\documentclass{amsart}
\usepackage{graphicx} 
\usepackage[T1]{fontenc}

\usepackage[utf8]{inputenc}

\usepackage{amsmath}
\usepackage{mathtools}
\usepackage{amsfonts}
\usepackage{amssymb}
\usepackage{tikz-cd}

\DeclareMathOperator{\vol}{vol}

\DeclareMathOperator{\SL}{SL}

\DeclareMathOperator{\ASL}{ASL}
\DeclareMathOperator{\Aut}{Aut}

\DeclareMathOperator{\Hom}{Hom}

\DeclareMathOperator{\id}{id}

\usepackage{xcolor}

\newtheorem{prop}{Proposition}[section]
\newtheorem{thm}[prop]{Theorem}

\newtheorem{lem}[prop]{Lemma}
\newtheorem{cor}[prop]{Corollary}
\newtheorem{conj}[prop]{Conjecture}
\newtheorem{defn}[prop]{Definition}

\title{${\ASL_n}(\mathbb Z)$ invariant random subsets of $\mathbb Z^n$}

\author{Miko{\l}aj Fr\k{a}czyk}
\address{\parbox{\linewidth}{Faculty of Mathematics and Computer Science, Jagiellonian University, ul. Łojasiewicza 6, 30-348 Krak{\'o}w, Poland
}}
\email{mikolaj.fraczyk@uj.edu.pl}
\author{Simon Machado}
\address{\parbox{\linewidth}{
ETH Zurich, Ramistrasse 101, Zurich, Switzerland}}
\email{smachado@ethz.ch}
\date{\today}

\begin{document}

\begin{abstract}
We classify measures on $\{0,1\}^{\mathbb{Z}^d}$, $d \geq 3$, the space of subsets of $\mathbb{Z}^d$, which are invariant under all affine special linear transformations. In other words, we classify simple point processes on $\mathbb{Z}^d$ whose law is invariant under affine special linear transformations.

We show that every such process is built from a random equivariant polynomial together with independent random sampling, a higher-order generalisation of the cut-and-project method: a random polynomial map is drawn from a distribution invariant under a natural action of $\SL_d(\mathbb{Z})$, each site is then retained independently with a probability determined by a measurable function of the polynomial's value, and the classical cut-and-project construction is recovered in the degree-one case. As a corollary, when the underlying $\mathbb{Z}^d$-action is weakly mixing the measure must be a convex combination of Bernoulli shifts, in the spirit of de Finetti's theorem on exchangeable processes. Our theorem also makes precise how the Howe--Moore theorem fails for the pair $(\ASL_d(\mathbb{Z}), \SL_d(\mathbb{Z}))$.

Motivated by this classification, we formulate a conjecture for $\ASL_d(\mathbb{R})$-invariant point processes on $\mathbb{R}^d$, predicting that any such set decomposes into a Poisson part and a quasicrystal part. The proofs rely on the interaction between the Host--Kra theory of characteristic factors, Zimmer's theory of dynamical cocycles of simple Lie groups, and the dynamics of $\SL_d(\mathbb{Z})$-actions on homogeneous spaces.
\end{abstract}

\maketitle
\section{Introduction}
\subsection{Invariant random subsets} An $\ASL_d(\mathbb Z)$-invariant random subset of $\mathbb Z^d$ is a random subset of $\mathbb Z^d$ whose distribution is an $\ASL_d(\mathbb Z)$-invariant probability measure on the space $\{0,1\}^{\mathbb Z^d}$, where $\ASL_d(\mathbb Z)$ acts by translations. Our goal in this work is to understand and possibly classify all such sets. 
For $d=1$, the only condition is invariance under translations, so the problem is tantamount to understanding shift-invariant measures on $\{0,1\}^{\mathbb Z}$, which form a vast, unclassifiable space \cite{foreman2011conjugacy,hjorth2001invariants}. Similarly, it is hopeless to expect any clean description of translation-invariant measures on $\{0,1\}^{\mathbb Z^d}$ for $d\geq 2$. Invariance under a richer group of transformations of $\mathbb{Z}^d$, such as $\ASL_d(\mathbb Z)$, should in principle force some rigidity of invariant measures. In the extreme case, by a classical theorem of de Finetti \cite{de1937prevision}, the only ergodic measures on $\{0,1\}^{\mathbb Z^d}$ invariant under \emph{all} permutations of $\mathbb Z^d$ correspond to Bernoulli random subsets. We refer to \cite{diaconis1977finite,diaconis1980finite,kirsch2019elementary,gavalakis2021information} for a more modern treatment of de Finetti's theorem. 

As the main result of this paper, we construct\footnote{Technically, we construct a dense family in the weak-* topology, so one must take the closure to obtain all of them.} \emph{all} $\ASL_d(\mathbb Z)$-invariant random subsets of $\mathbb Z^d$ for $d\geq 3$. While we believe the case $d=2$ admits a similar description, our current methods use super-rigidity and property (T) for the group $\SL_d(\mathbb Z)$ which hold only for $d\geq 3$. Even though the group $\ASL_d(\mathbb Z)$ is much, much smaller than the group of all permutations of $\mathbb Z^d$, the set of ergodic $\ASL_d(\mathbb Z)$-invariant measures is surprisingly rigid. All of them are attainable by a combination of an explicit algebraic construction and independent random sampling. The algebraic construction is slightly involved and reminiscent of cut-and-project sets, which are used to produce quasi-crystals in $\mathbb R^d$ \cite{ruhr2023classification}. Before we delve into the details, we will motivate our construction with several examples, in order of (subjectively) increasing complexity. 

Perhaps the simplest $\ASL_d(\mathbb Z)$-invariant random subsets can be obtained as random translates of $n\mathbb Z^d$, where $n \in\mathbb N$. These fit into the larger family of periodic $\ASL_d(\mathbb Z)$-invariant subsets of $\mathbb Z^d$ (see Figure \ref{fig-periodic}). Indeed, any periodic subset of $\mathbb Z^d$ has a finite orbit under $\ASL_d(\mathbb Z),$ so we can get an invariant random subset by uniformly sampling the elements of the orbit. 
\begin{figure}[h]
    \centering
    \includegraphics[width=0.7\textwidth]{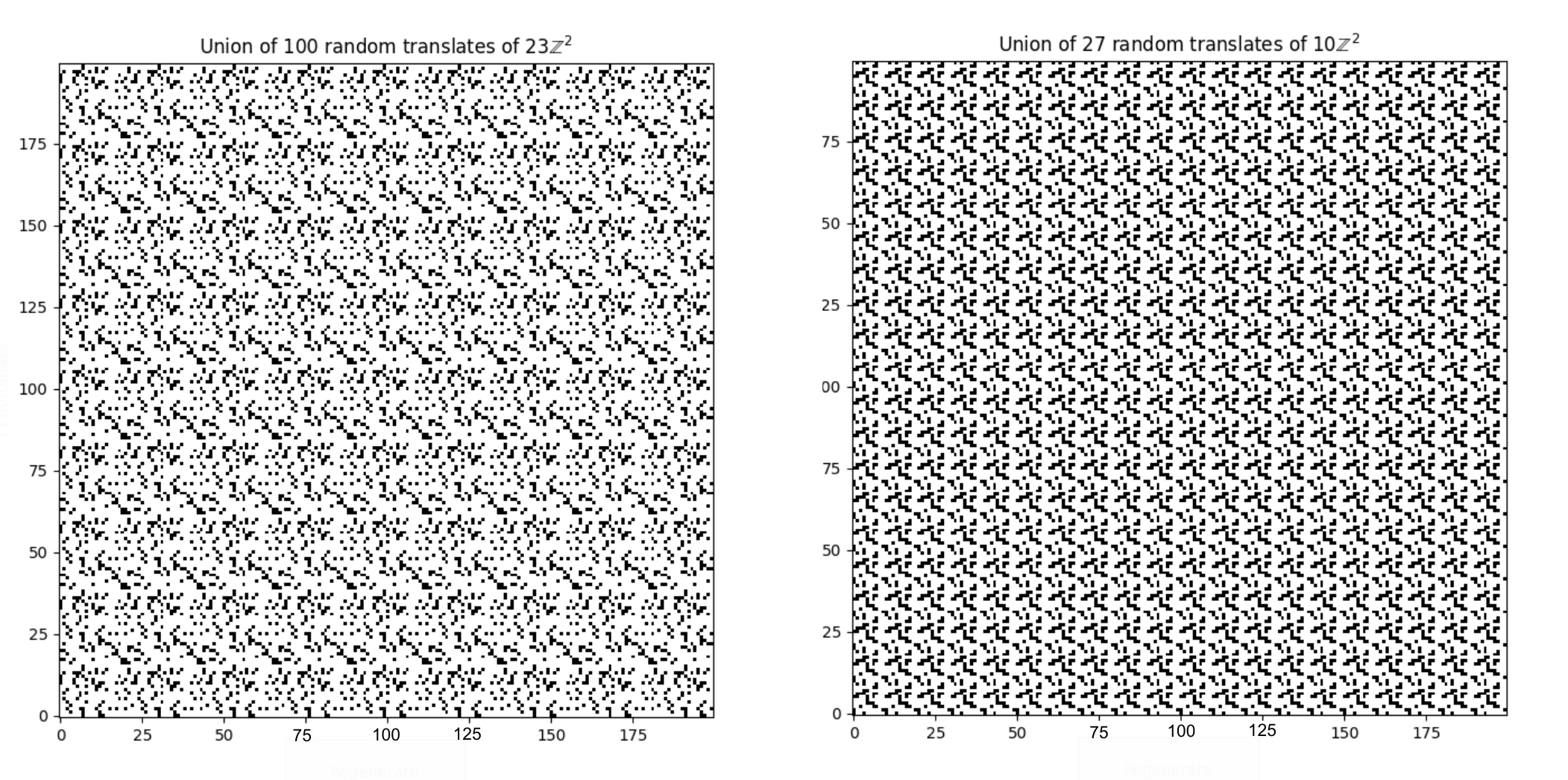}
    \caption{Periodic examples}
    \label{fig-periodic}
\end{figure}

Next in order of complexity are the sets obtained as preimages of random affine characters of $\mathbb Z^d$. For example, choose uniform random $\xi_0,\xi_1,\ldots, \xi_d\in \mathbb R/\mathbb Z$ and set 
$$S_1:=\{t\in \mathbb Z^d\mid \xi_0+\sum_{i=1}^d t_i\xi_i\in [0,0.5]+\mathbb Z\}.$$ 
It is not hard to check that the distribution of $S_1$ is invariant under $\ASL_d(\mathbb Z).$ These sets are no longer periodic, but weakly almost-periodic \cite{guiheneuf2017minkowski}. As we can see in Figure \ref{fig-deg1}, the set $S_1$ consists of layers of points concentrated around hyperplanes, typically of irrational slope. 

\begin{figure}[h]
    \centering
    \includegraphics[width=0.7\textwidth]{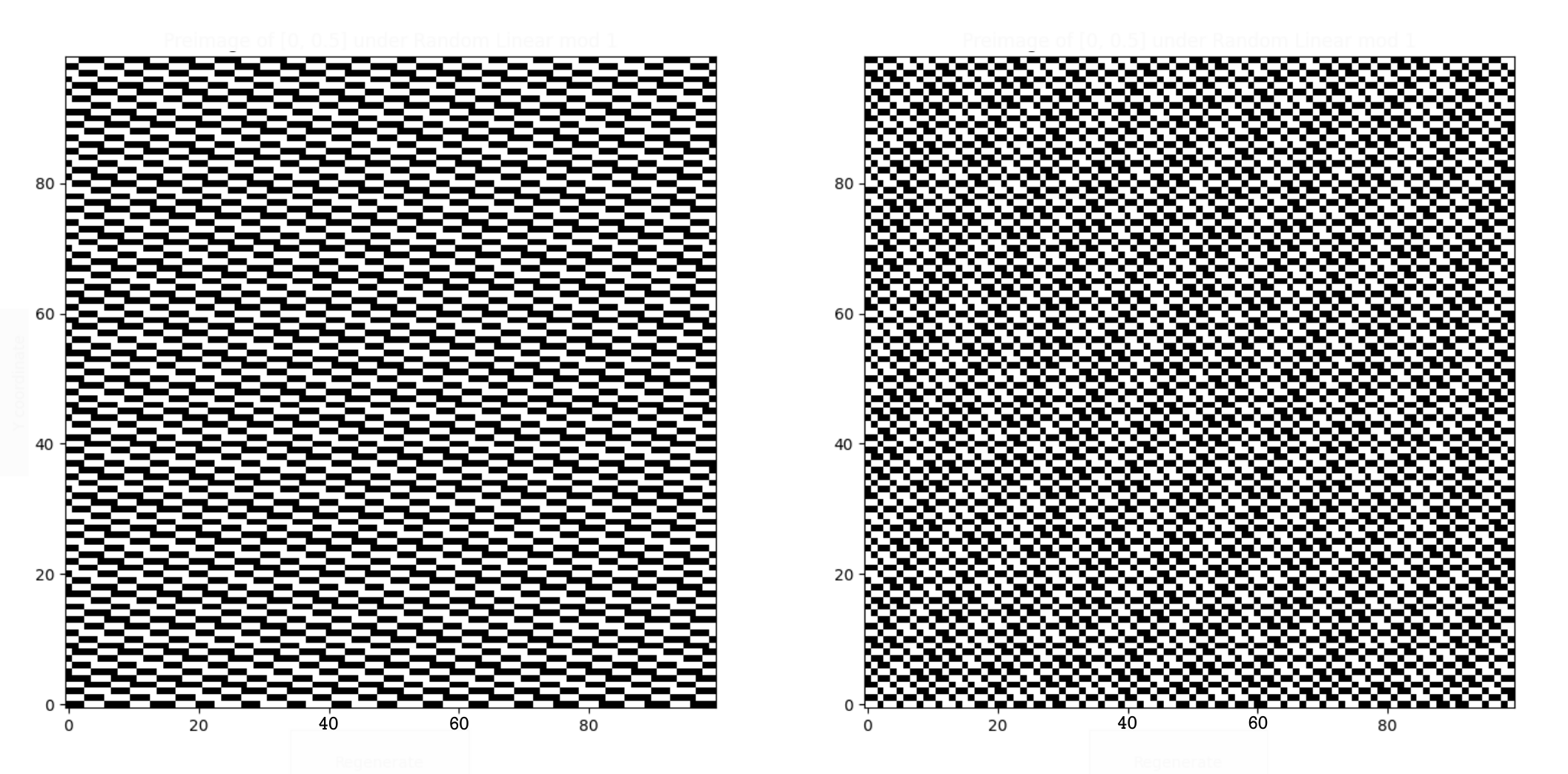}
    \caption{Instances of $S_1$.}
    \label{fig-deg1}
\end{figure}

The above construction can be repeated verbatim using preimages of random polynomial functions $\mathbb Z^d\to \mathbb R/\mathbb Z$. In Figure \ref{fig-deg2}, we have collected several instances of the random set constructed using degree two polynomials:
\begin{align*}
S_2=&\{t\in \mathbb Z^2\mid \xi_1t_1^2+\xi_2t_1t_2+\xi_3t_2^2+\xi_4t_1+\xi_5t_2 +\xi_6\in [0,0.5]+\mathbb Z\},\\ 
& \text{ where } \xi_1,\ldots,\xi_6\sim {\tt Unif}([0,1]) \text{ are independent}.
\end{align*}

\begin{figure}[h]
    \centering
    \includegraphics[width=\textwidth]{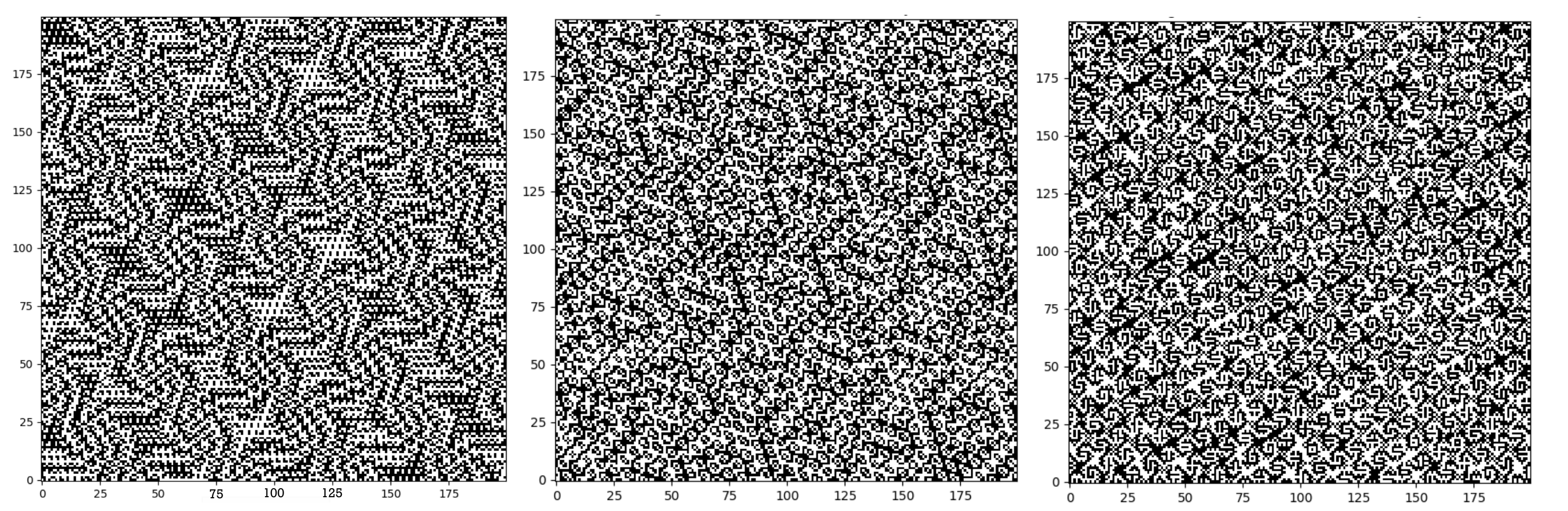}
    \caption{Instances of $S_2$}
    \label{fig-deg2}
\end{figure}
Instances of $S_2$ exhibit curious, somewhat quasi-periodic patterns.
For any degree $k\geq 3$, we can similarly define the set $S_k$ as the preimage of $[0,0.5]+\mathbb Z$ under a Haar-random degree-$k$ polynomial in $d$ variables, with coefficients in $\mathbb R/\mathbb Z$. Contrary to degree $2$, for degrees $k\geq 3$ the set $S_k$ loses its apparent structure and is hard to visually distinguish from a Bernoulli random set, as illustrated in Figure \ref{fig-deg3} \footnote{We invite the reader to guess which is which; the answer is in the source.}. In Bernoulli random sets, each element is contained independently with probability $p$. These are of course the most random examples, where there are no correlations between the structure of our set in disjoint subsets of the space.

\begin{figure}[h]
    \centering
    \includegraphics[width=\textwidth]{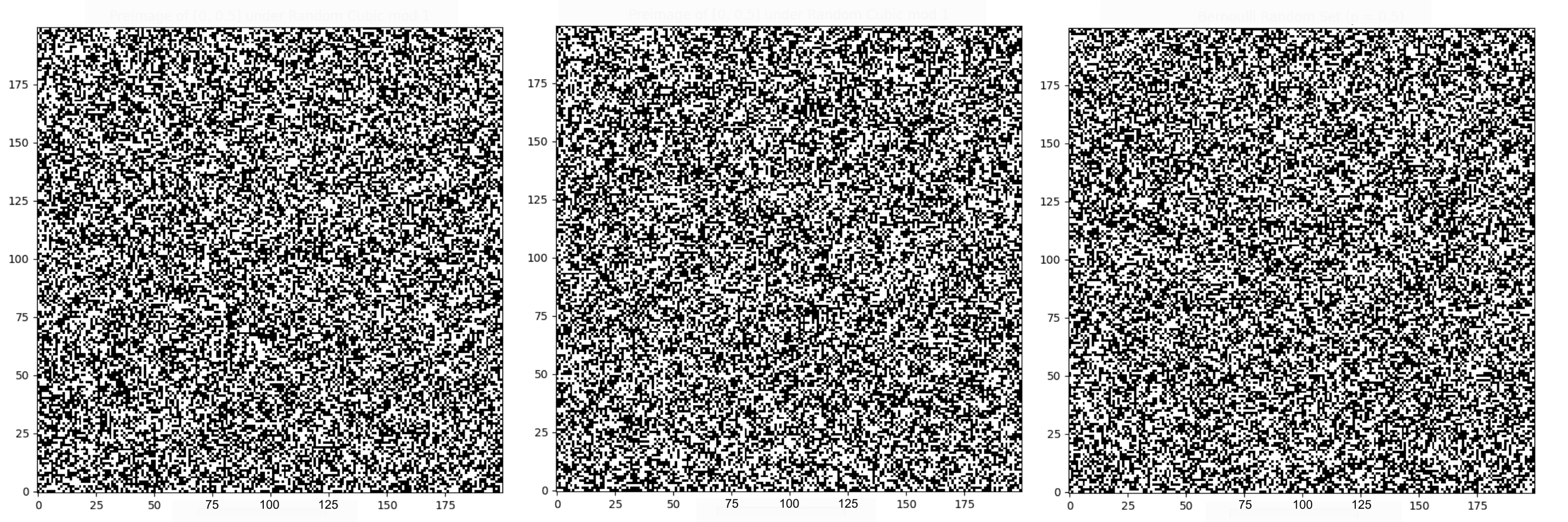}
    \caption{Two instances of $S_3$ and one instance of a Bernoulli random set.}
    \label{fig-deg3}
\end{figure}

Nevertheless, we can distinguish sets $S_k$ for $k\geq 3$, as well as the Bernoulli random examples, by considering their finer arithmetic statistics, captured by higher Gowers uniformity norms. Indeed, in the $S_3$ examples, the intersection of $S_3$ with eight consecutive terms in an arithmetic progression has slightly different distribution than a Bernoulli random subset\footnote{We don't claim here that $8$ is the minimal number one should take to see this, but $5$ is not enough in the particular case of preimages of $[0,0.5].$}. 
The mention of Gowers norms is not accidental, as we found that the study of $\ASL_d(\mathbb Z)$-invariant random subsets of $\mathbb Z^d$ is deeply connected to additive combinatorics, multiple recurrences, and the structure theory of characteristic factors \cite{HostKra, TaoZieglerConcat,ZieglerNonConventional,JamneshanMachado}.

One can interpolate between the constructions explained above, for example, by taking independent unions, intersections, or by using random $\mathbb R/\mathbb Z$-valued polynomials to first construct $[0,1]$-valued invariant random fields $\phi$ on $\mathbb Z^d$ and sample each $t$ independently with probability $\phi(t)$. We can also replace the torus $\mathbb R/\mathbb Z$ with a higher-dimensional torus, possibly endowed with a non-trivial action of $\SL_d(\mathbb Z).$ Our main result states that these constructions are essentially how \emph{all} $\ASL_d(\mathbb Z)$-invariant random subsets are constructed. 

Write $\mathbb T$ for the torus $\mathbb R/\mathbb Z$. Suppose we are given a homomorphism $\pi\colon \SL_d(\mathbb Z)\to \SL_m(\mathbb Z)$. Let $\ASL_d(\mathbb Z)$ act on $\mathbb T^m$ via $\pi$ evaluated on the linear part of the affine map. Consider the space of polynomial maps $\mathbb{Z}^d \to \mathbb{T}^m$ of total degree at most $k$:

$$ \mathbb{T}^m_{\leq k}[X_1, \ldots, X_d] = \bigl\{(t_1, \ldots, t_d) \mapsto \!\!\sum_{i_1 + \cdots + i_d \leq k} t_1^{i_1} \cdots t_d^{i_d} \, \tau_{(i_1, \ldots, i_d)} : \tau_{(i_1, \ldots, i_d)} \in \mathbb{T}^m \bigr\}.$$

The group $\ASL_d(\mathbb Z)$ admits two commuting actions on $\mathbb{T}^m_{\leq k}[X_1, \ldots, X_d]$. One by precomposition $P \mapsto P \circ \gamma$, and the second via a linear action on the coefficient torus $\mathbb T^m$.

We now fix $k, m \geq 0$ and a closed subgroup $T \subseteq \mathbb{T}^m$. Let $T_{\leq k}[X_1, \ldots, X_d]$ be the space of polynomial maps of degree at most $k$ taking values in the subgroup $T$. We choose a Borel section $s: \SL_m(\mathbb{R})/\SL_m(\mathbb{Z}) \to \SL_m(\mathbb{R})$ and define the Mackey cocycle:
\begin{align*}
    \alpha: \SL_d(\mathbb{Z}) \times \SL_m(\mathbb{R})/\SL_m(\mathbb{Z}) &\longrightarrow \SL_m(\mathbb{Z}) \\
    (\gamma, \bar g) &\longmapsto s(\pi(\gamma) \bar g)^{-1} \, \pi(\gamma) \, s(\bar g).
\end{align*}
The cocycle $\alpha$ takes values in $\SL_m(\mathbb{Z})$, as the projection of the right-hand side to $\SL_m(\mathbb{R})/\SL_m(\mathbb{Z})$ is the identity coset $\SL_m(\mathbb Z)$. We suppose in addition that the range of $\alpha$ stabilises $T$, so that $\alpha$ acts on $T_{\leq k}[X_1, \ldots, X_d]$.

\begin{defn}\label{DefIntro: general polynomial process}
    With the data $(k, m, T, \pi, s)$ as above, let $\nu$ be the Haar probability measure on a closed subgroup of $T_{\leq k}[X_1, \ldots, X_d]$ stable under $P \mapsto \alpha(\gamma, x)(P) \circ \gamma^{-1}$ for every $\gamma \in \SL_d(\mathbb{Z})$, and let $f: T \to [0, 1]$ be a measurable function invariant under the range of $\alpha$. The associated \emph{polynomial point process} $\Lambda(k,m,T,\nu,\pi,s,f)$ on $\mathbb{Z}^d$ is obtained as follows:
    \begin{itemize}
        \item draw a random polynomial $\underline{P}$ according to $\nu$;
        \item independently across $t \in \mathbb{Z}^d$, keep $t$ with probability $f(\underline{P}(t))$.
    \end{itemize}
\end{defn}

The process $\Lambda(k,m,T,\nu,\pi,s,f)$ is $\ASL_d(\mathbb{Z})$-invariant because the function $t\mapsto f(\underline{P}(t))$ is invariant under the action of $\ASL_d(\mathbb Z).$ To state our approximation result, we define the intensity of a $\mathbb Z^d$-invariant random subset $\Lambda$ of $\mathbb Z^d$ as the probability $\mathbb P[0\in \Lambda]$. If $\Lambda$ is an ergodic $\ASL_d(\mathbb Z)$-invariant random subset, the value $\mathbb P[0\in \Lambda]$ almost surely equals the upper and lower density of $\Lambda$. Our main result takes the form of an approximation theorem for $\ASL_d(\mathbb Z)$-invariant random subsets.

\begin{thm}\label{mt-approx}
    Let $\Lambda$ be an $\ASL_d(\mathbb{Z})$-invariant random subset of $\mathbb{Z}^d$ for $d\geq 3$. For any $\varepsilon>0$, there exists a tuple $k, m, T,\nu, \pi,s$, and $f$ as above and an $\ASL_d(\mathbb Z)$-invariant coupling $(\Lambda,\Lambda(k,m,T,\nu, \pi,s,f))$, such that the intensity of the symmetric difference $\Lambda\Delta \Lambda(k,m,T,\nu, \pi,s,f)$ is at most $\varepsilon.$
\end{thm}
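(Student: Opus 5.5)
We reduce to the ergodic case and then show that every ergodic $\ASL_d(\mathbb Z)$-invariant measure is measurable with respect to a factor of polynomial type, up to an independent resampling. Let $\mu$ be the law of $\Lambda$ on $X=\{0,1\}^{\mathbb Z^d}$ and let $F=\mathbf 1_{\{0\in\Lambda\}}$. By ergodic decomposition over $\ASL_d(\mathbb Z)$ and measurable selection, it suffices to treat ergodic $\mu$; the constructions can then be glued over the ergodic components, and the resulting mixture is again of the form in Definition \ref{DefIntro: general polynomial process} after enlarging $m$. If this gluing is awkward, we instead approximate by finitely many components and then take an independent product. The translation subgroup $\mathbb Z^d$ is normal in $\ASL_d(\mathbb Z)$, so $\SL_d(\mathbb Z)$ acts on the $\mathbb Z^d$-system $(X,\mu)$ by automorphisms.

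\textbf{Step 1: characteristic factors.} For each $k$, consider the Host--Kra factor $\mathcal Z_k$ of the $\mathbb Z^d$-action. It is canonical, hence $\SL_d(\mathbb Z)$-invariant. We claim the following dichotomy: the conditional expectation $\phi=\mathbb E[F\mid \mathcal Z_\infty]$, with $\mathcal Z_\infty=\bigvee_k \mathcal Z_k$, captures all the correlations, and conditionally on $\mathcal Z_\infty$ the points are independent. To prove this, we use the $\SL_d(\mathbb Z)$-symmetry to spread any finite configuration $\{t_1,\dots,t_r\}$ into ``generic'' position. For $d\ge 3$ the group $\SL_d(\mathbb Z)$ acts on $r$-tuples with orbits containing configurations that look like long cubes and progressions. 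The Gowers--Host--Kra seminorm controls multiple correlation averages, so averaging along such orbits shows that
\[
\mathbb E\Bigl[\prod_i (F-\phi)\circ T_{t_i}\Bigr]=0 .
\]
Consequently, $\Lambda$ is exactly the process of sampling each $t$ independently with probability $\phi(T_t x)$.

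\textbf{Step 2: algebraic structure of $\mathcal Z_k$.} Fix $\varepsilon$ and choose $k$ with $\|\phi-\mathbb E[\phi\mid\mathcal Z_k]\|_1<\varepsilon$. By the structure theory of Host--Kra, Ziegler, Jamneshan and the second author, $\mathcal Z_k$ is an inverse limit of $k$-step nilsystems, possibly with compact-abelian extension pieces. We next use the extra $\SL_d(\mathbb Z)$-action together with Zimmer cocycle superrigidity and property (T), which hold for $d\ge3$. The $\SL_d(\mathbb Z)$-action on each structure group gives a cocycle into the automorphism group of a torus. By superrigidity, this cocycle is cohomologous to a homomorphism $\pi:\SL_d(\mathbb Z)\to\SL_m(\mathbb Z)$ twisted by a compact-valued cocycle; that twisted form is exactly the Mackey cocycle $\alpha$ of the section $s$. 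We then want to show that the nilpotent extensions split, so that the factor becomes abelian and polynomial: the vertical characters then become $\mathbb Z^d$-polynomial phases. This is where $\ASL_d$-invariance forces the nilsystem to be a system of polynomials $\mathbb Z^d\to T$. The conclusion of this step is that $\mathcal Z_k$ (a finite-level approximant of it) is isomorphic to $(T_{\le k}[X],\nu)$ with $x\mapsto \underline P$ and $T_t x\mapsto \underline P(\cdot+t)$.

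\textbf{Step 3: conclusion.} Approximate $\mathbb E[\phi\mid \mathcal Z_k]$ by $f(\underline P(0))$, with $f$ invariant under the range of $\alpha$; invariance follows by averaging $f$ over the compact closure of that range. Couple $\Lambda$ and $\Lambda(k,m,T,\nu,\pi,s,f)$ by using the same uniform variables $U_t$: keep $t$ in the first set if $U_t<\phi(T_tx)$ and in the second if $U_t<f(\underline P(t))$. This coupling is $\ASL_d(\mathbb Z)$-invariant, and the intensity of the symmetric difference is $\|\phi-f(\underline P(0))\|_1<2\varepsilon$.

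The main obstacle is Step 2: identifying $\SL_d(\mathbb Z)$-equivariant nilsystem factors with polynomial systems. This requires combining superrigidity for cocycles into non-compact targets with the fact that non-abelian nilpotent structure cannot be normalized by a lattice acting through a Mackey cocycle. A secondary obstacle is the independence claim of Step 1.
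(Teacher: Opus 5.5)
Your outline follows the paper's architecture: marginals controlled by Host--Kra factors, then $\ASL_d(\mathbb Z)$-equivariant nilfactors, superrigidity and polynomial cocycles, and a Bernoulli thinning coupled through common uniforms. However, the steps that carry the content are asserted rather than proved, and several would fail as stated.

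In Step 1, take a unipotent $\psi\in\SL_d(\mathbb Z)$ with $\psi^l(v)=v+l\varphi(v)u$ and average over $l$. This yields a multiple average $\frac1N\sum_{l}\prod_i T_u^{l\varphi(v_i)}(v_i\cdot h_i)$ for the single transformation $T_u$. Such an average is controlled only by the factors $Z^k_{\langle u\rangle}$, which contain, and are in general much larger than, the $\mathbb Z^d$-factors $Z^k$. The missing idea is an $\SL_d(\mathbb Z)$-concatenation step. Let $g$ be $\SL_d(\mathbb Z)$-invariant and orthogonal to a suitable $Z^{K}$. Its projections onto $Z^k_{\gamma H}$, for $\gamma\in\SL_d(\mathbb Z)/\mathrm{Stab}(H\otimes\mathbb Q)$, all have the same norm. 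They are also pairwise orthogonal, by the Tao--Ziegler inclusion $Z^k_{H_1}\wedge Z^k_{H_2}\subset Z^{2k-1}_{H_1+H_2}$ and descending induction on the rank of $H$. Bessel's inequality then forces them to vanish, and only then are $r$-point correlations controlled by $Z^{2^{d-1}r}$. Note also that you need every mixed term containing at least one factor $F-\phi$ to vanish, not only $\mathbb E[\prod_i(F-\phi)\circ T_{t_i}]$.

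The decisive gap is Step 2. First, $\SL_d(\mathbb Z)$ preserves $\mathcal Z_k$ but not the finite-dimensional nilfactors of the inverse limit, and the structure groups can be infinite-dimensional. So there is no a priori cocycle into a finite-dimensional target such as $\Aut(\mathbb T^m)$ to which superrigidity applies. The paper first produces nilfactors that are only commensurated, by a dimension-counting argument using the $\SL_d(\mathbb Z)$-invariance of the function being approximated. It then uses property (T) (finite generation of cocycles, reduction to a lattice stabiliser, finite extensions) to make them equivariant. These are bundles over $Z^0$, which is non-trivial in general (the Mackey base is $\mathbb Z^d$-invariant), not a single system $(T_{\le k}[X],\nu)$.

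Second, nothing forces the nilfactor itself to become abelian, and the remark about lattices normalising non-abelian structure is not a mechanism. What the paper proves is that directions contracted by the intertwining cocycle $\alpha$ lie in the stabiliser of the $\SL_d(\mathbb Z)$-invariant function; this is a squeezing argument. So the function lives on $M\backslash N/\Gamma$, with $M\trianglelefteq N_0$ its connected stabiliser and $\alpha$ bounded on $N_0/M$. On this bounded part the higher action cocycle is amenable-valued, and property (T) of $\ASL_d(\mathbb Z)$ makes its translation part a coboundary. That contradicts non-triviality of the Host--Kra structure cocycles unless $N_0/M$ is abelian.

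Finally, the averaging in Step 3 is invalid. The Mackey cocycle is cohomologous to $\pi$, so for non-trivial $\pi$ its range is an unbounded subset of the discrete group $\SL_m(\mathbb Z)$, with no compact closure to average over. Averaging would also destroy the approximation. Invariance of $f$ must instead come from $\alpha$ acting trivially on the quotient where the function lives, again an output of the squeezing argument. Likewise, the Mackey structure does not come from the compact twist in superrigidity, which is removed by a compact extension. It comes from the transfer map to $\Aut(N)/\Aut(N,\Gamma)$, whose law is identified via Benoist--Quint.
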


This means that, up to a sparse $\ASL_d(\mathbb Z)$-invariant "error set", $\Lambda$ is given by our fairly explicit amalgam of a polynomial construction followed by Bernoulli sampling. Allowing for a small error is unavoidable. Consider the sets:
$$S_k^{\delta}:=\{t\in \mathbb Z^d\mid \underline P(t)\in [0,\delta]\}, \text{ where } \underline{P}\in \mathbb T^m_{\leq k}[X_1,\ldots,X_d] \text{ is Haar-random.}$$
The intensity of $S_k^{\delta}$ is $\delta$, so the union $\bigcup_{k=1}^\infty S_{k}^{1/3^k}$ is a proper $\ASL_d(\mathbb Z)$-invariant random subset (because the sum of the intensities of $S_k^{1/3^k}$ is strictly below $1$). At the same time, it cannot be realized as $\Lambda(k,m,T,\nu,\pi,s,f)$ for any finite $k$.

In specific situations, we can show that a finite $k$ is enough to realize an $\ASL_d(\mathbb Z)$-invariant subset as $\Lambda(k,m,T,\nu,\pi,s,f)$. Let $\mu$ be the distribution of $\Lambda$. The restriction of the action of $\ASL_d(\mathbb Z)$ on $(\{0,1\}^{\mathbb Z^d},\mu)$ to $\mathbb Z^d$ might fail to be ergodic, but for each $\mathbb Z^d$-ergodic component $\mu_0$ of $\mu$, we can consider the characteristic factors $\mathcal Z^i(\{0,1\}^{\mathbb Z^d},\mu_0)$ of the underlying p.m.p. action of $\mathbb Z^d$ (see \cite{JamneshanMachado}). If these factors stabilize at $k$ for almost all ergodic components $\mu_0$, that is $\mathcal Z^i(\{0,1\}^{\mathbb Z^d},\mu_0)=\mathcal Z^k(\{0,1\}^{\mathbb Z^d},\mu_0)$ for all $i\geq k$, then we can represent $\Lambda=\Lambda(k,m,T,\nu, \pi,s,f)$ for some choice of $m,T,\nu, \pi,s,f.$

As a corollary, we can classify all $\ASL_d(\mathbb Z)$-invariant random subsets which are weakly mixing under the action of $\mathbb Z^d$. In this case, all characteristic factors considered above are trivial and hence stabilize at $k=0$.
\begin{cor}\label{Cor: Mixing}
Let $\mu$ be an $\ASL_d(\mathbb Z)$-invariant probability measure on $\{0,1\}^{\mathbb Z^d}$, such that the $\mathbb Z^d$ action on $(\{0,1\}^{\mathbb Z^d},\mu)$ is weakly mixing. Then, $\mu$ is the distribution of a Bernoulli random subset, i.e., $\mu=(p\delta_0+(1-p)\delta_1)^{\mathbb Z^d},$ for some $p\in [0,1].$ 
\end{cor}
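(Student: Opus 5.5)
The plan is to derive Corollary \ref{Cor: Mixing} from the finite-step version of Theorem \ref{mt-approx} stated just above. If the $\mathbb Z^d$-action on $(\{0,1\}^{\mathbb Z^d},\mu)$ is weakly mixing, it is in particular ergodic. The ergodic decomposition of $\mu$ with respect to $\mathbb Z^d$ is therefore trivial, so we only need to handle the single component $\mu_0=\mu$.

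First, we show that all characteristic factors vanish. For a weakly mixing $\mathbb Z^d$-system, the Kronecker factor $\mathcal Z^1$ is trivial, since there are no nonconstant eigenfunctions. By the Host--Kra structure theory (in the $\mathbb Z^d$ setting of \cite{JamneshanMachado}), each $\mathcal Z^{i+1}$ is an extension of $\mathcal Z^i$ by compact abelian group data, and weak mixing forces every Host--Kra seminorm $\|f\|_{U^{i}}$ to equal $|\int f\,d\mu|$. Hence $\mathcal Z^i$ is trivial for all $i$. The factors therefore stabilise at $k=0$, and the representation result gives $\mu$ as the law of $\Lambda(0,m,T,\nu,\pi,s,f)$ for some data.

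Second, we unwind the degree-zero construction. A polynomial of degree $0$ is a constant $\underline P\equiv\tau\in T$, with $\tau$ drawn from $\nu$. Each $t$ is then kept independently with probability $p(\tau):=f(\tau)$. So $\mu=\int (p(\tau)\delta_1+(1-p(\tau))\delta_0)^{\mathbb Z^d}\,d\nu(\tau)$, a mixture of Bernoulli measures. Ergodicity of $\mathbb Z^d$ (indeed weak mixing) then forces this mixture to be trivial. The event $\{\text{density of }\Lambda \le c\}$ is $\mathbb Z^d$-invariant, and its probability is $\nu(p\le c)$, because by the law of large numbers the density of a Bernoulli$(p)$ sample is almost surely $p$. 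Ergodicity makes $\nu(p\le c)\in\{0,1\}$ for every $c$, so $p(\tau)$ is $\nu$-a.s. equal to a constant $p$. This gives $\mu=(p\delta_1+(1-p)\delta_0)^{\mathbb Z^d}$, which is the claimed form up to renaming $p\leftrightarrow 1-p$.

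The main obstacle is the first step. We must check that the stabilisation criterion really applies with $k=0$, meaning that the triviality of all $\mathcal Z^i$ yields a degree-zero polynomial model. If the representation theorem only gives an approximation, there is a fallback. Run Theorem \ref{mt-approx} with $\varepsilon\to 0$, and observe that the polynomial part of each approximating process is a factor of $\mu$ up to error $\varepsilon$. Under weak mixing, a nonconstant $\mathbb T^m$-valued polynomial would produce a nontrivial $\mathcal Z^k$-measurable function, hence nontrivial correlations via Gowers seminorms. This forces the approximants to be $\varepsilon$-close to Bernoulli measures, and Bernoulli measures are closed under weak-* limits with fixed intensity $\mu([0\in\Lambda])$.
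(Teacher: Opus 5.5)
Your argument is correct if one grants the finite-step representation statement from the introduction, and it follows the introduction's one-line justification (``all characteristic factors \dots\ stabilize at $k=0$''). Your unwinding of the degree-zero model is fine, and so is the density argument showing that an ergodic mixture of Bernoulli measures is a single Bernoulli measure.

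The mechanism the paper's outline points to is much more direct. Take $f_1=\dots=f_k=\mathbf 1_{0\in\underline X}$ in Theorem \ref{Theorem: Reduction to Host--Kra factors}; this function is $\SL_d(\mathbb Z)$-invariant because $\SL_d(\mathbb Z)$ fixes the origin. Weak mixing makes $Z^K(X)$ trivial, so $\mathbb E[f\mid Z^K(X)]$ is the constant $p=\mu(0\in\underline X)$. The theorem then gives $\mathbb P(v_1,\dots,v_k\in\underline X)=p^k$ for all distinct nonzero $v_i$, and translation invariance removes the restriction $v_i\neq 0$. These inclusion probabilities determine the law (Lemma \ref{Lemma: Relative mixing implies Bernoulli thinning} with trivial $Y$), so $\mu$ is Bernoulli.

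This route has three advantages. First, it uses only the unipotent averaging and Tao--Ziegler concatenation of \S\ref{Subsection: Reduction to Host--Kra}: no nilfactors, finite extensions, cocycle superrigidity, property (T) or Benoist--Quint. Second, the marginals come out as exactly $p^k$, so there is no mixture to rule out. Third, it sidesteps what you correctly flagged as your main obstacle. The body of the paper only formulates the classification as an approximation result (Theorem \ref{Thm: All processes are polynomial}), so the exact degree-zero representation you invoke rests on the least explicitly documented part of the argument. In exchange, your route shows how the corollary sits inside the general classification as its degree-zero case.

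Your fallback would not work as written. Theorem \ref{mt-approx} only provides an invariant coupling, so the random polynomial lives on the joining rather than being a factor of $(\{0,1\}^{\mathbb Z^d},\mu)$. Weak mixing of $\mu$ does not by itself force that polynomial to be constant; you would need an additional disjointness argument. With the direct route above, the fallback is unnecessary.
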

We believe the assumption $d\geq 3$ in Theorem \ref{mt-approx} could be relaxed to $d\geq 2$. Currently, $d\geq 3$ is used to apply property (T) and super-rigidity in several places in the proof. At least some of these could be avoided with more work.

\subsection{Discrete Howe-Moore theorem and invariant random subsets of coset spaces}
The classical Howe-Moore theorem \cite{howe1979asymptotic} states that, under mild assumptions, the ergodic p.m.p. actions of a real Lie group $G$ remain ergodic after restricting to a closed unbounded subgroup $H$. The case relevant to this work is $G=\ASL_d(\mathbb R), H=\SL_d(\mathbb R),$ where the Howe-Moore theorem states that any ergodic $\ASL_d(\mathbb R)$ p.m.p. action is also $\SL_d(\mathbb R)$-ergodic. 

The analogous statement does not hold for $\ASL_d(\mathbb Z)$ and $\SL_d(\mathbb Z).$ Indeed, the action of $\ASL_d(\mathbb Z)$ on $\mathbb Z^d/2\mathbb Z^d$ with a uniform measure is ergodic, but restriction to $\SL_2(\mathbb Z)$ fixes the positive measure subset $\{0\}$. The knowledge of $\ASL_d(\mathbb Z)$-invariant random subsets can be used to still say something meaningful about $\SL_d(\mathbb Z)$-invariant subsets in an $\ASL_d(\mathbb Z)$-action. 

\begin{thm}\label{mthm-discreteHM}
Let $(X,\mu)$ be an ergodic p.m.p. action of $\ASL_d(\mathbb Z)$ for $d\geq 3$, and let $U \subset X$ be a positive measure $\SL_2(\mathbb Z)$-invariant subset. Suppose that the action by $\mathbb Z^d$ is weakly mixing. Then, for any finite subset $F\subset \mathbb Z^d$, we have 
$$\mu\left(\bigcap_{t \in F}tU\right)=\mu(U)^{|F|}.$$
\end{thm}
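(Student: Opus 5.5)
We reduce to Corollary \ref{Cor: Mixing} by building an $\ASL_d(\mathbb Z)$-invariant random subset of $\mathbb Z^d$ out of $U$. (We read the hypothesis as $U$ being $\SL_d(\mathbb Z)$-invariant, where $\SL_d(\mathbb Z)$ is the stabiliser of $0$ in $\ASL_d(\mathbb Z)$.)

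\textbf{Step 1: the random subset.} Consider the map
\[
\Phi\colon X\to\{0,1\}^{\mathbb Z^d},\qquad \Phi(x)=\{t\in\mathbb Z^d : t^{-1}x\in U\}.
\]
Equivalently, $t\in\Phi(x)$ iff $x\in tU$. Let $\nu=\Phi_*\mu$.

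We check equivariance, with $\ASL_d(\mathbb Z)$ acting on subsets of $\mathbb Z^d$ by $g\cdot S=\{g(t) : t\in S\}$. Let $g\in\ASL_d(\mathbb Z)$ and $t\in\mathbb Z^d$. Write $g\,t=g(t)\,h$ in $\ASL_d(\mathbb Z)$, where the translation $t$ is viewed as an element of $\ASL_d(\mathbb Z)$. Then $h=g(t)^{-1}gt$ fixes $0$, so $h\in\SL_d(\mathbb Z)$ and $hU=U$. Hence
\[
g(tU)=g(t)\,hU=g(t)U.
\]
It follows that $x\in tU$ iff $gx\in g(t)U$, i.e. $\Phi(gx)=g\cdot\Phi(x)$. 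So $\Phi$ is an $\ASL_d(\mathbb Z)$-equivariant factor map, and $\nu$ is an $\ASL_d(\mathbb Z)$-invariant probability measure on $\{0,1\}^{\mathbb Z^d}$.

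\textbf{Step 2: weak mixing passes to the factor.} The $\mathbb Z^d$-action on $(\{0,1\}^{\mathbb Z^d},\nu)$ is a factor of the $\mathbb Z^d$-action on $(X,\mu)$. Weak mixing passes to factors: a nonconstant eigenfunction on the factor would pull back to a nonconstant eigenfunction on $X$. Hence $\nu$ satisfies the hypotheses of Corollary \ref{Cor: Mixing}.

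\textbf{Step 3: conclude.} By Corollary \ref{Cor: Mixing}, $\nu$ is a Bernoulli product measure; write $q=\nu(\{S : 0\in S\})$ for the probability that a given site is retained. Then
\[
q=\mu(\{x : 0\in\Phi(x)\})=\mu(U).
\]
Note that $q$ is the one-site inclusion probability, which is $1-p$ in the corollary's notation $(p\delta_0+(1-p)\delta_1)^{\mathbb Z^d}$; only the fact that $\nu$ is a product measure is used. For finite $F\subset\mathbb Z^d$,
\[
\mu\Bigl(\bigcap_{t\in F}tU\Bigr)=\nu(\{S : F\subseteq S\})=q^{|F|}=\mu(U)^{|F|},
\]
which is the claim.

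\textbf{Main obstacle.} The only delicate point is the group-theoretic bookkeeping in Step 1: verifying that $g(tU)=g(t)U$, which is exactly where $\SL_d(\mathbb Z)$-invariance of $U$ is used. Measurability of $\Phi$ is clear, since each coordinate is the indicator of the measurable set $tU$.
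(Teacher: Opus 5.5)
Your proof is correct and is the intended derivation. The paper gives no separate proof, but the remark preceding the theorem points to exactly this reduction: the map $x\mapsto\{t : x\in tU\}$ is an $\ASL_d(\mathbb Z)$-equivariant factor precisely because $U$ is invariant under $\SL_d(\mathbb Z)$, the stabiliser of $0$; weak mixing descends to this factor; and Corollary \ref{Cor: Mixing} then yields the product formula. Your reading of $\SL_2(\mathbb Z)$ as a typo for $\SL_d(\mathbb Z)$ is the right one, since for a block-embedded $\SL_2(\mathbb Z)$ the claim fails. For example, in a Bernoulli system with one-site probability $q\in(0,1)$, the set $U=\{S : 0,e_3\in S\}$ is invariant, but $\mu(U\cap e_3U)=q^3\neq q^4=\mu(U)^2$.
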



We have faint hope that our techniques can be applied to study the failure of Howe-Moore for more general pairs, like, for example, $(\SL_n(\mathbb Z), \SL_m(\mathbb Z))$ for $m<n.$ This would require an understanding of $\SL_n(\mathbb Z)$-invariant measures on $\{0,1\}^{\SL_n(\mathbb Z)/\SL_m(\mathbb Z)}$. We expect that the "easiest" in this family of problems should be an approximation result for $\SL_d(\mathbb Z)$-invariant subsets of $\mathbb Z^d$ for $d\geq 4$. The grid $\mathbb Z^4$ is a union of affine translates of $\mathbb Z^3$ on which suitable subgroups of $\SL_4(\mathbb Z)$ act like $\ASL_3(\mathbb Z)$ acts on $\mathbb{Z}^3$, so Theorem \ref{mt-approx} already provides some information on this problem, although not enough to get an analogous description of the $\SL_4(\mathbb Z)$-invariant random set on the whole space $\mathbb Z^4$. 

\subsection{Dreams about the continuous analogue}

A point process on $\mathbb R^d$ can be viewed either as a random discrete subset of $\mathbb R^d$ or a random locally finite sum of Dirac masses \cite{daley2003introduction}. These points of view are largely equivalent, but working with measures is technically slightly more convenient. Let us write $\mathcal M(\mathbb R^d)$ for the space of locally finite sums of Dirac masses on $\mathbb R^d$ with the topology of weak-* convergence on compact subsets. 

An $\ASL_d(\mathbb R)$-invariant point process on $\mathbb R^d$ is a process whose distribution is an $\ASL_d(\mathbb R)$-invariant probability measure on $\mathcal M(\mathbb R^d).$ To our knowledge, it was Jens Marklof who first asked for the classification of $\ASL_d(\mathbb R)$-invariant point processes on $\mathbb R^d$. This problem was our initial motivation to work on $\ASL_d(\mathbb Z)$-invariant subsets of $\mathbb Z^d$, in hopes that the solution of the discrete analogue will shed some light on the continuous version. 

We present two examples of $\ASL_d(\mathbb R)$-invariant point processes on $\mathbb R^d$. Their constructions can then be combined to produce many more examples. The first is a Poisson point process \cite[Chapter 3]{daley2003introduction}, which is the continuous analogue of a Bernoulli random set. Given $\eta>0$, the Poisson point process $\Pi_\eta$ on $\mathbb R^d$ is the unique random subset $\Pi_\eta$ of $\mathbb R^d$ satisfying the following properties: 
\begin{enumerate}
    \item For any measurable subset $A\subset \mathbb R^d$, the expected number of points in $\Pi_\eta\cap A$ is $\eta {\vol}(A).$
    \item For any disjoint measurable subsets $A,B\subset \mathbb R^d$, the numbers of points in $\Pi_\eta\cap A$ and $\Pi_\eta\cap B$ are independent. 
\end{enumerate}
The Poisson point process $\Pi_\eta$ depends only on the Lebesgue measure and $\eta$; it is therefore invariant under any measure-preserving map. In particular, $\Pi_\eta$ is an example of an $\ASL_d(\mathbb R)$-invariant point process on $\mathbb R^d.$

The second family of examples comes from cut-and-project sets, which were studied and classified in \cite{ruhr2023classification}. Pick $m\geq d$ and fix a decomposition $\mathbb R^{m}=\mathbb R^d \oplus \mathbb R^{m-d}.$ We select a compact window $W\subset \mathbb R^{m-d}$ of positive Lebesgue measure. Let $\mathcal L$ be a translate of a lattice in $\mathbb R^{m}$ and define $\Lambda(\mathcal L,W)$ as the projection of $\mathcal L\cap (\mathbb R^d\times W)$ to $\mathbb R^d$. The sets $\Lambda(\mathcal L,W)$ are always discrete; we call them \emph{cut-and-project} sets. Cut-and-project sets are also known as \emph{quasi-crystals} or \emph{model sets} \cite{baake2013aperiodic, senechal1995quasicrystals}. 

Suppose now that $\ASL_d(\mathbb R)$ acts on $\mathbb R^m$, and preserves the decomposition $\mathbb R^d\oplus \mathbb R^{m-d}$ in such a way that the action on $\mathbb R^d$ is the standard one. Then, taking the intersection with $\mathbb R^d\times W$ and the projection to $\mathbb R^d$ are equivariant with respect to the action of $\ASL_d(\mathbb R)$. In particular, if the lattice $\mathcal L$ is $\ASL_d(\mathbb R)$-invariant random, the cut-and-project set $\Lambda(\mathcal L, W)$ is an $\ASL_d(\mathbb R)$-invariant point process on $\mathbb R^d$. In \cite{ruhr2023classification}, R\"uhr, Smilansky, and Weiss used Ratner's measure classification to describe all $\ASL_d(\mathbb R)$-invariant point processes supported on cut-and-project sets. 

Given these two constructions, we can interpolate them by taking independent unions or by further percolating an existing $\ASL_d(\mathbb R)$-invariant point process in an $\ASL_d(\mathbb R)$-invariant way. At present, we don't know any examples that would be disjoint from these two families. To make it precise, we don't know a counterexample to the following provocative conjecture. 
\begin{conj}\label{conj-hard}
Let $\Lambda$ be an $\ASL_d(\mathbb R)$-invariant point process. Then, either $\Lambda$ is a Poisson point process, or there exists an $\ASL_d(\mathbb R)$-invariant random cut-and-project set $\mathcal{S}$ and an $\ASL_d(\mathbb R)$-invariant coupling $(\Lambda,\mathcal{S})$, such that $\Lambda\cap \mathcal{S}\neq \emptyset$ almost surely.
\end{conj}

In a sense, Theorem \ref{mt-approx} confirms the discrete version of this conjecture. It implies that either $\Lambda\subset \mathbb Z^d$ is Bernoulli random, or it correlates with an "algebraically defined" invariant random subset. The polynomial sets $S_1$ are in fact cut-and-project sets. Indeed, choose uniform random $\tilde\xi_0,\tilde \xi_1,\ldots, \tilde \xi_d\in [0,1]$ and consider a lattice translate 
$\mathcal L=\{(t_1,t_2,\ldots, t_d, z+\tilde\xi_0+\sum_{i=1}^dt_i\tilde\xi_i) \mid t_i,z\in \mathbb Z\}.$ The projection of $\mathcal L\cap (\mathbb R^d\times [0,1/2])$ to $\mathbb R^d$ is exactly the set $S_1$. In the discrete case, we have the "higher" polynomial examples $S_k, k\geq 2$. It would be interesting to determine if there are meaningful continuous analogues of these sets in $\mathbb R^d,$ which would not always be contained in a cut-and-project set. If these "higher cut-and-project sets" existed, they would be promising counter-examples to Conjecture \ref{conj-hard}.

We conclude this section by stating a conjectural analogue of Corollary \ref{Cor: Mixing}, which could be potentially more approachable. 
\begin{conj}
Let $\Lambda$ be an $\ASL_d(\mathbb R)$-invariant point process with distribution $\mu$. Suppose that $(\mathcal M(\mathbb R^d),\mu)$ is weakly mixing as an action of $\mathbb R^d$. Then $\Lambda$ is a Poisson point process. 
\end{conj}
For cut-and-project examples, the $\mathbb R^d$ action on $(\mathcal M(\mathbb R^d),\mu)$ fails to even be ergodic, so the conjecture passes this basic sanity check.


\subsection{Outline of the proof}

We prove our main result (Theorem \ref{mt-approx}) by translating it into a question in ergodic theory. The proof then descends through a sequence of factors, each more structured than the last. It rests on a dichotomy: either $\underline{X}$ mixes, in which case it is Bernoulli, or it does not, in which case it carries algebraic structure we can pin down.

We are given a standard probability space $(X,\nu)$ with an $\ASL_d(\mathbb{Z})$-ergodic probability measure preserving action, and we wish to understand how the dynamics on $X$ controls the law of the associated point process $\underline{X} \subset \mathbb{Z}^d$. Step by step, we will show that this law is controlled by factors of $X$ with more and more structure.

The starting point is the marginals. For pairwise distinct $t_1, \ldots, t_k \in \mathbb{Z}^d$, we wish to understand 
$$ \mathbb{P}(t_1, \ldots, t_k \in \underline{X}).$$
Invariance under $\ASL_d(\mathbb{Z})$ gives that for every $\gamma \in \SL_d(\mathbb{Z})$, 
$$ \mathbb{P}(t_1, \ldots, t_k \in \underline{X}) = \mathbb{P}(\gamma(t_1), \ldots, \gamma(t_k) \in \underline{X}),$$
and the marginal can be rewritten in a more dynamical way as 
$$\mathbb{P}(t_1, \ldots, t_k \in \underline{X}) = \int_X (t_1 \cdot \mathbf{1}_{0 \in \underline{X}}) \cdots (t_k \cdot \mathbf{1}_{0 \in \underline{X}}) = \int_X (\gamma(t_1) \cdot \mathbf{1}_{0 \in \underline{X}}) \cdots (\gamma(t_k) \cdot \mathbf{1}_{0 \in \underline{X}}).$$
Under sufficient mixing of the action of $\mathbb{Z}^d$ by translation, choosing $(\gamma_n)_{n \geq 0}$ with $\gamma_n(t_i) - \gamma_n(t_j) \rightarrow \infty$ for all $i \neq j$, one could reasonably expect 
$$\int_X (\gamma_n(t_1) \cdot \mathbf{1}_{0 \in \underline{X}}) \cdots (\gamma_n(t_k) \cdot \mathbf{1}_{0 \in \underline{X}}) \rightarrow \left(\int_X \mathbf{1}_{0 \in \underline{X}}\right)^k.$$
In other words, under mixing assumptions $\underline{X}$ should be Bernoulli. This is indeed what we prove (Corollary \ref{Cor: Mixing}), by choosing the $\gamma_n$ along a judicious unipotent sequence and averaging, turning the computation above into an ergodic-like average.

But the approach yields much more. Exploiting Host--Kra--Ziegler theory \cite{HostKra,ZieglerNonConventional} and concatenation results of Tao--Ziegler \cite{TaoZieglerConcat}, we show in \S \ref{Subsection: Reduction to Host--Kra} that the marginals $\mathbb{P}(t_1, \ldots, t_k \in \underline{X})$ are controlled by Host--Kra--Gowers norms $\|\cdot\|_{U^{d(k)}(X)}$ of the function $f := \mathbf{1}_{0 \in \underline{X}}$. Dynamically, this means we only need to understand the conditional expectation of $f$ with respect to the $d(k)$-th \emph{characteristic factor} $Z^{d(k)}(X)$ of $X$ — the series of $\ASL_d(\mathbb{Z})$-factors that precisely captures the \emph{lack} of mixing of $X$ under the action of $\mathbb{Z}^d$, see \S \ref{Subsection: Reduction to Host--Kra}.

 We then begin investigating in \S \ref{Section: Bundles of nilsystems} the projections of $f$ to the characteristic factors $Z^k(X)$. Write therefore $f_k := \mathbb{E}[f \mid Z^k(X)]$. This is the function we wish to control, and it has two key features. First, it is invariant under $\SL_d(\mathbb{Z})$. Second, by Host--Kra--Ziegler theory \cite{HostKra,ZieglerNonConventional} and its non-ergodic extension due to Jamneshan and the second author \cite{JamneshanMachado} - recalled in \S \ref{Subsection: Host--Kra--Ziegler theory for non-ergodic actions} - $Z^k(X)$ \emph{as a $\mathbb{Z}^d$-system} is an inverse limit of \emph{nilfactors}, that is, systems built from homogeneous spaces $N/\Gamma$ for nilpotent Lie groups $N$ of step at most $k$ and lattices $\Gamma \subset N$. The theory however produces only $\mathbb{Z}^d$-nilfactors rather than $\ASL_d(\mathbb{Z})$-factors: only the translation action descends to these structured factors. A soft counting argument involving $f_k$ nevertheless asserts that one can find nilfactors of $Z^k(X)$ on which the action of all of $\ASL_d(\mathbb{Z})$ \emph{almost} descends. If $\pi: X \rightarrow Y$ is such a factor, then $\pi \circ \gamma$ and $\pi$ are almost equal for all $\gamma \in \ASL_d(\mathbb{Z})$, which means that the joining $(\pi, \pi \circ \gamma)$ is \emph{almost} a graph joining. Property (T) of $\ASL_d(\mathbb{Z})$ then promotes almost-invariance into genuine invariance - the same mechanism by which property (T) implies finite generation. This is the one place we crucially use $d \geq 3$. Every other appearance of higher rank in the proof is for convenience.

Once we have reduced the study of $f$ to its projection onto $\ASL_d(\mathbb{Z})$-nilfactors, we can appeal to homogeneous dynamics, which we do in \S \ref{Section: Structure of invariant functions}. The problem reduces to understanding a dynamical cocycle
$$ \alpha: \SL_d(\mathbb{Z}) \times X \rightarrow \Aut(N) $$
together with the invariance of $f$ under $\SL_d(\mathbb{Z})$ and $\alpha$. Leveraging directions (i.e. one-parameter subgroups) in $N$ that are contracted by $\alpha$, we show that the stabiliser of the projection of $f$ to the given nilfactor is large. This in turn enables us to consider only $N$'s with \emph{abelian} connected component. Higher rank is used here too, but only for convenience: the essential tool is in reality \emph{relative property (T)} of $\mathbb{Z}^d$ in $\ASL_d(\mathbb{Z})$, which holds for $d = 2$ as well.

Finally, in \S \ref{Section: From nilsystems to polynomials}, having reduced to $N = \mathbb{Z}^d \ltimes \mathbb{T}^m$ for some $m$, we write the action of $\ASL_d(\mathbb{Z})$ explicitly in terms of $1$-cocycles for the (unipotent) action of $\mathbb{Z}^d$ on $\mathbb{T}^m$. Unipotence forces these cocycles to be polynomials of bounded degree. The family of cocycles we encounter is then \emph{precisely} the random polynomial $\underline{P}$ from Theorem \ref{mt-approx}, and the projection of $f$ onto this nilfactor corresponds to the relevant function.
 
\subsection{Acknowledgement}
MF would like to thank Asaf Katz for pointing out counterexamples to a more general version of Conjecture \ref{conj-hard} on $\SL_d(\mathbb R)$ invariant point processes.
MF was supported by the Dioscuri
program, initiated by the Max Planck Society, jointly managed with the National Science Center
in Poland, and mutually funded by Polish the Ministry of Education and Science and the German
Federal Ministry of Education and Research. SM acknowledges support from a Hermann Weyl Instructorship at the Institute for Mathematical Research (FIM) at ETH Zurich.

For the purpose of open access, the authors have
applied a CC BY public copyright licence to any author accepted manuscript arising from this
submission.

\section{Notation, conventions and preliminaries}

Throughout, given a group $\Gamma$, we say that $(X, \mu)$ is a $\Gamma$-system if $X$ is a standard Borel space equipped with a measurable $\Gamma$-action preserving the Borel probability measure $\mu$. A factor $\pi: (X, \mu) \to (Y, \nu)$ is a $\Gamma$-equivariant measurable map between $\Gamma$-systems such that $\nu = \pi_* \mu$. Given a factor $\pi$, we naturally identify $L^\infty(Y)$ with a sub-algebra of $L^\infty(X)$ via $f \mapsto f \circ \pi$. By Mackey's point realisation theorem \cite{MackeyPointRealization}, every $\Gamma$-invariant von Neumann sub-algebra of $L^\infty(X)$ closed under complex conjugation arises this way. Given two factors $(X, \mu) \to (Y_i, \nu_i)$ for $i = 1, 2$, the intersection $L^\infty(Y_1) \cap L^\infty(Y_2)$ is again a $\Gamma$-invariant sub-algebra and therefore defines a factor of $X$, which we denote $Y_1 \wedge Y_2$. By construction, $L^\infty(Y_1 \wedge Y_2) = L^\infty(Y_1) \cap L^\infty(Y_2)$.

\subsection{Dynamical cocycles and compactness criterion}

The notion of \emph{dynamical cocycle} underpins much of this article. Let $\Gamma$ be a group, $(X, \nu)$ an ergodic $\Gamma$-system, and $\Delta$ a locally compact second countable group. A Borel map
$$ \alpha: \Gamma \times X \to \Delta$$
is a \emph{cocycle} if for all $\gamma_1, \gamma_2 \in \Gamma$ and $\nu$-a.e.\ $x \in X$,
$$ \alpha(\gamma_1 \gamma_2, x) = \alpha(\gamma_1, \gamma_2 \cdot x) \, \alpha(\gamma_2, x).$$
Two cocycles $\alpha, \beta: \Gamma \times X \to \Delta$ are \emph{cohomologous (in $\Delta$)} if there is a measurable map $\phi: X \to \Delta$ such that for all $\gamma \in \Gamma$ and $\nu$-a.e.\ $x$,
$$ \alpha(\gamma, x) = \phi(\gamma \cdot x) \, \beta(\gamma, x) \, \phi(x)^{-1}.$$
A cocycle is a \emph{coboundary} if it is cohomologous to the trivial cocycle $\gamma, x \mapsto e$.

Cocycles arise pervasively in dynamics, and we will use a number of properties drawn from Zimmer's superrigidity theory \cite{ErgodicTheoryZimmer} and from the Host--Kra--Ziegler theory of nilsystems \cite{HostKra, ZieglerNonConventional}. We collect general properties for repeated use.

\begin{lem}\label{Lemma: Compactness criterion}
    Let $\Delta$ be a locally compact second countable group, $\Gamma$ a group acting ergodically on a standard probability space $(X, \nu)$, and $\alpha: \Gamma \times X \to \Delta$ a cocycle. For Borel $B \subset X$, set
    $$ E(x, B) := \{\alpha(\gamma, x) : \gamma \cdot x \in B\} \subset \Delta.$$
    If there exists $B$ with $\nu(B) > 0$ such that $E(x, B)$ is relatively compact for $x$ in a set of positive measure, then $\alpha$ is cohomologous to a cocycle taking values in a compact subgroup of $\Delta$.
\end{lem}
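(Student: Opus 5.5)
The plan is to build a Borel, $\alpha$-equivariant map from $X$ into the space of compact subsets of $\Delta$. Because $\Delta$ acts properly on that space, ergodicity will force the map to land in a single orbit. The stabiliser of a point on that orbit is compact, and it will receive the conjugated cocycle. Throughout I assume $\Gamma$ is countable, as in all our applications, so that $E(x,B)$ depends measurably on $x$.

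\emph{Step 1: equivariance of $E$.} From the cocycle identity, for $\gamma_0\in\Gamma$ and a.e.\ $x$,
$$E(\gamma_0 x,B)=\{\alpha(\gamma,\gamma_0x):\gamma\gamma_0x\in B\}=\{\alpha(\gamma\gamma_0,x)\alpha(\gamma_0,x)^{-1}:\gamma\gamma_0 x\in B\}=E(x,B)\,\alpha(\gamma_0,x)^{-1}.$$
Set $\psi(x):=\overline{E(x,B)}$, a closed subset of $\Delta$. Then $\psi(\gamma x)=\psi(x)\alpha(\gamma,x)^{-1}$.

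\emph{Step 2: $\psi$ is a.e.\ a nonempty compact set.} Since $\nu(B)>0$ and the action is ergodic, a.e.\ $x$ has some $\gamma$ with $\gamma x\in B$, so $\psi(x)\neq\emptyset$ a.e. Right translation preserves compactness, so the set of $x$ with $\psi(x)$ compact is $\Gamma$-invariant. By hypothesis it has positive measure, hence by ergodicity it is conull. Thus $\psi$ is a Borel map into $\mathcal K(\Delta)$, the space of nonempty compact subsets of $\Delta$ with the Hausdorff (Vietoris) topology. Measurability holds because $\psi(x)$ is the closure of a countable union of Borel-parametrised points.

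\emph{Step 3: properness of the right action of $\Delta$ on $\mathcal K(\Delta)$.} Suppose $C_n\to C$ and $C_ng_n\to C'$ in $\mathcal K(\Delta)$. Fix $c_n\in C_n$. Then $c_n$ stays in a compact neighbourhood of $C$, and $c_ng_n$ stays in a compact neighbourhood of $C'$. Hence $g_n=c_n^{-1}(c_ng_n)$ lies in a compact set, which is exactly properness. Consequently:
\begin{itemize}
\item orbits are closed and the orbit space is Hausdorff, so the action is smooth and admits a Borel transversal together with a Borel choice of translating element;
\item the stabiliser $\{g: Cg=C\}$ of any $C$ is a closed subgroup contained in $C^{-1}C$, hence compact.
\end{itemize}
I expect this to be the main technical point. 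The Borel section for a proper action of an lcsc group on a Polish space is standard (Effros/Glimm), and I would cite it rather than reprove it.

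\emph{Step 4: conclusion.} The orbit $\psi(x)\Delta$ is $\Gamma$-invariant by Step 1. The orbit space is countably separated by Step 3, so by ergodicity this orbit is a.e.\ equal to a fixed orbit $C\Delta$. Using the Borel section, write $\psi(x)=C\phi(x)$ with $\phi:X\to\Delta$ Borel. Step 1 then gives
$$C\phi(\gamma x)=C\phi(x)\alpha(\gamma,x)^{-1},$$
so $\phi(\gamma x)\alpha(\gamma,x)\phi(x)^{-1}$ lies in $\mathrm{Stab}(C)$, which is compact by Step 3. Thus the cocycle $\beta(\gamma,x):=\phi(\gamma x)\alpha(\gamma,x)\phi(x)^{-1}$ is cohomologous to $\alpha$ (via the transfer map $\phi^{-1}$) and takes values in a compact subgroup of $\Delta$, as required.
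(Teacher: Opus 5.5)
Your proof is correct and follows the same route as the paper. Both arguments rest on the equivariance $E(\gamma_0 x,B)=E(x,B)\,\alpha(\gamma_0,x)^{-1}$, which gives an $\alpha$-equivariant map into $\mathcal K(\Delta)$ (you use right translation on $\overline{E(x,B)}$, the paper uses left translation on $\overline{E(x,B)}^{-1}$), and then on closed orbits with compact stabilisers; where the paper cites Zimmer's cocycle reduction lemma, you unpack it via properness and a Borel section, and you also spell out the ergodicity argument upgrading compactness of $\overline{E(x,B)}$ from a positive-measure set to a conull set, which the paper leaves implicit.
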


\begin{proof}
    For all $\gamma_0 \in \Gamma$ and $\nu$-a.e.\ $x \in X$,
    \begin{align*}
        E(\gamma_0 \cdot x, B)
        &= \{\alpha(\gamma, \gamma_0 \cdot x) : \gamma \gamma_0 \cdot x \in B\} \\
        &= \{\alpha(\gamma' \gamma_0^{-1}, \gamma_0 \cdot x) : \gamma' \cdot x \in B\} && (\gamma' := \gamma \gamma_0) \\
        &= \{\alpha(\gamma', x) \, \alpha(\gamma_0, x)^{-1} : \gamma' \cdot x \in B\} \\
        &= E(x, B) \, \alpha(\gamma_0, x)^{-1},
    \end{align*}
    where the third equality uses the cocycle equation in the form $\alpha(\gamma' \gamma_0^{-1}, \gamma_0 \cdot x) = \alpha(\gamma', x) \alpha(\gamma_0^{-1}, \gamma_0 \cdot x)$ together with $\alpha(\gamma_0^{-1}, \gamma_0 \cdot x) = \alpha(\gamma_0, x)^{-1}$.

    Equip the set $\mathcal{K}(\Delta)$ of compact subsets of $\Delta$ with the Hausdorff topology. The map
    $$ f: X \to \mathcal{K}(\Delta), \quad x \mapsto \overline{E(x, B)}^{-1}$$
    is defined on a set of positive measure by hypothesis and, by the computation above, satisfies
    $$ f(\gamma_0 \cdot x) = \alpha(\gamma_0, x) \cdot f(x)$$
    where $\Delta$ acts on $\mathcal{K}(\Delta)$ by left translation. Thus $f$ is $\alpha$-equivariant in the sense of the cocycle reduction lemma \cite[Lem. 4.2.8]{ErgodicTheoryZimmer}. Since the $\Delta$-action on $\mathcal{K}(\Delta)$ has closed orbits with stabilisers given by stabilisers of compact sets — themselves compact subgroups of $\Delta$ — we conclude that $\alpha$ is cohomologous to a cocycle taking values in some compact subgroup of $\Delta$.
\end{proof}

A useful corollary is the following folklore result.

\begin{cor}\label{Cor: coho to compact indep of ambient}
    Let $\Gamma$ be a countable group, $(X, \nu)$ an ergodic $\Gamma$-system, $G$ a locally compact second countable group, and $\alpha: \Gamma \times X \to G$ a cocycle. Suppose that $\alpha$ is cohomologous (in $G$) to a cocycle taking values in a compact subgroup, and that $\alpha(\Gamma \times X) \subset H$ for some closed subgroup $H \subset G$. Then $\alpha$ is cohomologous in $H$ to a cocycle taking values in a compact subgroup.
\end{cor}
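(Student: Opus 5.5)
The plan is to reduce to Lemma~\ref{Lemma: Compactness criterion}, applied with $\Delta = H$. The point is that the hypothesis of that lemma is a relative-compactness condition on sets of values of $\alpha$. Those sets lie inside $H$, and since $H$ is closed in $G$, relative compactness in $G$ is the same as relative compactness in $H$.

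\textbf{Step 1.} Write the hypothesis explicitly. There is a compact subgroup $K \subset G$, a measurable $\phi: X \to G$, and a cocycle $\beta$ with values in $K$, such that
$$\alpha(\gamma,x) = \phi(\gamma x)\,\beta(\gamma,x)\,\phi(x)^{-1}.$$

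\textbf{Step 2.} Produce a set $B$ satisfying the hypothesis of Lemma~\ref{Lemma: Compactness criterion}. Choose a compact $C \subset G$ with $\nu(\phi^{-1}(C))>0$; this exists by inner regularity of $\phi_*\nu$, since $G$ is $\sigma$-compact. Set $B:=\phi^{-1}(C)$.

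For $x \in B$ and any $\gamma$ with $\gamma x \in B$, we have
$$\alpha(\gamma,x) \in C\,K\,C^{-1},$$
which is a compact subset of $G$. Hence $E(x,B) \subset H \cap CKC^{-1}$ for $x \in B$, a set of positive measure.

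The set $H \cap CKC^{-1}$ is compact in $H$, because $H$ is closed and carries the subspace topology. So $E(x,B)$ is relatively compact in $H$ for all $x \in B$.

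\textbf{Step 3.} Apply Lemma~\ref{Lemma: Compactness criterion} to $\alpha$, viewed as an $H$-valued cocycle. The group $H$ is locally compact second countable, being a closed subgroup of $G$. The lemma then gives that $\alpha$ is cohomologous in $H$ to a cocycle with values in a compact subgroup of $H$.

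\textbf{Main obstacle.} The only delicate point is measurability. We need $\alpha$ to be Borel as a map into $H$, which holds because $H$ is Borel in $G$ with the induced Borel structure. We also need the transfer map produced inside the proof of Lemma~\ref{Lemma: Compactness criterion} to take values in $H$. This holds because that proof works intrinsically in $\mathcal{K}(H)$: the $H$-action on $\mathcal{K}(H)$ is smooth, so the cocycle reduction lemma yields a Borel section with values in $H$. I expect no further difficulty.
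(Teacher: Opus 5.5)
Your proposal is correct and follows the paper's proof. Both apply Lemma~\ref{Lemma: Compactness criterion} with $\Delta = H$, using that relative compactness of $E(x,B)$ in $G$ passes to the closed subgroup $H$. Your explicit choice $B=\phi^{-1}(C)$, which gives $E(x,B)\subset H\cap CKC^{-1}$, fills in the step the paper's sketch leaves implicit.
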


\begin{proof}[Proof sketch]
    Apply Lemma \ref{Lemma: Compactness criterion} to $\alpha$ viewed as an $H$-valued cocycle. The hypothesis on $\alpha$ in $G$ implies that the sets $E(x, B)$ are relatively compact in $G$ for some $B$ of positive measure, hence relatively compact in the closed subgroup $H$.
\end{proof}

By passing to a compact extension, a cocycle cohomologous to one with values in a compact subgroup may always be assumed to be a coboundary.

\begin{lem}\label{Lemma: From compact to trivial}
    Let $K$ be a compact group, $\Gamma$ a group acting by measure-preserving transformations on a probability space $(X, \nu)$, and $\alpha: \Gamma \times X \to K$ a cocycle. Let $Y := X \rtimes_\alpha K$ denote the compact extension: the set $X \times K$ equipped with the product measure $\nu \times \mu_K$ and the $\Gamma$-action
    $$ \gamma \cdot (x, k) := (\gamma \cdot x, \alpha(\gamma, x) \, k).$$
    Then:
    \begin{enumerate}
        \item the natural factor map $Y \to X$ is relatively ergodic if and only if $\alpha$ is not cohomologous to a cocycle taking values in a proper closed subgroup of $K$;
        \item the cocycle $\beta: \Gamma \times Y \to K$ defined by $\beta(\gamma, (x, k)) := \alpha(\gamma, x)$ is a coboundary.
    \end{enumerate}
\end{lem}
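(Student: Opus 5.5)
Part (2) is proved by writing down the trivialising map, and part (1) by decomposing $L^2(Y)$ along the Peter--Weyl decomposition of $L^2(K)$ and using the Mackey range of $\alpha$.

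\textbf{Part (2).} Define $\phi\colon Y\to K$ by $\phi(x,k):=k$. Then
$$\phi(\gamma\cdot(x,k))\,\phi(x,k)^{-1}=\alpha(\gamma,x)\,k\,k^{-1}=\alpha(\gamma,x)=\beta(\gamma,(x,k)).$$
So $\beta(\gamma,y)=\phi(\gamma y)\,e\,\phi(y)^{-1}$, which means $\beta$ is cohomologous to the trivial cocycle. I only need to check that $\beta$ is a cocycle, which follows directly from the cocycle identity for $\alpha$.

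\textbf{Part (1), relative ergodicity fails if $\alpha$ reduces.} Read ``relatively ergodic'' as: every $\Gamma$-invariant function on $Y$ is measurable with respect to $X$. Suppose $\alpha(\gamma,x)=\psi(\gamma x)\,\alpha'(\gamma,x)\,\psi(x)^{-1}$ with $\alpha'$ taking values in a proper closed subgroup $L<K$.

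\begin{itemize}
\item Set $F(x,k):=\mathbf 1_{L}(\psi(x)^{-1}k)$, or a continuous function of the coset $L\psi(x)^{-1}k\in L\backslash K$.
\item A direct computation shows $F$ is $\Gamma$-invariant: $\psi(\gamma x)^{-1}\alpha(\gamma,x)k=\alpha'(\gamma,x)\psi(x)^{-1}k$, and left multiplication by $\alpha'(\gamma,x)\in L$ preserves the coset.
\item Since $L\backslash K$ is nontrivial, $F$ depends genuinely on $k$, so it is not $X$-measurable.
\end{itemize}

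\textbf{Part (1), the converse.} Suppose $F\in L^2(Y)$ is $\Gamma$-invariant and not $X$-measurable. Expand $F$ fibrewise by Peter--Weyl: for each irreducible unitary representation $\rho$ of $K$, the component $F_\rho(x)$ is a matrix satisfying the twisted equation $F_\rho(\gamma x)=\rho(\alpha(\gamma,x))F_\rho(x)$, up to a transpose depending on convention. Some nontrivial $\rho$ has $F_\rho\neq 0$.

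\begin{itemize}
\item The map $x\mapsto F_\rho(x)$ is then an $\alpha$-equivariant map into the linear $K$-space $\Hom(\mathbb C^{\dim\rho},V_\rho)$.
\item $K$ is compact, so orbits in this space are closed and the action is smooth.
\item By ergodicity of $X$, $F_\rho$ takes values a.s. in a single $K$-orbit $K\cdot v$.
\item The cocycle reduction lemma \cite[Lem. 4.2.8]{ErgodicTheoryZimmer}, used in Lemma \ref{Lemma: Compactness criterion}, then shows $\alpha$ is cohomologous to a cocycle into $\mathrm{Stab}_K(v)$.
\end{itemize}

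\textbf{Main obstacle.} The delicate step is proving $\mathrm{Stab}_K(v)$ is proper. Here $v\ne 0$ because $F_\rho\neq 0$ and has constant orbit type. If $\mathrm{Stab}_K(v)=K$, then $v$ would be a nonzero $K$-fixed vector in a sum of copies of the nontrivial irreducible $\rho$, which is impossible. So the stabiliser is a proper closed subgroup, completing the equivalence.

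If $X$ is not ergodic, one either interprets the statement ergodic-componentwise or assumes ergodicity, as the surrounding text does.
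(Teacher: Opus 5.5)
Your part (2) is exactly the paper's argument, using the transfer map $\phi(x,k)=k$. For part (1) the paper gives no argument and simply cites \cite[Prop.~4.7]{Austin]}. You instead give a self-contained proof, and it is correct.

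\begin{itemize}
\item Easy direction: you write down the invariant function $h(L\psi(x)^{-1}k)$.
\item Converse: you project an invariant, non-$X$-measurable $F$ onto a nontrivial isotypic component of the left-regular representation. Ergodicity of $X$ and smoothness of the linear $K$-action then put this component in a single orbit $K\cdot v$ with $v\neq 0$. You then apply the same cocycle reduction lemma \cite[Lem.~4.2.8]{ErgodicTheoryZimmer} that the paper uses in Lemma~\ref{Lemma: Compactness criterion} and Proposition~\ref{Proposition: cocycles of prop (T) groups are finitely generated}.
\item The stabiliser of $v$ is proper because a nontrivial isotypic component contains no nonzero $K$-fixed vector.
\end{itemize}

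Compared with the citation, your route buys two things. It shows exactly where ergodicity of $X$ is used: only in the converse direction. It also shows that the reducing subgroup can be taken to be the stabiliser of a vector in a finite-dimensional representation of $K$.

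Your closing caveat about ergodicity is more than a matter of interpretation. Without ergodicity the ``if'' direction is false. Take $X=X_1\sqcup X_2$, with $\alpha$ trivial over $X_1$ and with ergodic skew product over $X_2$. Then $\mathbf{1}_{X_1}(x)h(k)$ is invariant but not $X$-measurable. Yet no reduction to a proper subgroup exists, since restricting such a reduction to $X_2$ would contradict ergodicity there. So the statement must be read under the paper's standing convention, stated at the start of the cocycle subsection, that these $\Gamma$-systems are ergodic.

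One small correction to the easy direction: drop the indicator option $\mathbf{1}_L(\psi(x)^{-1}k)$. It is $\nu\times\mu_K$-a.e.\ zero whenever $\mu_K(L)=0$, for instance when $K$ is connected. Use a non-constant continuous $h$ on $L\backslash K$ instead. That $h$ is not a.e.\ constant because the $K$-invariant probability on $L\backslash K$ has full support.
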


\begin{proof}
    Item (1) is \cite[Prop. 4.7]{Austin}. Item (2) is folklore; we recall the short proof. Define
    $$ \phi: Y \to K, \qquad (x, k) \mapsto k.$$
    Then
    $$ \phi(\gamma \cdot (x, k)) = \phi(\gamma \cdot x, \alpha(\gamma, x) \, k) = \alpha(\gamma, x) \, k = \alpha(\gamma, x) \, \phi(x, k),$$
    that is, $\beta(\gamma, y) = \phi(\gamma \cdot y) \phi(y)^{-1}$, so $\beta$ is a coboundary.
\end{proof}

\subsection{Finiteness properties of dynamical cocycles}\label{Subsection: Finiteness properties of dynamical cocycles}

We turn to results which, although general, are tailored to the groups we will encounter. They will be crucial in \S \ref{Section: Bundles of nilsystems}. We begin with a finiteness statement for cocycles of property (T) groups; we refer to \cite{ErgodicTheoryZimmer, PropertyT} for background.

\begin{prop}\label{Proposition: cocycles of prop (T) groups are finitely generated}
    Let $\Gamma$ be a property (T) group acting ergodically on a probability space $(X, \nu)$, $\Delta$ a countable discrete group, and $\alpha: \Gamma \times X \to \Delta$ a cocycle. Then there exists a finitely generated subgroup $H \subset \Delta$ such that $\alpha$ is cohomologous to a cocycle taking values in $H$.
\end{prop}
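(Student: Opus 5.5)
The plan is to mimic the proof that property (T) groups are finitely generated, applied to the induced permutation representation. Enumerate $\Delta = \bigcup_n H_n$ as an increasing union of finitely generated subgroups $H_1 \subset H_2 \subset \cdots$; this is possible since $\Delta$ is countable. For each $n$, consider the skew-product action of $\Gamma$ on $X \times \Delta/H_n$ given by
$$\gamma \cdot (x, \delta H_n) = (\gamma \cdot x, \alpha(\gamma, x)\delta H_n),$$
equipped with $\nu \times$ counting measure, and the associated unitary representation $\rho_n$ of $\Gamma$ on $L^2(X \times \Delta/H_n)$. The key observation is the standard cocycle reduction fact (in the spirit of \cite[Lem. 4.2.8]{ErgodicTheoryZimmer}): a nonzero $\rho_n$-invariant vector yields, after taking a superlevel set, a $\Gamma$-invariant set $A \subset X \times \Delta/H_n$ of finite positive measure. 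Its fibres $A_x$ are then finite nonempty subsets of $\Delta/H_n$ for a.e.\ $x$ by ergodicity, and they are equivariant: $A_{\gamma x} = \alpha(\gamma, x) A_x$.

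The first step is to produce invariant vectors. Fix a finite generating set $S$ of $\Gamma$ (property (T) groups are finitely generated) and a Kazhdan pair $(S, \varepsilon)$. Consider the unit vector $\xi_n = \mathbf 1_{X \times \{H_n\}}$. We have
$$\|\rho_n(s)\xi_n - \xi_n\|^2 = 2\,\nu\{x : \alpha(s^{-1}, x) \notin H_n\},$$
up to the obvious inverse convention. For each $s \in S$ this tends to $0$ as $n \to \infty$, since $\alpha(s^{-1}, x)$ is some element of $\Delta$, which lies in $H_n$ for $n$ large, and we may apply dominated convergence. Choosing $n$ so that all of these are below $\varepsilon^2$, property (T) gives a nonzero invariant vector for $\rho_n$.

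The second step, which I expect to be the main point requiring care, is converting the finite equivariant map $x \mapsto A_x$ into a genuine reduction of $\alpha$ to a finitely generated subgroup. By ergodicity, $|A_x| = N$ is constant. Measurably choose a point $\delta(x)H_n \in A_x$, picking a measurable lift $\phi(x) = \delta(x) \in \Delta$; this uses countability of $\Delta$. Set
$$\beta(\gamma, x) = \phi(\gamma x)^{-1}\alpha(\gamma, x)\phi(x).$$
Then $\beta(\gamma, x)$ maps the finite set $\phi(x)^{-1}A_x$, which contains $H_n$, onto $\phi(\gamma x)^{-1}A_{\gamma x}$, which also contains $H_n$. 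However, these translated fibres vary with $x$, so the values need not lie in one subgroup.

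To fix this, I would use that there are only countably many finite subsets $F \subset \Delta/H_n$ containing the base coset. Pick one, $F_0$, such that $\{x : \phi(x)^{-1}A_x = F_0\}$ has positive measure; call this set $B$. For $x, \gamma x \in B$, the value $\beta(\gamma, x)$ lies in the setwise stabiliser $\mathrm{Stab}(F_0)$. This stabiliser contains the finite-index subgroup $\bigcap_{f \in F_0} \mathrm{Stab}(f)$ of conjugates of $H_n$, so it is commensurable with a conjugate of $H_n$. Since $F_0$ is finite, each pointwise stabiliser $\delta H_n\delta^{-1}$ is finitely generated, and therefore their intersection has finite index in a finitely generated group and is itself finitely generated (Schreier). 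Hence $\mathrm{Stab}(F_0)$ is finitely generated.

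Finally, a cocycle whose restriction to returns to the positive-measure set $B$ lands in a subgroup $H$ is cohomologous to an $H$-valued cocycle. This is the standard induced-cocycle argument: for $x \notin B$, choose measurably $\gamma_x$ with $\gamma_x x \in B$, which exists by ergodicity, and conjugate by $\beta(\gamma_x, x)$. This gives the result with $H = \mathrm{Stab}(F_0)$.
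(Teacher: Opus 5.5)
Your strategy is the same as the paper's: induce the quasi-regular representation on $\ell^2(\Delta/H)$ along $\alpha$, note that the indicator of the base coset is almost invariant, apply property (T), read off an $\alpha$-equivariant family of finite subsets of $\Delta/H$, and reduce $\alpha$ to a setwise stabiliser. Exhausting $\Delta$ by the $H_n$ (rather than picking a finite $F$ that captures most values of $\alpha(q,\cdot)$), and using a superlevel set (rather than the argmax set), are cosmetic differences.

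The step that fails is your claim that $\mathrm{Stab}(F_0)$ is finitely generated and commensurable with a conjugate of $H_n$. The pointwise stabiliser $\bigcap_{f\in F_0}\mathrm{Stab}(f)$ has finite index in $\mathrm{Stab}(F_0)$. It need not have finite index in any single $\delta H_n\delta^{-1}$: $H\cap\delta H\delta^{-1}$ has finite index in $H$ exactly when the $H$-orbit of $\delta H$ in $\Delta/H$ is finite. So the Schreier argument is circular. The claim can in fact be false, because an intersection of two conjugates of a finitely generated subgroup need not be finitely generated. For example, take $\Delta=F_2\times F_2$ with $F_2=\langle a,b\rangle$, let $H=\langle (a,a),(b,b),(a^2,1)\rangle$ (the pairs $(u,v)$ with $u\equiv v$ modulo $\langle\langle a^2\rangle\rangle$), and let $\delta=(b,1)$. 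The projection of $H\cap\delta H\delta^{-1}$ to the first factor is $\langle b\rangle\langle\langle a^2\rangle\rangle$. This subgroup has infinite index in $F_2$ and contains a non-trivial normal subgroup, so it is not finitely generated. Hence the stabiliser of $\{H,\delta H\}$, which contains $H\cap\delta H\delta^{-1}$ with index at most $2$, is not finitely generated either.

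The repair is immediate, and it is how the paper concludes: you only need $\mathrm{Stab}(F_0)$ to be \emph{contained} in a finitely generated subgroup. Write $F_0=\{H_n,\delta_2H_n,\ldots,\delta_NH_n\}$. Any $g\in\mathrm{Stab}(F_0)$ sends $H_n$ to some $\delta_jH_n$, so $g\in\delta_jH_n$. Hence $\mathrm{Stab}(F_0)\subseteq\langle H_n,\delta_2,\ldots,\delta_N\rangle=:H$, which is finitely generated because $H_n$ is. The paper likewise takes the group generated by the coset representatives together with $\langle F\rangle$.

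With this $H$ the rest of your argument goes through: the computation $\|\rho_n(s)\xi_n-\xi_n\|^2=2\nu\{x:\alpha(s^{-1},x)\notin H_n\}$, finiteness and non-emptiness of the fibres $A_x$, and the reduction from returns to $B$. You could also skip $B$ entirely. The map $x\mapsto\Delta\cdot A_x$ is $\Gamma$-invariant and takes countably many values, hence is a.e.\ constant. So $A_x=\phi(x)F_0$ for a measurable $\phi$, and $\beta$ then lands in $\mathrm{Stab}(F_0)$ for a.e.\ $x$.
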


\begin{proof}
    The strategy parallels the standard proof that property (T) groups are finitely generated. Let $(Q, \kappa)$ be a Kazhdan pair for $\Gamma$: $Q$ is a finite generating set and $\kappa > 0$ is such that any unitary representation of $\Gamma$ admitting a $(Q, \kappa)$-almost invariant unit vector has a non-zero invariant vector.

    Set $\epsilon := \kappa^2 / 4$. By countability of $\Delta$ there is a finite set $F \subset \Delta$ and a Borel set $B \subset X$ with $\nu(B) \geq 1 - \epsilon / (2|Q|)$ such that $\alpha(q, x) \in F$ for every $q \in Q$ and every $x \in B$.

    Let $\pi_F$ denote the quasi-regular representation of $\Delta$ on $\ell^2(\Delta / \langle F \rangle)$, and let $V$ be the unitary representation of $\Gamma$ on
    $$ V := L^2\bigl(X, \ell^2(\Delta / \langle F \rangle)\bigr)$$
    given by $(\gamma \cdot f)(x) := \pi_F(\alpha(\gamma, x)) \, f(\gamma^{-1} x)$ — the $L^2$-induction of $\pi_F$ along $\alpha$. Let $f_0 \in V$ be the constant function $f_0(x) = \mathbf{1}_{\langle F \rangle}$, the indicator of the trivial coset; it is a unit vector.

    For $q \in Q$ and $x \in B$, $\alpha(q, x) \in F \subset \langle F \rangle$, so
    $$(q \cdot f_0)(x) = \pi_F(\alpha(q, x)) \mathbf{1}_{\langle F \rangle} = \mathbf{1}_{\alpha(q, x) \langle F \rangle} = \mathbf{1}_{\langle F \rangle} = f_0(x).$$
    Hence
    $$\|q \cdot f_0 - f_0\|_V^2 = \int_{X \setminus q^{-1}(B) \cap B} \|(q \cdot f_0)(x) - f_0(x)\|_{\ell^2}^2 \, d\nu(x) \leq 2 \nu(X \setminus B) \leq \epsilon / |Q|.$$
    Summing over $q \in Q$ gives $\sum_{q \in Q} \|q \cdot f_0 - f_0\|^2 \leq \epsilon < \kappa^2$, so $f_0$ is $(Q, \kappa)$-almost invariant. By property (T) there is a non-zero $\Gamma$-invariant vector $f \in V$, which we may take to be a unit vector.

    For $\nu$-a.e.\ $x$, $f(x) \in \ell^2(\Delta / \langle F \rangle)$ is a non-zero $\ell^2$-function on a discrete set, hence attains its supremum on a non-empty finite set
    $$ E(x) := \{\delta \langle F \rangle \in \Delta / \langle F \rangle : f(x)(\delta \langle F \rangle) = \max f(x)\}.$$
    The $\Gamma$-invariance of $f$, $\pi_F(\alpha(\gamma, x)) f(\gamma^{-1} x) = f(x)$, gives $f(\gamma^{-1} x)(\delta \langle F \rangle) = f(x)(\alpha(\gamma, x) \delta \langle F \rangle)$, so
    $$ E(\gamma^{-1} x) = \alpha(\gamma, x)^{-1} \cdot E(x).$$
    Thus $E$ is an $\alpha$-equivariant Borel map from $X$ to the countable set of finite subsets of $\Delta / \langle F \rangle$. By the cocycle reduction lemma \cite[Lem. 4.2.8]{ErgodicTheoryZimmer}, $\alpha$ is cohomologous to a cocycle taking values in $\mathrm{Stab}_\Delta(E_0)$ for some finite $E_0 = \{\delta_1 \langle F \rangle, \ldots, \delta_r \langle F \rangle\} \subset \Delta / \langle F \rangle$. This stabiliser is generated by $\delta_1, \ldots, \delta_r$ together with $\langle F \rangle$, hence is finitely generated.
\end{proof}

When working with matrix groups we may combine this with further properties of cocycles of higher rank lattices — in particular for $\SL_d(\mathbb{Z})$ with $d \geq 3$ — to obtain strong information; we refer to \cite{ErgodicTheoryZimmer} for background on cocycles of these groups.

\begin{prop}\label{Prop: SLd cocycles stabilize a lattice}
    Let $d \geq 3$, $m \geq 1$, and let $(X, \nu)$ be an ergodic $\SL_d(\mathbb{Z})$-system. Let $\Delta \subset \SL_m(\mathbb{Q})$ be a subgroup and $\alpha: \SL_d(\mathbb{Z}) \times X \to \Delta$ a cocycle. Then $\alpha$ is cohomologous in $\Delta$ to a cocycle taking values in a subgroup of $\Delta$ commensurable with $\Delta \cap \SL_m(\mathbb{Z})$.
\end{prop}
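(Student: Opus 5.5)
Since $\SL_d(\mathbb Z)$ has property (T) for $d\geq 3$ and $\Delta\subset\SL_m(\mathbb Q)$ is countable, I will first apply Proposition \ref{Proposition: cocycles of prop (T) groups are finitely generated}. This replaces $\alpha$, up to cohomology in $\Delta$, by a cocycle with values in a finitely generated subgroup $H\subset\Delta$. The matrix entries of a finite generating set of $H$ involve only finitely many primes. So $H\subset \SL_m(\mathbb Z[1/N])$ for some integer $N$, and the problem becomes $S$-arithmetic with $S=\{\infty\}\cup\{p\mid N\}$. The group $\SL_m(\mathbb Z[1/N])$ embeds diagonally as a lattice in $G_S:=\prod_{p\mid N}\SL_m(\mathbb Q_p)\times\SL_m(\mathbb R)$. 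The subgroup $\SL_m(\mathbb Z)$ is exactly the intersection of $\SL_m(\mathbb Z[1/N])$ with the compact open subgroup $K_f:=\prod_{p\mid N}\SL_m(\mathbb Z_p)$ of the finite-adelic part.

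The key step is to show that, for each $p\mid N$, the projected cocycle $\alpha_p:\SL_d(\mathbb Z)\times X\to\SL_m(\mathbb Q_p)$ is cohomologous to a cocycle with values in a compact subgroup. For this I will invoke Zimmer's cocycle superrigidity \cite{ErgodicTheoryZimmer} for $\SL_d(\mathbb Z)$, $d\geq3$, with $p$-adic targets. The statement I want is: a cocycle of a higher-rank lattice into a $p$-adic algebraic group is cohomologous to one taking values in a compact group. The reason is that there is no non-precompact continuous homomorphism from $\SL_d(\mathbb R)$ into a totally disconnected group, and Margulis–Zimmer superrigidity rules out the lattice-homomorphism part as well.

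First I will reduce to an ergodic action of the ambient Lie group by inducing the action to $\SL_d(\mathbb R)\times_{\SL_d(\mathbb Z)}X$. In that setting, superrigidity says $\alpha_p$ is cohomologous to $\rho(\gamma)c(\gamma,x)$, where $c$ takes compact values and commutes with $\rho$. Here $\rho$ is a homomorphism from $\SL_d(\mathbb Z)$ into the algebraic hull, and by Margulis superrigidity any such homomorphism to a $p$-adic group has bounded image. Hence $\alpha_p$ is cohomologous in $\SL_m(\mathbb Q_p)$ to a cocycle valued in a compact subgroup. I expect this to be the main obstacle, mainly in citing the correct version of superrigidity for $p$-adic targets, including the case where the algebraic hull is not semisimple. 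An alternative route is to work directly with the algebraic hull and use that its unipotent radical and torus parts give amenable quotients, which $\SL_d(\mathbb Z)$ cocycles cannot use non-trivially by property (T).

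Next I need the coboundaries to be taken inside $H$, or at least inside $\Delta$, rather than in $\SL_m(\mathbb Q_p)$. I will apply Lemma \ref{Lemma: Compactness criterion} and Corollary \ref{Cor: coho to compact indep of ambient} to the finite-part cocycle $\alpha_f:=(\alpha_p)_{p\mid N}$ with values in $\prod_{p\mid N}\SL_m(\mathbb Q_p)$. These give a set $B$ of positive measure such that, for $x$ in a positive-measure set, the sets $E(x,B)=\{\alpha(\gamma,x):\gamma x\in B\}$ are relatively compact at every finite place. So they lie in a compact set $C_x\subset\prod_p\SL_m(\mathbb Q_p)$, and $C_x$ is covered by finitely many translates $g_iK_f$. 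Therefore $E(x,B)$ lies in finitely many cosets of $\Lambda:=\SL_m(\mathbb Z[1/N])\cap K_f=\SL_m(\mathbb Z)$ inside $\Delta$.

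I will then run the argument from the end of the proof of Proposition \ref{Proposition: cocycles of prop (T) groups are finitely generated} with $\Delta$ acting on the countable set $\Delta/(\Delta\cap\SL_m(\mathbb Z))$. The finite set of cosets met by $E(x,B)^{-1}$ defines a Borel, $\alpha$-equivariant map from $X$ to the finite subsets of this coset space, with the transformation rule given by Lemma \ref{Lemma: Compactness criterion}. The cocycle reduction lemma \cite[Lem. 4.2.8]{ErgodicTheoryZimmer} then shows that $\alpha$ is cohomologous in $\Delta$ to a cocycle with values in the stabiliser of a finite set of cosets $\{\delta_i(\Delta\cap\SL_m(\mathbb Z))\}$. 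This stabiliser contains the finite-index subgroup $\bigcap_i\delta_i(\Delta\cap\SL_m(\mathbb Z))\delta_i^{-1}$ and permutes finitely many cosets. Finally, I will check that $\bigcap_i\delta_i(\Delta\cap\SL_m(\mathbb Z))\delta_i^{-1}$ has finite index in both the stabiliser and $\Delta\cap\SL_m(\mathbb Z)$. This holds because each conjugate $\delta\SL_m(\mathbb Z)\delta^{-1}$ with $\delta$ rational is commensurable with $\SL_m(\mathbb Z)$, so the stabiliser is commensurable with $\Delta\cap\SL_m(\mathbb Z)$, as required.
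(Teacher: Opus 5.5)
Your proof is correct. Its first half coincides with the paper's: property (T) reduces to a finitely generated subgroup of $\SL_m(\mathbb{Z}[1/N])$, and superrigidity makes the finite-place cocycle cohomologous to a compact-valued one. The paper also handles that superrigidity step by citation (Zimmer superrigidity, plus property (T) for the amenable part), so the obstacle you flag is treated at the same level there.

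The genuine difference is how you bring the transfer function back into $\Delta$. The paper proceeds as follows:
\begin{enumerate}
\item It takes the closure $L$ of the image of $\Delta$ in $\prod_{p\mid N}\SL_m(\mathbb{Q}_p)$.
\item It uses Corollary \ref{Cor: coho to compact indep of ambient} to get an $L$-valued transfer function.
\item It places the compact group inside an open compact subgroup $O\subset L$.
\item It uses density of $\Delta$ in $L$ to replace the transfer function by a $\Delta$-valued one modulo $O$, landing directly in $\Delta\cap\pi^{-1}(O)$.
\end{enumerate}
You never approximate a transfer function. You only use that the sets $E(x,B)$ are relatively compact at the finite places. You convert this, via $\SL_m(\mathbb{Z})=\SL_m(\mathbb{Z}[1/N])\cap K_f$, into membership in finitely many cosets of $\Delta\cap\SL_m(\mathbb{Z})$, and then rerun the finite-subsets-of-cosets reduction from Proposition \ref{Proposition: cocycles of prop (T) groups are finitely generated}.

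Your route does not need the closure $L$, the passage to cohomology inside $L$, or the density/selection step. It also makes the argument uniform with the finite-generation proposition. The price is that the output is the stabiliser of a finite set of cosets rather than a group of the form $\Delta\cap\pi^{-1}(O)$. You therefore need the extra commensurability check via $\delta\SL_m(\mathbb{Z})\delta^{-1}$, which you carry out correctly.

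Two small points to tidy:
\begin{itemize}
\item Lemma \ref{Lemma: Compactness criterion} is stated in the direction ``relatively compact $\Rightarrow$ cohomologous to compact'', whereas you use the converse. The converse is immediate: take $B$ to be a positive-measure set on which the transfer function stays in a fixed compact set. Then $E(x,B)$ is relatively compact for a.e.\ $x$.
\item $\alpha$ acts on $E(x,B)^{-1}$ by left multiplication, so you want $E(x,B)^{-1}$ to meet finitely many \emph{left} cosets. Cover $C_x$ by right translates $K_f g_i$ rather than $g_iK_f$, or note that a right coset of a commensurated subgroup is a finite union of left cosets.
\end{itemize}
The real place in $G_S$ plays no role and can be dropped. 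Passing from the individual primes to the product $\prod_{p\mid N}\SL_m(\mathbb{Q}_p)$ is harmless, since cohomology works componentwise.
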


\begin{proof}
    Since $d \geq 3$, $\SL_d(\mathbb{Z})$ has property (T). By Proposition \ref{Proposition: cocycles of prop (T) groups are finitely generated}, after replacing $\alpha$ by a cohomologous cocycle we may assume $\Delta$ is finitely generated; in particular $\Delta \subset \SL_m(\mathbb{Z}[\tfrac{1}{M}])$ for some integer $M \geq 1$. Let $p_1, \ldots, p_r$ be the prime divisors of $M$, and consider the diagonal embedding
    $$ \pi: \SL_m(\mathbb{Z}[\tfrac{1}{M}]) \hookrightarrow \prod_{i = 1}^{r} \SL_m(\mathbb{Q}_{p_i}).$$
    Let $L$ be the closure of $\pi(\Delta)$ in $\prod_i \SL_m(\mathbb{Q}_{p_i})$.

    By Zimmer's cocycle superrigidity theorem \cite[Thm. 1.5]{FisherMargulis} and property (T) \cite[Thm. 9.1.1]{ErgodicTheoryZimmer}, $\pi \circ \alpha$ is cohomologous in $\prod_i \SL_m(\mathbb{Q}_{p_i})$ to a cocycle taking values in a compact subgroup. By Corollary \ref{Cor: coho to compact indep of ambient}, $\pi \circ \alpha$ is cohomologous in $L$ to a cocycle taking values in a compact subgroup $K' \subset L$. Since $L$ is locally compact totally disconnected, $K'$ is contained in some open compact subgroup $O \subset L$.

    Let $\phi: X \to L$ realise this cohomology, so that $\phi(\gamma \cdot x)^{-1} \, \pi(\alpha(\gamma, x)) \, \phi(x) \in O$ for $\nu$-a.e.\ $x$ and all $\gamma$. Since $\pi(\Delta)$ is dense in $L$ and $O$ is open, by measurable selection \cite[Thm. A.5]{ErgodicTheoryZimmer} there is a measurable $\psi: X \to \Delta$ with $\phi(x) \in \pi(\psi(x)) \, O$ for $\nu$-a.e.\ $x$. Then
    $$ \pi\bigl(\psi(\gamma \cdot x)^{-1} \, \alpha(\gamma, x) \, \psi(x)\bigr) \in O$$
    for $\nu$-a.e.\ $x$ and all $\gamma$, i.e.\ $\psi(\gamma \cdot x)^{-1} \alpha(\gamma, x) \psi(x) \in \Delta \cap \pi^{-1}(O)$. The subgroup $\Delta \cap \pi^{-1}(O)$ is commensurable with $\Delta \cap \pi^{-1}\bigl(\prod_i \SL_m(\mathbb{Z}_{p_i})\bigr) = \Delta \cap \SL_m(\mathbb{Z})$, since both $O$ and $\prod_i \SL_m(\mathbb{Z}_{p_i})$ are open compact subgroups of $\prod_i \SL_m(\mathbb{Q}_{p_i})$ and any two such are commensurable.
\end{proof}

Combining the previous results, we obtain a structural statement for cocycles into automorphism groups of nilpotent Lie groups commensurating a lattice.

\begin{prop}\label{Prop: cocycles into Aut(N) reduce to lattice stabiliser, baby case}
    Let $N$ be a nilpotent Lie group of the form $\mathbb{Z}^d \ltimes N_0$ with $d \geq 3$ and $N_0$ connected simply connected, and let $\Gamma \subset N$ be a lattice. Let $\Delta \subset \Aut(N)$ be a subgroup such that $\delta(\Gamma)$ is commensurable with $\Gamma$ for every $\delta \in \Delta$. Let $(X, \nu)$ be an ergodic $\SL_d(\mathbb{Z})$-system and $\alpha: \SL_d(\mathbb{Z}) \times X \to \Delta$ a cocycle.

    Then there exist a finite group extension $\pi: \tilde{X} \to X$ and a lattice $\Gamma' \subset N$ such that the lifted cocycle $\tilde{\alpha} := \alpha \circ (\id \times \pi)$ is cohomologous in $\Delta$ to a cocycle taking values in $\{\delta \in \Delta : \delta(\Gamma') = \Gamma'\}$.
\end{prop}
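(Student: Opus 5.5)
Reduce to Proposition \ref{Prop: SLd cocycles stabilize a lattice} by linearising the commensurator of $\Gamma$ and realising it inside some $\SL_M(\mathbb{Q})$.

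First, I will use Mal'cev theory to attach a rational structure. Passing to finite index, $\Gamma\cap N_0$ is a lattice in the simply connected nilpotent group $N_0$. Its Mal'cev completion gives a $\mathbb{Q}$-structure $\mathfrak{n}_{0,\mathbb{Q}}$ on the Lie algebra of $N_0$. The abelianised $\mathbb{Z}^d$-direction, or more precisely the image of $\Gamma$ in $N/N_0\cong\mathbb{Z}^d$, gives a rational structure there as well.

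Any $\delta\in\Delta$ maps $\Gamma$ to a group commensurable with $\Gamma$. Hence $\delta$ preserves $N_0$, being its identity component up to the discrete part, and maps $\Gamma\cap N_0$ to a commensurable lattice. Its differential therefore preserves $\mathfrak{n}_{0,\mathbb{Q}}$. Combining this with the induced automorphism of the finitely generated group $\Gamma$ up to commensurability, I get a homomorphism
\[
\rho\colon\Delta\to \mathrm{GL}(V_\mathbb{Q}),
\]
where $V_\mathbb{Q}=\mathfrak{n}_{0,\mathbb{Q}}\oplus(\Gamma/(\Gamma\cap N_0)\otimes\mathbb{Q})$. The kernel of $\rho$ needs care: $\Gamma$ together with its Mal'cev data determines $\delta$ up to a finite ambiguity, since the automorphism is determined by its action on a lattice, which is Zariski dense. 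I will arrange $\rho$ to be injective by also encoding the action on generators of $\Gamma$. A concrete way is to embed $N$ rationally into a unipotent-by-$\mathbb{Z}^d$ algebraic group, so that $\Aut$ acts linearly.

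Set $\Lambda=\rho(\Gamma)$-stabilising lattice in $V_\mathbb{Q}$, namely $\Gamma_0=\mathbb{Z}$-span of $\log(\Gamma\cap N_0)$ plus the discrete part. Up to passing to $\SL$ by taking a determinant twist or an index-two subgroup, handled through a finite extension, Proposition \ref{Prop: SLd cocycles stabilize a lattice} makes $\rho\circ\alpha$ cohomologous in $\rho(\Delta)$ to a cocycle with values in a subgroup $\Delta_1$ commensurable with $\rho(\Delta)\cap\SL_M(\mathbb{Z})$, the stabiliser of $\Gamma_0$.

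Next I pass to a finite extension. The group $\Delta_1$ permutes the finitely many lattices in the commensurability orbit $\{\delta\Gamma_0:\delta\in\Delta_1\}$, because $\Delta_1\cap\mathrm{Stab}(\Gamma_0)$ has finite index. Let $\tilde X=X\times_{\alpha}(\Delta_1/\Delta_1\cap \mathrm{Stab}(\Gamma_0))$, and take an ergodic component if needed; this is a finite group extension after Galois closure. On $\tilde X$, the lifted cocycle becomes cohomologous to one valued in $\mathrm{Stab}(\Gamma_0)$, using the tautological section as in Lemma \ref{Lemma: From compact to trivial}(2) with a finite group in place of $K$. Pulling back via $\rho^{-1}$ gives a cocycle into the stabiliser of the corresponding subgroup $\Gamma'\subset N$. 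I will take $\Gamma'$ to be the subgroup generated by $\exp$ of the lattice; a commensurable finite-index modification makes it a genuine group, using that $\exp$ of a suitable sub-lattice generates a lattice with bounded index.

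I expect the main obstacle to be the linearisation step. The issues are: making $\rho$ faithful with image in some $\SL_M(\mathbb{Q})$; getting lattice stabilisers in $V$ to correspond to group stabilisers in $N$, despite the gap between $\exp$ of a lattice and a lattice subgroup; and handling the discrete $\mathbb{Z}^d$ factor, which is not connected. My first attempt will be to let $\Delta$ act on the finite-dimensional $\mathbb{Q}$-vector space $\mathbb{Q}[\Gamma]$-filtration quotients $I^k/I^{k+1}$, or simply on the rational Mal'cev completion of $\Gamma$. The group stabilising a lattice there then contains $\mathrm{Stab}(\Gamma')$ with finite index for a characteristic finite-index $\Gamma'$.
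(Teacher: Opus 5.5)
Your plan follows the paper's proof. Realise $\Delta$ inside the automorphisms of the rational Mal'cev completion of $\Gamma$; the paper goes straight there, and you are right that the graded sum $V_{\mathbb Q}$ is not faithful. Then reduce to $\SL$, apply Proposition~\ref{Prop: SLd cocycles stabilize a lattice}, and use the finite orbit of lattices under the resulting commensurable subgroup, together with a finite extension, to land in a single lattice stabiliser.

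The differences are cosmetic. The paper removes the determinant via property (T) applied to the abelian quotient $\mathrm{GL}_m(\mathbb{Q})/\SL_m(\mathbb{Q})$. Your twist also works if it means $g\mapsto\mathrm{diag}(g,\det g^{-1})$, but an index-two subgroup alone would not, since determinants need not be $\pm1$. The paper also takes the orbit of $\Gamma$ itself, which keeps $\Gamma'$ a lattice of $N$ without any $\exp$-of-a-lattice modification.
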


\begin{proof}
    Since $\Gamma$ is a lattice in the nilpotent Lie group $N$, it is finitely generated torsion-free nilpotent, with Malcev completion $\hat{\Gamma}$ a connected simply connected nilpotent Lie group of dimension $m := \dim N_0 + d$. Since each $\delta \in \Delta$ commensurates $\Gamma$, we can identify $\Delta$ with a subgroup of $\Aut(\hat\Gamma) \subset \mathrm{GL}_m(\mathbb{Q})$ by standard results of Malcev relating lattices and rational structure in nilpotent Lie groups \cite[Thm. 2.12]{RaghunathanDiscrete}. By Property (T) and \cite[Thm. 9.1.1]{ErgodicTheoryZimmer}, the composition of $\alpha$ with the natural projection from $\mathrm{GL}_m(\mathbb{Q})$ to the abelian group $\mathrm{GL}_m(\mathbb{Q})/\SL_m(\mathbb{Q})$ is cohomologous in the image of $\Delta$ to the trivial cocycle. Thus, we may assume $\Delta \subset \SL_m(\mathbb{Q})$.

    By Proposition \ref{Prop: SLd cocycles stabilize a lattice}, $\alpha$ is cohomologous in $\Delta$ to a cocycle taking values in a subgroup $\Delta' \subset \Delta$ commensurable with $\Delta \cap \SL_m(\mathbb{Z})$. Under Malcev's lattice--rational-structure correspondence \cite[Thm. 2.12]{RaghunathanDiscrete}, $\Delta'$ permutes finitely many lattices $\Gamma_1, \ldots, \Gamma_r$ commensurable with $\Gamma$. Thus we may assume $\alpha$ takes values in the setwise stabiliser of $\{\Gamma_1, \ldots, \Gamma_r\}$ in $\Delta$, fitting in a finite extension
    $$ 1 \to \mathrm{Stab}_\Delta(\Gamma_1) \cap \cdots \cap \mathrm{Stab}_\Delta(\Gamma_r) \to \mathrm{Stab}_\Delta(\{\Gamma_1, \ldots, \Gamma_r\}) \to S \to 1$$
    where $S$ is a finite group of permutations.

    Composing $\alpha$ with the quotient to $S$ gives a cocycle into a finite group; applying Lemma \ref{Lemma: From compact to trivial} to this finite quotient cocycle produces a finite extension $\tilde{X} \to X$ on which the lifted cocycle becomes a coboundary into $S$. After adjusting by the corresponding transfer function, the lifted cocycle takes values in the kernel $\bigcap_i \mathrm{Stab}_\Delta(\Gamma_i)$, which is contained in $\mathrm{Stab}_\Delta(\Gamma_1)$. Setting $\Gamma' := \Gamma_1$ concludes.
\end{proof}

In fact, we will need a slightly more involved result, which nevertheless holds thanks to the same proof:

\begin{prop}\label{Prop: cocycles into Aut(N) reduce to lattice stabiliser}
    Let $N$ be a nilpotent Lie group of the form $\mathbb{Z}^d \ltimes N_0$ with $d \geq 3$ and $N_0$ connected simply connected, and let $\Gamma \subset N$ be a lattice and $H \subset \Gamma$ be a subgroup. Let $\Delta \subset \Aut(N)$ be a subgroup such that $\delta(H) = H$ and $\delta(\Gamma)$ is commensurable with $\Gamma$ for every $\delta \in \Delta$. Let $(X, \nu)$ be an ergodic $\SL_d(\mathbb{Z})$-system and $\alpha: \SL_d(\mathbb{Z}) \times X \to \Delta$ be a map satisfying the cocycle equation modulo inner automorphisms of $H$ in $\Delta$.

    Then there exist a finite group extension $\pi: \tilde{X} \to X$, a lattice $\Gamma' \subset N$ containing $H$ and a measurable map $\phi: \tilde{X} \rightarrow \Delta$ such that 
    $$ \phi(\gamma\cdot\tilde{x}) \alpha(\gamma,  \pi(\tilde{x})) \phi(\tilde{x})$$ belongs to $\{\delta \in \Delta : \delta(\Gamma') = \Gamma'\}$ for all $\gamma \in \SL_d(\mathbb{Z})$  and almost all $\tilde{x} \in \tilde{X}$.
\end{prop}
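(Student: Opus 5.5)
The strategy is to reduce to the baby case, Proposition \ref{Prop: cocycles into Aut(N) reduce to lattice stabiliser, baby case}, by passing to the quotient by the inner automorphisms of $H$. Let $\mathrm{Inn}_\Delta(H) := \{\delta \in \Delta : \delta = \mathrm{Ad}(h)|_N \text{ for some } h\in H\}$. This group is normal in $\Delta$, since each $\delta \in \Delta$ preserves $H$ and so $\delta\,\mathrm{Ad}(h)\,\delta^{-1} = \mathrm{Ad}(\delta(h))$. Set $\bar\Delta := \Delta/\mathrm{Inn}_\Delta(H)$. By hypothesis the composite $\bar\alpha: \SL_d(\mathbb{Z})\times X \to \bar\Delta$ is a genuine cocycle. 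Since $H\subset \Gamma$, every element of $\mathrm{Inn}_\Delta(H)$ preserves $\Gamma$. So the property ``$\delta(\Gamma)$ is commensurable with $\Gamma$'' descends to $\bar\Delta$. So does the property ``$\delta$ preserves a lattice $\Gamma'\supseteq H$ which is normalised by $H$'', because $\mathrm{Ad}(h)(\Gamma') = \Gamma'$ for $h \in H\subset \Gamma'$.

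I would then rerun the proof of the baby case on $\bar\alpha$, as follows.
\begin{enumerate}
\item Embed $\Delta$ into $\Aut(\hat\Gamma)\subset \mathrm{GL}_m(\mathbb{Q})$ via Malcev, and remove the determinant part using property (T).
\item Apply Proposition \ref{Proposition: cocycles of prop (T) groups are finitely generated} to the countable group $\bar\Delta$, to reduce $\bar\alpha$ to a finitely generated subgroup.
\item Run the $p$-adic superrigidity argument of Proposition \ref{Prop: SLd cocycles stabilize a lattice} to land in a subgroup commensurable with the stabiliser of $\Gamma$.
\end{enumerate}
Step (3) requires viewing the image of $\bar\Delta$ inside $\prod_i \Aut(\hat\Gamma)(\mathbb{Q}_{p_i})/\overline{\mathrm{Inn}(H)}$. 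The closure of the image of $\mathrm{Inn}(H)$ is a closed normal subgroup, and the quotient is still a totally disconnected locally compact group. The compact open subgroup argument therefore goes through. Alternatively, one can note that any compact open $O$ meets the unipotent group $\overline{\mathrm{Inn}(H)}$ in a compact open subgroup.

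This yields a measurable $\bar\psi: X\to\bar\Delta$ conjugating $\bar\alpha$ into a subgroup that permutes finitely many lattices $\Gamma_1,\dots,\Gamma_r$ commensurable with $\Gamma$. I would choose these $\Gamma_i$ to contain $H$: replace each $\Gamma_i$ by the group generated by $\Gamma_i\cap\Gamma$ and $H$. Because $H$ is $\Delta$-invariant and normalises $\Gamma$, this group is again a lattice commensurable with $\Gamma$. The permutation is well defined on $\bar\Delta$, since inner automorphisms by $H$ fix each $\Gamma_i$. Passing to the finite permutation quotient $S$ and applying Lemma \ref{Lemma: From compact to trivial} gives a finite extension $\tilde X\to X$ on which the $S$-valued cocycle is a coboundary. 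Correcting by its transfer function, which is a map $\tilde X\to S$ lifted through a set-theoretic section into $\bar\Delta$, lands $\bar\alpha$ in the stabiliser of $\Gamma' := \Gamma_1$.

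Finally, I would lift everything from $\bar\Delta$ to $\Delta$. Choose a Borel section $\bar\Delta\to\Delta$; this is possible since the groups are countable. Let $\phi$ be the lift of the total transfer function, inverted as appropriate to match the displayed expression. Then $\phi(\gamma\tilde x)\,\alpha(\gamma,\pi\tilde x)\,\phi(\tilde x)$ projects into $\mathrm{Stab}_{\bar\Delta}(\Gamma')$. Hence it lies in $\mathrm{Stab}_\Delta(\Gamma')\cdot \mathrm{Inn}_\Delta(H) = \mathrm{Stab}_\Delta(\Gamma')$, using $H\subset\Gamma'$.

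I expect step (3), the superrigidity input, to be the main obstacle. Zimmer's theorem as quoted concerns cocycles into algebraic groups, whereas $\bar\Delta$ is a quotient by a possibly non-algebraic, non-closed subgroup. The cleanest fix I would try first is to replace $\mathrm{Inn}(H)$ by the $\mathbb{Q}$-points of its Zariski closure $U$. This is a unipotent normal $\mathbb{Q}$-subgroup, so $\Aut(\hat\Gamma)/U$ is again a linear algebraic $\mathbb{Q}$-group and Proposition \ref{Prop: SLd cocycles stabilize a lattice} applies there. One then checks that the lattices $\Gamma_i$ can be taken $U(\mathbb{Z})$-stable up to commensurability.
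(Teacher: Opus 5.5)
There is a genuine gap, and it sits where you suspected: step (3). The fix you propose does not close it.

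Passing to $\bar\Delta=\Delta/\mathrm{Inn}_\Delta(H)$ is a sound device. It makes $\bar\alpha$ a genuine cocycle, and $\bar\Delta$ acts well on lattices containing $H$. The trouble is that superrigidity needs a linear algebraic target. Your replacement, the quotient of the Zariski closure of $\Delta$ by $U$, controls $\bar\alpha$ only modulo $U(\mathbb{Q})$. (Note that $U$ is normalised by $\Delta$ but is not normal in $\Aut(\hat\Gamma)$.) So it places $\alpha$ in $\bigl(\text{lattice stabiliser}\bigr)\cdot\bigl(U(\mathbb{Q})\cap\Delta\bigr)$.

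Elements of $U(\mathbb{Q})$ need not stabilise any common lattice. Take the Heisenberg group with $[x,y]=z$ and $H=\langle x\rangle$. Then $\mathrm{Ad}(x^{1/n})$ is allowed in $\Delta$: it fixes $H$ and commensurates $\Gamma$. It sends $y\mapsto yz^{1/n}$. A lattice containing $H$ contains some $y^bx^az^c$ with $b\neq 0$. If it were stable under $\mathrm{Ad}(x^{1/n})$ for infinitely many $n$, it would contain $z^{b/n}$ for all those $n$, contradicting discreteness.

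So the residual part with values in $(U(\mathbb{Q})\cap\Delta)/\mathrm{Inn}_\Delta(H)$ is the actual content of the proposition. Checking that the $\Gamma_i$ can be taken $U(\mathbb{Z})$-stable does not touch it.

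What is missing is a separate argument for this unipotent direction. One way to set it up is the following.
\begin{enumerate}
\item Let $F$ be the preimage in $\Delta$ of the finitely generated group from step (2). It is finitely generated, because $\mathrm{Inn}_\Delta(H)$ is.
\item Let $L$ be the closure of $F$ in $\prod_i\Aut(\hat N)(\mathbb{Q}_{p_i})$. Then $C:=\overline{\mathrm{Inn}_\Delta(H)}$ is compact, since it lies in the stabiliser of the $p$-adic closure of $\Gamma$. It is also normal in $L$.
\item $L/C$ is an extension of a group mapping into the $\prod_i\mathbb{Q}_{p_i}$-points of the algebraic group $\mathrm{Norm}(U)/U$, where superrigidity applies. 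The kernel is the amenable group $\bigl(L\cap U(\prod_i\mathbb{Q}_{p_i})\bigr)/C$.
\item Property (T) together with Zimmer's theorem for amenable targets handles the kernel. So $\bar\alpha$ is cohomologous into a compact subgroup of $L/C$.
\item The preimage of that compact subgroup in $L$ is compact, so the compact-open selection in Proposition \ref{Prop: SLd cocycles stabilize a lattice} then applies.
\end{enumerate}
In the Heisenberg example this reduces to the fact that a $\mathbb{Q}/\mathbb{Z}$-valued cocycle of $\SL_d(\mathbb{Z})$ is cohomologous into a finite subgroup.

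The paper reaches a linear target differently. It does not form an abstract quotient group. Instead it uses the representation $p:\Delta\to\mathrm{GL}(\mathfrak{n}_{\mathbb{Q}}/\mathfrak{h}_{\mathbb{Q}})$, well defined because $\delta(H)=H$, and applies Proposition \ref{Prop: SLd cocycles stabilize a lattice} to $p\circ\alpha$ directly. The kernel of $p$ — elements acting on $\mathfrak{h}$ through $\Aut(H)$, plus a unipotent block — is what must then be absorbed when ``unpacking as in the baby case''.

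Two smaller points.
\begin{itemize}
\item You do not need to modify the $\Gamma_i$ at all. The orbit of $\Gamma$ already consists of lattices $\delta(\Gamma)\supseteq\delta(H)=H$.
\item Your replacement $\langle\Gamma_i\cap\Gamma,H\rangle$ is not permuted by $\delta$ when $\delta(\Gamma)\neq\Gamma$.
\end{itemize}
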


\begin{proof}
    The proof of Proposition \ref{Prop: cocycles into Aut(N) reduce to lattice stabiliser} is the same as the proof of Proposition \ref{Prop: cocycles into Aut(N) reduce to lattice stabiliser, baby case} once we make the following observation. 

    Let $\hat{N}$ be the Malcev completion of $N$. Since $\hat{N}$ is connected simply connected the exponential and logarithm maps are well-defined and homeomorphisms. Let $\mathfrak{n}_{\mathbb{Q}}$ denote the $\mathbb{Q}$-Lie algebra spanned by the logarithms of elements of $\Gamma$ and let $\mathfrak{h}_{\mathbb{Q}}$ be the $\mathbb{Q}$-Lie algebra spanned by logarithms of elements of $H$. 

    Since elements of $\Delta$ stabilise $H$, there is an obvious group homomorphism $p: \Delta \rightarrow \mathrm{GL}(\mathfrak{n}_{\mathbb{Q}} / \mathfrak{h}_{\mathbb{Q}})$. By assumption, $p \circ \alpha$ is a cocycle. And by Proposition \ref{Prop: SLd cocycles stabilize a lattice}, we can assume that $p \circ \alpha$ takes values in a subgroup of the form $\bigoplus \mathbb{Z} e_i $ for some basis $(e_i)$ of $\mathfrak{n}_{\mathbb{Q}} / \mathfrak{h}_{\mathbb{Q}}$. 

    Unpacking this as in the proof of Proposition \ref{Prop: cocycles into Aut(N) reduce to lattice stabiliser, baby case}, we get the desired result. 
\end{proof}

\subsection{Nilpotent Lie groups}

A key step in this paper reduces the study of point processes to so-called \emph{systems of order $K$}; we recall their definition in \S \ref{Subsection: Reduction to Host--Kra}. Before turning to that, we collect the facts we will need on nilpotent Lie groups, which underlie the Host--Kra--Ziegler theory of such systems.

For a Lie group $G$, the \emph{lower central series} is the descending sequence of closed subgroups defined by
\begin{enumerate}
    \item $C_0(G) := G$;
    \item $C_{i+1}(G) := \overline{[G, C_i(G)]}$, the closure of the subgroup generated by all commutators $[g, c] := g c g^{-1} c^{-1}$ with $g \in G$ and $c \in C_i(G)$.
\end{enumerate}
We say $G$ is \emph{nilpotent} if $C_m(G) = \{e\}$ for some $m \geq 0$, and the smallest such $m$ is the \emph{nilpotency class} of $G$. Nilpotent Lie groups are denoted $N$ throughout.

Connected nilpotent Lie groups share many properties with abelian Lie groups. In particular, every connected nilpotent Lie group $N$ contains a unique maximal compact subgroup, which is central; $N$ is simply connected if and only if it contains no non-trivial connected compact subgroup. For connected simply connected $N$, the exponential map $\exp: \mathrm{Lie}(N) \to N$ is a diffeomorphism, and the multiplication is described in terms of the Lie bracket via the Baker--Campbell--Hausdorff formula; we refer to \cite[Ch. II]{RaghunathanDiscrete} for background.

A \emph{lattice} in a nilpotent Lie group $N$ is a discrete cocompact subgroup $\Gamma \subset N$. For connected simply connected $N$, lattices in $N$ correspond to certain rational structures on $\mathrm{Lie}(N)$; we refer to \cite[Thm. 2.12]{RaghunathanDiscrete} for the precise correspondence and to \cite[Ch. II]{RaghunathanDiscrete} for Malcev's theorems. In particular, every finitely generated torsion-free nilpotent group $\Gamma$ is isomorphic to a lattice in a unique connected simply connected nilpotent Lie group, the \emph{Malcev completion} of $\Gamma$. We have already encountered this correspondence in Proposition \ref{Prop: cocycles into Aut(N) reduce to lattice stabiliser} to relate cocycles into $\Aut(N)$ to cocycles into arithmetic subgroups of $\mathrm{GL}_m(\mathbb{Q})$.

\subsection{Host--Kra--Ziegler theory for non-ergodic actions}\label{Subsection: Host--Kra--Ziegler theory for non-ergodic actions}

Rather than nilpotent Lie groups themselves, it is their homogeneous spaces that will be instrumental in this work.

A \emph{nilmanifold} is the quotient $N/\Gamma$ of a nilpotent Lie group $N$ by a lattice $\Gamma \subset N$. Every nilmanifold carries a unique $N$-invariant probability measure $\mu_{N/\Gamma}$, the \emph{Haar measure}. If $\Delta \subset \Gamma$ is normal in $N$, we may identify $N/\Gamma$ with $(N/\Delta)/(\Gamma/\Delta)$. We say that the pair $(N, \Gamma)$ is \emph{reduced} if no nontrivial subgroup of $\Gamma$ is normal in $N$; equivalently, $\Gamma \cap Z(N) = \{e\}$, where $Z(N)$ denotes the centre of $N$.

For $d \geq 0$ and a fixed nilmanifold $N/\Gamma$, any group homomorphism $\rho: \mathbb{Z}^d \to N$ defines a $\mathbb{Z}^d$-action on $N/\Gamma$ preserving $\mu_{N/\Gamma}$. A \emph{nilsystem} is the data of a nilmanifold $N/\Gamma$ equipped with its Haar measure and a $\mathbb{Z}^d$-action of this form. We will always assume that $N$ is generated by its identity component $N^\circ$ together with $\rho(\mathbb{Z}^d)$; this is no loss of generality, since the orbit of $\overline{e}$ under $N^\circ \cdot \rho(\mathbb{Z}^d)$ is a clopen subnilmanifold of $N/\Gamma$ on which the $\mathbb{Z}^d$-action is supported when ergodic.

The space $\Hom(\mathbb{Z}^d, N)$ identifies with the closed subset
$$ \{(u_1, \ldots, u_d) \in N^d : u_1, \ldots, u_d \text{ commute}\} \subset N^d $$
and through this inherits a natural locally compact topology. We can thus define measurably varying actions on $N/\Gamma$ by considering any standard Borel space $\Omega$ together with any Borel map $\Omega \to \Hom(\mathbb{Z}^d, N)$.

Any automorphism $\phi$ of $N$ preserving $\Gamma$ descends to a measure-preserving automorphism of $N/\Gamma$, and conjugates translation actions to translation actions. We write $\Aut(N/\Gamma)$ for the group of automorphisms of $N/\Gamma$ obtained in this way. A second source of transformations is given by \emph{base change}: for fixed $m \in N$, the map $N/\Gamma \to N/\Gamma$ sending $n\Gamma$ to $mn\Gamma$. When $(N, \Gamma)$ is reduced this group is isomorphic to $N$. Together, these transformations form the \emph{affine automorphisms} of $N/\Gamma$, isomorphic to the quotient of $\Aut(N/\Gamma) \ltimes N$ by the normal subgroup $\{(\gamma^{-1} \cdot \gamma
, \gamma) \vert \gamma \in \Gamma\}$. In fact, every isomorphism between nilsystems is affine; see \cite[Ch. 10-11]{HostKra} for this and related results.

We can now define bundles of such systems.

\begin{defn}[\cite{JamneshanMachado}]
We call a \emph{bundle of nilsystems (of dimension $\leq k$) over $\Omega$} any $\mathbb{Z}^d$-system constructed from the following data:
\begin{enumerate}
    \item a standard Borel probability space $(\Omega, \nu)$;
    \item a countable Borel partition $(\Omega_n)_{n \geq 0}$ of $\Omega$ and a countable family of nilmanifolds $(N_n/\Gamma_n)_{n \geq 0}$ of dimension $\leq k$;
    \item Borel maps $\rho_n: \Omega_n \to \Hom(\mathbb{Z}^d, N_n)$ such that for $\nu$-a.e. $\omega \in \Omega_n$, the action of $\rho_n(\omega)$ on $N_n/\Gamma_n$ is ergodic.
\end{enumerate}
The space
$$ X := \bigsqcup_{n \geq 0} \Omega_n \times N_n/\Gamma_n,$$
equipped with the measure
$$\sum_{n \geq 0} \nu_{|\Omega_n} \otimes \mu_{N_n/\Gamma_n}$$
and the $\mathbb{Z}^d$-action defined for $(\omega, \overline{n}) \in \Omega_n \times (N_n/\Gamma_n)$ and $t \in \mathbb{Z}^d$ by
$$ t \cdot (\omega, \overline{n}) = (\omega, \rho_n(\omega)(t) \cdot \overline{n}),$$
is denoted
$$ \bigsqcup_{n \geq 0} \Omega_n \rtimes_{\rho_n} N_n/\Gamma_n.$$
The natural map $\bigsqcup_{n \geq 0} \Omega_n \rtimes_{\rho_n} N_n/\Gamma_n \to \Omega$ is called the \emph{bundle projection}.
\end{defn}

The Host--Kra theorem \cite{HostKra} is a cornerstone of the theory of structure factors: for an ergodic system $X$, each characteristic factor $Z^k(X)$ is an inverse limit of nilsystems of step at most $k$. In \cite{JamneshanMachado}, the author and Jamneshan extended this result to non-ergodic systems, using bundles of nilsystems and the notion of an extension over a fixed base space.

\begin{defn}[\cite{JamneshanMachado}]
    Let $(X, \nu)$ be a $\mathbb{Z}^d$-system and let $\pi_X: (X, \nu) \to (\Omega, \mu)$ be a $\mathbb{Z}^d$-invariant factor map. Let $(Y, \mu_Y)$ be a bundle of $\mathbb{Z}^d$-nilsystems over $\Omega$ with factor map $\pi_Y: (Y, \mu_Y) \to (\Omega, \mu)$. A measure-preserving Borel map $\phi: (X, \nu) \to (Y, \mu_Y)$ is a \emph{nilfactor over $\Omega$} if it is $\mathbb{Z}^d$-equivariant and satisfies $\pi_Y \circ \phi = \pi_X$.
\end{defn}

This setup extends the notion of nilsystems to the non-ergodic setting, in which one recovers an analogue of the Host--Kra theorem.

\begin{thm}[Jamneshan--M., \cite{JamneshanMachado}]\label{Thm: Host-Kra non-ergodic}
    Let $X$ be a (not necessarily ergodic) $\mathbb{Z}^d$-system, and let $Z^k(X)$ denote its $k$-th characteristic factor. Then $Z^k(X)$ is isomorphic to an inverse limit of nilfactors of step $k$ over the invariant factor $Z^0(X)$.
\end{thm}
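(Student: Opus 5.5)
The plan is to reduce to the ergodic case of Host--Kra--Ziegler by disintegrating along the invariant factor, and then to glue the fibrewise structure measurably. Let $\pi_0: X \to \Omega := Z^0(X)$ be the factor onto the $\mathbb{Z}^d$-invariant $\sigma$-algebra, and write $\mu = \int_\Omega \mu_\omega \, d\nu(\omega)$ for the ergodic decomposition. A standard fact from the non-ergodic theory is that the characteristic factors commute with this disintegration: for $\nu$-a.e.\ $\omega$, the fibre of $Z^k(X)$ over $\omega$ is $Z^k(X_\omega, \mu_\omega)$. We will establish this by writing the Host--Kra cube measures $\mu^{[k]}$ relative to $\Omega$, so that $\mu^{[k]} = \int \mu_\omega^{[k]}\,d\nu$ and the seminorms satisfy $\|f\|_{U^k(X)}^{2^k} = \int \|f\|_{U^k(\mu_\omega)}^{2^k}\,d\nu$. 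It follows that $f$ is orthogonal to $Z^k(X)$ exactly when a.e.\ fibre is orthogonal to $Z^k(\mu_\omega)$.

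Next we apply the ergodic Host--Kra theorem fibrewise. For a.e.\ $\omega$, $Z^k(\mu_\omega)$ is an inverse limit of ergodic $k$-step nilsystems $N/\Gamma$ carrying a translation action $\rho_\omega: \mathbb{Z}^d \to N$. There are only countably many isomorphism classes of pairs $(N,\Gamma)$, since $\Gamma$ is finitely generated nilpotent and determines $N$ through its Malcev completion together with finite data. So we fix a countable dense family $(f_j)$ in $L^2(X)$. For each $\varepsilon$-level and each finite set of $f_j$'s, the set of $\omega$ for which these $f_j$ are $\varepsilon$-approximated in $L^2(\mu_\omega)$ by functions pulled back from a nilsystem of a given type $(N_n,\Gamma_n)$ is Borel. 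This lets us form a countable Borel partition $(\Omega_n)$.

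On each $\Omega_n$ we must choose the factor map $X_\omega \to N_n/\Gamma_n$ and the homomorphism $\rho_n(\omega)$ measurably in $\omega$. We will use a measurable selection theorem for this: the set of admissible pairs of (homomorphism, factor map) is a Borel subset of a standard Borel space, and it is nonempty fibrewise. The choices are only unique up to affine automorphisms of $N_n/\Gamma_n$, since isomorphisms of nilsystems are affine. Because of this ambiguity, we take the inverse limit along a nested sequence of such selections. At each stage the map is chosen so that it factors through the previous one, which is possible because the nilfactors of an ergodic system form a directed family. The result is a tower of nilfactors over $\Omega$ whose joint $\sigma$-algebra is $Z^k(X)$.

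The main obstacle is this measurable gluing step. Specifically, we must choose the fibrewise nilfactor maps Borel in $\omega$ and compatibly along the inverse system, despite the non-uniqueness coming from affine automorphisms and base changes. The first thing we will try is to quotient by the Polish group of affine automorphisms. In that quotient the classifying data, namely the conjugacy class of $\rho_\omega$, lands in a standard Borel space, because the affine automorphism group of a compact nilmanifold acts with closed orbits on ergodic homomorphisms. We then lift via a Borel section.
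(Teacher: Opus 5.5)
The paper does not prove this theorem; it imports it from \cite{JamneshanMachado}, whose method is measurable selection. Your main line follows that route and works: disintegrate $\mu$ over $\Omega=Z^0(X)$, identify the fibres of $Z^k(X)$ with $Z^k(\mu_\omega)$ by disintegrating cube measures and seminorms, apply the ergodic structure theorem on each fibre, sort fibres into countably many nilmanifold types, and select nested fibrewise nilfactor maps measurably.

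Two small corrections. First, the functions to approximate are $\mathbb{E}[f_j\mid Z^k(X)]$, not $f_j$. A general $f_j$ is not $Z^k$-measurable, so it cannot be approximated by functions pulled back from nilfactors. Second, sets defined by ``there exists a factor map such that\dots'' are analytic rather than Borel. This is harmless: analytic sets are universally measurable, and you only need everything modulo $\nu$-null sets.

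The real problem is your last paragraph, which is where you place the main obstacle. It is false that the affine automorphism group acts with closed orbits on ergodic homomorphisms, and the quotient is not standard Borel. A counterexample already occurs for $d=1$, $k=1$, with fibres that are ergodic rotations of $\mathbb{T}^2$. Two such rotations $R_\alpha,R_\beta$ are isomorphic iff $\beta\in\mathrm{GL}_2(\mathbb{Z})\alpha$. Now $\mathrm{GL}_2(\mathbb{Z})$ acts on the conull invariant set of totally irrational $\alpha$ with countable dense orbits, and ergodically for Haar measure. So this orbit equivalence relation is not smooth: it has no Borel (or even Haar-measurable) transversal, and there is no section to lift through.

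You never need canonical representatives, though; the affine ambiguity does not obstruct selection. Parametrise a fibrewise nilfactor onto $N_n/\Gamma_n$ by a pair $(\rho,\lambda)$ as follows:
\begin{itemize}
\item $\rho\in\Hom(\mathbb{Z}^d,N_n)$;
\item $\lambda$ is a joining of $\mu_\omega$ with Haar measure, invariant under the diagonal action ($t\cdot x$ on $X$, translation by $\rho(t)$ on $N_n/\Gamma_n$), and supported on a graph, which is a Borel condition on $\lambda$;
\item the factor selected at the previous stage is measurable with respect to the new one;
\item on the fibre, it approximates $\mathbb{E}[f_1\mid Z^k(X)],\dots,\mathbb{E}[f_i\mid Z^k(X)]$ within $1/i$.
\end{itemize}
The sections of this set are nonempty for a.e.\ $\omega$, because nilfactors of an ergodic system are directed: the factor generated by two of them is again a nilsystem. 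Jankov--von Neumann then gives a measurable selector, exactly as in Lemma~\ref{Lemma: Measurable selection subnilmanifolds}. Finally, $\int\lambda_\omega\,d\nu(\omega)$ is itself a graph joining, which produces the global $\mathbb{Z}^d$-equivariant map onto the bundle. So drop the quotient-by-affine-automorphisms step; the selection you already set up in your third paragraph does the gluing.
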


\subsection{Automorphisms and joinings of nilbundles}

Leibman \cite{Leibman}, see also \cite[Ch. 11, Prop. 15]{HostKra} showed that any factor or joining of nilsystems is again a nilsystem. We extend this to nilbundles over a common base $\Omega$. Throughout this subsection we fix two $\mathbb{Z}^d$-bundles of nilsystems
$$ X = \bigsqcup_{k \geq 0} \Omega_{X,k} \rtimes_{\rho_{X,k}} N_{X,k}/\Gamma_{X,k}, \qquad Y = \bigsqcup_{l \geq 0} \Omega_{Y,l} \rtimes_{\rho_{Y,l}} N_{Y,l}/\Gamma_{Y,l},$$
both over the standard Borel base $(\Omega, \nu)$, with $\mu_X$ and $\mu_Y$ the corresponding measures.

By a \emph{subnilmanifold} of a nilmanifold $N/\Gamma$ we mean a closed subset of the form $u H \cdot \overline{e}$, where $u \in N$ and $H$ is a closed subgroup of $N$ for which $H \cap \Gamma $ is a lattice in $H$; we call such an $H$ a \emph{rational subgroup} of $N$. Each subnilmanifold carries a unique $uHu^{-1}$-invariant probability measure, which we refer to as its Haar (probability) measure.

\begin{lem}\label{Lemma: Joining nilbundles}
    Let $(Z, \eta)$ be a joining of $X$ and $Y$ over $\Omega$: that is, a $\mathbb{Z}^d$-system equipped with measure-preserving $\mathbb{Z}^d$-equivariant maps $\psi: Z \to X$ and $\phi: Z \to Y$ satisfying $\pi_X \circ \psi = \pi_Y \circ \phi$, so that the diagram
    \begin{center}
     \begin{tikzcd}
Z \arrow[r, "\phi"] \arrow[d, "\psi"] \arrow[dr, "\pi_Z"]
& Y  \arrow[d, "\pi_Y"] \\
X \arrow[r,  "\pi_X"]
&  \Omega
\end{tikzcd}
     \end{center}
     commutes, where $\pi_Z := \pi_X \circ \psi = \pi_Y \circ \phi$. Then $Z$ admits the structure of a bundle of nilsystems over $\Omega$ for which $\psi$ and $\phi$ are nilfactors over $\Omega$.
\end{lem}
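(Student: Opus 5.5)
The plan is to realise $Z$ concretely inside the fibre product $X\times_\Omega Y$ and then identify each fibre with a subnilmanifold of a product nilmanifold. Throughout, \emph{joining} is read in the usual sense. That is, we may replace $Z$ by the pushforward $\eta':=(\psi,\phi)_*\eta$, which is a $\mathbb Z^d$-invariant measure on
$$X\times_\Omega Y=\bigsqcup_{k,l}(\Omega_{X,k}\cap\Omega_{Y,l})\times N_{X,k}/\Gamma_{X,k}\times N_{Y,l}/\Gamma_{Y,l}$$
with marginals $\mu_X$ and $\mu_Y$.

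We also assume the joining is \emph{relatively ergodic} over $\Omega$: a.e.\ fibre measure $\eta'_\omega$ is ergodic. This hypothesis is genuinely needed, not a convenience. For two copies of the same circle rotation, the product measure is an invariant joining that is not ergodic, and its invariant functions $x-y$ cannot be seen by $\Omega$. So without relative ergodicity the statement fails as worded over $\Omega$. It holds in the situations where the lemma is applied, namely graph-type joinings $(\pi,\pi\circ\gamma)$.

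\textbf{Step 1: refine the partition and disintegrate.} Refine the partition to $\Omega_{k,l}:=\Omega_{X,k}\cap\Omega_{Y,l}$. Since the projection $Z\to\Omega$ is $\mathbb Z^d$-invariant, disintegrate $\eta'=\int\eta'_\omega\,d\nu(\omega)$. For $\omega\in\Omega_{k,l}$, each $\eta'_\omega$ is an invariant joining of the ergodic nilsystems $(N_{X,k}/\Gamma_{X,k},\rho_{X,k}(\omega))$ and $(N_{Y,l}/\Gamma_{Y,l},\rho_{Y,l}(\omega))$. Equivalently, it is an ergodic invariant measure on the nilsystem
$$(N/\Gamma,\rho(\omega)),\qquad N:=N_{X,k}\times N_{Y,l},\quad \Gamma:=\Gamma_{X,k}\times\Gamma_{Y,l},\quad \rho:=\rho_{X,k}\times\rho_{Y,l}.$$

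\textbf{Step 2: identify each fibre.} By Leibman's theorem (see \cite[Ch.~11]{HostKra}), an ergodic invariant measure for a translation action on $N/\Gamma$ that projects onto Haar measure in each coordinate is the Haar measure on a subnilmanifold $u_\omega H_\omega\cdot\overline e$. Here $H_\omega$ is a rational subgroup with $\rho(\omega)(\mathbb Z^d)\subset u_\omega H_\omega u_\omega^{-1}$.

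\textbf{Step 3: countability and measurability.} This is the main technical obstacle. A rational subgroup $H$ is determined by the finitely generated subgroup $H\cap\Gamma$ of the countable group $\Gamma$, namely as the closure of its Malcev hull inside $N^\circ\cdot(H\cap\Gamma)$. Hence only countably many $H$ occur. For each such $H$, the set of $\omega$ with $H_\omega=H$ is Borel. To see this, note that $\omega\mapsto\eta'_\omega$ is Borel into the space of probability measures on $N/\Gamma$, and, for fixed $H$, the set of Haar measures of translates $uH\cdot\overline e$ is a Borel (indeed $\sigma$-compact) family parametrised by $N/\mathrm{Stab}$.

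Partition each $\Omega_{k,l}$ accordingly into countably many pieces $\Omega_{k,l,H}$. On each piece, choose $u_\omega$ by a Borel selection from the $\sigma$-compact fibres, using \cite[Thm.~A.5]{ErgodicTheoryZimmer}. Set $N'_H:=H$, $\Gamma'_H:=H\cap\Gamma$ and
$$\rho'(\omega):=u_\omega^{-1}\rho(\omega)u_\omega\in\Hom(\mathbb Z^d,H).$$
The map $\omega\mapsto\rho'(\omega)$ is Borel, and the action $\rho'(\omega)$ on $H/(H\cap\Gamma)$ is ergodic precisely because $\eta'_\omega$ is ergodic.

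\textbf{Step 4: assemble the nilbundle structure.} The fibrewise maps $h(H\cap\Gamma)\mapsto u_\omega h\cdot\overline e$ give an isomorphism over $\Omega$ between $\bigsqcup\Omega_{k,l,H}\rtimes_{\rho'}H/(H\cap\Gamma)$ and $(X\times_\Omega Y,\eta')$. Composing these with the coordinate projections $N\to N_{X,k}$ and $N\to N_{Y,l}$ shows that $\psi$ and $\phi$ act fibrewise by affine maps $H/(H\cap\Gamma)\to N_{X,k}/\Gamma_{X,k}$: a homomorphism composed with the translation by the $X$-component of $u_\omega$, and similarly for $Y$. These maps are $\mathbb Z^d$-equivariant and commute with the bundle projections to $\Omega$, so $\psi$ and $\phi$ are nilfactors over $\Omega$. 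Finally, the dimension bound holds with $\dim H\le\dim N_{X,k}+\dim N_{Y,l}$.
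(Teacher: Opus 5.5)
Your proposal is correct and follows the paper's route: disintegrate over $\Omega$, identify each fibre measure as the Haar measure on a subnilmanifold of $N_{X,k}/\Gamma_{X,k}\times N_{Y,l}/\Gamma_{Y,l}$ via Leibman/Host--Kra, then assemble the bundle using countability of rational subgroups and a Borel selection. Your relative-ergodicity caveat is also justified: the paper's proof tacitly needs $\eta_\omega$ to be ergodic when it invokes Host--Kra's result on joinings of nilsystems, your product-of-rotations example shows the lemma fails as literally stated, and the hypothesis holds in the paper's applications, where the joinings are induced by a system over $\Omega=Z^0(X)$.

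There is one detail to tighten in Step 3. An arbitrary $u$ whose translate $u H\cdot\overline{e}$ carries $\eta'_\omega$ as its Haar measure need not satisfy $u^{-1}\rho(\omega)(\mathbb{Z}^d)u\subset H$; think of a one-point fibre represented with $H=\{e\}$. You should therefore impose this Borel condition in the selection, or replace $H$ by $u^{-1}\mathrm{Stab}_N(\eta'_\omega)u$. Similarly, $H$ is not literally determined by $H\cap\Gamma$ when $H$ is disconnected (e.g.\ $\tfrac12\mathbb{Z}\subset\mathbb{R}$ with $\Gamma=\mathbb{Z}$), although the countability you actually need remains true.
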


The proof relies on the following measurable selection result.

\begin{lem}\label{Lemma: Measurable selection subnilmanifolds}
    Let $N/\Gamma$ be a nilmanifold and let $S$ be a standard Borel space. Let $\omega \mapsto \eta_\omega$ be a Borel map from $S$ into the space of Haar measures on subnilmanifolds of $N/\Gamma$, equipped with the weak-$*$ topology. Then there exist Borel maps $u: S \to N$ and $H: S \to \mathcal{H}$, where $\mathcal{H}$ denotes the (countable) set of rational subgroups of $N$, such that for every $\omega \in S$, $u(\omega)^* \eta_\omega$ is the Haar measure on $H(\omega) \cdot \overline{e}$.
\end{lem}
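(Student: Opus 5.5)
The plan has two parts: first select the subgroup measurably, then select the translation $u$.

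For the first part, use that $\mathcal{H}$ is countable. This holds because a rational subgroup $H$ is determined by the lattice $H\cap\Gamma$ together with the connected component data. Concretely, $H^\circ$ is the Malcev closure of $H^\circ\cap\Gamma$, and $H$ is generated by $H^\circ$ and finitely many elements of $\Gamma$ modulo $H^\circ$; so $H$ is determined by a finitely generated subgroup of the countable group $\Gamma$ (up to the finitely many components, which we handle the same way). For each $H\in\mathcal{H}$, let
$$S_H := \{\omega\in S : \eta_\omega \text{ is a translate of } \mu_{H\cdot\overline{e}}\}.$$
We then show that each $S_H$ is Borel. This is the image of the continuous map $N\times S\to \mathcal{P}(N/\Gamma)\times S$ given by $(u,\omega)\mapsto(u_*\mu_{H\overline e},\omega)$, intersected with the graph of $\omega\mapsto\eta_\omega$. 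Since the translates $u_*\mu_{H\overline e}$ form the orbit of a $\sigma$-compact group action, the set of such measures is $\sigma$-compact, hence Borel. Therefore
$$S_H=\{\omega : \eta_\omega\in N_*\mu_{H\overline e}\}$$
is the preimage of a Borel set under a Borel map. By hypothesis the $S_H$ cover $S$. Fix an enumeration $(H_j)$ of $\mathcal{H}$ and set $H(\omega):=H_j$ for the least $j$ with $\omega\in S_{H_j}$. This gives a Borel map $S\to\mathcal{H}$.

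For the second part, on each $S_{H_j}$ consider the closed, hence Borel, relation
$$R_j=\{(\omega,u)\in S_{H_j}\times N : u^{-1}_*\eta_\omega=\mu_{H_j\overline e}\}.$$
Its sections are cosets $u_0\,\mathrm{Stab}(\mu_{H_j\overline e})$ of a closed subgroup. These are closed and nonempty, and they are $\sigma$-compact because $N$ is. A Borel relation with $\sigma$-compact sections admits a Borel uniformization by the Arsenin–Kunugui theorem. Alternatively, one can argue directly: the map $N\to N/\mathrm{Stab}$, $u\mapsto u_*\mu$, is a continuous injection from the quotient onto the orbit, so its inverse is Borel by Lusin–Souslin, and composing with a Borel section of $N\to N/\mathrm{Stab}$ yields the uniformization. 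This produces $u(\omega)$ with $u(\omega)^{-1}_*\eta_\omega=\mu_{H(\omega)\overline e}$, and after inverting the convention this matches the statement.

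I expect the main obstacle to be the descriptive-set-theoretic bookkeeping. One must check that the orbit map is a homeomorphism onto its image, or at least Borel-injective, so that Lusin–Souslin applies; this holds because $N$ is locally compact second countable and the stabiliser is closed. One must also verify countability of $\mathcal{H}$ carefully in the disconnected case, where $N$ is generated by $N^\circ$ and $\rho(\mathbb{Z}^d)$.
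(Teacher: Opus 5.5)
Your proof is correct, but it is organised differently from the paper's. The paper makes one selection on the target space. It observes that $(u,n)\mapsto u^*\mu_n$ is a continuous surjection from $N\times\mathbb{N}$ onto the weak-$*$ compact space $\mathcal{M}_{\mathrm{Haar}}$, picks a right inverse by Jankov--von Neumann, and composes with $\omega\mapsto\eta_\omega$. You instead work directly over $S$, in two steps:
\begin{enumerate}
\item You choose the discrete datum $H(\omega)$ by a least-index rule. This relies on showing that each orbit $N_*\mu_{H\overline{e}}$ is $\sigma$-compact, so each $S_H$ is Borel.
\item You choose the translation by uniformising a Borel relation whose sections are cosets of a closed stabiliser, via Arsenin--Kunugui or via Lusin--Souslin plus a Borel section of $N\to N/\mathrm{Stab}$.
\end{enumerate}

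The paper's route is shorter and gives a selector on $\mathcal{M}_{\mathrm{Haar}}$ that does not depend on $S$. However, Jankov--von Neumann on its own only yields $\sigma(\Sigma^1_1)$-measurable (universally measurable) selectors. As written, it therefore gives Borel maps only after discarding a null set, which is harmless for how the lemma is used.

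Your route brings in the extra input that fixes this: $\sigma$-compactness of $N$. That gives genuinely Borel maps defined at every $\omega$, as the statement literally asks. It also avoids needing compactness of $\mathcal{M}_{\mathrm{Haar}}$. The same upgrade could be applied to the paper's argument, since the fibres of $N\times\mathbb{N}\to\mathcal{M}_{\mathrm{Haar}}$ are $\sigma$-compact.

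Two small corrections:
\begin{enumerate}
\item $R_j$ is Borel because $(\omega,u)\mapsto u^{-1}_*\eta_\omega$ is Borel, not because it is ``closed''; $S$ carries no topology.
\item Your justification of countability of $\mathcal{H}$ is wrong in the disconnected case. An element of $H$ outside $H^\circ(H\cap\Gamma)$ need not lie in $\Gamma H^\circ$. For example, take $H=N$ when $\Gamma$ projects onto a proper finite-index subgroup of $\mathbb{Z}^d$, a case the paper explicitly allows. Since the statement takes countability of $\mathcal{H}$ as given, nothing in your proof depends on this.
\end{enumerate}
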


\begin{proof}
    Let $\mathcal{M}_{\mathrm{Haar}}$ denote the space of Haar measures on subnilmanifolds of $N/\Gamma$, equipped with the weak-$*$ topology; it is compact (see e.g. \cite{Leibman}). Enumerate $\mathcal{H} = (H_n)_{n \geq 0}$, and let $\mu_n$ be the Haar measure on $H_n \cdot \overline{e}$. Every $\eta \in \mathcal{M}_{\mathrm{Haar}}$ is of the form $u^* \mu_n$ for some $u \in N$ and some $n \geq 0$, so the continuous map
    $$ N \times \mathbb{N} \longrightarrow \mathcal{M}_{\mathrm{Haar}}, \qquad (u,n) \mapsto u^* \mu_n$$
    is surjective. By the Jankov--von Neumann measurable selection theorem \cite[Thm. 18.1]{KechrisDescriptive}, there exist Borel maps $\widetilde{u}: \mathcal{M}_{\mathrm{Haar}} \to N$ and $\widetilde{n}: \mathcal{M}_{\mathrm{Haar}} \to \mathbb{N}$ with $\widetilde{u}(\eta)^* \eta = \mu_{\widetilde{n}(\eta)}$. Setting $u(\omega) := \widetilde{u}(\eta_\omega)$ and $H(\omega) := H_{\widetilde{n}(\eta_\omega)}$ concludes.
\end{proof}

\begin{proof}[Proof of Lemma \ref{Lemma: Joining nilbundles}.]
    Disintegrate $\eta = \int_\Omega \eta_\omega \, d\nu(\omega)$ along $\pi_Z$. For $\omega \in \Omega_{X,k} \cap \Omega_{Y,l}$, the fibre $\pi_Z^{-1}(\omega)$ is naturally identified with a subset of $N_{X,k}/\Gamma_{X,k} \times N_{Y,l}/\Gamma_{Y,l}$, and under this identification $\eta_\omega$ is a self-joining of two ergodic nilsystems with Haar marginals. By \cite[Ch. 11, Prop. 15]{HostKra} , $\eta_\omega$ is the Haar measure on a subnilmanifold of $N_{X,k}/\Gamma_{X,k} \times N_{Y,l}/\Gamma_{Y,l}$, and $\omega \mapsto \eta_\omega$ is Borel by standard properties of disintegration.

    Applying Lemma \ref{Lemma: Measurable selection subnilmanifolds} on each piece $\Omega_{X,k} \cap \Omega_{Y,l}$ yields a countable Borel partition $(\Omega_{k,l,m})_{m \geq 0}$ of $\Omega_{X,k} \cap \Omega_{Y,l}$, together with rational subgroups $H_{k,l,m} \leq N_{X,k} \times N_{Y,l}$ and Borel maps $u_{k,l,m}: \Omega_{k,l,m} \to N_{X,k} \times N_{Y,l}$, such that for $\nu$-a.e. $\omega \in \Omega_{k,l,m}$, $\eta_\omega$ is the Haar measure on $u_{k,l,m}(\omega) \cdot H_{k,l,m} \cdot \overline{e}$. This exhibits $Z$ as a bundle of nilsystems over $\Omega$ with fibres modelled on $H_{k,l,m}/(H_{k,l,m} \cap (\Gamma_{X,k} \times \Gamma_{Y,l}))$, and the projections $\psi$, $\phi$ are nilfactors over $\Omega$ by construction.
\end{proof}

\begin{defn}
    Let $(Z, \mu_Z)$ be a joining of $X$ and $Y$ over $\Omega$, identified with a bundle of nilsystems over $\Omega$ via Lemma \ref{Lemma: Joining nilbundles}. We say that $Z$ is \emph{finite-to-finite} if both nilfactor maps $Z \to X$ and $Z \to Y$ are finite-to-one $\mu_Z$-almost everywhere.
\end{defn}

For factor maps we obtain a more precise description.

\begin{lem}\label{Lemma: Factors nilbundles}
    Suppose that $(N_{Y,l}, \Gamma_{Y,l})$ is reduced for every $l \geq 0$, and that $\pi: X \to Y$ is a nilfactor over $\Omega$. Then for all $k, l \geq 0$ there exist a Borel partition $(\Omega_{k,l,m})_{m \geq 0}$ of $\Omega_{X,k} \cap \Omega_{Y,l}$, surjective continuous group homomorphisms $\phi_{k,l,m}: N_{X,k} \to N_{Y,l}$ with $\phi_{k,l,m}(\Gamma_{X,k}) \subset \Gamma_{Y,l}$, and Borel maps $b_{k,l,m}: \Omega_{k,l,m} \to N_{Y,l}$, such that for $\nu$-a.e. $\omega \in \Omega_{k,l,m}$,
    $$ \pi(\omega, \overline{n}) = \bigl(\omega,\; b_{k,l,m}(\omega) \cdot \overline{\phi_{k,l,m}(n)}\bigr) \quad \text{for $\mu_{X,\omega}$-a.e. } \overline{n}.$$
\end{lem}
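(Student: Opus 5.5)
The plan is to reduce to the ergodic case fibrewise, using the classical description of factor maps between ergodic nilsystems, and then to glue the fibrewise descriptions by measurable selection, as in the proof of Lemma \ref{Lemma: Joining nilbundles}.

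\emph{Fibrewise structure.} Disintegrate along $\Omega$. For $\nu$-a.e.\ $\omega \in \Omega_{X,k}\cap\Omega_{Y,l}$, the map $\pi$ restricts to a measurable map
$$\pi_\omega: N_{X,k}/\Gamma_{X,k} \to N_{Y,l}/\Gamma_{Y,l},$$
defined a.e.\ with respect to the Haar measure. This map pushes Haar measure to Haar measure and intertwines the ergodic actions $\rho_{X,k}(\omega)$ and $\rho_{Y,l}(\omega)$. By the Host--Kra/Leibman description of factors of ergodic nilsystems \cite[Ch. 10-11]{HostKra}, \cite{Leibman}, such a factor map is a.e.\ equal to an affine map
$$\overline{n} \mapsto b\,\overline{\phi(n)},$$
where $\phi: N_{X,k}\to N_{Y,l}$ is a continuous homomorphism with $\phi(\Gamma_{X,k})\subset\Gamma_{Y,l}$. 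Surjectivity of $\phi$ follows because the image is a closed subgroup acting transitively on a set of full Haar measure. Reducedness of $(N_{Y,l},\Gamma_{Y,l})$ is what makes the affine maps form a group acting faithfully: without it, $b$ and $\phi$ are only determined up to $\Gamma_{Y,l}\cap Z(N_{Y,l})$, and the map need not be literally of this form.

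One concern is that the cited theorem is usually stated with connected $N$ or with $N$ generated by $N^\circ$ and the acting elements. Our standing convention that $N_{X,k}$ is generated by $N_{X,k}^\circ$ and $\rho(\mathbb{Z}^d)$ should cover this. Alternatively, one can first take the graph joining, which by Lemma \ref{Lemma: Joining nilbundles} (or \cite[Ch. 11, Prop. 15]{HostKra}) is the Haar measure on a subnilmanifold
$$uH\overline{e} \subset N_{X,k}/\Gamma_{X,k}\times N_{Y,l}/\Gamma_{Y,l}.$$
Since it is a graph, $H$ projects isomorphically onto its first coordinate modulo $H\cap(\{e\}\times\Gamma_{Y,l})$. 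Reducedness of the target forces that intersection to be trivial, because it is normal in $H$ and projects into a $\Gamma_{Y,l}$-subgroup normal in $N_{Y,l}$. This gives $H$ as the graph of a homomorphism $\phi$.

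\emph{Measurable gluing.} The set of continuous surjective homomorphisms $N_{X,k}\to N_{Y,l}$ mapping $\Gamma_{X,k}$ into $\Gamma_{Y,l}$ is countable: such a homomorphism is determined by the images of a finite generating set of $\Gamma_{X,k}$ together with its extension to the Malcev completion, and these images lie in the countable group $\Gamma_{Y,l}$. Enumerate them as $(\phi_m)$. The subgroups $H$ occurring in the joining route are likewise among the countably many rational subgroups.

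Set $\Omega_{k,l,m}$ to be the set of $\omega$ for which $\pi_\omega$ agrees a.e.\ with $\overline{n}\mapsto b\,\overline{\phi_m(n)}$ for some $b$, minus the $\Omega_{k,l,m'}$ with $m'<m$. Borelness follows from Lemma \ref{Lemma: Measurable selection subnilmanifolds} applied to the Borel map $\omega\mapsto\eta_\omega$ of graph measures, which also produces a Borel translation parameter $u(\omega)$. From $u(\omega)$, which is determined modulo $H$, one reads off a Borel $b_{k,l,m}(\omega)$, for instance as the second coordinate of a Borel selection of $u(\omega)H\cap(\{e\}\times N_{Y,l})$.

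\emph{Main obstacle.} The delicate step is the fibrewise identification: showing that the graph subgroup $H$ is a genuine graph of a homomorphism defined on all of $N_{X,k}$, not just on a finite-index or open subgroup. This is where reducedness and the generation convention are used. I would try first to handle the disconnected part by noting that $H$ must surject onto $N_{X,k}$, since its projection is a closed subgroup carrying Haar measure onto Haar measure, and then use reducedness to kill the kernel as described above.
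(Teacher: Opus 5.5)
Your proposal is correct, and its graph-joining version is exactly the paper's proof: apply Lemma~\ref{Lemma: Joining nilbundles} to the graph joining of $\pi$ to get $u(\omega)H\cdot\overline{e}$, deduce $p_X(H)=N_{X,k}$, use reducedness to kill $H\cap(\{e\}\times N_{Y,l})$, set $\phi=p_Y\circ p_X^{-1}$, and absorb the first coordinate of $u(\omega)$ into $b(\omega)$. With that route, your fibrewise citation of the ergodic classification and your enumeration of homomorphisms are not needed. One step needs tightening: transitivity of $\phi(N_{X,k})$ on $N_{Y,l}/\Gamma_{Y,l}$ only gives $\phi(N_{X,k})\Gamma_{Y,l}=N_{Y,l}$, hence $\phi(N_{X,k})\supseteq N_{Y,l}^\circ$. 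To get full surjectivity you need, as in the paper, the convention that $N_{Y,l}$ is generated by $N_{Y,l}^\circ$ and $\rho_{Y,l}(\omega)(\mathbb{Z}^d)$, together with the equivariance relating $\rho_{Y,l}(\omega)$ to $\phi\circ\rho_{X,k}(\omega)$.
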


\begin{proof}
    Apply Lemma \ref{Lemma: Joining nilbundles} to the graph joining of $\pi$. On each piece $\Omega_{X,k} \cap \Omega_{Y,l}$ we obtain a countable Borel partition $(\Omega_{k,l,m})_{m \geq 0}$ together with a rational subgroup $H_{k,l,m} \leq N_{X,k} \times N_{Y,l}$ and a Borel map $\omega \mapsto u_{k,l,m}(\omega) \in N_{X,k} \times N_{Y,l}$ such that, for $\nu$-a.e. $\omega \in \Omega_{k,l,m}$, the graph of $\pi$ on the fibre over $\omega$ is the image in $N_{X,k}/\Gamma_{X,k} \times N_{Y,l}/\Gamma_{Y,l}$ of $u_{k,l,m}(\omega) \cdot H_{k,l,m}$.

    Since $\pi$ is a factor map over $\Omega$, the projection of this graph onto $N_{X,k}/\Gamma_{X,k}$ is surjective, hence the projection $p_X: H_{k,l,m} \to N_{X,k}$ has image acting transitively on $N_{X,k}/\Gamma_{X,k}$, which forces $p_X(H_{k,l,m}) = N_{X,k}$. Moreover $\pi$ is single-valued, so $H_{k,l,m} \cap (\{e\} \times N_{Y,l})$ projects to a closed normal subgroup of $N_{Y,l}$ contained in $\Gamma_{Y,l}$; reducedness of $(N_{Y,l}, \Gamma_{Y,l})$ implies that this subgroup is trivial. Hence $p_X$ is a continuous group isomorphism of $H_{k,l,m}$ onto $N_{X,k}$, and the formula
    $$ \phi_{k,l,m} := p_Y \circ p_X^{-1} : N_{X,k} \longrightarrow N_{Y,l}$$
    defines a continuous group homomorphism whose graph is $H_{k,l,m}$.

    Surjectivity of $\phi_{k,l,m}$ follows from the same argument applied on the $Y$-side: the projection of the graph of $\pi$ onto $N_{Y,l}/\Gamma_{Y,l}$ has full image (as the pushforward of $\mu_X$ by $\pi$ is $\mu_Y$), so $\phi_{k,l,m}(N_{X,k}) \cdot \Gamma_{Y,l} = N_{Y,l}$. Since $\phi_{k,l,m}(N_{X,k})$ is a closed subgroup of $N_{Y,l}$ and $\Gamma_{Y,l}$ is discrete, $\phi_{k,l,m}(N_{X,k})$ has finite index in $N_{Y,l}$, hence contains the connected component of the identity; this implies surjectivity since $N_{Y,l}$ is generated by its connected component of the identity and  $\rho_{Y,l}(\omega)(\mathbb{Z}^d) = \phi_{k,l,m} \circ \rho_{X,k}(\omega)(\mathbb{Z}^d)$.

    Finally, writing $u_{k,l,m}(\omega) = (a(\omega), b_{k,l,m}(\omega))$ and using that $H_{k,l,m}$ is the graph of $\phi_{k,l,m}$, the relation $\pi(\omega, \overline{n}) = (\omega, b_{k,l,m}(\omega) \cdot \overline{\phi_{k,l,m}(n)})$ holds, after absorbing $a(\omega)$ into $b_{k,l,m}(\omega)$ via the identity $\phi_{k,l,m}(a(\omega)^{-1} n) = \phi_{k,l,m}(a(\omega))^{-1} \phi_{k,l,m}(n)$. The inclusion $\phi_{k,l,m}(\Gamma_{X,k}) \subset \Gamma_{Y,l}$ is automatic from $\pi$ being well-defined on $N_{X,k}/\Gamma_{X,k}$.
\end{proof}

\section{Reduction to Host--Kra factors}\label{Subsection: Reduction to Host--Kra}

The goal of this section is to prove the following result.

\begin{thm}\label{Theorem: Reduction to Host--Kra factors}
    Let $(X, \mu)$ be an $\ASL_d(\mathbb{Z})$-system. Let $k \geq 1$, let $v_1, \ldots, v_k \in \mathbb{Z}^d \setminus \{0\}$ be pairwise distinct, and let $f_1, \ldots, f_k \in L^\infty(X)$ be $\SL_d(\mathbb{Z})$-invariant. Set $K := 2^{d-1}k$. Then
    $$ \mathbb{E}\left[(v_1 \cdot f_1) \cdots (v_k \cdot f_k)\right] = \mathbb{E}\left[\bigl(v_1 \cdot \mathbb{E}[f_1 \mid Z^{K}(X)]\bigr) \cdots \bigl(v_k \cdot \mathbb{E}[f_k \mid Z^{K}(X)]\bigr)\right],$$
    where $Z^{K}(X)$ denotes the $K$-th Host--Kra factor of $X$.
\end{thm}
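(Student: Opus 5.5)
Since each $f_i$ is $\SL_d(\mathbb{Z})$-invariant, for every $\gamma\in\SL_d(\mathbb{Z})$ we have $v_i\cdot f_i = \gamma\cdot\bigl((\gamma^{-1}v_i)\cdot f_i\bigr)$. The conjugation relation $\gamma t\gamma^{-1}=\gamma(t)$ in $\ASL_d(\mathbb{Z})$ and invariance of $\mu$ then give
$$\mathbb{E}\bigl[(v_1\cdot f_1)\cdots(v_k\cdot f_k)\bigr]=\mathbb{E}\bigl[(\gamma v_1\cdot f_1)\cdots(\gamma v_k\cdot f_k)\bigr].$$
The plan is to average this identity over a well-chosen family of $\gamma$, so that the correlation becomes a multiple ergodic average of polynomial type for the $\mathbb{Z}^d$-action. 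We then invoke Host--Kra/Ziegler/Tao--Ziegler characteristic factor results, which hold in the non-ergodic setting (Theorem \ref{Thm: Host-Kra non-ergodic}, \cite{JamneshanMachado}), to replace each $f_i$ by $\mathbb{E}[f_i\mid Z^K(X)]$. Since $Z^K(X)$ is $\ASL_d(\mathbb{Z})$-invariant (it is canonically defined, and $\SL_d(\mathbb{Z})$ normalises $\mathbb{Z}^d$), the conditional expectations are again $\SL_d(\mathbb{Z})$-invariant. Running the same averaging backwards therefore recovers the right-hand side.

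\textbf{Choice of averaging family.} I would use unipotent elements. After applying a fixed $\gamma_0$ we may assume the $v_i$ have pairwise distinct first coordinates or, more generally, are in general position. Take the abelian group $U\cong\mathbb{Z}^{d-1}$ of elementary unipotents $u_n: e_1\mapsto e_1+\sum_{j\ge2}n_je_j$ fixing $e_j$ for $j\ge2$. Then $u_n v_i = v_i + (v_i)_1\,(0,n_2,\dots,n_d)$, which is linear in $n\in\mathbb{Z}^{d-1}$. Averaging over Følner boxes $n\in[N]^{d-1}$ gives
$$\mathbb{E}\Big[\prod_i v_i\cdot f_i\Big]=\lim_N \mathbb{E}_{n\in[N]^{d-1}}\int \prod_i T^{v_i}\,T^{(v_i)_1 n}f_i\,d\mu,$$
where $n$ ranges over a rank-$(d-1)$ subgroup of $\mathbb{Z}^d$. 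This is a linear multiple ergodic average of $\mathbb{Z}^{d-1}$-actions $T_i:=T^{(v_i)_1(\cdot)}$. When the first coordinates coincide, I would precompose with a generic $\gamma_0$, or use several unipotent directions at once, so that the linear forms $n\mapsto (v_i)_1 n$ are pairwise distinct. Coincident forms can be merged by multiplying the corresponding functions.

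\textbf{Characteristic factor bound.} For a linear average $\mathbb{E}_n\int\prod_i T^{a_i n}g_i$ with distinct coefficients $a_i\in\mathbb{Z}$ acting through $\mathbb{Z}^{d-1}$, the standard van der Corput/PET induction shows that the average is controlled by $\|g_i\|_{U^{s}}$ of the restricted actions of the subgroups $a_i\mathbb{Z}^{d-1}$, with $s\lesssim k$. Converting Gowers--Host--Kra seminorms of the $(d-1)$-dimensional subactions into seminorms for the full $\mathbb{Z}^d$-action is done via Tao--Ziegler concatenation \cite{TaoZieglerConcat}, giving control by $Z^{K}$ with $K=2^{d-1}k$ (the concatenation costs a factor $2$ per extra direction). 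Hence, if some $f_i$ satisfies $\mathbb{E}[f_i\mid Z^K]=0$, the limit vanishes. Telescoping $f_i=\mathbb{E}[f_i\mid Z^K]+(f_i-\mathbb{E}[f_i\mid Z^K])$ over $i$, each error term contains at least one function orthogonal to $Z^K$ and so contributes zero. Reversing the averaging step then yields the identity.

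\textbf{Main obstacle.} The hardest step is the seminorm control with the precise exponent $K=2^{d-1}k$, uniformly in the non-ergodic setting. One must pass from control by seminorms of the subgroups $a_i\mathbb{Z}^{d-1}$ (finite index in $\mathbb{Z}^{d-1}\times\{0\}$ or similar) to $Z^K(X)$ of the whole $\mathbb{Z}^d$-action. I would try first to average additionally over the $\SL_d(\mathbb{Z})$-images of the unipotent group, i.e.\ over $d$ different coordinate unipotent groups. This makes the relevant subgroups jointly span a finite-index subgroup of $\mathbb{Z}^d$, so that Tao--Ziegler's concatenation theorem applies directly. The passage from finite-index subgroups to $\mathbb{Z}^d$ should be handled by the fact that the Host--Kra factors of finite-index subactions agree with those of the full action up to the invariant factor.
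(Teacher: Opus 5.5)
Your outline matches the paper's: rewrite the correlation using $\SL_d(\mathbb{Z})$-invariance, average over unipotents to get a linear multiple ergodic average, control it by Gowers--Host--Kra seminorms, and telescope using that $g_i := f_i - \mathbb{E}[f_i \mid Z^K(X)]$ is still $\SL_d(\mathbb{Z})$-invariant. But the step you flag as the main obstacle is a genuine gap, and the mechanism you propose for it does not work.

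Your averaging only shows that the correlation vanishes when $\mathbb{E}[g_i \mid Z^s_{H_0}(X)] = 0$, where $H_0 = \{0\}\times\mathbb{Z}^{d-1}$ has infinite index. What you need is the implication $\mathbb{E}[g_i \mid Z^K(X)] = 0 \Rightarrow \mathbb{E}[g_i \mid Z^s_{H_0}(X)] = 0$. For general functions this is false: for a $\mathbb{Z}^d$-action in which $H_0$ acts trivially (e.g.\ a Bernoulli shift in the $e_1$-direction), $Z^s_{H_0}(X)$ is all of $X$ while $Z^K(X)$ is trivial. So the implication must come from invariance, and concatenation does not ``apply directly''. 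It only concerns functions lying in $L^\infty(Z^s_{H_1}(X)) \cap L^\infty(Z^s_{H_2}(X))$, and nothing in your argument produces such a function.

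The usual dual-function route is also closed, because the identity between the correlation and the unipotent average holds only for invariant inputs. Averaging over conjugate unipotent groups adds nothing either. For invariant $g$ one has $\mathbb{E}[g \mid Z^s_{\gamma H_0}(X)] = \gamma\cdot \mathbb{E}[g \mid Z^s_{H_0}(X)]$, so the vanishing conditions for $H_0$ and $\gamma H_0$ are equivalent, and you never get anything like their conjunction. A single average over a larger group of unipotent matrices cannot help: such a group fixes a nonzero linear form (Kolchin), so all its orbit directions lie in a hyperplane.

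The missing idea is the paper's $\ASL_d(\mathbb{Z})$-concatenation proposition, which is an orthogonality argument. Let $g$ be $\SL_d(\mathbb{Z})$-invariant with $\mathbb{E}[g \mid Z^{2s-1}(X)] = 0$. For a subgroup $H$, put $P_\gamma := \mathbb{E}[g \mid Z^s_{\gamma H}(X)]$.
\begin{itemize}
\item By invariance, all $P_\gamma$ have the same $L^2$-norm.
\item If $\gamma(H\otimes\mathbb{Q}) \neq H\otimes\mathbb{Q}$, functoriality of Host--Kra factors gives $\langle P_\gamma, P_{\mathrm{id}}\rangle = \|\mathbb{E}[g \mid Z^s_{\gamma H}(X)\wedge Z^s_H(X)]\|_2^2$.
\item By Tao--Ziegler, this meet lies in $Z^{2s-1}_{H+\gamma H}(X)$. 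For your corank-one $H_0$, the group $H_0+\gamma H_0$ has finite index, so this factor is $Z^{2s-1}(X)$ and the inner product vanishes.
\item Choosing $\gamma$ in infinitely many distinct cosets of the stabiliser of $H\otimes\mathbb{Q}$ gives an infinite pairwise orthogonal family of equal norm. Bessel's inequality then forces $P_{\mathrm{id}} = 0$.
\end{itemize}
The paper averages along a single transvection $\psi(v) = v + \phi(v)u$. Its controlling group $\langle u\rangle$ therefore has rank one, and the argument must be iterated by descending induction on rank; this is where $2^{d-1}$ comes from. With your rank-$(d-1)$ group, one step would suffice and give roughly $K = 2s-1$. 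Your accounting of ``a factor $2$ per extra direction'' to reach $2^{d-1}k$ does not match your own setup, where only one direction is missing.
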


The proof unfolds in two parts. First, a unipotent trick reduces the question to Host--Kra factors with respect to certain subgroups of $\mathbb{Z}^d$. Second, a \emph{concatenation} argument - relying on \cite{TaoZieglerConcat} - upgrades these subgroup factors to the full Host--Kra factor.

\subsection{Structure factors}

Let $(X, \mu)$ be a $\mathbb{Z}^d$-system. For $t \in \mathbb{Z}^d$ and $f \in L^\infty(X)$, define $\Delta_t f := (t \cdot f) \cdot \overline{f}$. Given a subgroup $H \subset \mathbb{Z}^d$ and a F\o lner sequence $(\mathcal{F}_n)_{n \geq 0}$ of $H$, it is proved in \cite[Thm 2.1]{TaoZieglerConcat} that the limit
$$ \|f\|_{U^k_H(X)} := \lim_{n_1 \to \infty} \cdots \lim_{n_k \to \infty}\left(\mathbb{E}_{g_1 \in \mathcal{F}_{n_1}} \cdots \mathbb{E}_{g_k \in \mathcal{F}_{n_k}} \int \Delta_{g_1} \cdots \Delta_{g_k} f \, d\mu\right)^{2^{-k}}$$
exists, defines a seminorm on $L^\infty(X)$, and is independent of the choice of F\o lner sequence. We refer to it as the $k$-th \emph{Gowers--Host--Kra seminorm} associated to $H$. It is related to structure factors via
$$ f \in L^\infty(Z^k_H(X))^\perp \iff \|f\|_{U^k_H(X)} = 0,$$
see \cite[Thm 2.4]{TaoZieglerConcat}. When $H = \mathbb{Z}^d$ we drop the subscript, writing $\|f\|_{U^k(X)}$ and $Z^k(X)$.

The independence on the choice of F\o lner sequence yields a first observation.

\begin{lem}\label{Lemma: ASLd invariance}
    Let $(X, \mu)$ be an $\ASL_d(\mathbb{Z}) = \SL_d(\mathbb{Z}) \ltimes \mathbb{Z}^d$-system, and let $Z^k(X)$ denote the $k$-th Host--Kra factor with respect to the translation action of $\mathbb{Z}^d$. Then $L^2(Z^k(X))$ is invariant under $\ASL_d(\mathbb{Z})$.
\end{lem}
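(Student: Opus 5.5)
We prove invariance under translations and under $\SL_d(\mathbb{Z})$ separately, using the characterisation $L^2(Z^k(X))^\perp = \{f : \|f\|_{U^k(X)} = 0\}$ recalled above (extended from $L^\infty$ to $L^2$ by density and continuity of the projection). Since $Z^k(X)$ is a factor, $L^2(Z^k(X))$ is a closed subspace. So it suffices to show that its orthogonal complement is invariant under every $g \in \ASL_d(\mathbb{Z})$. Equivalently, for $f \in L^\infty(X)$ we want $\|g\cdot f\|_{U^k(X)} = 0$ whenever $\|f\|_{U^k(X)} = 0$. Because the action is unitary and $L^\infty$ is dense, this gives invariance of $L^2(Z^k(X))^\perp$, hence of $L^2(Z^k(X))$.

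\textbf{Translations.} For $s \in \mathbb{Z}^d$ we have $\Delta_{g_1}\cdots\Delta_{g_k}(s\cdot f) = s\cdot(\Delta_{g_1}\cdots\Delta_{g_k} f)$, because $\mathbb{Z}^d$ is abelian and $s$ commutes with each $g_i$. Since $\mu$ is $s$-invariant, the integrals in the definition of the seminorm are unchanged. Hence $\|s\cdot f\|_{U^k(X)} = \|f\|_{U^k(X)}$.

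\textbf{Linear part.} Fix $\gamma \in \SL_d(\mathbb{Z})$. In $\ASL_d(\mathbb{Z})$ we have $t\gamma = \gamma\,\gamma^{-1}(t)$, where $\gamma^{-1}(t)$ denotes the linear action. Therefore
$$ t\cdot(\gamma\cdot f) = \gamma\cdot\bigl(\gamma^{-1}(t)\cdot f\bigr), \qquad \Delta_t(\gamma\cdot f) = \gamma\cdot \Delta_{\gamma^{-1}(t)} f. $$
Iterating and using $\gamma$-invariance of $\mu$ gives
$$ \int \Delta_{g_1}\cdots\Delta_{g_k}(\gamma\cdot f)\,d\mu = \int \Delta_{\gamma^{-1}(g_1)}\cdots\Delta_{\gamma^{-1}(g_k)} f\,d\mu. $$
Now take a Følner sequence $(\mathcal{F}_n)$ of $\mathbb{Z}^d$. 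Its image $(\gamma^{-1}(\mathcal{F}_n))$ is again a Følner sequence, because $\gamma^{-1}$ is a group automorphism of $\mathbb{Z}^d$: it maps $\mathcal{F}_n \triangle (\mathcal{F}_n + s)$ bijectively onto $\gamma^{-1}(\mathcal{F}_n) \triangle (\gamma^{-1}(\mathcal{F}_n) + \gamma^{-1}(s))$, and it preserves cardinalities. So $\|\gamma\cdot f\|_{U^k(X)}$ computed along $(\mathcal{F}_n)$ equals $\|f\|_{U^k(X)}$ computed along $(\gamma^{-1}(\mathcal{F}_n))$. By the independence of the Følner sequence from \cite[Thm 2.1]{TaoZieglerConcat}, this is $\|f\|_{U^k(X)}$.

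\textbf{Conclusion.} Both seminorm identities hold for all $f \in L^\infty(X)$. Hence the kernel of $\|\cdot\|_{U^k(X)}$ in $L^\infty$ is invariant under $\ASL_d(\mathbb{Z})$. Taking $L^2$-closures, $L^2(Z^k(X))^\perp$ is invariant, and therefore so is $L^2(Z^k(X))$.

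The only delicate point is the passage from $L^\infty$ to $L^2$, and it is routine: $L^2(Z^k(X))^\perp$ is the $L^2$-closure of the kernel of the seminorm on $L^\infty$. I would verify this by noting that $f - \mathbb{E}[f \mid Z^k]$ lies in that kernel for $f \in L^\infty$.
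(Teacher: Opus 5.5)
Your proposal is correct and follows essentially the same route as the paper: show that $L^2(Z^k(X))^\perp$, the kernel of the $U^k$ seminorm, is invariant, using that the linear part maps F\o lner sequences to F\o lner sequences together with the F\o lner-independence of the seminorm. You are more explicit than the paper about three points: the conjugation identity $\Delta_t(\gamma\cdot f)=\gamma\cdot\Delta_{\gamma^{-1}(t)}f$, the separate treatment of translations (which simply commute with each $\Delta_g$), and the density argument passing from $L^\infty$ to $L^2$, all of which are handled correctly.
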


\begin{proof}
    Since $\mu$ is $\ASL_d(\mathbb{Z})$-invariant, it suffices to show that $L^2(Z^k(X))^\perp$ is $\ASL_d(\mathbb{Z})$-invariant. Let $f \in L^2(Z^k(X))^\perp$, so that $\|f\|_{U^k(X)} = 0$. The image of any F\o lner sequence in $\mathbb{Z}^d$ under any $\sigma \in \ASL_d(\mathbb{Z})$ is again a F\o lner sequence in $\mathbb{Z}^d$, hence
    $$\|\sigma \cdot f\|_{U^k(X)} = \|f\|_{U^k(X)} = 0,$$
    which gives $\sigma \cdot f \in L^2(Z^k(X))^\perp$.
\end{proof}

Structure factors satisfy further natural properties.

\begin{lem}\label{Lemma: Successive factors}
    Let $\pi: (X, \mu) \to (Y, \nu)$ be a factor of $\mathbb{Z}^d$-systems, and let $H \subset \mathbb{Z}^d$ be an infinite subgroup. Identify $L^\infty(Y)$ with its image in $L^\infty(X)$ under $\pi$. Then
    $$ L^\infty(Z^k_H(Y)) = L^\infty(Z^k_H(X)) \cap L^\infty(Y)$$
    and, for every $f \in L^\infty(Y)$,
    $$ \mathbb{E}[f \mid Z^k_H(X)] = \mathbb{E}[f \mid Z^k_H(Y)].$$
\end{lem}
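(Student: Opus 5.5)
The plan is to work entirely with the seminorm characterisation $f \in L^\infty(Z^k_H(X))^\perp \iff \|f\|_{U^k_H(X)} = 0$ from \cite[Thm 2.4]{TaoZieglerConcat}, together with the observation that the Gowers--Host--Kra seminorm of a function pulled back from $Y$ can be computed in $Y$. Indeed, for $f \in L^\infty(Y)$ and $g_1, \ldots, g_k \in H$, the function $\Delta_{g_1} \cdots \Delta_{g_k} f$ is again pulled back from $Y$, because $\pi$ is $\mathbb{Z}^d$-equivariant. Its integral against $\mu$ equals its integral against $\nu = \pi_*\mu$. Taking the iterated F\o lner averages, I get $\|f\|_{U^k_H(X)} = \|f\|_{U^k_H(Y)}$ for every $f \in L^\infty(Y)$.

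Next I would prove the conditional expectation identity. Fix $f \in L^\infty(Y)$ and write $f = g + h$, where $g := \mathbb{E}[f \mid Z^k_H(Y)]$ and $h := f - g$ is orthogonal to $L^2(Z^k_H(Y))$ inside $L^2(Y)$. Then $\|h\|_{U^k_H(Y)} = 0$ by the characterisation applied in $Y$, so $\|h\|_{U^k_H(X)} = 0$ by the previous paragraph. Applying the characterisation in $X$ gives $h \perp L^2(Z^k_H(X))$, that is, $\mathbb{E}[h \mid Z^k_H(X)] = 0$.

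It then remains to see that $g$ is $Z^k_H(X)$-measurable, since then $\mathbb{E}[f \mid Z^k_H(X)] = g$. To show this, take any $u \in L^\infty(X)$ orthogonal to $L^2(Z^k_H(X))$; I need $\langle g, u\rangle = 0$. The natural route is the dual-function description: $L^2(Z^k_H(Y))$ is the closed span of dual functions $\mathcal{D}_k \phi$, built from averages of products of shifts of $\phi \in L^\infty(Y)$. Each such dual function, viewed in $X$, is the dual function of $\phi \circ \pi$, and dual functions in $X$ lie in $L^2(Z^k_H(X))$. If I prefer to avoid dual functions, an alternative is the cube-measure construction: $Z^k_H$ is the factor generated by the Host--Kra measure $\mu^{[k]}_H$, and $\nu^{[k]}_H$ is the pushforward of $\mu^{[k]}_H$ under $\pi^{[k]}$, so the defining invariant $\sigma$-algebras are compatible. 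Either way this gives $L^\infty(Z^k_H(Y)) \subset L^\infty(Z^k_H(X))$.

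For the $\sigma$-algebra identity, the inclusion $\subset$ was just shown. For the reverse inclusion, take $f \in L^\infty(Z^k_H(X)) \cap L^\infty(Y)$. Then $f = \mathbb{E}[f \mid Z^k_H(X)] = \mathbb{E}[f \mid Z^k_H(Y)]$, so $f \in L^\infty(Z^k_H(Y))$.

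I expect the main obstacle to be the inclusion $L^\infty(Z^k_H(Y)) \subset L^\infty(Z^k_H(X))$. The seminorm equality only gives control of the orthogonal complements in one direction, so a description of $Z^k_H$ as generated by objects that transfer along factor maps, such as dual functions or cube measures, is genuinely needed. I would cite the corresponding functoriality statement in \cite{TaoZieglerConcat} or \cite{HostKra} rather than reprove it.
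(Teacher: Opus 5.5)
Your proposal is correct and follows the paper's proof. The paper obtains the conditional-expectation identity exactly as you do: the seminorm computed in $X$ and in $Y$ agree for functions pulled back from $Y$, then one uses the orthogonal decomposition together with the inclusion $L^\infty(Z^k_H(Y)) \subseteq L^\infty(Z^k_H(X))$, which it takes from \cite[Cor. 2.5]{TaoZieglerConcat}. The only difference is that you deduce the reverse inclusion of the first identity from the second identity rather than citing it, which is valid and uses the citation slightly more economically.
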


\begin{proof}
    The first identity is \cite[Cor. 2.5]{TaoZieglerConcat}. For the second, write $g := f - \mathbb{E}[f \mid Z^k_H(Y)]$. Since $g \in L^\infty(Y) \cap L^\infty(Z^k_H(Y))^\perp$, we have $\|g\|_{U^k_H(Y)} = 0$. The Gowers--Host--Kra seminorm depends only on the integrals of $\Delta_{g_1} \cdots \Delta_{g_k} g$, which are the same in $X$ and $Y$ (as $g$ comes from $Y$), so $\|g\|_{U^k_H(X)} = \|g\|_{U^k_H(Y)} = 0$ and therefore $g \in L^\infty(Z^k_H(X))^\perp$. Decomposing
    $$f = \mathbb{E}[f \mid Z^k_H(Y)] + g,$$
    with the first term in $L^\infty(Z^k_H(Y)) \subseteq L^\infty(Z^k_H(X))$ (by the first identity) and the second in $L^\infty(Z^k_H(X))^\perp$, identifies $\mathbb{E}[f \mid Z^k_H(Y)]$ with the orthogonal projection $\mathbb{E}[f \mid Z^k_H(X)]$.
\end{proof}

\subsection{Concatenation}

We use the \emph{concatenation theorem} of Tao--Ziegler \cite[Thm. 1.15]{TaoZieglerConcat}: for every $\mathbb{Z}^d$-system $(X, \mu)$, every $k, l \geq 1$, and every pair of subgroups $H_1, H_2 \subset \mathbb{Z}^d$,
\begin{equation}\label{Eq: Concatenation}
L^\infty\bigl(Z^k_{H_1}(X)\bigr) \cap L^\infty\bigl(Z^l_{H_2}(X)\bigr) \subset L^\infty\bigl(Z^{k+l-1}_{H_1 + H_2}(X)\bigr).
\end{equation}
This has a striking consequence for $\SL_d(\mathbb{Z})$-invariant functions on an $\ASL_d(\mathbb{Z})$-system.

\begin{prop}[$\ASL_d(\mathbb{Z})$-concatenation]\label{Proposition: Concatenation}
    Let $(X, \mu)$ be an $\ASL_d(\mathbb{Z})$-system and let $H \subset \mathbb{Z}^d$ be a subgroup of rank $r \geq 1$. For every $\SL_d(\mathbb{Z})$-invariant $f \in L^\infty(X)$ and every $k \geq 1$,
    $$ \mathbb{E}[f \mid Z^k_H(X)] \in L^\infty\bigl(Z^{\kappa_r(k)}(X)\bigr),$$
    where $\kappa_r$ is defined by the recursion $\kappa_d(k) = k$ and $\kappa_r(k) = \kappa_{r+1}(2k - 1)$ for $1 \leq r \leq d-1$.
\end{prop}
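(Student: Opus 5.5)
Induction on $r$ downward from $r=d$. For $r=d$, $H$ has finite index in $\mathbb{Z}^d$. We will show that $Z^k_H(X)=Z^k(X)$, for instance because the Gowers--Host--Kra seminorms for $H$ and for $\mathbb{Z}^d$ vanish on the same functions. To see this, use Følner sequences of $\mathbb{Z}^d$ of the form $\mathcal{F}_n+F$, where $F$ is a finite set of coset representatives, and compare the averages. Alternatively, use the standard fact that passing to a finite-index subgroup does not change the characteristic factors. This gives $\kappa_d(k)=k$.

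For the inductive step, let $H$ have rank $r<d$ and put $g:=\mathbb{E}[f\mid Z^k_H(X)]$. The key observation is an equivariance. For $\gamma\in\SL_d(\mathbb{Z})$, the factor $Z^k_{\gamma H}(X)$ equals $\gamma\cdot Z^k_H(X)$, since $\gamma$ maps Følner sequences of $H$ to Følner sequences of $\gamma H$ and conjugates the $H$-action into the $\gamma H$-action, exactly as in the proof of Lemma~\ref{Lemma: ASLd invariance}. Because $f$ is $\SL_d(\mathbb{Z})$-invariant, conditional expectation commutes with $\gamma$, and so $\gamma\cdot g=\mathbb{E}[f\mid Z^k_{\gamma H}(X)]$. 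Now pick $\gamma$ fixing $H$ pointwise with $H+\gamma H$ of rank $r+1$. Concretely, after a change of basis take $H$ of finite index in $\mathbb{Z}^r\times\{0\}$, and let $\gamma$ be an elementary unipotent matrix $I+E_{r+1,1}$ (or a suitable variant), so that $\gamma$ moves $e_1$ into $e_1+e_{r+1}$. In the case where $H$ is merely contained in a rank-$r$ saturated subgroup, choose $\gamma$ to preserve $H$ setwise instead. Then $\gamma H=H'$ satisfies $\mathrm{rank}(H+H')=r+1$. If $\gamma$ fixes $H$ pointwise it would fix $g$'s factor, which is useless, so we should instead pick $\gamma$ not normalizing $H$. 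The invariance we use is the following: apply the argument to $f$ itself, noting that $f=\gamma f$.

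The cleanest route is to work with $F:=\mathbb{E}[f\mid Z^k_H(X)\wedge Z^k_{\gamma H}(X)]$, but the simpler approach seems to be this. First note that $g\in L^\infty(Z^k_H)$. Next, the function $g':=\mathbb{E}[g\mid Z^k_{\gamma H}]$ lies in both $Z^k_{\gamma H}$ and, after re-projecting, in $Z^k_H$. To get a genuine intersection we use alternating projections (von Neumann): $P_HP_{\gamma H}P_H\cdots f$ converges in $L^2$ to the projection of $f$ onto $L^2(Z^k_H)\cap L^2(Z^k_{\gamma H})$. By \eqref{Eq: Concatenation}, this limit lies in $Z^{2k-1}_{H+\gamma H}$.

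The main obstacle is then to show that $g$ itself, and not only this intersection projection, lies in a high-order factor. For this, first replace $H$ by a subgroup such that $f-g\perp Z^k_H$ and $f-\gamma g\perp Z^k_{\gamma H}$. We then want the decomposition $g=\mathbb{E}[f\mid Z^k_H]$ to satisfy $g=\mathbb{E}[f\mid Z^{2k-1}_{H+\gamma H}]$-type relations. The better plan is induction on the pair: show directly that $Z^k_H(X)\subset Z^{2k-1}_{H+\gamma H}(X)$ restricted to the relevant $f$. That is, project $f$ first onto $Z^{\kappa_{r+1}(2k-1)}$, which is $\ASL_d(\mathbb{Z})$-invariant by Lemma~\ref{Lemma: ASLd invariance}, and show that the remainder $h:=f-\mathbb{E}[f\mid Z^{\kappa_{r}(k)}]$ is orthogonal to $Z^k_H$. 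Here $h$ is again $\SL_d(\mathbb{Z})$-invariant, because $Z^{\kappa}$ is $\ASL_d(\mathbb{Z})$-invariant.

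To finish, suppose $\mathbb{E}[h\mid Z^k_H]=:u\ne0$. Then $\gamma u=\mathbb{E}[h\mid Z^k_{\gamma H}]$, and $\langle h,u\rangle=\|u\|^2>0$. Using the alternating-projection limit, we argue that the projection of $h$ onto $L^2(Z^k_H)\cap L^2(Z^k_{\gamma H})$ is nonzero; this is the delicate point. Granting it, that projection lies in $Z^{2k-1}_{H+\gamma H}$. By the induction hypothesis applied to the rank-$(r+1)$ subgroup $H+\gamma H$ together with $h$, we have $\mathbb{E}[h\mid Z^{2k-1}_{H+\gamma H}]\in Z^{\kappa_{r+1}(2k-1)}$. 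But this projection vanishes, since $h\perp Z^{\kappa_r(k)}$, and this is a contradiction. To establish the nonvanishing, I would first try averaging over the infinitely many $\gamma_n$ fixing $H$ and use a van der Corput / mean-ergodic argument showing that the $\gamma_n u$ correlate with $u$.
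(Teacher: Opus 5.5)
\textbf{There is a genuine gap in the inductive step.} You have the right ingredients. The remainder $h=f-\mathbb{E}[f\mid Z^{\kappa_r(k)}(X)]$ is $\SL_d(\mathbb{Z})$-invariant, and $\gamma u=\mathbb{E}[h\mid Z^k_{\gamma H}(X)]$. Concatenation plus the induction hypothesis kill the projection of $h$ onto $L^2(Z^k_H(X))\cap L^2(Z^k_{\gamma H}(X))$ whenever $\gamma$ does not stabilise $H\otimes\mathbb{Q}$. But the step you call ``delicate'' is the whole content of the inductive step, and you leave it unproved.

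Moreover, the route you suggest for it cannot work. Averaging over $\gamma_n$ that fix $H$ gives $\gamma_n H=H$, so there is no rank increase and $\gamma_n u=u$ trivially. A single $\gamma$ cannot succeed either. The only information available is that $u$ and $\gamma u$ are equal-norm projections of the same vector $h$, which is perfectly consistent with $u\perp\gamma u$. Indeed, your own argument shows the meet projection vanishes for every $\gamma$ outside the stabiliser. Alternating projections do not bridge this: for two arbitrary closed subspaces $A,B$, non-orthogonality of $P_Ah$ and $P_Bh$ says nothing about $A\cap B$.

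Two ideas are missing.

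\emph{The projections commute.} $Z^k_H(X)$ is a $\mathbb{Z}^d$-factor, so Lemma~\ref{Lemma: Successive factors} (applied to $Y=Z^k_H(X)$ and the subgroup $\gamma H$) shows that projecting a $Z^k_H$-measurable function onto $Z^k_{\gamma H}(X)$ keeps it $Z^k_H$-measurable. Hence $\mathbb{E}\bigl[\mathbb{E}[h\mid Z^k_H(X)]\mid Z^k_{\gamma H}(X)\bigr]$ is the projection of $h$ onto the meet, and its squared norm is exactly $\langle u,\gamma u\rangle$. Combined with what you already have, this gives $u\perp\gamma u$ for every $\gamma$ outside the stabiliser $L$ of $H\otimes\mathbb{Q}$.

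\emph{Use infinitely many cosets and Bessel.} Since $1\le r\le d-1$, $L$ has infinite index in $\SL_d(\mathbb{Z})$. Choose $\gamma_n$ in pairwise distinct cosets of $L$, for instance $\gamma_n=(I+E_{r+1,1})^n$ in your coordinates. Applying $\gamma_m^{-1}$ shows that the $\gamma_n u$ are pairwise orthogonal. Each has norm $\|u\|_2$ and satisfies $\langle h,\gamma_n u\rangle=\|u\|_2^2$. Bessel's inequality then gives $\|h\|_2^2\ge\sum_n\|u\|_2^2$, which forces $u=0$. This is the paper's argument; it runs directly, with no contradiction and no van der Corput step.

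Finally, you should also say why $\mathbb{E}[f\mid Z^k_H(X)]=\mathbb{E}\bigl[\mathbb{E}[f\mid Z^{\kappa_r(k)}(X)]\mid Z^k_H(X)\bigr]$ is $Z^{\kappa_r(k)}$-measurable. This is again Lemma~\ref{Lemma: Successive factors}, applied to the $\mathbb{Z}^d$-factor $Z^{\kappa_r(k)}(X)$.
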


A direct computation gives $\kappa_r(k) = 2^{d-r}(k-1) + 1$, so in particular $\kappa_1(k) = 2^{d-1}(k-1) + 1 \leq 2^{d-1}k$ for every $k \geq 1$.

\begin{proof}
    We argue by descending induction on $r$.

    \emph{Base case} ($r = d$). Here $H$ has finite index in $\mathbb{Z}^d$, so $Z^k_H(X) = Z^k(X)$ \cite[Lem 3.1]{FrantzikinakisKucaJoint} and the conclusion is immediate.

    \emph{Inductive step.} Suppose the conclusion holds for ranks $\geq r+1$, and let $H \subset \mathbb{Z}^d$ have rank exactly $r$. Set
    $$ g := f - \mathbb{E}[f \mid Z^{\kappa_{r+1}(2k-1)}(X)].$$
    By Lemma \ref{Lemma: ASLd invariance}, $\mathbb{E}[f \mid Z^{\kappa_{r+1}(2k-1)}(X)]$ is $\SL_d(\mathbb{Z})$-invariant, hence so is $g$.

    For any $\gamma \in \SL_d(\mathbb{Z})$, the $\SL_d(\mathbb{Z})$-invariance of $g$ and of $\mu$ gives
    \begin{equation}\label{Eq: Norm equality}
    \|\mathbb{E}[g \mid Z^k_{\gamma(H)}(X)]\|_{L^2} = \|\mathbb{E}[g \mid Z^k_H(X)]\|_{L^2}.
    \end{equation}
    Moreover,
    $$\langle \mathbb{E}[g \mid Z^k_{\gamma(H)}(X)], \mathbb{E}[g \mid Z^k_H(X)] \rangle_{L^2} = \langle g, \mathbb{E}\bigl[\mathbb{E}[g \mid Z^k_H(X)] \,\big|\, Z^k_{\gamma(H)}(X)\bigr] \rangle_{L^2}.$$
    By two applications of Lemma \ref{Lemma: Successive factors}, the iterated conditional expectation collapses to the projection on the meet:
    \begin{align*}
        \mathbb{E}\bigl[\mathbb{E}[g \mid Z^k_H(X)] \,\big|\, Z^k_{\gamma(H)}(X)\bigr]
        &= \mathbb{E}\bigl[g \,\big|\, Z^k_{\gamma(H)}(Z^k_H(X))\bigr] \\
        &= \mathbb{E}[g \mid Z^k_{\gamma(H)}(X) \wedge Z^k_H(X)].
    \end{align*}
    By the concatenation theorem \eqref{Eq: Concatenation},
    $$L^\infty\bigl(Z^k_{\gamma(H)}(X) \wedge Z^k_H(X)\bigr) \subset L^\infty\bigl(Z^{2k-1}_{\gamma(H) + H}(X)\bigr).$$
    When $H + \gamma(H)$ has rank at least $r+1$, the inductive hypothesis applied to $g$ (which is $\SL_d(\mathbb{Z})$-invariant) gives
    $$\mathbb{E}[g \mid Z^{2k-1}_{\gamma(H) + H}(X)] \in L^\infty\bigl(Z^{\kappa_{r+1}(2k-1)}(X)\bigr),$$
    and the very definition of $g$ then forces $\mathbb{E}[g \mid Z^{2k-1}_{\gamma(H) + H}(X)] = 0$. A fortiori,
    \begin{equation}\label{Eq: Orthogonality}
    \langle \mathbb{E}[g \mid Z^k_{\gamma(H)}(X)], \mathbb{E}[g \mid Z^k_H(X)] \rangle_{L^2} = 0
    \end{equation}
    whenever $H + \gamma(H)$ has rank at least $r+1$.

    Now let
    $$ L := \{\gamma \in \SL_d(\mathbb{Z}) : \gamma(H \otimes \mathbb{Q}) = H \otimes \mathbb{Q}\}.$$
    This is the stabiliser in $\SL_d(\mathbb{Z})$ of the rational subspace $H \otimes \mathbb{Q} \subset \mathbb{Q}^d$ and is therefore a subgroup. For $\gamma \in \SL_d(\mathbb{Z}) \setminus L$, the rational subspace $\gamma(H) \otimes \mathbb{Q}$ differs from $H \otimes \mathbb{Q}$, so $H + \gamma(H)$ has rank strictly greater than $r$. Since $r \leq d-1$, the orbit of $H \otimes \mathbb{Q}$ under $\SL_d(\mathbb{Z})$ in the rational Grassmannian $\mathrm{Gr}_r(\mathbb{Q}^d)$ is infinite, hence $\SL_d(\mathbb{Z})/L$ is infinite. Choose representatives $(\gamma_n)_{n \geq 0}$ of pairwise distinct cosets, with $\gamma_0 = \id$.

    By \eqref{Eq: Orthogonality}, the family $\bigl(\mathbb{E}[g \mid Z^k_{\gamma_n H}(X)]\bigr)_{n \geq 0}$ is pairwise orthogonal in $L^2$. Bessel's inequality  and \eqref{Eq: Norm equality} give
    $$ \|g\|_{L^2}^2 \;\geq\; \sum_{n \geq 0} \|\mathbb{E}[g \mid Z^k_{\gamma_n H}(X)]\|_{L^2}^2 \;=\; \sum_{n \geq 0} \|\mathbb{E}[g \mid Z^k_H(X)]\|_{L^2}^2.$$
    The right-hand side is finite only if $\|\mathbb{E}[g \mid Z^k_H(X)]\|_{L^2} = 0$, i.e.\ $\mathbb{E}[g \mid Z^k_H(X)] = 0$. Therefore
    $$\mathbb{E}[f \mid Z^k_H(X)] = \mathbb{E}\bigl[\mathbb{E}[f \mid Z^{\kappa_{r+1}(2k-1)}(X)] \;\big|\; Z^k_H(X)\bigr] \in L^\infty\bigl(Z^{\kappa_{r+1}(2k-1)}(X)\bigr).$$
    This is the inductive step at rank $r$ with exponent $\kappa_r(k) = \kappa_{r+1}(2k-1)$.
\end{proof}

\subsection{Proof of Theorem \ref{Theorem: Reduction to Host--Kra factors}}

\begin{proof}[Proof of Theorem \ref{Theorem: Reduction to Host--Kra factors}]
    Set $K := 2^{d-1}k$ and $g_i := f_i - \mathbb{E}[f_i \mid Z^K(X)]$ for $i = 1, \ldots, k$. Each $g_i$ is $\SL_d(\mathbb{Z})$-invariant by Lemma \ref{Lemma: ASLd invariance}. We claim that for every infinite subgroup $H \subset \mathbb{Z}^d$ and every $i$,
    \begin{equation}\label{Eq: gi vanishing}
    \mathbb{E}[g_i \mid Z^k_H(X)] = 0.
    \end{equation}
    Indeed, by Proposition \ref{Proposition: Concatenation} we have $\mathbb{E}[f_i \mid Z^k_H(X)] \in L^\infty(Z^{\kappa_1(k)}(X)) \subseteq L^\infty(Z^K(X))$ (using $\kappa_1(k) \leq 2^{d-1}k = K$). Therefore,
    $$\mathbb{E}[g_i \mid Z^k_H(X)] = \mathbb{E}[f_i \mid Z^k_H(X)] - \mathbb{E}[\mathbb{E}[f_i \mid Z^K(X)] \mid Z^k_H(X)] = 0.$$

    Choose a group homomorphism $\phi: \mathbb{Z}^d \to \mathbb{Z}$ such that $\phi(v_i) \neq 0$ and $\phi(v_i - v_j) \neq 0$ for all $1 \leq i < j \leq k$ (possible since the $v_i$ are pairwise distinct and nonzero, and the conditions exclude only finitely many hyperplanes from the dual). Pick $u \in \ker\phi \setminus \{0\}$ and define $\psi: \mathbb{Z}^d \to \mathbb{Z}^d$ by $\psi(v) := v + \phi(v) u$. Since $\phi(u) = 0$, $\psi - \id$ squares to zero. Hence, $\psi \in \SL_d(\mathbb{Z})$ is unipotent and $\psi^l(v) = v + l \phi(v) u$ for every $l \geq 0$.

    Let $h_i \in \{f_i, g_i\}$. In either case, $h_i$ is $\SL_d(\mathbb{Z})$-invariant. By $\SL_d(\mathbb{Z})$-invariance of $\mu$ and of each $h_i$, for every $l \geq 0$,
    \begin{align*}
    \mathbb{E}[(v_1 \cdot h_1) \cdots (v_k \cdot h_k)]
    &= \mathbb{E}[\psi^l \cdot ((v_1 \cdot h_1) \cdots (v_k \cdot h_k))] \\
    &= \mathbb{E}[(\psi^l(v_1) \cdot \psi^l \cdot h_1) \cdots (\psi^l(v_k) \cdot \psi^l \cdot h_k)] \\
    &= \mathbb{E}[(\psi^l(v_1) \cdot h_1) \cdots (\psi^l(v_k) \cdot h_k)] \\
    &= \mathbb{E}\bigl[\bigl(l\phi(v_1) u \cdot (v_1 \cdot h_1)\bigr) \cdots \bigl(l\phi(v_k) u \cdot (v_k \cdot h_k)\bigr)\bigr].
    \end{align*}
    Averaging over $0 \leq l \leq N$,
    $$ \mathbb{E}[(v_1 \cdot h_1) \cdots (v_k \cdot h_k)] = \mathbb{E}\left[\frac{1}{N+1} \sum_{l=0}^{N} \prod_{i=1}^{k} \bigl(l\phi(v_i) u \cdot (v_i \cdot h_i)\bigr)\right].$$

    Suppose at least one $h_i$ equals $g_i$. The integers $\phi(v_i)$ are pairwise distinct (since $\phi(v_i - v_j) \neq 0$) and nonzero, so the average on the right is a non-conventional ergodic average in the sense of \cite{HostKra, ZieglerNonConventional}. By \cite[Ch. 21, Prop. 7]{HostKra}, this average converges in $L^2$-norm to $0$ provided at least one factor satisfies $\mathbb{E}[g_i \mid Z^k_{\langle u \rangle}(X)] = 0$, which holds by \eqref{Eq: gi vanishing} (applied with $H = \langle u \rangle$). Therefore
    \begin{equation}\label{Eq: Mixed term vanishing}
    \mathbb{E}[(v_1 \cdot h_1) \cdots (v_k \cdot h_k)] = 0 \quad \text{whenever at least one } h_i = g_i.
    \end{equation}

    Finally, expand each $f_i = \mathbb{E}[f_i \mid Z^K(X)] + g_i$ and write
    $$ \mathbb{E}[(v_1 \cdot f_1) \cdots (v_k \cdot f_k)] - \mathbb{E}\left[\bigl(v_1 \cdot \mathbb{E}[f_1 \mid Z^K(X)]\bigr) \cdots \bigl(v_k \cdot \mathbb{E}[f_k \mid Z^K(X)]\bigr)\right]$$
    as a telescoping sum over the $2^k - 1$ choices of $(h_1, \ldots, h_k) \in \{f_i, g_i\}^k \setminus \{(\mathbb{E}[f_i \mid Z^K(X)])_i\}$ in which at least one entry is a $g_i$ (see e.g. the proof of \cite[Ch. 21, Thm. 6]{HostKra}). By \eqref{Eq: Mixed term vanishing} every such term vanishes, completing the proof.
\end{proof}

\section{Finding $\ASL_d(\mathbb{Z})$-nilfactors}\label{Section: Bundles of nilsystems}

\subsection{Commensurated factors}

To prove our main result, we study $\SL_d(\mathbb{Z})$-invariant functions on $Z^k(X)$ by approximating them by their projections onto $\ASL_d(\mathbb{Z})$-invariant nilfactors. This reduces the proof to the study of measures on nilmanifolds invariant under an action by $\ASL_d(\mathbb{Z})$. As a first step towards establishing that such an approximation is possible, we establish the existence of \emph{commensurated} nilfactors on which we can approximate our functions efficiently.

\begin{defn}\label{Def: commensurated}
    Let $(X, \mu)$ be an $\ASL_d(\mathbb{Z})$-system and $\pi: X \to Y$ a nilfactor over $Z^0(X)$. We say $Y$ is \emph{commensurated} by $\SL_d(\mathbb{Z})$ if for every $\gamma \in \SL_d(\mathbb{Z})$ the joining of $Y$ with itself induced by $(\pi, \pi \circ \gamma): X \to Y \times Y$ is finite-to-finite.
\end{defn}

Note that this joining is indeed defined over $Z^0(X)$: the $\ASL_d(\mathbb{Z})$-action on $Z^0(X)$ factors through $\SL_d(\mathbb{Z})$, so $\gamma$ acts on $Z^0(X)$, and the diagram
$$\begin{tikzcd}
X \arrow[r, "\pi \circ \gamma"] \arrow[d, "\pi"] & Y \arrow[d] \\
Y \arrow[r] & Z^0(X)
\end{tikzcd}$$
commutes.

The following lemma describes how a nilfactor changes under precomposition by $\gamma \in \SL_d(\mathbb{Z})$. Throughout, for $\gamma \in \ASL_d(\mathbb{Z})$ we write $\gamma_0 \in \SL_d(\mathbb{Z})$ for its linear part, so that $\gamma$ acts on $\mathbb{Z}^d$ via $t \mapsto \gamma_0(t) + v_\gamma$ for some $v_\gamma \in \mathbb{Z}^d$.

\begin{lem}\label{Lemma: gamma-twist of a nilfactor}
    Let $(X, \mu)$ be an $\ASL_d(\mathbb{Z})$-system and $\pi: (X, \mu) \to (Y, \mu_Y)$ a nilfactor over $Z^0(X)$, with
    $$ Y = \bigsqcup_{n \geq 0} \Omega_n \rtimes_{\rho_n} N_n / \Gamma_n.$$
    Then for every $\gamma \in \SL_d(\mathbb{Z})$, the composition $\pi \circ \gamma$ is a nilfactor over $Z^0(X)$ with range
    $$ \bigsqcup_{n \geq 0} \gamma^{-1} \Omega_n \rtimes_{\rho_n^\gamma} N_n / \Gamma_n,$$
    where $\rho_n^\gamma(\omega)(t) := \rho_n(\gamma \cdot \omega)(\gamma(t))$ for $\omega \in \gamma^{-1} \Omega_n$ and $t \in \mathbb{Z}^d$.
\end{lem}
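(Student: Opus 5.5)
We verify the three requirements of a nilfactor over $Z^0(X)$ for $\pi\circ\gamma$, with the bundle described in the statement: that the target is a bundle of nilsystems, that $\pi\circ\gamma$ is measure-preserving and $\mathbb{Z}^d$-equivariant for the twisted action, and that it commutes with the bundle projections. The whole argument is bookkeeping with the semidirect product relation $\gamma t \gamma^{-1} = \gamma(t)$ in $\ASL_d(\mathbb{Z})$.

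\emph{Step 1: the target is a bundle over $Z^0(X)$.} Since $Z^0(X)$ is the $\mathbb{Z}^d$-invariant factor, $\ASL_d(\mathbb{Z})$ acts on it through $\SL_d(\mathbb{Z})$, preserving the measure. Hence $(\gamma^{-1}\Omega_n)_{n\ge 0}$ is again a countable Borel partition of the base. The maps $\omega\mapsto \rho_n^\gamma(\omega)$ are Borel on $\gamma^{-1}\Omega_n$, and each is a homomorphism $\mathbb{Z}^d\to N_n$ because $t\mapsto\gamma(t)$ is an automorphism of $\mathbb{Z}^d$. Ergodicity of $\rho_n^\gamma(\omega)$ on $N_n/\Gamma_n$ holds because it has the same image subgroup as $\rho_n(\gamma\cdot\omega)$, which is ergodic for a.e.\ $\omega$. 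The fibre measures are unchanged Haar measures, so the bundle measure is $\sum_n \nu_{|\gamma^{-1}\Omega_n}\otimes\mu_{N_n/\Gamma_n}$.

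\emph{Step 2: the bundle map and measure preservation.} Write $\pi(x) = (p(x), \bar n(x))$, where $p\colon X\to Z^0(X)$ is the canonical projection. Then
$$\pi(\gamma x) = (p(\gamma x), \bar n(\gamma x)) = (\gamma\cdot p(x), \bar n(\gamma x)).$$
Our convention is to regard the factor map as $x\mapsto (p(x), \bar n(\gamma x))$; this is the version of ``$\pi\circ\gamma$'' compatible with the bundle projection $p$. Note that $p(x)\in\gamma^{-1}\Omega_n$ exactly when $\gamma p(x)\in\Omega_n$, which justifies the indexing. Measure preservation follows because $\gamma$ preserves $\mu$ and $\pi$ pushes $\mu$ to $\mu_Y$. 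Thus the law of $(\gamma p(x), \bar n(\gamma x))$ is the bundle measure of $Y$, and reindexing the base by $\gamma^{-1}$ (which preserves the base measure) gives the stated bundle measure.

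\emph{Step 3: equivariance.} For $t\in\mathbb{Z}^d$ we compute
$$\bar n(\gamma t x) = \bar n(\gamma(t)\,\gamma x) = \rho_n(p(\gamma x))(\gamma(t))\,\bar n(\gamma x) = \rho_n(\gamma\cdot p(x))(\gamma(t))\,\bar n(\gamma x).$$
The first equality uses $\gamma t = \gamma(t)\gamma$ in $\ASL_d(\mathbb{Z})$, and the second uses the $\mathbb{Z}^d$-equivariance of $\pi$. Since $p(tx)=p(x)$, the last expression is exactly $\rho_n^\gamma(p(x))(t)\cdot \bar n(\gamma x)$, as required. Compatibility with the bundle projection holds by construction.

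The only delicate point is the convention for identifying the base coordinate, namely whether one records $p(x)$ or $\gamma p(x)$. We will state explicitly that the base coordinate of $\pi\circ\gamma$ is read as $p(x)$, via the measure-preserving identification $\omega\mapsto\gamma^{-1}\omega$. With that convention the lemma follows, and everything else is routine.
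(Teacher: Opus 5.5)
Your proposal is correct and follows the paper's approach: the paper's proof is a one-line appeal to direct verification from the definitions, and your computation $\gamma t = \gamma(t)\gamma$ in $\ASL_d(\mathbb{Z})$, together with the $\SL_d(\mathbb{Z})$-equivariance of $Z^0(X)$, fills in exactly that verification. Your explicit convention $x \mapsto (p(x), \bar n(\gamma x))$ for the base coordinate is also the one implicit in the lemma's indexing of the pieces by $\gamma^{-1}\Omega_n$.
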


\begin{proof}
    Direct verification from the definitions, using that $\gamma$ acts measurably on $Z^0(X)$ and $\mathbb{Z}^d$.
\end{proof}

From now on, fix $k \geq 0$ and an $\ASL_d(\mathbb{Z})$-system $(X, \mu)$ which is ergodic and of order $k$ with respect to the $\mathbb{Z}^d$-action: $X = Z^k(X)$. We aim to understand $\SL_d(\mathbb{Z})$-invariant functions on $X$.

\begin{prop}\label{Prop: reduction to nilbundles}
    Let $(X, \mu)$ be as above, $f \in L^\infty(X)$ an $\SL_d(\mathbb{Z})$-invariant function, and $\epsilon > 0$. Then there exists a nilfactor $\pi: X \to Y$ commensurated by $\SL_d(\mathbb{Z})$ with
    $$ \|f - \mathbb{E}[f \mid Y]\|_2 \leq \epsilon.$$
\end{prop}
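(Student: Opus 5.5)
We will use the Jamneshan--M.\ structure theorem together with a soft counting argument. By Theorem \ref{Thm: Host-Kra non-ergodic} and $X = Z^k(X)$, $X$ is an inverse limit of nilfactors $\pi_j : X \to Y_j$ over $Z^0(X)$. Since $L^2(X) = \overline{\bigcup_j L^2(Y_j)}$, we can fix $j$ with $\|f - \mathbb{E}[f \mid Y_j]\|_2 \le \epsilon/2$. Write $Y := Y_j$ and $\pi := \pi_j$. In general $Y$ is not commensurated, so the plan is to enlarge or modify it while keeping the approximation. The key input is $\SL_d(\mathbb{Z})$-invariance of $f$: for every $\gamma \in \SL_d(\mathbb{Z})$,
\[
\|f - \mathbb{E}[f \mid \gamma^{-1}Y]\|_2 = \|f - \mathbb{E}[f \mid Y]\|_2 \le \epsilon/2 ,
\]
where $\gamma^{-1}Y$ is the nilfactor $\pi\circ\gamma$ described in Lemma \ref{Lemma: gamma-twist of a nilfactor}. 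Thus every translate of $Y$ captures $f$ almost equally well.

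To build a commensurated factor, we would proceed fibrewise, using the structure of joinings. By Lemma \ref{Lemma: Joining nilbundles}, the joining $(\pi, \pi\circ\gamma)$ is a nilbundle whose fibres are subnilmanifolds $u H\cdot\bar e$ of $N_n/\Gamma_n \times N_{n'}/\Gamma_{n'}$. It fails to be finite-to-finite exactly when the projection of $H$ to either coordinate has a positive-dimensional kernel. That kernel corresponds to a nontrivial subgroup $K_\gamma \subset N_n$ (a "non-shared" direction of $Y$) that is independent of $\gamma^{-1}Y$ on fibres. Replace $Y$ by $Y' := Y/\!\!/\langle K_\gamma : \gamma\rangle$, the largest nilfactor of $Y$ that is "shared" by all its $\SL_d(\mathbb{Z})$-translates. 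Concretely, $L^\infty(Y') = \bigcap$ over the relevant translates, taken up to finite extension, using Lemma \ref{Lemma: Factors nilbundles} to describe factors of nilbundles by quotient homomorphisms.

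The counting argument then shows that discarding these directions costs little. If $\mathbb{E}[f\mid Y]$ had substantial mass on the part of $L^2(Y)$ relatively orthogonal to the shared factor, then translates $\gamma_n^{-1}Y$ with distinct non-shared directions would give nearly orthogonal components of $f$, each of norm bounded below. This mirrors the Bessel argument in Proposition \ref{Proposition: Concatenation}, and it contradicts $\|f\|_2<\infty$. Since nilmanifolds have bounded dimension $\le$ some fixed value (we can fix the dimension bound on a large-measure piece of the bundle), only finitely many independent non-shared directions can carry mass. This yields $\|f - \mathbb{E}[f\mid Y']\|_2 \le \epsilon$.

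The main obstacle is that this argument only gives almost-commensuration: for $\gamma$ in a finite generating set, $Y'$ and $\gamma^{-1}Y'$ agree up to small measure or small fibre discrepancies, not exactly. To upgrade this, we plan to use property (T) of $\ASL_d(\mathbb{Z})$ ($d\ge3$). We consider the unitary representation on $L^2$ of the space of self-joinings, or equivalently on the projections $P_{\gamma^{-1}Y'}$ viewed as vectors in Hilbert--Schmidt-type space. The projection $P_{Y'}$ is then $(Q,\kappa)$-almost invariant, so there is an invariant vector nearby. From its spectral projection we extract a genuinely $\SL_d(\mathbb{Z})$-commensurated factor $Y''$ close to $Y'$, in the same way property (T) yields finite generation in Proposition \ref{Proposition: cocycles of prop (T) groups are finitely generated}. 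We then check, using Lemma \ref{Lemma: Joining nilbundles} again, that $Y''$ is a nilfactor and that the joinings $(\pi'',\pi''\circ\gamma)$ are finite-to-finite. Making this last extraction rigorous, namely producing an honest nilfactor rather than merely an invariant subspace, is where we expect the difficulty to lie.
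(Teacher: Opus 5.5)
There is a genuine gap at the step where you replace $Y$ by the ``shared'' factor $Y'$, i.e.\ the meet of the translates $\gamma^{-1}Y$, and claim $\|f-\mathbb{E}[f\mid Y']\|_2\le\epsilon$.

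Your Bessel heuristic cannot give this. Every translate satisfies $\|f-\mathbb{E}[f\mid\gamma^{-1}Y]\|_2\le\epsilon/2$. Hence the components $\mathbb{E}[f\mid\gamma^{-1}Y]-\mathbb{E}[f\mid Y']$ all lie within $\epsilon/2$ of the single vector $f-\mathbb{E}[f\mid Y']$. They are nearly parallel, not nearly orthogonal, so nothing contradicts $\|f\|_2<\infty$.

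In fact the claim is false. Take any good approximant $Y$ and replace its fibres by points over a set $B\subset Z^0(X)$ of tiny positive measure; the error grows by at most $\|f\|_\infty^2\nu(B)$. Since $\SL_d(\mathbb{Z})$ acts ergodically on $Z^0(X)$, almost every $\omega$ has some $\gamma$ with $\gamma\cdot\omega\in B$. Then $\gamma^{-1}Y$ has trivial fibre at $\omega$, and the meet of all translates collapses to $Z^0(X)$. For the example $S_1$, with $f=\mathbf{1}_{[0,1/2]}(\xi_0)$, this gives $\mathbb{E}[f\mid Y']\equiv 1/2$. A meet only sees the worst fibre along each $\SL_d(\mathbb{Z})$-orbit: it controls complexity but destroys approximation. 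Note also that a meet over all $\gamma$ would be exactly invariant, so ``almost-commensuration'' is not where the real difficulty lies.

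The missing idea is to use the \emph{join} of translates, which automatically keeps the approximation, and to bound its complexity by a counting argument that needs non-degeneracy.

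\emph{Minimality and a margin.} The paper first takes $Y$ minimal, so that $\mathbb{E}[f\mid Y]$ factors through no proper nilfactor. Compactness of Haar measures on subnilmanifolds then yields a fibrewise margin $\delta(\omega)>0$: every strict factor of the fibre over $\omega$ loses at least $\delta(\omega)$ of $f$. Fix $\delta_0$ with $\Omega_e=\{\delta\ge\delta_0\}$ of positive measure; one should also ask for small fibrewise error on $\Omega_e$.

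\emph{Making the margin uniform.} The factor is rebuilt so that its fibre over each $\omega$ is the fibre of $Y$ over some $\gamma^{-1}\cdot\omega\in\Omega_e$, transported via $\pi\circ\gamma^{-1}$; here ergodicity works in your favour. By $\SL_d(\mathbb{Z})$-invariance of $f$, all translates are then $\delta_0$-non-degenerate everywhere.

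\emph{Counting.} In a join of translates, each new translate either adds no dimension, or adds to $\mathbb{E}[f\mid\cdot]$ a piece of norm at least $\delta_0$ orthogonal to the previous join, by relative independence over the common factor (Lemma~\ref{Lemma: Dimension bound under proper factor condition}). So all finite joins have dimension at most $D\|f\|_\infty^2\delta_0^{-2}$.

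\emph{Commensuration.} A finite join $W$ of maximal dimension satisfies $\dim(W\vee\gamma^{-1}W)=\dim W=\dim\gamma^{-1}W$, so both projections are finite-to-one. Thus $W$ is exactly commensurated, and it approximates $f$ because it contains the rebuilt factor.

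No property (T) is used at this stage. It enters only afterwards, to pass from commensurated to $\ASL_d(\mathbb{Z})$-equivariant nilfactors via \S\ref{Subsection: Finiteness properties of dynamical cocycles}. Your (T) step would fail as stated in any case. Projections onto $L^2$ of factors are not Hilbert--Schmidt, and an invariant vector near an almost-invariant projection need not be the projection onto $L^2$ of any factor, let alone a nilfactor.
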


The proof relies on the following dimension-counting lemma, which controls the dimension of joinings under a non-degeneracy hypothesis.

\begin{lem}\label{Lemma: Dimension bound under proper factor condition}
    Let $X$ be an ergodic $\mathbb{Z}^{d'}$-system, $f \in L^\infty(X)$, and $\delta > 0$. Let $D \geq 0$ and let $(N_n)_{n \geq 0}$ be nilfactors of $X$, each of dimension at most $D$, satisfying:
    \begin{equation}\label{Eq: nondegeneracy}
        \|\mathbb{E}[f \mid N_n] - \mathbb{E}[f \mid N_n']\|_2 > \delta
    \end{equation}
    for every $n \geq 0$ and every strict nilfactor $N_n' \to N_n$ (i.e.\ of dimension strictly smaller than $\dim N_n$). Then for every finite $I \subset \mathbb{N}$, the joining of $\{N_n\}_{n \in I}$ inside $X$ is a nilsystem of dimension at most $D \|f\|_2^2 \delta^{-2}$.
\end{lem}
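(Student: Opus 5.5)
We argue by induction on the size of $I$, building the joining one nilfactor at a time and charging each genuine increase in dimension to a definite increment of $\|\mathbb{E}[f\mid\cdot]\|_2^2$. Enumerate $I=\{n_1,\ldots,n_r\}$ and let $J_j$ be the joining of $N_{n_1},\ldots,N_{n_j}$ inside $X$, so that $J_0$ is trivial. By Lemma \ref{Lemma: Joining nilbundles}, in the ergodic case applied fibrewise, or directly by Leibman's theorem, each $J_j$ is a nilsystem and $J_{j-1}$ and $N_{n_j}$ are both nilfactors of $J_j$. Passing from $J_{j-1}$ to $J_j$ may leave the dimension unchanged; this happens when $N_{n_j}$ is a finite extension of a factor of $J_{j-1}$. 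We will show that \emph{whenever} $\dim J_j>\dim J_{j-1}$, we have
$$\|\mathbb{E}[f\mid J_j]\|_2^2-\|\mathbb{E}[f\mid J_{j-1}]\|_2^2>\delta^2 .$$

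To prove this increment, let $N':=N_{n_j}\wedge J_{j-1}$. This is a factor of the nilsystem $N_{n_j}$, hence itself a nilsystem. If $\dim N'=\dim N_{n_j}$, then $N_{n_j}$ is a finite extension of $N'$, and the joining of $J_{j-1}$ with $N_{n_j}$ is a finite extension of $J_{j-1}$. In that case $\dim J_j=\dim J_{j-1}$. Otherwise $N'$ is a strict nilfactor of $N_{n_j}$, and the nondegeneracy hypothesis \eqref{Eq: nondegeneracy} gives
$$\|\mathbb{E}[f\mid N_{n_j}]-\mathbb{E}[f\mid N']\|_2>\delta .$$
We then need to transfer this to the increment from $J_{j-1}$ to $J_j$. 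Since $L^2(N_{n_j})\subset L^2(J_j)$, Pythagoras reduces the claim to the following: the function $h:=\mathbb{E}[f\mid N_{n_j}]-\mathbb{E}[f\mid N']$ has no component in $L^2(J_{j-1})$. Equivalently, $L^2(N_{n_j})\ominus L^2(N')$ must be orthogonal to $L^2(J_{j-1})$. This is a relative independence statement: $N_{n_j}$ and $J_{j-1}$ must be relatively independent over their meet.

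This relative independence is the main obstacle, because it fails for general joinings. For nilsystems, however, Lemma \ref{Lemma: Joining nilbundles} describes the joining as a Haar measure on a subnilmanifold $H\cdot\bar e$ of the product. The plan is to use that conditional expectations onto factors of nilsystems are averages over fibres of group homomorphisms. The projection of $L^2(N_{n_j})$ onto $L^2(J_{j-1})$ is then averaging over the normal subgroup $H\cap(N_{n_j}\times\{e\})$, and its image lies in $L^2$ of a common factor of both. If this identification of the meet only works up to finite index, the fallback is to replace $N'$ by the maximal common nilfactor, possibly up to a finite extension. A finite extension of $N'$ has the same dimension as $N'$, so it is still a strict nilfactor of $N_{n_j}$ and hypothesis \eqref{Eq: nondegeneracy} still applies to it.

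With the increment established, we count. The quantity $\|\mathbb{E}[f\mid J_j]\|_2^2$ is nondecreasing in $j$ and bounded by $\|f\|_2^2$. It therefore increases by more than $\delta^2$ at most $\|f\|_2^2\delta^{-2}$ times. So the dimension strictly increases at most $\|f\|_2^2\delta^{-2}$ times. Each such step adds at most $D$ to the dimension, because the joining of $J_{j-1}$ with $N_{n_j}$ embeds in the product, whose dimension is $\dim J_{j-1}+\dim N_{n_j}$. Non-increasing steps add nothing. This gives $\dim J_r\le D\|f\|_2^2\delta^{-2}$.
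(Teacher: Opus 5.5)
Your proposal is correct and follows essentially the paper's proof: join the $N_n$ one at a time, split on whether $N_{n_j}\wedge J_{j-1}$ has full dimension, use relative independence plus Pythagoras to gain at least $\delta^2$ in $\|\mathbb{E}[f\mid\cdot]\|_2^2$ whenever the dimension grows, and count. The one difference is that the paper handles the relative-independence step by citing \cite[Ch.~13, Thm.~6]{HostKra} for independence over a common factor $N_i^*$, whereas you sketch it by averaging over $H\cap(N_{n_j}\times\{e\})$; your finite-extension fallback is genuinely needed rather than a mere contingency, because the strict factor $N^*$ of $N_{n_j}$ over which the orthogonality holds is in general a finite extension of the meet and need not be a factor of $J_{j-1}$ (for instance, two index-two overlattices $\Gamma_1,\Gamma_2$ of a Heisenberg lattice with $\Gamma_1\Gamma_2\neq\Gamma_2\Gamma_1$ give non-commuting conditional expectations, so no common factor works).
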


\begin{proof}
    For $i \geq 0$, let $N_{<i}$ denote the join of $N_0, \ldots, N_{i-1}$ inside $X$ i.e. the smallest factor of $X$ through which each $N_j$ ($j < i$) factors. By \cite[Ch. 13, Thm. 6]{HostKra}, $N_{<i}$ is a nilsystem, and $N_{<i+1}$ is a joining of $N_{<i}$ and $N_i$ relatively independent over a common factor $N_i \to N_i^*$.

    There are two cases. If $\dim N_i^* = \dim N_i$, then $\dim N_{<i+1} = \dim N_{<i}$. Otherwise $N_i^*$ is a strict factor of $N_i$, hypothesis \eqref{Eq: nondegeneracy} gives $\|\mathbb{E}[f \mid N_i] - \mathbb{E}[f \mid N_i^*]\|_2 > \delta$, and by relative independence of $N_{<i+1}$ over $N_i^*$ the function $\mathbb{E}[f \mid N_i] - \mathbb{E}[f \mid N_i^*]$ is orthogonal to $L^2(N_{<i})$. Hence
    \begin{align*}
        \|\mathbb{E}[f \mid N_{<i+1}]\|_2^2
        &\geq \|\bigl(\mathbb{E}[f \mid N_i] - \mathbb{E}[f \mid N_i^*]\bigr) + \mathbb{E}[f \mid N_{<i}]\|_2^2 \\
        &= \|\mathbb{E}[f \mid N_i] - \mathbb{E}[f \mid N_i^*]\|_2^2 + \|\mathbb{E}[f \mid N_{<i}]\|_2^2 \\
        &\geq \|\mathbb{E}[f \mid N_{<i}]\|_2^2 + \delta^2,
    \end{align*}
    and $\dim N_{<i+1} \leq \dim N_{<i} + D$.

    Since $\|\mathbb{E}[f \mid N_{<i}]\|_2 \leq \|f\|_2$ for all $i$, the second case occurs at most $\|f\|_2^2 \delta^{-2}$ times. Therefore $\dim N_{<i} \leq D \|f\|_2^2 \delta^{-2}$ for all $i$.
\end{proof}

\begin{proof}[Proof of Proposition \ref{Prop: reduction to nilbundles}]
    Throughout, write $\Gamma := \SL_d(\mathbb{Z})$ and $\Omega := Z^0(X)$.

    \textit{Step 1 (initial choice of nilfactor).} By Theorem \ref{Thm: Host-Kra non-ergodic} there exist $D \geq 0$ and a nilfactor $\pi: X \to Y$ over $\Omega$ with fibrewise dimension at most $D$, such that $\|f - \mathbb{E}[f \mid Y]\|_2 < \epsilon$. Among all such factors we may further assume that $Y$ is \emph{minimal}, in the sense that $\mathbb{E}[f \mid Y]$ does not factor through any proper nilfactor $Y \to Z$ over $\Omega$:
    \begin{equation}\label{Eq: minimality}
        \mathbb{E}[f \mid Z] = \mathbb{E}[f \mid Y] \implies Z = Y.
    \end{equation}

    Write $Y = \bigsqcup_{n \geq 0} \Omega_n \times N_n / \Gamma_n$. We will show that $Y$ admits a uniform fibrewise dimension bound on the joinings $(\pi \circ \gamma)_{\gamma \in \Gamma}$. By Lemma \ref{Lemma: Dimension bound under proper factor condition}, it suffices to find $\delta_0 > 0$ such that, for $\mu_\Omega$-a.e.\ $\omega \in \Omega$, the family $(Y^\gamma_\omega)_{\gamma \in \Gamma}$ satisfies the non-degeneracy condition \eqref{Eq: nondegeneracy} with margin $\delta_0$, where $Y^\gamma$ denotes the range of $\pi \circ \gamma$ as in Lemma \ref{Lemma: gamma-twist of a nilfactor}.

    \textit{Step 2 (controlling strict factors fibrewise).} For each $n \geq 0$, by the affine rigidity of nilfactors (Lemma \ref{Lemma: Factors nilbundles}) every strict nilfactor of $N_n / \Gamma_n$ corresponds to a non-trivial rational normal subgroup $L \trianglelefteq N_n$, with the factor obtained by quotienting fibrewise:
    $$ N_n / \Gamma_n \leftrightarrow L \backslash N_n / \Gamma_n.$$
    Let $\mathcal{L}_n$ denote the set of such subgroups; this is countable (Lemma \ref{Lemma: Measurable selection subnilmanifolds}). For $L \in \mathcal{L}_n$ let $\mu_L$ denote the Haar probability measure on $L$, viewed as a probability measure on $N_n$ supported on a compact subnilmanifold via the lattice $L \cap \Gamma_n$.

    For $L \in \mathcal{L}_n$ and $x \in \Omega_n \times N_n / \Gamma_n$, define the $L$-average of $f$ along the fibre:
    $$ f_{n, L}(x) := \int_L f(\ell \cdot x) \, d\mu_L(\ell);$$
    set $f_{n, L}(x) := f(x)$ for $x \notin \Omega_n \times N_n / \Gamma_n$. Define
    $$ \delta_{n, L}(\omega) := \sqrt{\mathbb{E}[|f - f_{n, L}|^2 \mid \Omega](\omega)}, \qquad \delta(\omega) := \inf_{n, L} \delta_{n, L}(\omega).$$
    The function $\delta$ is measurable and bounded by $2 \|f\|_\infty$. Moreover, $\delta_{n, L}(\omega) > 0$ if and only if the strict factor of $N_n/\Gamma_n$ corresponding to $L$ does not contain $f|_{\{\omega\} \times N_n/\Gamma_n}$, so the non-degeneracy condition \eqref{Eq: nondegeneracy} holds at $\omega \in \Omega_n$ with margin $\delta(\omega)$.

    \textit{Step 3 ($\delta > 0$ a.e.).} Suppose for contradiction that $\Omega^0 := \delta^{-1}(\{0\})$ has positive measure. For $\omega \in \Omega^0$ and $n$ such that $\omega \in \Omega_n$, choose a sequence $(L_m)_{m \geq 0}$ in $\mathcal{L}_n$ with $\delta_{n, L_m}(\omega) \to 0$. Since the space of Haar measures on subnilmanifolds is weak-$*$ compact (Lemma \ref{Lemma: Measurable selection subnilmanifolds} and the discussion preceding it), we may pass to a subsequence and assume $\mu_{L_m} \to \mu$ weakly for some Haar measure $\mu$ on a subnilmanifold of $N_n$. The limit $\mu$ is again the Haar measure on a rational subgroup $L \in \mathcal{L}_n$, and continuity of convolution then gives $f_{n, L_m} \to f_{n, L}$ in $L^2$, hence $\delta_{n, L}(\omega) = 0$, i.e.\ $f$ is $L$-invariant on $\{\omega\} \times N_n / \Gamma_n$.

    Set
    $$ \Omega_{n, L} := \bigl\{\omega \in \Omega_n : f \text{ is $L$-invariant on } \{\omega\} \times N_n / \Gamma_n\bigr\}.$$
    The family $(\Omega_{n, L})_{n, L}$ is countable, each set is measurable, and $\bigcup_{n, L} \Omega_{n, L} \supseteq \Omega^0$, hence has positive measure. Pick $(n_0, L_0)$ with $\nu(\Omega_{n_0, L_0}) > 0$. Define a nilfactor $Y \to Y'$ by quotienting by $L_0$ on the piece $\Omega_{n_0, L_0}$:
    $$ Y' := \bigsqcup_{n \neq n_0} \Omega_n \times N_n/\Gamma_n \;\sqcup\; \Omega_{n_0, L_0} \times L_0 \backslash (N_{n_0}/\Gamma_{n_0}) \;\sqcup\; (\Omega_{n_0} \setminus \Omega_{n_0, L_0}) \times N_{n_0}/\Gamma_{n_0}.$$
    On $\Omega_{n_0, L_0}$, $f$ is $L_0$-invariant, so $\mathbb{E}[f \mid Y'] = \mathbb{E}[f \mid Y]$; but $Y'$ is a strict factor of $Y$, contradicting the minimality \eqref{Eq: minimality}.

    Therefore $\delta > 0$ almost surely on $\Omega$. Choose $\delta_0 > 0$ such that $\Omega_e := \delta^{-1}([\delta_0, \infty))$ has positive measure and $\|\mathbf{1}_{\Omega_e} f\|_2 \geq \|f\|_2 - \epsilon$.

    \textit{Step 4 (covering $\Omega$ by $\SL_d(\mathbb{Z})$-translates).} The $\ASL_d(\mathbb{Z})$-action on $\Omega = Z^0(X)$ factors through $\Gamma$, and is ergodic since the $\ASL_d(\mathbb{Z})$-action on $X$ is. Since $\Gamma$ is countable and $\Omega_e$ has positive measure, $\bigcup_{\gamma \in \Gamma} \gamma \cdot \Omega_e = \Omega$ up to a null set. By countability we may select a Borel partition $(\Omega_\gamma)_{\gamma \in \Gamma}$ of $\Omega$ with $\Omega_\gamma \subseteq \gamma \cdot \Omega_e$ for every $\gamma$.

    Define
    $$ Y' := \bigsqcup_{\gamma \in \Gamma} \bigsqcup_{n \geq 0} \Omega_\gamma^n \rtimes_{\rho_n^{\gamma^{-1}}} N_n / \Gamma_n, \qquad \Omega_\gamma^n := \Omega_\gamma \cap \gamma \cdot \Omega_n,$$
    where $\rho_n^{\gamma^{-1}}$ is the twisted homomorphism from Lemma \ref{Lemma: gamma-twist of a nilfactor}. By construction $Y'$ is a nilfactor of $X$ over $\Omega$, with the same fibrewise dimension bound as $Y$, and on each piece $\Omega_\gamma^n$ the fibre coincides with the fibre of $\pi \circ \gamma^{-1}$ at the corresponding point of $\gamma^{-1} \cdot \Omega_\gamma \subseteq \Omega_e$.

    \textit{Step 5 (commensuration and dimension bound).} The $\SL_d(\mathbb{Z})$-invariance of $f$ implies that the non-degeneracy condition \eqref{Eq: nondegeneracy} for the family $(Y'^\gamma_\omega)_{\gamma \in \Gamma}$ at $\omega$ is equivalent to the same condition for the family $(Y^\gamma_{\gamma^{-1} \omega})_{\gamma \in \Gamma}$ at points $\gamma^{-1} \omega \in \Omega_e$. This holds with margin $\delta_0$ by the choice of $\Omega_e$, for $\mu_\Omega$-a.e.\ $\omega$.

    Lemma \ref{Lemma: Dimension bound under proper factor condition}, applied fibrewise to each ergodic component $X_\omega$ with $\|f_\omega\|_2 \leq \|f\|_\infty$, gives that for every finite $I \subset \Gamma$ the fibrewise dimension of the joining of $(Y'^\gamma)_{\gamma \in I}$ inside $X$ is at most $D \|f\|_\infty^2 \delta_0^{-2}$. In particular this dimension is uniformly bounded, and this yields a commensurated nilfactor with the desired properties.
\end{proof}

\subsection{Reduction to a common nilpotent group}

In this subsection we show that any nilfactor commensurated by $\SL_d(\mathbb{Z})$ admits a bundle decomposition in which the nilpotent Lie group is constant across fibres, only the lattice varies. We work throughout under the standing hypothesis that all nilpotent Lie groups encountered are of the form $N = \mathbb{Z}^d \ltimes N_0$ with $N_0$ connected and simply connected, generated by $N_0$ and the image $\rho(\mathbb{Z}^d)$ of the dynamical action.

We begin with a rigidity statement: two nilfactors with finite-to-finite joining must share their underlying nilpotent groups, up to commensurability.

\begin{cor}\label{Corollary: Commensurable factors have almost same decomposition}
    Let $X$ be a $\mathbb{Z}^d$-system and $\pi_i: X \to Y_i$, $i = 1, 2$, be two nilfactors over a common base $\Omega$, with bundle decompositions
    $$ Y_i = \bigsqcup_{n \geq 0} \Omega^i_n \rtimes_{\rho^i_n} N^i_n / \Gamma^i_n.$$
    If the joining of $Y_1$ and $Y_2$ over $\Omega$ induced by $X$ is finite-to-finite, then for every $n, l \geq 0$ with $\nu(\Omega^1_n \cap \Omega^2_l) > 0$, the nilpotent Lie groups $N^1_n$ and $N^2_l$ are isomorphic.
\end{cor}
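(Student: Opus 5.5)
Fix $n,l$ with $\nu(\Omega^1_n\cap\Omega^2_l)>0$ and work fibrewise over $\omega$ in this set. By Lemma \ref{Lemma: Joining nilbundles}, the joining $Z$ of $Y_1$ and $Y_2$ over $\Omega$ is a bundle of nilsystems. Refining $\Omega^1_n\cap\Omega^2_l$ by a countable Borel partition, we may assume that on a positive-measure piece the fibre $Z_\omega$ is the Haar measure on a subnilmanifold $u(\omega)H\cdot\overline e\subset N^1_n/\Gamma^1_n\times N^2_l/\Gamma^2_l$, with a fixed rational subgroup $H\le N^1_n\times N^2_l$. The goal is to show that the two coordinate projections $p_1\colon H\to N^1_n$ and $p_2\colon H\to N^2_l$ have discrete kernels and images of finite index. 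Then $N^1_n$ and $N^2_l$ are both (essentially) quotients of $H$ by discrete normal subgroups, and we compare them through $H$.

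\textbf{Surjectivity.} This step follows the proof of Lemma \ref{Lemma: Factors nilbundles}. Each marginal of $Z_\omega$ is Haar on the ergodic fibre $N^i/\Gamma^i$, so $p_i(H)$ acts transitively on $N^i/\Gamma^i$. It therefore contains the identity component, and together with the images of $\rho^i_n(\omega)(\mathbb{Z}^d)$, which come from the $\mathbb{Z}^d$-action on $Z$, it is all of $N^i$.

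\textbf{Discrete kernels.} Here we use the finite-to-finite hypothesis. The fibre of $Z_\omega\to N^1_n/\Gamma^1_n$ over a point is a translate of $(\ker p_1)\cdot \overline e$ inside $\{x\}\times N^2_l/\Gamma^2_l$. If $\ker p_1=H\cap(\{e\}\times N^2_l)$ had positive dimension, this fibre would be a positive-dimensional subnilmanifold, hence infinite, contradicting finiteness almost everywhere. So $\ker p_1$ is discrete, and symmetrically $\ker p_2$ is discrete.

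\textbf{Comparing the groups.} From the above, $N^1_n\cong H/\ker p_1$ and $N^2_l\cong H/\ker p_2$, with discrete normal kernels. In particular the identity components $(N^1_n)^\circ$ and $(N^2_l)^\circ$ are both quotients of $H^\circ$ by discrete central subgroups. Since they are simply connected (standing hypothesis $N=\mathbb{Z}^d\ltimes N_0$), both are isomorphic to the universal cover of $H^\circ$, so $N^1_0\cong N^2_0$. For the full groups, both are generated by $N_0$ and the image of $\mathbb{Z}^d$.

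The main obstacle is assembling the isomorphism of the full groups, not just their identity components. I would try to show that $\ker p_1$ and $\ker p_2$ meet $H^\circ$ trivially, which holds because the kernels are discrete and the quotients have simply connected identity components. Then $\ker p_i\cap H^\circ=\{e\}$, so $\ker p_i$ injects into the discrete quotient $H/H^\circ$. From there one can either show both kernels are trivial modulo a reduction assumption, or pass to the common quotient $H/(\ker p_1\ker p_2)$. The cleaner route is to identify both $N^i$ with the semidirect product $\mathbb{Z}^d\ltimes N_0$, where the $\mathbb{Z}^d$-action on $N_0$ is conjugation by $\rho^i(t)$. Since $\rho^1(t)$ and $\rho^2(t)$ are both the images of the same element $h_t\in H$ lifting the $\mathbb{Z}^d$-action on $Z$, the conjugation actions on $N^1_0\cong H^\circ\cong N^2_0$ agree, and the two semidirect products coincide.
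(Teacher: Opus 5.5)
Your proposal is correct and follows the paper's proof. The paper likewise models the joining by a nilpotent group $N_m$ (your $H$) via Lemma \ref{Lemma: Joining nilbundles}, obtains surjections $\phi_i\colon N_m\to N^i$ (your $p_i$) from Lemma \ref{Lemma: Factors nilbundles}, uses the finite-to-finite hypothesis to make their kernels small, and concludes with Lemma \ref{Lemma: Surjection with finite kernel is iso}. Your finish is a hands-on version of that lemma: you identify the identity components through $H^\circ$ and match the $\mathbb{Z}^d$-parts through the common lift $h_t$. This needs only discrete (not finite) kernels and does not require $H$ itself to have the form $\mathbb{Z}^d\ltimes N_0$. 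It does require $H$ to be chosen so that it contains the conjugated translations $h_t$, which Leibman's orbit-closure description allows and which the paper also assumes implicitly.
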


\begin{proof}
    Replacing $X$ by the joining $Z$ of $Y_1$ and $Y_2$ over $\Omega$ does not affect the conclusion, so we may assume $X = Z$. By Lemma \ref{Lemma: Joining nilbundles}, $X$ admits a bundle decomposition
    $$ X = \bigsqcup_{m \geq 0} \Omega_m \rtimes_{\rho_m} N_m / \Gamma_m$$
    over $\Omega$, with each $\pi_i: X \to Y_i$ a nilfactor over $\Omega$.

    Fix $n, l, m \geq 0$ with $\nu(\Omega^1_n \cap \Omega^2_l \cap \Omega_m) > 0$. By Lemma \ref{Lemma: Factors nilbundles} applied to each $\pi_i$, after passing to a measurable refinement we obtain surjective continuous group homomorphisms
    $$ \phi_1: N_m \to N^1_n, \qquad \phi_2: N_m \to N^2_l$$
    and Borel maps $b_1, b_2$ to $N^1_n, N^2_l$ such that, on a positive-measure subset of $\Omega^1_n \cap \Omega^2_l \cap \Omega_m$,
    $$ \pi_i(\omega, \overline{x}) = \bigl(\omega,\, \overline{b_i(\omega) \, \phi_i(x)}\bigr), \quad i = 1, 2.$$
    Since the joining of $Y_1$ and $Y_2$ is finite-to-finite, both $\pi_1$ and $\pi_2$ are finite-to-one, hence each $\phi_i$ has finite kernel. By Lemma \ref{Lemma: Surjection with finite kernel is iso} below, finite-kernel surjective endomorphisms within our class of nilpotent groups have trivial kernel, so each $\phi_i$ is an isomorphism. In particular $N^1_n \simeq N_m \simeq N^2_l$.
\end{proof}

The following lemma — used implicitly above and again below — is where the standing hypothesis on $N$ enters.

\begin{lem}\label{Lemma: Surjection with finite kernel is iso}
    Let $N = \mathbb{Z}^d \ltimes N_0$ with $N_0$ connected simply connected nilpotent. Let $\phi: N \to N'$ be a continuous surjective group homomorphism with finite kernel onto another nilpotent group $N'$ in the same class. Then $\phi$ is an isomorphism.
\end{lem}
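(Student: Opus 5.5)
The plan is to show that $K := \ker\phi$ is a finite normal subgroup of $N$, and then that $N$ has no non-trivial finite subgroups at all. This forces $K = \{e\}$; since $\phi$ is a continuous bijection between second countable locally compact groups, it is then a homeomorphism by the open mapping theorem.

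\textbf{Step 1: $N$ is torsion-free.} Let $g \in N$ have finite order $r$. Write $p: N = \mathbb{Z}^d \ltimes N_0 \to \mathbb{Z}^d$ for the quotient map, which is a homomorphism because $N_0$ is normal. Then $r\,p(g) = p(g^r) = 0$ in the torsion-free group $\mathbb{Z}^d$, so $p(g) = 0$ and $g \in N_0$. Now $N_0$ is a connected simply connected nilpotent Lie group, so $\exp: \mathrm{Lie}(N_0) \to N_0$ is a diffeomorphism. Writing $g = \exp(X)$ gives $g^r = \exp(rX) = e$, hence $rX = 0$, so $X = 0$ and $g = e$. Equivalently, one can use the fact recalled in the paper that a connected simply connected nilpotent Lie group contains no non-trivial compact subgroup.

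\textbf{Step 2: conclusion.} A finite group in which every element has finite order must be trivial once the ambient group is torsion-free. By Step 1, $K = \{e\}$, so $\phi$ is injective and hence a continuous group isomorphism. It remains to see that $\phi^{-1}$ is continuous. Both groups are Lie groups with countably many components: $N/N_0 \cong \mathbb{Z}^d$, and $N'$ is in the same class. So the open mapping theorem for $\sigma$-compact locally compact groups applies and shows $\phi$ is open, hence a homeomorphism.

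\textbf{Main obstacle.} The only delicate point is using the standing form $N = \mathbb{Z}^d \ltimes N_0$ in Step 1. One needs $N_0$ to be exactly the identity component, with discrete quotient $\mathbb{Z}^d$, so that torsion lies in $N_0$. Without simple connectivity of $N_0$ the statement fails: for example, $\mathbb{R} \to \mathbb{R}/\mathbb{Z}$ composed with a finite cover of the circle gives a surjection with non-trivial finite kernel. So the argument hinges on excluding torus factors in $N_0$, which the hypothesis does. Notably, no hypothesis on $N'$ beyond it being a locally compact, second countable Hausdorff group is actually needed.
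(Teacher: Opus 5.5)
Your proof is correct, but it is organised differently from the paper's. The paper splits $\phi$ along the identity component. First it argues that $\phi$ maps $N_0$ onto $N'_0$, that $\ker\phi\cap N_0$ is a finite normal subgroup of the connected group $N_0$ and hence central, and that the centre of a connected simply connected nilpotent Lie group has no non-trivial finite subgroups. It then passes to the induced surjection of component groups $\mathbb{Z}^d\to\mathbb{Z}^d$, notes that it is injective, and recombines the two pieces.

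You collapse all of this into one fact: $N$ itself is torsion-free. Torsion is pushed into $N_0$ because $\mathbb{Z}^d$ is torsion-free, and $N_0$ is torsion-free because $\exp$ is bijective. Hence the finite subgroup $\ker\phi$ is trivial. Both proofs rest on the same two facts, namely that neither $\mathbb{Z}^d$ nor a connected simply connected nilpotent group has non-trivial finite subgroups.

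Your packaging has several advantages:
\begin{itemize}
\item it is shorter;
\item it shows that no hypothesis on $N'$ is needed for injectivity;
\item it avoids the steps the paper passes over quickly, namely that $\phi(N_0)$ is all of $N'_0$ and that $N'/N'_0\cong\mathbb{Z}^d$;
\item it justifies continuity of $\phi^{-1}$ via the open mapping theorem, which the paper leaves implicit.
\end{itemize}
The paper's decomposition displays the isomorphisms of identity components and of component groups directly. However, these follow from your conclusion once $\phi$ is known to be a topological isomorphism.

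One slip in your closing remark: $\mathbb{R}\to\mathbb{R}/\mathbb{Z}$ composed with a finite cover of the circle has infinite kernel. The counterexample you want is the doubling map $x\mapsto 2x$ on $N_0=\mathbb{T}=\mathbb{R}/\mathbb{Z}$, which is connected but not simply connected. Its kernel $\{0,\tfrac12\}$ is finite and non-trivial.
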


\begin{proof}
    Continuity sends the identity component $N_0$ of $N$ into the identity component of $N'$, which is again connected simply connected. Since $\phi(N_0)$ is closed and connected, and $\phi$ is surjective with finite kernel, the induced map $\phi: N_0 \to N'_0$ is again surjective with finite kernel. The kernel is a finite normal subgroup of the connected group $N_0$, hence central; but the centre of a connected simply connected nilpotent Lie group is itself connected and simply connected (as $\exp$ is a diffeomorphism), so it contains no non-trivial finite subgroup. Thus $\phi|_{N_0}$ is injective, hence an isomorphism $N_0 \xrightarrow{\sim} N'_0$.

    Passing to component groups, $\phi$ induces a surjective homomorphism $\mathbb{Z}^d \to \mathbb{Z}^d$ with finite kernel; any surjective endomorphism of $\mathbb{Z}^d$ is an isomorphism. Combining the two pieces, $\phi$ has trivial kernel, hence is an isomorphism.
\end{proof}

\begin{lem}\label{Lemma: Def common nilpotent group}
    Let $(X, \mu)$ be an ergodic $\ASL_d(\mathbb{Z})$-system and $\pi: X \to Y$ a nilfactor over $Z^0(X)$ commensurated by $\SL_d(\mathbb{Z})$. Then there exist a nilpotent Lie group $N$ (of the standing form), a Borel partition $(\Omega_n)_{n \geq 0}$ of $Z^0(X)$, lattices $\Gamma_n \subset N$ pairwise commensurable in $N$, and homomorphisms $\rho_n: \mathbb{Z}^d \to N$ such that
    $$ Y \simeq \bigsqcup_{n \geq 0} \Omega_n \rtimes_{\rho_n} N / \Gamma_n$$
    as a bundle of nilsystems over $Z^0(X)$.
\end{lem}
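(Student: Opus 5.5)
We start from an arbitrary bundle decomposition $Y = \bigsqcup_{n} \Omega_n \rtimes_{\rho_n} N_n/\Gamma_n$ over $\Omega := Z^0(X)$. The plan has two steps. First, show that the isomorphism class of $N_n$ is essentially constant. Second, transport all fibres onto one fixed group $N$, absorbing the identifications into the lattices.

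\textbf{Step 1: the isomorphism class of $N_n$ is $\SL_d(\mathbb{Z})$-invariant.} Define a measurable function $c: \Omega \to \{\text{isomorphism classes}\}$ by $c(\omega) := [N_n]$ for $\omega \in \Omega_n$; it takes countably many values. Fix $\gamma \in \SL_d(\mathbb{Z})$. By Lemma \ref{Lemma: gamma-twist of a nilfactor}, $\pi \circ \gamma$ is a nilfactor over $\Omega$ whose fibre over $\omega$ is $N_n/\Gamma_n$ with $\omega \in \gamma^{-1}\Omega_n$. Since $Y$ is commensurated, the joining of $\pi$ and $\pi\circ\gamma$ is finite-to-finite. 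Corollary \ref{Corollary: Commensurable factors have almost same decomposition} then gives $N_n \simeq N_l$ whenever $\Omega_l \cap \gamma^{-1}\Omega_n$ has positive measure. Hence $c(\gamma\omega) = c(\omega)$ almost everywhere for every $\gamma$. The $\SL_d(\mathbb{Z})$-action on $\Omega$ is ergodic, because $X$ is $\ASL_d(\mathbb{Z})$-ergodic and $\mathbb{Z}^d$ acts trivially on $Z^0(X)$. Therefore $c$ is a.e. constant, and we fix a representative $N$ in that class. For each $n$ we choose an isomorphism $\theta_n: N_n \to N$, and we transport the fibre to $N/\theta_n(\Gamma_n)$ with action $\theta_n \circ \rho_n$. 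This already yields the form $\bigsqcup \Omega_n \rtimes_{\rho_n'} N/\Gamma_n'$.

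\textbf{Step 2: making the lattices commensurable.} The lattices $\Gamma_n'$ obtained above are pairwise commensurable only up to automorphisms of $N$, since the $\theta_n$ were arbitrary. The countable set of lattices in $N$ that are images of $\Gamma'_0$ under automorphisms is not what we need; instead we use the rational structure. Every lattice in $N$ determines a $\mathbb{Q}$-structure on the Malcev completion $\hat N$, and two lattices are commensurable if and only if they define the same $\mathbb{Q}$-structure \cite[Thm. 2.12]{RaghunathanDiscrete}. The transported lattices $\Gamma_n'$ give $\mathbb{Q}$-forms of $\hat N$ that differ by automorphisms. So we fix the $\mathbb{Q}$-structure of $\Gamma_0'$ and, for each $n$, choose $\alpha_n \in \Aut(N)$ (extending to $\hat N$) carrying the $\mathbb{Q}$-structure of $\Gamma'_n$ to that of $\Gamma'_0$. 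Such an $\alpha_n$ exists provided all these $\mathbb{Q}$-forms are isomorphic over $\mathbb{Q}$, not merely over $\mathbb{R}$.

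This is the main obstacle. We establish it with the same invariance argument as in Step 1, now with $c(\omega)$ the $\mathbb{Q}$-isomorphism class of the pair $(N,\Gamma_n)$ up to commensurability; this class is countable. In the finite-to-finite joining of $\pi$ and $\pi\circ\gamma$, Lemma \ref{Lemma: Factors nilbundles} gives isomorphisms $\phi_i$ from the joining group $N_m$ onto the fibre groups, with $\phi_i(\Gamma_m) \subset \Gamma^i$. Since the $\phi_i$ are isomorphisms and $\Gamma_m$ is a lattice, $\phi_i(\Gamma_m)$ has finite index in $\Gamma^i$. Consequently $\phi_2\circ\phi_1^{-1}$ is an isomorphism carrying $\Gamma^1$ to a lattice commensurable with $\Gamma^2$, so it preserves the $\mathbb{Q}$-class. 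Ergodicity of $\SL_d(\mathbb{Z})$ on $\Omega$ then makes the $\mathbb{Q}$-class constant a.e.

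Finally we replace $\theta_n$ by $\alpha_n\circ\theta_n$ and refine the partition into countably many measurable pieces on which these choices are constant. All resulting lattices then lie in a single commensurability class of $N$. The transported nilsystems are isomorphic to the original fibres via automorphisms, so $Y$ is isomorphic to the stated bundle. Throughout, Lemma \ref{Lemma: Surjection with finite kernel is iso} is what guarantees that the relevant homomorphisms between fibre groups are isomorphisms.
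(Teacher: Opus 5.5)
Your proposal is correct. Step 1 is the paper's own argument: the twist lemma, the finite-to-finite joining of $\pi$ and $\pi\circ\gamma$, Corollary~\ref{Corollary: Commensurable factors have almost same decomposition}, and ergodicity of $\SL_d(\mathbb{Z})$ on $Z^0(X)$ together show that the isomorphism type of $N_n$ is a.e.\ constant.

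You diverge from the paper on commensurability. The paper fixes arbitrary isomorphisms $\iota_n\colon N_n\to N$. It then asserts that the $\iota_n(\Gamma_n)$ are commensurable by ``the same joining argument applied to the trivial twist $\gamma=\id$'' together with Lemma~\ref{Lemma: Factors nilbundles}. As literally stated this does not suffice, for two reasons:
\begin{itemize}
\item The pieces $\Omega_n$ are disjoint, so $\gamma=\id$ never compares two different indices.
\item For arbitrary $\iota_n$ the conclusion can fail, e.g.\ $\mathbb{Z}^d\times\mathbb{Z}$ versus $\mathbb{Z}^d\times\sqrt2\,\mathbb{Z}$ in $N=\mathbb{Z}^d\times\mathbb{R}$.
\end{itemize}

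Your Step 2 repairs this, and it is what actually proves the lemma. You upgrade the invariant to the class of $(N_n,\Gamma_n)$ under the relation ``some isomorphism $\psi$ makes $\psi(\Gamma)$ commensurable with $\Gamma'$''. You show this class is $\SL_d(\mathbb{Z})$-invariant by extracting from the proof of the Corollary that $\phi_2\circ\phi_1^{-1}$ maps the finite-index subgroup $\phi_1(\Gamma_m)\subset\Gamma^1$ onto the finite-index subgroup $\phi_2(\Gamma_m)\subset\Gamma^2$. Ergodicity, now with nontrivial $\gamma$, then lets you choose the identifications $\alpha_n\circ\theta_n$ compatibly. The cost is that you must open up the Corollary's proof rather than cite its statement.

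Some simplifications are available:
\begin{itemize}
\item Step~2 subsumes Step~1.
\item The Malcev/$\mathbb{Q}$-structure language is unnecessary. It is formulated for connected simply connected groups, not $\mathbb{Z}^d\ltimes N_0$, and you only use the equivalence relation above.
\item No countability of the set of classes is needed: your $c$ is constant on each $\Omega_n$, hence automatically countably valued.
\item No refinement of the partition is needed, since your choices depend only on $n$.
\end{itemize}
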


\begin{proof}
    Fix any bundle decomposition $Y \simeq \bigsqcup_n \Omega_n \rtimes_{\rho_n} N_n / \Gamma_n$, discarding pieces with $\nu(\Omega_n) = 0$. For each $\gamma \in \SL_d(\mathbb{Z})$, by Lemma \ref{Lemma: gamma-twist of a nilfactor} the nilfactor $\pi \circ \gamma$ has bundle decomposition
    $$ \bigsqcup_n \gamma^{-1}(\Omega_n) \rtimes_{\rho_n^\gamma} N_n / \Gamma_n.$$
    Since $\pi$ is commensurated, the joining of $\pi$ and $\pi \circ \gamma$ over $Z^0(X)$ is finite-to-finite. Corollary \ref{Corollary: Commensurable factors have almost same decomposition} then yields: for every $n, l \geq 0$ with $\nu(\Omega_n \cap \gamma^{-1}(\Omega_l)) > 0$, $N_n \simeq N_l$.

    The $\ASL_d(\mathbb{Z})$-action on $Z^0(X)$ factors through $\SL_d(\mathbb{Z})$ and is ergodic since the $\ASL_d(\mathbb{Z})$-action on $X$ is. Hence for every pair $n, l$ with $\nu(\Omega_n), \nu(\Omega_l) > 0$, ergodicity gives some $\gamma \in \SL_d(\mathbb{Z})$ with $\nu(\Omega_n \cap \gamma^{-1}(\Omega_l)) > 0$, and so $N_n \simeq N_l$.

    Fix isomorphisms $\iota_n: N_n \xrightarrow{\sim} N$ to a common $N$. Pulling back, $Y$ is isomorphic to
    $$ \bigsqcup_n \Omega_n \rtimes_{\iota_n \circ \rho_n} N / \iota_n(\Gamma_n).$$
    Pairwise commensurability of the lattices $\iota_n(\Gamma_n)$ in $N$ follows from the same finite-to-finite joining argument applied to the trivial twist $\gamma = \id$, using Lemma \ref{Lemma: Factors nilbundles} to identify the lattices on overlapping pieces up to finite index.
\end{proof}

The next corollary describes how a finite-to-one nilfactor refines such a decomposition: the same nilpotent group $N$ persists, only the lattices subdivide.

\begin{cor}\label{Corollary: Trivialisation of finite-to-one factors}
    Let $\pi_Y: X \to Y$ be a nilfactor over $Z^0(X)$ commensurated by $\SL_d(\mathbb{Z})$, identified via Lemma \ref{Lemma: Def common nilpotent group} with
    $$ Y \simeq \bigsqcup_{n \geq 0} \Omega_n \rtimes_{\rho_n} N / \Gamma_n.$$
    Let $\pi_Z: X \to Z$ be a nilfactor over $Z^0(X)$ and $p: Z \to Y$ a finite-to-one factor map over $Z^0(X)$ with $\pi_Y = p \circ \pi_Z$. Then there exists an isomorphism
    $$ Z \simeq \bigsqcup_{n, m \geq 0} \Omega_{n, m} \rtimes_{\rho_{n, m}} N / \Gamma_{n, m}$$
    of bundles of nilsystems, where for every $n \geq 0$:
    \begin{enumerate}
        \item $(\Omega_{n, m})_{m \geq 0}$ is a Borel partition of $\Omega_n$;
        \item $\Gamma_{n, m}$ is a finite-index sublattice of $\Gamma_n$;
        \item under this isomorphism, $p(\omega, u \Gamma_{n, m}) = (\omega, u \Gamma_n)$ for $\omega \in \Omega_{n, m}$ and $u \in N$.
    \end{enumerate}
\end{cor}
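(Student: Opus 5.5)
Fix a bundle decomposition $Z \simeq \bigsqcup_{m} \Omega'_m \rtimes_{\rho'_m} N'_m/\Gamma'_m$ over $Z^0(X)$, refined so that each piece $\Omega'_m$ lies inside a single $\Omega_n$. Set $\Omega_{n,m} := \Omega_n \cap \Omega'_m$, keeping only pieces of positive measure. The plan is to identify each group $N'_m$ with $N$ so that $p$ becomes the natural projection of nilmanifolds.

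\emph{Step 1: describe $p$ on each piece.} Because $\Gamma_n$ may fail to be reduced, I first quotient $N$ by the largest subgroup of $\Gamma_n$ normal in $N$. In our class this subgroup is discrete, normal and central-ish; in fact it is trivial for $N=\mathbb{Z}^d\ltimes N_0$ acting faithfully, which we may assume by the standing hypotheses. I then apply Lemma \ref{Lemma: Factors nilbundles} to $p$ restricted to $\Omega_{n,m}$. After a further countable Borel refinement, which I absorb into the index $m$, this yields:
\begin{itemize}
\item a surjective continuous homomorphism $\phi: N'_m \to N$ with $\phi(\Gamma'_m)\subset \Gamma_n$;
\item a Borel map $b:\Omega_{n,m}\to N$ with $p(\omega,\overline{x}) = (\omega, b(\omega)\overline{\phi(x)})$.
\end{itemize}
Since $p$ is finite-to-one, $\phi$ has finite kernel. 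This needs an argument: $\ker\phi$ acts on fibres of $p$ through the map $\ker\phi \to N'_m/\Gamma'_m$. If $\ker\phi$ were infinite, its identity component, or an infinite discrete part, would give infinite fibres unless it were contained in $\Gamma'_m$ up to finite index. Reducedness of $(N'_m,\Gamma'_m)$, obtained by quotienting out the normal core beforehand, excludes that case. Lemma \ref{Lemma: Surjection with finite kernel is iso} then shows that $\phi$ is an isomorphism.

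\emph{Step 2: normalise.} I replace the presentation of the piece $\Omega_{n,m}$ of $Z$ using the following choices:
\begin{itemize}
\item the group becomes $N$, via $\phi$;
\item the lattice becomes $\Gamma_{n,m} := \phi(\Gamma'_m) \subset \Gamma_n$;
\item the action becomes $\rho_{n,m}(\omega)(t) := \phi(\rho'_m(\omega)(t))$.
\end{itemize}
This is an isomorphism of nilbundles. The lattice $\Gamma_{n,m}$ is a lattice in $N$ contained in the lattice $\Gamma_n$, so it has finite index. That gives item (2).

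It remains to remove the base change $b(\omega)$. I compose the fibre over $\omega$ with the affine map $u\Gamma_{n,m}\mapsto b(\omega)u\Gamma_{n,m}$ and conjugate the action by $b(\omega)$, that is, replace $\rho_{n,m}(\omega)$ by $b(\omega)\rho_{n,m}(\omega)b(\omega)^{-1}$. The new action still has values in $N$, so this is again an isomorphism of bundles. After it, $p(\omega,u\Gamma_{n,m})=(\omega,u\Gamma_n)$. I must check that the $Y$-side action matches: equivariance of $p$ forces the conjugated $\rho_{n,m}$ to agree with $\rho_n$ modulo elements acting trivially on $N/\Gamma_n$. Reducedness makes these elements trivial, which gives item (3). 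Measurability of all choices follows from Lemma \ref{Lemma: Measurable selection subnilmanifolds}, and item (1) holds by construction.

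\emph{Main obstacle.} The delicate point is Step 1: showing that finite-to-one forces $\ker\phi$ to be finite, and handling reducedness. Lemma \ref{Lemma: Factors nilbundles} requires $(N,\Gamma_n)$ to be reduced, and the kernel argument needs $(N'_m,\Gamma'_m)$ to be reduced as well. My first attempt would be to pass to reduced presentations at the outset, quotienting each $N$ by the normal core of its lattice. I would then check that the resulting groups remain in the standing class $\mathbb{Z}^d\ltimes N_0$, so that Lemma \ref{Lemma: Surjection with finite kernel is iso} applies.
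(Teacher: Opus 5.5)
\textbf{Verdict: genuine gap.} The problem is your handling of reducedness, which carries the weight of Step~1.

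\emph{The normal core is not trivial.} Your claim that the normal core of $\Gamma_n$ in $N$ is trivial under the standing hypotheses is false. Suppose $N=\mathbb{Z}^d\ltimes N_0$ with $N_0\neq\{e\}$ connected and simply connected. Then $Z(N)\cap N_0$ is the fixed-point group of the unipotent $\mathbb{Z}^d$-action on $Z(N_0)$. This is a non-trivial rational subgroup, so $\Gamma_n\cap Z(N)$ is a non-trivial normal subgroup of $N$ contained in $\Gamma_n$. Pairs of the standing form are therefore never reduced unless $N_0$ is trivial.

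\emph{Quotienting by the cores breaks Lemma~\ref{Lemma: Surjection with finite kernel is iso}.} After the quotient, $N_0$ acquires a torus and you leave the standing class. There the conclusion of that lemma is false. Take the following example:
\begin{itemize}
\item $Y$ is the $\mathbb{Z}^d$-rotation $x\mapsto x+t\cdot\alpha$ on $\mathbb{T}$, with $\alpha\in\mathbb{R}^d$ generic;
\item $Z$ is the rotation by $t\cdot\alpha/2$;
\item $p(x)=2x$.
\end{itemize}
In standing form, $N=\mathbb{Z}^d\times\mathbb{R}$ and $\Gamma_n=\mathbb{Z}^d\times\mathbb{Z}$. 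The statement is realised with $\Gamma_{n,m}=\mathbb{Z}^d\times 2\mathbb{Z}$ and the same action $t\mapsto(t,t\cdot\alpha)$. In reduced form, however, both fibres are $\mathbb{T}$ with trivial lattice, and $\phi$ is the doubling map, whose kernel has order $2$. So the inference ``reducedness excludes the bad case, hence $\phi$ is an isomorphism'' fails exactly in the presentation you moved to. There the degree of $p$ is carried by $\ker\phi$, not by a lattice index, and items (2)--(3) are not visible at all.

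\emph{What the paper does instead.} The paper never reduces.
\begin{itemize}
\item It observes that $Z$ is itself commensurated: the joining of $Z$ with its $\gamma$-twist maps finite-to-one onto the finite-to-finite joining for $Y$. Lemma~\ref{Lemma: Def common nilpotent group} then gives $Z$ a presentation with a group $N'$ of the standing form.
\item It applies Lemma~\ref{Lemma: Factors nilbundles} to $p$ in these simply connected presentations and obtains a surjection with finite kernel.
\item Lemma~\ref{Lemma: Surjection with finite kernel is iso} makes this surjection an isomorphism.
\item The degree of $p$ reappears as the index $[\Gamma_n:\phi(\Gamma'_m)]$, and the base change is absorbed exactly as in your Step~2.
\end{itemize}

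\emph{Your concern is legitimate, but the fix is wrong.} The reducedness hypothesis of Lemma~\ref{Lemma: Factors nilbundles} is indeed not met in that setting, and the paper does not comment on it. But any repair has to stay in the simply connected presentation. If you do reduce, you need an extra step that lifts $\phi$ back along the quotients by the normal cores, turning its finite kernel into the finite index of $\Gamma_{n,m}$ in $\Gamma_n$. That step is missing from your plan.

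\emph{A related point in Step~2.} Item (3) concerns only $p$, so you do not need $\rho_{n,m}=\rho_n$. In standing form the two may genuinely differ by elements of the non-trivial normal core of $\Gamma_n$. Appealing to reducedness there is both unnecessary and unavailable.
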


\begin{proof}
    Since $\pi_Z$ is a nilfactor over $Z^0(X)$ with $\pi_Y = p \circ \pi_Z$, $Z$ is itself commensurated by $\SL_d(\mathbb{Z})$: the joining of $Z$ and $Z \circ \gamma$ over $Z^0(X)$ projects onto the (finite-to-finite) joining of $Y$ and $Y \circ \gamma$ along the finite-to-one map $p$, hence is itself finite-to-finite. Lemma \ref{Lemma: Def common nilpotent group} applies to $\pi_Z$, giving a bundle decomposition
    $$ Z \simeq \bigsqcup_{m \geq 0} \widetilde{\Omega}_m \rtimes_{\widetilde{\rho}_m} N' / \widetilde{\Gamma}_m$$
    for some nilpotent group $N'$ in the standing class.

    By Lemma \ref{Lemma: Factors nilbundles} applied to $p: Z \to Y$, on each piece $\Omega_n \cap \widetilde{\Omega}_m$ we obtain (after a measurable refinement indexed by $k \geq 0$) Borel partitions $(\Omega_{n, m, k})_{k \geq 0}$, surjective continuous group homomorphisms $\phi_{n, m, k}: N' \to N$ with $\phi_{n, m, k}(\widetilde{\Gamma}_m) \subset \Gamma_n$, and Borel maps $b_{n, m, k}: \Omega_{n, m, k} \to N$ such that
    $$ p(\omega, \overline{x}) = \bigl(\omega,\, \overline{b_{n, m, k}(\omega) \, \phi_{n, m, k}(x)}\bigr) \quad \text{for } \nu\text{-a.e. } \omega \in \Omega_{n, m, k}.$$
    Since $p$ is finite-to-one, each $\phi_{n, m, k}$ has finite kernel, hence by Lemma \ref{Lemma: Surjection with finite kernel is iso} is a group isomorphism. In particular $N' \simeq N$, and we identify them.

    Absorb the base-change $b_{n, m, k}$ into the bundle structure: the affine map $x \mapsto b_{n, m, k}(\omega)\, \phi_{n, m, k}(x)$ on $N$ defines an isomorphism of nilsystems sending $N / \widetilde{\Gamma}_m$ to $N / \phi_{n, m, k}(\widetilde{\Gamma}_m)$, with cocycle
    $$ \rho_{n, m, k}(\omega, t) := b_{n, m, k}(\omega) \, \phi_{n, m, k}\bigl(\widetilde{\rho}_m(\omega, t)\bigr) \, b_{n, m, k}(\omega)^{-1}.$$
    Under this identification, $p$ becomes the natural projection $N / \phi_{n, m, k}(\widetilde{\Gamma}_m) \twoheadrightarrow N / \Gamma_n$, and $\phi_{n, m, k}(\widetilde{\Gamma}_m) \subset \Gamma_n$ is a finite-index sublattice (finite index because $p$ is finite-to-one).

    Re-index by collapsing the triple $(n, m, k)$ to a pair $(n, m')$ — partitioning $\Omega_n$ as $\bigsqcup_{m, k} \Omega_{n, m, k}$ and renaming the joint index $m'$ — gives the asserted decomposition with $\Omega_{n, m'} := \Omega_{n, m, k}$, $\Gamma_{n, m'} := \phi_{n, m, k}(\widetilde{\Gamma}_m)$, and $\rho_{n, m'} := \rho_{n, m, k}$.
\end{proof}

\subsection{From commensurated to invariant nilfactors}\label{Subsection:From commensurated to invariant nilfactors}

Now that we can reliably identify the nilpotent group $N$ in commensurated nilfactors, we examine how the $\SL_d(\mathbb{Z})$-action interacts with this identification. The output is a cocycle $\alpha: \ASL_d(\mathbb{Z}) \times Z^0(X) \to \Aut(N)$ encoding the action; combining it with the cocycle finiteness results of \S\ref{Subsection: Finiteness properties of dynamical cocycles} we obtain $\ASL_d(\mathbb{Z})$-equivariant nilfactors.

\begin{lem}\label{Lemma: Intertwining cocycle}
    Let $(X, \mu)$ be an ergodic $\ASL_d(\mathbb{Z})$-system and $\pi: X \to Y$ a nilfactor over $Z^0(X)$ commensurated by $\SL_d(\mathbb{Z})$. Identify $Y$ via Lemma \ref{Lemma: Def common nilpotent group} with
    $$ Y = \bigsqcup_{n \geq 0} \Omega_n \rtimes_{\rho_n} N / \Gamma_n,$$
    where $N$ is in the standing class and the lattices $\Gamma_n$ are pairwise commensurable. Let $\mathrm{Comm}_{\Aut(N)}(\Gamma_0)$ denote the subgroup of automorphisms of $N$ preserving the commensurability class of $\Gamma_0$.

    Then there exists a map
    $$ \alpha: \ASL_d(\mathbb{Z}) \times Z^0(X) \to \mathrm{Comm}_{\Aut(N)}(\Gamma_0)$$
    with the following property: for every $\gamma \in \SL_d(\mathbb{Z})$ and $\nu$-a.e.\ $\omega \in Z^0(X)$, the canonical isomorphism $Y_\omega \to Y_{\gamma \cdot \omega}^\gamma$ induced by $\gamma$ acts on the fibre $N/\Gamma_n$ at $\omega$ by the affine map $\overline{x} \mapsto \overline{b(\gamma, \omega) \cdot \alpha(\gamma, \omega)(x)}$ for some Borel cochain $b: \ASL_d(\mathbb{Z}) \times Z^0(X) \to N$.

    Moreover, $\alpha$ satisfies the following weak cocycle identity: for $\gamma \in \SL_d(\mathbb{Z})$ and almost all $\omega \in Z^0(X)$ we have 
    $$ \alpha(\gamma_1\gamma_2, \omega) = \alpha(\gamma_1, \gamma_2\cdot \omega) \circ \alpha(\gamma_2,\omega)   \mod{ \mathrm{inn(H_\omega)}}$$
    where $H_\omega$ denotes the intersection of all lattices appearing above $\omega$ in the bundle decomposition of systems commensurable with $Y$ and $\mathrm{inn}$ denotes the set of inner automorphisms. 
\end{lem}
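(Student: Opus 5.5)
The plan is to read the maps $\alpha$ and $b$ off Lemma \ref{Lemma: Factors nilbundles}, applied to the finite-to-finite joining of $\pi$ and $\pi\circ\gamma$, and then to derive the weak cocycle identity from uniqueness of affine representations up to deck transformations.

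\emph{Step 1: the joining of $\pi$ and $\pi\circ\gamma$.} Fix $\gamma \in \SL_d(\mathbb{Z})$. By Lemma \ref{Lemma: gamma-twist of a nilfactor}, $\pi\circ\gamma$ is a nilfactor over $Z^0(X)$ with fibres $N/\Gamma_n$ over $\gamma^{-1}\Omega_n$. Commensuration says the joining $W_\gamma$ of $\pi$ and $\pi\circ\gamma$ is finite-to-finite. By Lemma \ref{Lemma: Joining nilbundles}, $W_\gamma$ is a bundle of nilsystems. Corollary \ref{Corollary: Trivialisation of finite-to-one factors}, applied to each of the two finite-to-one projections $W_\gamma \to Y$ and $W_\gamma \to Y^\gamma$, identifies the fibre of $W_\gamma$ over $\omega$ with $N/\Gamma'$ for a finite-index sublattice $\Gamma'$. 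The two projections then become $N/\Gamma' \to N/\Gamma_n$ and, after an affine change of coordinates, $N/\Gamma'' \to N/\Gamma_l$.

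\emph{Step 2: reading off $\alpha$ and $b$.} Composing the second identification with the inverse of the first on a common finite cover gives, for $\omega$ in each piece of a countable measurable refinement, an affine isomorphism of nilsystems between finite covers of $Y_\omega$ and $Y^\gamma_{\gamma\cdot\omega}$. Lemma \ref{Lemma: Factors nilbundles} and Lemma \ref{Lemma: Surjection with finite kernel is iso} show that any such isomorphism has the form $\overline{x}\mapsto \overline{b\,\phi(x)}$ with $\phi\in\Aut(N)$. Since $\phi$ carries a finite-index sublattice of $\Gamma_n$ onto one of $\Gamma_l$, and all $\Gamma_n$ are commensurable, we get $\phi\in \mathrm{Comm}_{\Aut(N)}(\Gamma_0)$. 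The homomorphisms $\phi_{k,l,m}$ range over a countable set, and the $b$'s are Borel by the measurable selection Lemma \ref{Lemma: Measurable selection subnilmanifolds}. Gluing over the countably many pieces therefore defines Borel maps $\alpha(\gamma,\omega):=\phi$ and $b(\gamma,\omega):=b$. For a translation $t\in\mathbb{Z}^d$, the induced map is $\rho_n(\omega)(t)$ acting on the fibre, so we take $\alpha = \id$ and $b=\rho_n(\omega)(t)$. Combining the two through the linear part gives $\alpha$ on all of $\ASL_d(\mathbb{Z})$.

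\emph{Step 3: the weak cocycle identity.} The canonical isomorphisms compose, $Y_\omega\to Y^{\gamma_2}\to Y^{\gamma_1\gamma_2}$, so
\[
\overline{x}\mapsto \overline{b(\gamma_1\gamma_2,\omega)\,\alpha(\gamma_1\gamma_2,\omega)(x)}
\]
and the composite of the two affine maps induce the same measurable isomorphism modulo the finite covers chosen. Two affine maps $x\mapsto b\phi(x)$ and $x\mapsto b'\phi'(x)$ that agree on $N/\Lambda$ for every lattice $\Lambda$ appearing over $\omega$ differ by right multiplication by elements of these lattices. Equivalently, $\phi'^{-1}\circ\phi$ is conjugation by an element $h$ of their intersection $H_\omega$, with the $b$'s adjusted accordingly. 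This is seen by comparing the linear parts: differentiating at the identity for $N_0$, and using the induced actions on $\mathbb{Z}^d$ and on the lattices. The conclusion is exactly the stated identity modulo $\mathrm{inn}(H_\omega)$.

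\emph{Main obstacle.} The delicate point is Step 3. The isomorphism of $Y_\omega$ is only determined up to deck transformations of the finite covers chosen in Step 1. We must check that the ambiguity is precisely inner automorphisms by elements of $H_\omega$, and not some larger group of affine symmetries. I would handle this by normalising all choices to a single lattice $H_\omega$-invariant family and invoking the uniqueness part of Lemma \ref{Lemma: Factors nilbundles}. Reducedness can be arranged by quotienting by $\Gamma\cap Z(N)$ beforehand.
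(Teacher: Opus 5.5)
Steps 1--2 are close to what the paper does for a single $\gamma$. The affine form is read off Lemma~\ref{Lemma: Factors nilbundles}, and commensurability of the $\Gamma_n$ places the linear part in $\mathrm{Comm}_{\Aut(N)}(\Gamma_0)$.

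The genuine gap is Step~3, exactly where you locate the main obstacle. Your proposed fix does not supply the missing ingredient: ``normalising all choices to a single lattice $H_\omega$-invariant family'' is not spelled out, and Lemma~\ref{Lemma: Factors nilbundles} has no uniqueness part to invoke. Neither $Y$ nor the covers $W_\gamma$ are $\SL_d(\mathbb{Z})$-invariant, so $\gamma$ only induces finite-to-finite correspondences. Your $\alpha(\gamma,\omega)$ is therefore an affine representative chosen on the cover $W_\gamma$, independently for each $\gamma$. The three maps in the identity for $(\gamma_1,\gamma_2)$ can at best be compared on one common refinement $N/\Lambda'$. Agreement there only yields $b^{-1}b'\in\Lambda'$, i.e.\ the identity modulo conjugation by elements of the finite-index lattice $\Lambda'$. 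That is much weaker than modulo $\mathrm{inn}(H_\omega)$: $H_\omega$ can be trivial, and the later use of the lattices $\langle H,\Delta^{(m)}\rangle$ for all $m$ needs the sharp version. Your sentence about affine maps ``that agree on $N/\Lambda$ for every lattice $\Lambda$ appearing over $\omega$'' presupposes that the $\gamma$-induced maps are given by one affine formula at every level of the tower of systems commensurable with $Y$. Nothing in Steps 1--2 provides this.

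The paper builds this coherence in before defining $\alpha$, in four moves.
\begin{enumerate}
\item It assumes without loss of generality that $X$ is the inverse limit of all commensurated nilfactors through $Y$ of the same fibre dimension.
\item It picks a countable directed family $\mathcal{F}=(\pi_i\colon X\to Y_i)_{i\in I}$ realising this limit and closed under the twists $i\mapsto i^\gamma$.
\item It applies Corollary~\ref{Corollary: Trivialisation of finite-to-one factors} iteratively so that all transition maps are fibrewise projections $u\Gamma_{i,n}\mapsto u\Gamma_{j,n'}$.
\item It then has honest isomorphisms of nilbundles $\Phi_i^\gamma\colon Y_{i^\gamma}\to Y_i$, $\pi_{i^\gamma}(x)\mapsto\pi_i(\gamma x)$. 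These compose as $\Phi_i^{\gamma_1}\circ\Phi_{i^{\gamma_1}}^{\gamma_2}=\Phi_i^{\gamma_1\gamma_2}$, and compatibility with the fibrewise projections is used to make the linear part independent of the level $i$.
\end{enumerate}
This level-independence is indispensable, because in the composite $\alpha(\gamma_2,\cdot)$ is read at level $i^{\gamma_1}$ rather than $i$. Once the affine formulas hold at every level of $\mathcal{F}$, the leftover ambiguity lies in every lattice of the family, i.e.\ in $H_\omega$. This also gives $H_{\gamma\cdot\omega}=\alpha(\gamma,\omega)(H_\omega)$.

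A smaller point concerns your ``equivalently''. Even granting agreement on every $N/\Lambda$, differentiating at the identity only shows that $\phi'^{-1}\circ c_h\circ\phi$ is the identity on $N_0$, for the appropriate $h$. On the $\mathbb{Z}^d$-part it is only of the form $x\mapsto x\,\delta(x)$, with $\delta(x)$ in an intersection of lattices, and such a map need not be inner. So that step needs more than a comparison of linear parts.
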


\begin{proof}
    Without loss of generality assume $X$ equals the inverse limit of all nilfactors over $Z^0(X)$ commensurated by $\SL_d(\mathbb{Z})$ that factor through $Y$ and have the same fiberwise dimension as $Y$. Choose a countable directed family $\mathcal{F} = (\pi_i: X \to Y_i)_{i \in I}$ realising this inverse limit, closed under the $\Gamma$-twist $i \mapsto i^\gamma$ in the sense that the range of $\pi_i \circ \gamma$ belongs to $\mathcal{F}$ as some $Y_{i^\gamma}$. By Corollary \ref{Corollary: Trivialisation of finite-to-one factors} (applied iteratively along $\mathcal{F}$, using that all transitions are finite-to-one by construction of the family), we choose for each $i \in I$ a bundle decomposition
    $$ Y_i = \bigsqcup_{n \geq 0} \Omega_{i, n} \rtimes_{\rho_{i, n}} N / \Gamma_{i, n}$$
    such that whenever $j \leq i$, the transition map $\pi_{ij}: Y_i \to Y_j$ is the fibrewise projection $u \Gamma_{i, n} \mapsto u \Gamma_{j, n'}$ for the appropriate $n'$.

    \emph{Construction of $\alpha$.} Fix $i \in I$ and $\gamma \in \SL_d(\mathbb{Z})$, and let $j := i^\gamma$ so that $Y_j$ is the range of $\pi_i \circ \gamma$. The map
    $$\Phi_i^\gamma: Y_j \to Y_i, \qquad \pi_j(x) \mapsto \pi_i(\gamma \cdot x)$$
    is well-defined (as $Y_j$ is the range of $\pi_i \circ \gamma$) and an isomorphism of bundles of nilsystems over $Z^0(X)$. By Lemma \ref{Lemma: Factors nilbundles} applied to $\Phi_i^\gamma$ — both source and target having common nilpotent group $N$ in the standing class — there exist Borel maps $\alpha_i(\gamma, \cdot): Z^0(X) \to \Aut(N)$ and $b_i(\gamma, \cdot): Z^0(X) \to N$ such that
    \begin{equation}\label{Eq: Phi formula}
        \Phi_i^\gamma(\omega, \overline{x}) = \bigl(\gamma \cdot \omega,\, \overline{b_i(\gamma, \omega) \cdot \alpha_i(\gamma, \omega)(x)}\bigr).
    \end{equation}
    Since $\Phi_i^\gamma$ is an isomorphism, $\alpha_i(\gamma, \omega)$ sends $\Gamma_{j, n}$ to a finite-index supergroup or sublattice of $\Gamma_{i, n'}$, so $\alpha_i(\gamma, \omega)$ preserves the commensurability class of $\Gamma_0$.

    \emph{Independence of $i$.} If $j \leq i$ in $\mathcal{F}$, the transition $\pi_{ij}$ is the fibrewise projection by construction; comparing \eqref{Eq: Phi formula} for $i$ and for $j$ via $\pi_{ij}$ gives $\alpha_i(\gamma, \omega) = \alpha_j(\gamma, \omega)$ on the relevant pieces. Since $\mathcal{F}$ is directed, $\alpha_i$ does not depend on $i$, and we set $\alpha(\gamma, \omega) := \alpha_i(\gamma, \omega)$.

    \emph{Cocycle relation.} For $\omega \in \Omega$, let
    $$H_\omega := \bigcap_{\pi': X \to Y' \in \mathcal{F},\, \omega \in \Omega_n'} \Gamma_n',$$
    the intersection over all nilfactors $\pi': X \to Y' = \bigsqcup_n \Omega_n' \rtimes_{\rho_n'} N/\Gamma_n'$ in $\mathcal{F}$ and over the (unique) piece $\Omega_n'$ containing $\omega$, of the lattices $\Gamma_n'$. The intersection is a discrete subgroup of $N$, possibly trivial. We have
    $$ H_{\gamma \cdot \omega} = \alpha(\gamma, \omega)(H_\omega).$$ For $\gamma_1, \gamma_2 \in \SL_d(\mathbb{Z})$, decomposing $\pi_i \circ (\gamma_1 \gamma_2) = (\pi_i \circ \gamma_1) \circ \gamma_2$ and comparing the affine formulas \eqref{Eq: Phi formula} yields
    $$ \alpha(\gamma_1 \gamma_2, \omega) = \alpha(\gamma_1, \gamma_2 \cdot \omega) \circ \alpha(\gamma_2, \omega) \mod{\mathrm{inn(H_\omega)}},$$
    extending to all of $\ASL_d(\mathbb{Z})$ since translations $t \in \mathbb{Z}^d$ act trivially on $Z^0(X)$ but contribute base translations to $b(\gamma, \omega)$. Thus $\alpha$ is  valued in $\mathrm{Comm}_{\Aut(N)}(\Gamma_0)$.
\end{proof}

The next lemma allows us to refine the lattice in a directed family without changing the inverse limit, provided the new lattice is contained in some old one.

\begin{lem}\label{Lemma: Changing representatives for inverse limit}
    Let $\mathcal{F} = (\pi_n: X \to Y_n)_{n \geq 0}$ be a directed family of nilfactors over $Z^0(X)$, with
    $$ Y_n = \bigsqcup_{l \geq 0} \Omega_{n, l} \rtimes_{\rho_{n, l}} N / \Gamma_{n, l},$$
    and with all transition maps fibrewise projections. Let $\Delta \subset N$ be a lattice such that for $\nu$-a.e.\ $\omega \in Z^0(X)$ there exist $n, l \geq 0$ with $\omega \in \Omega_{n, l}$ and $\Gamma_{n, l} \subseteq \Delta$. Then there is a directed family $\mathcal{F}' = (\pi_n': X \to Y_n')$ with
    $$ Y_n' = \bigsqcup_{l \geq 0} \Omega_{n, l} \rtimes_{\rho_{n, l}'} N / (\Gamma_{n, l} \cap \Delta)$$
    such that the inverse limits of $\mathcal{F}$ and $\mathcal{F}'$ coincide and $\pi_n = p \circ \pi_n'$ where $p$ denotes fibrewise projection.
\end{lem}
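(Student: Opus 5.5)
The construction is fibrewise. For each $n$ and each $\omega \in \Omega_{n,l}$, I will replace the lattice $\Gamma_{n,l}$ by the finite-index sublattice $\Gamma_{n,l} \cap \Delta$. Since both are lattices in $N$ and commensurable with $\Delta$, the intersection has finite index in each. The fibrewise projection $N/(\Gamma_{n,l}\cap\Delta) \to N/\Gamma_{n,l}$ is then a finite-to-one cover of nilmanifolds.

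The main task is to lift $\pi_n$ through this cover, which is where the hypothesis enters. For a.e. $\omega$ there is some $m$ and some piece with $\omega \in \Omega_{m,l'}$ and $\Gamma_{m,l'} \subseteq \Delta$. Using directedness, I pass to an index $m' \geq n, m$. Write $\Gamma_{m',l''}$ for the lattice of $Y_{m'}$ above $\omega$. Because transitions are fibrewise projections, $\Gamma_{m',l''} \subseteq \Gamma_{n,l}$ and $\Gamma_{m',l''}\subseteq \Gamma_{m,l'} \subseteq \Delta$, so $\Gamma_{m',l''}\subseteq \Gamma_{n,l}\cap\Delta$.

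I then define $\pi_n'(x)$ as the image of $\pi_{m'}(x)$ under the fibrewise projection $N/\Gamma_{m',l''} \to N/(\Gamma_{n,l}\cap\Delta)$. To make this measurable, I partition $Z^0(X)$ into countably many Borel pieces according to the choice of $m'$, choosing the least admissible index on each piece. The action $\rho'_{n,l}$ is the one induced from $\rho_{m',l''}$ on the quotient. It agrees on the coarser quotient with $\rho_{n,l}$, and equivariance follows because fibrewise projections intertwine the translation actions. Independence of the choice of $m'$ follows from compatibility of transitions: two choices both factor through a common larger index. This gives $\pi_n = p\circ\pi_n'$, and also $\pi_n' = p\circ \pi_{m'}$ on each piece. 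The pieces indexed by $l$ in $Y_n'$ may need to be refined by the choice of $m'$; I will simply re-index, as was done in the proof of Corollary \ref{Corollary: Trivialisation of finite-to-one factors}.

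Next I check that $\mathcal{F}'$ is a directed family with fibrewise projection transitions. For $n'\geq n$, $\Gamma_{n',\cdot}\cap\Delta \subseteq \Gamma_{n,\cdot}\cap\Delta$, and the maps are compatible because all of them are induced from the same $\pi_{m'}$'s.

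Finally I compare inverse limits through their $\sigma$-algebras. First, each $\pi_n'$ factors through some $\pi_{m'}$ (piecewise over a countable Borel partition of the base), so $L^\infty(Y_n') \subseteq \bigvee_m L^\infty(Y_m)$. Second, $\pi_n = p\circ\pi'_n$ gives $L^\infty(Y_n)\subseteq L^\infty(Y_n')$. Together these show the two inverse limits agree. Checking the first inclusion is where I expect the only real care: $\bigvee_m L^\infty(Y_m)$ must absorb a factor defined piecewise over a countable Borel partition of the base. This holds because the indicator functions of the base pieces lie in $L^\infty(Z^0(X))$, which is contained in every $L^\infty(Y_m)$, and a countable piecewise gluing of functions from the join stays in the join.
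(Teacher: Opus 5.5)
Your proposal is correct and takes essentially the paper's route. Both arguments measurably select an index $m(\omega)$ whose lattice lies in $\Delta$, use directedness together with the fact that transitions are fibrewise projections to make the lift well defined and compatible across $n$, and then note that the resulting finite covers are already absorbed into the inverse limit. Your version is more careful than the paper's sketch in three places: you pass explicitly to $m' \geq n, m$ so that $\Gamma_{m',l''} \subseteq \Gamma_{n,l} \cap \Delta$; you define $\pi_n'$ itself as a projection of $\pi_{m'}$, whereas the paper only specifies $\rho'_{n,l}$; and you justify equality of the inverse limits via $L^\infty(Y_n) \subseteq L^\infty(Y_n') \subseteq \bigvee_m L^\infty(Y_m)$.
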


\begin{proof}
    Fix $n, l$. The bundle $Y_n'$ has fibres $N/(\Gamma_{n, l} \cap \Delta)$, finite covers of the corresponding fibres of $Y_n$ via the projection $u (\Gamma_{n, l} \cap \Delta) \mapsto u \Gamma_{n, l}$. To define $\rho_{n, l}'$ coherently, by the hypothesis on $\Delta$ and a measurable selection in the standard Borel space of pairs $(m, k)$ with $\omega \in \Omega_{m, k}$ and $\Gamma_{m, k} \subseteq \Delta$, we obtain a Borel map $\omega \mapsto (m(\omega), k(\omega))$. On the piece $\Omega_{n, l}$, set
    $$ \rho_{n, l}'(t, \omega) := \rho_{m(\omega), k(\omega)}(t, \omega),$$
    viewed in $N$ rather than $N/\Gamma_{m(\omega), k(\omega)}$, which is well-defined modulo $\Gamma_{m(\omega), k(\omega)} \subseteq \Delta$ and hence modulo $\Gamma_{n, l} \cap \Delta$ after projection. Independence of the choice $(m(\omega), k(\omega))$ modulo $\Gamma_{n, l} \cap \Delta$ follows from the directedness of $\mathcal{F}$ and the fact that all transitions are fibrewise projections.

    The resulting bundles $Y_n'$ are nilfactors of $X$ refining $Y_n$ by finite covers, with the same inverse limit since each $\Gamma_{n, l} \cap \Delta$ has finite index in $\Gamma_{n, l}$ and the inverse limit absorbs all such finite refinements.
\end{proof}

We can now prove the main result of this section.

\begin{prop}\label{Prop: invariant nilfactor}
    Let $(X, \mu)$ be an ergodic $\ASL_d(\mathbb{Z})$-system and $\pi_0: X \to Y$ a nilfactor over $Z^0(X)$ commensurated by $\SL_d(\mathbb{Z})$. Let $\mathcal{F} = (\pi_i: X \to Y_i)_{i \in I}$ denote the family of all nilfactors of $X$ over $Z^0(X)$ commensurated by $\SL_d(\mathbb{Z})$ that factor through $Y$, with inverse limit $\pi: X \to \widetilde{Y}$.

    Then there exists a finite group extension $\widetilde{X} \to X$ such that the lifted factor $\widetilde{X} \to \widetilde{Y}$ is the inverse limit of nilfactors equivariant under $\ASL_d(\mathbb{Z})$.
\end{prop}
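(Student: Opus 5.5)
The strategy is to turn the weak cocycle $\alpha$ of Lemma \ref{Lemma: Intertwining cocycle} into a genuine cocycle preserving a single lattice, after a finite extension. We then use the resulting lattice to build, inside the inverse limit, a cofinal family of nilfactors on which the $\ASL_d(\mathbb{Z})$-action descends.

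First, identify $Y$ via Lemma \ref{Lemma: Def common nilpotent group} with $\bigsqcup_n \Omega_n \rtimes_{\rho_n} N/\Gamma_n$, and choose a countable directed subfamily of $\mathcal{F}$ realising $\widetilde{Y}$. This subfamily should be closed under the twists $i \mapsto i^\gamma$ and have fibrewise projections as transition maps, as in the proof of Lemma \ref{Lemma: Intertwining cocycle}. That lemma provides $\alpha: \SL_d(\mathbb{Z}) \times Z^0(X) \to \mathrm{Comm}_{\Aut(N)}(\Gamma_0)$, a cocycle modulo $\mathrm{inn}(H_\omega)$, together with a base-change cochain $b$.

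Next, normalise $H_\omega$. The relation $H_{\gamma\omega} = \alpha(\gamma,\omega)(H_\omega)$ shows that $\omega \mapsto H_\omega$ is an $\alpha$-equivariant map into the countable set of discrete subgroups commensurated in the appropriate sense. By ergodicity of $\SL_d(\mathbb{Z})$ on $Z^0(X)$ and the cocycle reduction lemma, after conjugating $\alpha$ by a measurable $\phi: Z^0(X) \to \mathrm{Comm}_{\Aut(N)}(\Gamma_0)$ (and changing the bundle coordinates accordingly) we may assume $H_\omega = H$ is constant. Then $\alpha$ takes values in $\Delta := \{\delta : \delta(H) = H,\ \delta(\Gamma_0) \sim \Gamma_0\}$.

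Now apply Proposition \ref{Prop: cocycles into Aut(N) reduce to lattice stabiliser}. It yields a finite extension $\widetilde{X} \to X$, a lattice $\Gamma' \supseteq H$, and a transfer map $\phi$ such that the twisted $\alpha$ preserves $\Gamma'$. We realise $\widetilde{X}$ as the compact extension of Lemma \ref{Lemma: From compact to trivial}, so the finite-group part of the cocycle becomes a coboundary upstairs. We then use Lemma \ref{Lemma: Changing representatives for inverse limit} with $\Delta = \Gamma'$ to replace each $Y_i$ by the bundle with fibres $N/(\Gamma_{i,n} \cap \Gamma')$ without changing the inverse limit. The hypothesis of that lemma requires that almost every fibre admits a member of the family with lattice contained in $\Gamma'$. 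This should follow because $H = \bigcap \Gamma_{i,n}$, $H \subseteq \Gamma'$, and the lattices are commensurable, so that a finite intersection lands inside $\Gamma'$.

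On these refined bundles, the maps $\Phi_i^\gamma$ from \eqref{Eq: Phi formula} act fibrewise by $\overline{x} \mapsto \overline{b\,\alpha(x)}$ with $\alpha$ preserving $\Gamma'$. The ambiguity by $\mathrm{inn}(H)$ disappears because $H \subseteq \Gamma'$ acts trivially on $N/\Gamma'$-cosets after absorbing it into $b$. Hence, for each $i$, the factor $Y_i'' := Y_i' \vee (\text{its } \SL_d(\mathbb{Z})\text{-twists})$ is a finite join, and its range is mapped to itself by the action. This makes the factors $\ASL_d(\mathbb{Z})$-equivariant, with the translations $\mathbb{Z}^d$ acting via $\rho$ and $b$. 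Cofinality gives the inverse limit statement.

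I expect the main obstacle to be the constancy of $H_\omega$ and the verification of the hypothesis of Lemma \ref{Lemma: Changing representatives for inverse limit}. These require that the intersection lattices behave well under $\alpha$ and that finitely many of them already lie in $\Gamma'$, which needs a careful measurable selection uniform in $\omega$. If this fails, I would instead restrict to a positive-measure set where a single finite intersection works, and spread it by ergodicity using $\SL_d(\mathbb{Z})$-translates, as in Step 4 of Proposition \ref{Prop: reduction to nilbundles}.
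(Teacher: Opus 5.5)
Your first two steps, making $H_\omega$ constant by cocycle reduction and then applying Proposition~\ref{Prop: cocycles into Aut(N) reduce to lattice stabiliser} to get (after a finite extension) a cocycle preserving a lattice $\Gamma'\supseteq H$, are exactly Steps 1--2 of the paper. The gap is in what you do with $\Gamma'$. To apply Lemma~\ref{Lemma: Changing representatives for inverse limit} with $\Delta=\Gamma'$ you need, for a.e.\ $\omega$, a family lattice \emph{contained in} $\Gamma'$. The implication you offer (``$H=\bigcap\Gamma_{i,n}\subseteq\Gamma'$ and everything is commensurable, so a finite intersection lands in $\Gamma'$'') is false. Take the $\ASL_d(\mathbb{Z})$-system $\mathbb{Z}_2^d$ with translations and the linear action. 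Its family lattices are $2^k\mathbb{Z}^d$ and $H=\{0\}$. The lattice $\Gamma'=3\mathbb{Z}^d$ is invariant, contains $H$ and is commensurable, yet contains no $2^k\mathbb{Z}^d$. Proposition~\ref{Prop: cocycles into Aut(N) reduce to lattice stabiliser} gives you no more control on $\Gamma'$ than this. In such a case $N/(\Gamma_{i,n}\cap\Gamma')$ is a finite cover that is not a factor of $\widetilde Y$. So your refined bundles are not factors of $\widetilde Y$, and their inverse limit is not $\widetilde Y$. Your fallback (a positive-measure set spread by ergodicity) does not help, since the incompatibility can hold at every $\omega$. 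Independently, the claim that $Y_i'$ joined with all its $\SL_d(\mathbb{Z})$-twists is a \emph{finite} join is unsupported. Write $\Gamma_i(\omega')$ for the fibre lattice of $Y_i$ at $\omega'$. The $\gamma$-twist has lattice $\alpha(\gamma,\omega)^{-1}(\Gamma_i(\gamma\omega))\cap\Gamma'$ at $\omega$, of index $[\Gamma':\Gamma_i(\gamma\omega)\cap\Gamma']$ in $\Gamma'$. Finite generation of $\Gamma'$ makes the intersection over $\gamma$ finite only if these indices are bounded. But $Y_i$ may have countably many pieces with unbounded index, all visited by $\gamma\omega$ by ergodicity, so the join can be a genuine inverse limit rather than a nilfactor.

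The idea you are missing is the paper's Step 3: pass to the subgroups $\Delta^{(m)}$ generated by $m$-th powers of the invariant lattice $\Delta$. They are characteristic, hence preserved by the reduced cocycle with no join over twists. They are also cofinal among finite-index subgroups of $\Delta$: a finite-index subgroup contains its normal core, whose quotient has some exponent $e$, so it contains $\Delta^{(e)}$. The paper puts the $\ASL_d(\mathbb{Z})$-action directly on bundles with fibre $N/\langle H,\Delta^{(m)}\rangle$ through $b$ and $\widetilde\alpha$, and gets cofinality from $\Delta^{(m)}\subseteq\Gamma'_n$. The inclusion thus runs from the invariant lattices \emph{into} the family lattices, the reverse of what you tried to arrange.

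The other inclusion is still needed for these bundles to be factors of $\widetilde Y$, and with $H$ it can fail as well: in the $\mathbb{Z}_2^d$ example, $\langle H,\Delta^{(3)}\rangle=3\Delta\subseteq 3\mathbb{Z}^d$. The paper's Step 4 invokes Lemma~\ref{Lemma: Changing representatives for inverse limit} here, but that lemma's hypothesis is exactly this inclusion, which Step 5 does not supply. Generating with a family lattice instead of $H$ handles both directions. Let $\Lambda_m(\omega)$ be the smallest lattice of the form $\langle L,\Delta^{(m)}\rangle$ with $L$ a family lattice at $\omega$. It exists because, by directedness, only the finitely many groups between $\Delta^{(m)}$ and $\langle L_0,\Delta^{(m)}\rangle$ are involved, for any fixed family lattice $L_0$. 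Then $\Lambda_m(\omega)$ contains a family lattice. It satisfies $\alpha(\gamma,\omega)(\Lambda_m(\omega))=\Lambda_m(\gamma\omega)$ because the family is twist-closed. It lies in any given family lattice $L'$ as soon as $\Delta^{(m)}\subseteq L'$. This yields $\ASL_d(\mathbb{Z})$-equivariant nilfactors whose inverse limit is $\widetilde Y$.
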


\begin{proof}
    Throughout, write $\Omega := Z^0(X)$, $\Gamma := \SL_d(\mathbb{Z})$.
 \emph{Step 1 (a constant subgroup $H$ and cocycle identity).} 
     By Lemma \ref{Lemma: Intertwining cocycle} we have a cocycle $\alpha: \ASL_d(\mathbb{Z}) \times \Omega \to \mathrm{Comm}_{\Aut(N)}(\Gamma_0)$ modulo the measurably varying subgroup $H_\omega$ defined in Lemma \ref{Lemma: Intertwining cocycle}.
    The set of values of $H_\omega$ is countable: each $H_\omega$ is a subgroup of the finitely generated nilpotent group $\Gamma_0$. Each level set $\{\omega : H_\omega = H\}$ is Borel, and the cocycle relation makes the partition into level sets $\alpha$-equivariant. By the cocycle reduction lemma (\cite[Lem. 4.2.8]{ErgodicTheoryZimmer}) we may assume that $H_\omega = H$ for $\nu$-a.e.\ $\omega$. We do not require $H$ to be a lattice in $N$; in particular $H = \{e\}$ is allowed.

    \emph{Step 2 (cocycle reduction).} Since $\Gamma_0$ is a lattice in $N$ and the codomain commensurates $\Gamma_0$, Proposition \ref{Prop: cocycles into Aut(N) reduce to lattice stabiliser} applies: there exist a finite group extension $\widetilde{X} \to X$ and a lattice $\Delta \subset N$ such that the lifted cocycle $\widetilde{\alpha}$ is cohomologous in $\mathrm{Comm}_{\Aut(N)}(\Gamma_0)$ to a cocycle taking values in $\mathrm{Stab}_{\Aut(N)}(\Delta)$.

    Absorb the cohomology $\phi: \widetilde{X} \to \mathrm{Comm}_{\Aut(N)}(\Gamma_0)$ into the bundle: applying $\phi(\omega)$ fibrewise sends each lattice $\Gamma_n'$ in the bundle decomposition of any $Y' \in \mathcal{F}$ to $\phi(\omega)(\Gamma_n')$, which depends measurably on $\omega$ and is commensurable with $\phi(\omega)(\Gamma_0)$. Repartitioning by the (countable) value of $\phi(\omega)(\Gamma_n')$ restores a bundle decomposition with constant lattice on each piece. We work over $\widetilde{X}$ for the remainder of the proof, replacing $X$ by $\widetilde{X}$ and assuming $\widetilde{\alpha}$ takes values in $\mathrm{Stab}_{\Aut(N)}(\Delta)$.

    \emph{Step 3 (congruence subgroups).} For $m \geq 1$, let $\Delta^{(m)} := \langle \delta^m : \delta \in \Delta \rangle$ be the subgroup generated by $m$-th powers; by Malcev's theory of finitely generated nilpotent groups (see e.g. \cite[Ch. II, Thm. 2.12]{RaghunathanDiscrete}), $\Delta^{(m)}$ is a finite-index subgroup of $\Delta$, normal in $\Delta$, and these subgroups form a cofinal system of finite-index subgroups of $\Delta$ (this is the so-called congruence subgroup property). The lattices $\langle H, \Delta^{(m)} \rangle$ are therefore lattices in $N$ that decrease with $m$, with intersection $H$.

    Each $\Delta^{(m)}$ is preserved by $\widetilde\alpha$. Indeed, $\widetilde\alpha(\gamma, \omega)$ stabilises $\Delta$ and acts as a group automorphism, so it sends $m$-th powers to $m$-th powers. Combined with the $\widetilde\alpha$-invariance of $H$ from Step 2, the lattices $\langle H, \Delta^{(m)} \rangle$ are $\widetilde\alpha$-invariant.

    \emph{Step 4 (constructing equivariant nilfactors).} Fix $m \geq 1$. By Step 5 below, the lattices $\langle H, \Delta^{(m)} \rangle$ are bounded above (in the inclusion order on lattices commensurable with $\Delta$) by the lattices $\Gamma_n'$ of some $Y' \in \mathcal{F}$ on each partition piece. We thus apply Lemma \ref{Lemma: Changing representatives for inverse limit} to refine the directed family $\mathcal{F}$, replacing each lattice $\Gamma_n'$ by $\Gamma_n' \cap \langle H, \Delta^{(m)} \rangle = \langle H, \Delta^{(m)} \rangle$ (the second equality because $\langle H, \Delta^{(m)} \rangle \subseteq \Gamma_n'$ on each piece). The output is a directed family $\mathcal{F}_m$ in which every bundle has fibrewise lattice exactly $\langle H, \Delta^{(m)} \rangle$. Let $Y_m$ denote the resulting bundle:
    $$ Y_m := \bigsqcup_{n} \Omega_n \rtimes_{\rho_{m, n}} N / \langle H, \Delta^{(m)} \rangle$$
    over the partition $(\Omega_n)$ inherited from the refining $Y' \in \mathcal{F}$, with cocycle $\rho_{m, n}$ obtained by projecting $\rho_n'$.

    The $\widetilde\alpha$-invariance of $\langle H, \Delta^{(m)} \rangle$ allows us to define an $\ASL_d(\mathbb{Z})$-action on $Y_m$ by
    $$ \gamma \cdot (\omega, u \langle H, \Delta^{(m)} \rangle) := \bigl(\gamma \cdot \omega,\, b(\gamma, \omega) \cdot \widetilde\alpha(\gamma, \omega)(u) \, \langle H, \Delta^{(m)} \rangle\bigr).$$
    The cocycle equation for $\widetilde\alpha$ together with the corresponding cochain relation for $b$ ensures this is a well-defined action. The natural map $\widetilde{X} \to Y_m$ is $\ASL_d(\mathbb{Z})$-equivariant by construction.

     \emph{Step 5 (fibrewise cofinality of $(Y_m)$ in $\mathcal{F}$).} The cofinality argument is pointwise on the base $\Omega$: we claim that for $\nu$-a.e.\ $\omega \in \Omega$ and every $Y' \in \mathcal{F}$, there exists $m = m(\omega, Y') \geq 1$ such that the fibre of $Y_m$ at $\omega$ refines the fibre of $Y'$ at $\omega$. This is enough to ensure $\widetilde{Y} = \varprojlim_m Y_m$, as the inverse limit of bundles over $\Omega$ is determined fibrewise (modulo a measurable selection of $m$ depending on $\omega$).

    To establish this fibrewise claim, fix $Y' \in \mathcal{F}$ with bundle decomposition $Y' = \bigsqcup_n \Omega_n' \rtimes_{\rho_n'} N/\Gamma_n'$, and let $\omega \in \Omega_n'$ for the unique $n$. We show $\langle H, \Delta^{(m)} \rangle \subseteq \Gamma_n'$ for some $m$, depending on $(\omega, Y')$:
    \begin{itemize}
        \item $H \subseteq \Gamma_n'$ holds by the definition of $H$ as an intersection over $\mathcal{F}$ that includes $Y'$.
        \item After Step 1's cohomologising and repartitioning, $\Gamma_n'$ is commensurable with $\Delta$, and $\Gamma_n' \cap \Delta$ has finite index in $\Delta$. By cofinality of the $(\Delta^{(m)})_{m \geq 1}$ among finite-index subgroups of $\Delta$, there exists $m$ with $\Delta^{(m)} \subseteq \Gamma_n' \cap \Delta \subseteq \Gamma_n'$.
    \end{itemize}
    Combining, $\langle H, \Delta^{(m)} \rangle \subseteq \Gamma_n'$, so the natural projection $N/\langle H, \Delta^{(m)} \rangle \twoheadrightarrow N/\Gamma_n'$ provides the fibrewise refinement of $Y'$ by $Y_m$ at $\omega$. The choice $m(\omega, Y')$ depends measurably on $\omega$ (taking the smallest such $m$).

    \emph{Conclusion.} Each $Y_m$ is an $\ASL_d(\mathbb{Z})$-equivariant nilfactor of $\widetilde{X}$ (Step 4). By the fibrewise cofinality established in Step 5, the inverse limit $\varprojlim_m Y_m$ has the same fibres as $\widetilde{Y} = \varprojlim_{i \in I} Y_i$ at $\nu$-a.e.\ $\omega \in \Omega$, hence the two inverse limits coincide. Thus $\widetilde{Y}$ is the inverse limit of the $\ASL_d(\mathbb{Z})$-equivariant nilfactors $(Y_m)$, and is itself $\ASL_d(\mathbb{Z})$-equivariant.
\end{proof}

 \section{Structure of $\SL_d(\mathbb{Z})$-invariant functions}\label{Section: Structure of invariant functions}

Throughout this section, fix a bundle of nilsystems
$$ X = Z^0(X) \rtimes_\rho N/\Gamma$$
equipped with an $\SL_d(\mathbb{Z})$-action by isomorphisms of bundles of nilsystems, such that the combined $\ASL_d(\mathbb{Z})$-action (the $\SL_d(\mathbb{Z})$-action together with the $\mathbb{Z}^d$-translation action via $\rho$) makes $X$ ergodic. Suppose as we may that $(N,\Gamma)$ is reduced. Fix $f \in L^\infty(X)$ invariant under the $\SL_d(\mathbb{Z})$-action.

\subsection{Intertwining and action cocycles}\label{Subsection: Intertwining and action cocycles}

By Lemma \ref{Lemma: Intertwining cocycle}, the $\ASL_d(\mathbb{Z})$-action on $X$ is described by a cocycle
$$ \rho: \ASL_d(\mathbb{Z}) \times Z^0(X) \to \Aut(N/\Gamma) ,$$
which we call the \emph{action cocycle}. We use the same letter $\rho$ as for the $\mathbb{Z}^d$-cocycle defining the bundle, since the latter is the restriction of the former to $\mathbb{Z}^d \subset \ASL_d(\mathbb{Z})$.

The associated \emph{intertwining cocycle} is the unique map $\alpha: \SL_d(\mathbb{Z}) \times Z^0(X) \to \Aut(N)$ 
such that for all $u \in N$, $\bar n \in N/\Gamma$, $x \in Z^0(X)$ and $\gamma \in \SL_d(\mathbb{Z})$,
\begin{equation}\label{Eq: intertwining relation}
    \rho(\gamma, x)\bigl(u \cdot (x, \bar n)\bigr) = \alpha(\gamma, x)(u) \cdot \rho(\gamma, x)(x, \bar n).
\end{equation}
This expresses that $\gamma$, when acting on a translate of a base point, acts via $\alpha$ on the translation factor and via $\rho$ on the base point.

\paragraph{\textit{Higher-order action and intertwining cocycles.}}

Fix $k \geq 0$. By \cite[Ch. 12, Prop. 20]{HostKra}, the $k$-th characteristic factor of $X$ is
$$ Z_k(X) = Z^0(X) \rtimes_{\bar\rho_k} N/C_k(N) \Gamma,$$
where $C_k(N)$ is the $k$-th term of the lower central series of $N$ and $\bar\rho_k$ is the projection of $\rho$ to $\Aut((N/\Gamma C_k(N))$. The system $X$ is then a skew product of $C_k(N)/(C_k(N) \cap \Gamma)$ over $Z_k(X)$.

Choose a Borel section $s_k: N/C_k(N)\Gamma \to N$ of the natural projection. Define the \emph{$k$-th action cocycle}
$$ \rho_k: \ASL_d(\mathbb{Z}) \times Z_k(X) \to \Aut(N/\Gamma)$$
by the formula
$$ \rho_k(\gamma, (x, \bar n))(n_k) := s_k\bigl(\rho(\gamma, x)(\bar n)\bigr)^{-1} \cdot \rho(\gamma, x)\bigl(s_k(\bar n) \cdot n_k\bigr) \quad \text{for } n_k \in N/\Gamma.$$

Writing $\phi(x,\bar{n})$ as the base change map by $ s_k(\bar{n})$ we see that $\rho_k(\gamma, (x, \bar{n})) = \phi(\gamma \cdot (x, \bar{n}))^{-1} \circ \rho(\gamma, x) \circ \phi(x,\bar{n})$ thus showing that $\rho_k$ is a cocycle. 
A direct check also shows that $\rho_k$ restricts to an affine map of $C_k(N)\cdot \Gamma$ for each $(\gamma, (x, \bar n))$, and that the cocycle equation holds. The cocycle $\rho_k$ depends on the choice of $s_k$ only up to a coboundary in $C_k(N) \cap \Gamma$.

The \emph{$k$-th intertwining cocycle} is $\alpha_k: \SL_d(\mathbb{Z}) \times Z_k(X) \to \Aut(C_k(N))$, obtained from $\rho_k|_{\SL_d(\mathbb{Z}) \times Z_k(X)}$ by  the intertwining relation analogous to \eqref{Eq: intertwining relation} with respect to $\rho_k$ 
identifying $\alpha_k$ with the restriction of $\alpha$ to $C_k(N)$ (up to cohomology).

For $k = 0$ we recover $C_0(N) = N$, so $\rho_0 = \rho$ and $\alpha_0 = \alpha$.

The $k$-th action cocycle relates to the structural cocycles of \cite[Ch. 18, \S 3.4]{HostKra}: the natural fibration
$$ Z_{k+1}(X) = Z_k(X) \rtimes_{q_k} C_k(N)/\bigl(C_{k+1}(N)(\Gamma \cap C_k(N))\bigr)$$
realises $Z_{k+1}(X)$ as a $\mathbb{Z}^d$-system over $Z_k(X)$, where $q_k$ is the composition of $\rho_k|_{\mathbb{Z}^d \times Z_k(X)}$ with the natural projection $C_k(N) \twoheadrightarrow C_k(N)/(C_{k+1}(N)(\Gamma \cap C_k(N)))$. By \cite[Ch. 18, Prop. 5]{HostKra}, $q_k$ is a cocycle of type $k+1$ in the sense of Host--Kra.

\subsection{Squeezed vectors and extra invariance}

For a measurable set $B \subset Z^0(X)$ and $x \in Z^0(X)$, define
$$ E(x, B) := \{\alpha(\gamma, x) : \gamma \cdot x \in B\} \subset \Aut(N).$$
We will show that unboundedness of $E(x, B)$ forces the cocycle $\alpha$ to ``squeeze'' certain vectors, yielding extra invariance for $\SL_d(\mathbb{Z})$-invariant functions. We need this result \emph{relative to} an $\alpha$-invariant subgroup.

Let $M \trianglelefteq N_0$ be a normal subgroup of $N_0$ invariant under $\alpha$, in the sense that $\alpha(\gamma, x)(M) = M$ for $\nu$-a.e.\ $x$ and all $\gamma \in \SL_d(\mathbb{Z})$. The cocycle $\alpha$ stabilises $N_0$ (the identity component) automatically by continuity, so $\alpha|_{N_0}$ descends to a cocycle
$$ \alpha_{N_0/M}: \SL_d(\mathbb{Z}) \times Z^0(X) \to \Aut(N_0/M).$$
Set
$$ E_{N_0/M}(x, B) := \{\alpha_{N_0/M}(\gamma, x) : \gamma \cdot x \in B\}.$$

\begin{prop}\label{Prop: using squeezed vectors}
    Let $M \trianglelefteq N_0$ be $\alpha$-invariant, and suppose $f$ is invariant under the action of $M$. If, for every $B \subset Z^0(X)$ of positive measure and $\nu$-a.e.\ $x \in Z^0(X)$, the set $E_{N_0/M}(x, B)$ is unbounded in $\Aut(N_0/M)$, then for $\nu$-a.e.\ $x \in Z^0(X)$,
    $$ \dim \bigl\{u \in N_0 : u \cdot f(x, \cdot) = f(x, \cdot)\bigr\} > \dim M.$$
\end{prop}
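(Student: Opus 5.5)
Since $f$ is $\SL_d(\mathbb{Z})$-invariant, I would first transport the fibre function along the action. By the intertwining relation \eqref{Eq: intertwining relation}, for every $\gamma$ and $u \in N_0$ we get
$$ f\bigl(\gamma\cdot x,\ \alpha(\gamma,x)(u)\cdot \rho(\gamma,x)\bar n\bigr) = f(x, u\cdot \bar n).$$
So the fibre function $f_x := f(x,\cdot)$ on $N/\Gamma$ is carried by an affine map with linear part $\alpha(\gamma,x)$ to $f_{\gamma x}$. Because $f$ is $M$-invariant and $M$ is $\alpha$-invariant, everything descends to the quotient $M\backslash N_0$, and only $\alpha_{N_0/M}$ matters. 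Fix a set $B$ of positive measure on which the fibre functions $f_y$ lie in a compact subset of $L^2(N/\Gamma)$; one can take a small $L^2$-ball around a fixed function $f_{y_0}$, using separability of $L^2$. For a.e.\ $x$, the unboundedness hypothesis then gives $\gamma_j$ with $\gamma_j x \in B$ and $a_j := \alpha_{N_0/M}(\gamma_j,x) \to \infty$ in $\Aut(N_0/M)$.

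Next I would extract a squeezed direction. Identify $\Aut(N_0/M)$ with a closed subgroup of $\mathrm{GL}(\mathfrak{n}_0/\mathfrak{m})$ via the Lie algebra. Since $a_j \to \infty$ while $\det$ is controlled (as in Prop.~\ref{Prop: cocycles into Aut(N) reduce to lattice stabiliser, baby case}, the determinant part is cohomologically trivial), the inverses $a_j^{-1}$ are also unbounded. I would choose unit vectors $v_j$ with $\|a_j^{-1}\| \cdot |a_j v_j|$ bounded, or rather work directly with $a_j^{-1}$: there exist $w_j$ with $w_j \to 0$ such that $a_j^{-1}(w_j)$ converges to a nonzero $v$. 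Concretely, pick $w_j$ in the top singular direction of $a_j^{-1}$, rescaled. Set $u_j = \exp(w_j)$ and $u = \exp(v)$ in the simply connected group $N_0/M$, using that $\exp$ is a diffeomorphism and the BCH formula. Then, applying the transported identity at the points $\gamma_j x$, translating $f_x$ by $\exp(a_j^{-1}w_j)$ corresponds to translating $f_{\gamma_j x}$ by $u_j \to e$. Since the $f_{\gamma_j x}$ lie in a compact family in $L^2$, and translation acts continuously and uniformly on compact sets, we get $\|u_j \cdot f_{\gamma_j x} - f_{\gamma_j x}\|_2 \to 0$. Hence $\|\exp(a_j^{-1}w_j) f_x - f_x\|_2 \to 0$. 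The base-point part $b$ and $\rho(\gamma,x)$ are measure preserving, so they do not affect the $L^2$ norms. Letting $j\to\infty$ shows that $f_x$ is invariant under $\exp(v)$ modulo $M$.

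Finally, I would upgrade this to a dimension count. The stabiliser $S_x = \{u\in N_0 : u\cdot f_x = f_x\}$ is a closed subgroup containing $M$, so it is a Lie subgroup. Its image in $N_0/M$ contains $\exp(v)$. Running the argument with $tv$ for all $t$, by scaling the $w_j$, gives the whole one-parameter group $\exp(\mathbb{R}v)$. Thus $\dim S_x \ge \dim M + 1$, and measurability in $x$ is routine.

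I expect the main obstacle to be the compactness step: arranging that the fibre functions along the returning orbit $\gamma_j x \in B$ are uniformly equicontinuous, so that small translations $u_j\to e$ act almost trivially. The difficulty is that the fibres $N/\Gamma$ are identified with one another only up to base change. My first attempt would be to define $B$ via an $L^2$-neighbourhood of a single fibre function modulo the affine group action, and to check that the hypothesis "for every $B$ of positive measure" suffices for this.
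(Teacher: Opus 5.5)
Your argument is essentially the paper's proof. You transport translations along $\gamma$ using the intertwining relation and the $\SL_d(\mathbb{Z})$-invariance of $f$. You return along $\gamma_j$ to a set $B$ on which small translations act almost trivially on the fibre functions, and extract unit vectors contracted by $\alpha(\gamma_j,x)$ from the determinant constraint. Finally you take a limit direction and rescale to obtain a one-parameter subgroup. Two small repairs are needed.

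First, a small $L^2$-ball around $f_{y_0}$ is not compact. It only gives $\limsup_j\|u_j\cdot f_{\gamma_j x}-f_{\gamma_j x}\|_2\le 2\eta$. You can fix this in either of two ways.
\begin{itemize}
\item Take $B=\{y : f_y\in C\}$ with $C\subset L^2(N/\Gamma)$ compact. Such a $C$ exists, with $B$ of positive measure, by tightness of the law of $y\mapsto f_y$ on this Polish space. This gives exact invariance in a single limit.
\item Keep the $\eta$-error and run a second limit $\eta\to 0$ with a further subsequence of directions. This is what the paper does with Lemma~\ref{Lemma: Egorov}.
\end{itemize}

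Second, knowing only that the determinant part of $\alpha$ is cohomologically trivial is not enough. Along $\gamma_j x\in B$ the determinants could still blow up, and then $a_j$ could expand every direction. Either use, as the paper does, that $\alpha$ preserves Haar measure, so $|\det a_j|=1$; or shrink $B$ so that the transfer function of the determinant cocycle is bounded on it.

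Your worry about base changes is moot. The fibre is the fixed nilmanifold $N/\Gamma$, and $\rho(\gamma,x)$ is measure preserving, so it drops out of the $L^2$ norms.
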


The proof relies on a Lusin-type continuity statement.

\begin{lem}\label{Lemma: Egorov}
    For every $\epsilon > 0$ there exist $\delta > 0$ and a measurable subset $B \subset Z^0(X)$ with $\nu(B) \geq 1 - \epsilon$ such that, for every $u \in N_0$ at distance at most $\delta$ from $e$ and every $\omega \in B$,
    $$ \|u \cdot f - f\|_{L^2(X, \nu_\omega)} \leq \epsilon,$$
    where $\nu_\omega$ denotes the conditional measure of $\nu$ on the fibre over $\omega$.
\end{lem}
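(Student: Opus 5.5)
The plan is to combine continuity of translation in $L^2$ with an Egorov/Lusin-type uniformisation over the base. Here $X = Z^0(X) \rtimes_\rho N/\Gamma$, and $\nu_\omega$ is the Haar measure on the fibre $\{\omega\}\times N/\Gamma$, so $\|u\cdot f - f\|_{L^2(\nu_\omega)} = \|u\cdot f_\omega - f_\omega\|_{L^2(N/\Gamma)}$ with $f_\omega := f(\omega,\cdot)$. For $\nu$-a.e.\ $\omega$ we have $f_\omega \in L^\infty \subset L^2(N/\Gamma)$. Since $N_0$ acts continuously by left translation on $L^2(N/\Gamma)$ (a strongly continuous unitary representation), the function
$$ h_\delta(\omega) := \sup_{u \in N_0,\ d(u,e)\le \delta} \|u\cdot f_\omega - f_\omega\|_{L^2(N/\Gamma)} $$
tends to $0$ as $\delta \to 0$ for a.e.\ $\omega$.

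First I check that $h_\delta$ is measurable. I take the supremum over a countable dense subset $D$ of the ball of radius $\delta$ in $N_0$; by strong continuity in $u$, this gives the same value. For fixed $u$, the map $\omega \mapsto \|u\cdot f_\omega - f_\omega\|^2$ equals $\mathbb{E}[|u\cdot f - f|^2 \mid Z^0(X)](\omega)$, which is measurable (Fubini on the product $Z^0(X)\times N/\Gamma$). Hence each $h_\delta$ is measurable, and $h_\delta$ is monotone in $\delta$.

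Next I apply Egorov. Take $\delta = 1/j$; then $h_{1/j}\to 0$ pointwise a.e. and monotonically, so by Egorov's theorem there is $B$ with $\nu(B)\ge 1-\epsilon$ on which the convergence is uniform. Alternatively, and more simply, I choose $j$ with $\nu(\{h_{1/j} > \epsilon\}) \le \epsilon$, which is possible by monotone pointwise convergence together with continuity of measure, and set $B := \{h_{1/j}\le\epsilon\}$ and $\delta := 1/j$. This gives the conclusion.

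The only point needing care is the measurability and pointwise convergence, i.e.\ ensuring that strong continuity holds for the action of $N_0$ on $N/\Gamma$ fibrewise. This is standard since $N_0 \subset N$ acts by translations preserving Haar measure. No dynamics is used.
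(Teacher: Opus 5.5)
Your proof is correct. The paper states this lemma without proof, and the name ``Egorov'' together with its description as a ``Lusin-type continuity statement'' indicate exactly your argument: strong continuity of $N_0$ acting on each fibre $L^2(N/\Gamma)$, measurability of $h_\delta$ by taking the supremum over a countable dense subset of the $\delta$-ball and applying Fubini, and continuity of $\nu$ from above along the decreasing sets $\{h_{1/j}>\epsilon\}$.
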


\begin{proof}[Proof of Proposition \ref{Prop: using squeezed vectors}]
    \emph{Reduction to $M = \{e\}$.} Replace $N_0$ by $N_0/M$, $f$ by its descent to $X/M$ (well-defined since $f$ is $M$-invariant), and $\alpha$ by $\alpha_{N_0/M}$. The hypotheses persist with $M$ trivial in the new setting; we therefore assume $M = \{e\}$ throughout.

    \emph{Squeezing.} Fix $\epsilon > 0$ and let $\delta > 0$, $B \subset Z^0(X)$ be as in Lemma \ref{Lemma: Egorov}. By hypothesis, for $\nu$-a.e.\ $\omega \in Z^0(X)$ the set $E(\omega, B)$ is unbounded; pick a sequence $\gamma_n \in \SL_d(\mathbb{Z})$ with $\gamma_n \cdot \omega \in B$ and $\alpha(\gamma_n, \omega) \to \infty$ in $\Aut(N_0)$. Each $\alpha(\gamma_n, \omega)$ preserves Haar measure on $N_0$, so as a linear map on $\mathrm{Lie}(N_0)$ it has determinant $\pm 1$; unboundedness then forces the existence of unit vectors $X_n \in \mathrm{Lie}(N_0)$ with $\alpha(\gamma_n, \omega)(X_n) \to 0$. Setting $u_n := \exp(X_n) \in N_0$ at distance $1$ from $e$, we have $\alpha(\gamma_n, \omega)(u_n) \to e$; in particular, $\alpha(\gamma_n, \omega)(u_n)$ is within distance $\delta$ of $e$ for all $n$ large enough.

    \emph{Extra invariance.} Using $\SL_d(\mathbb{Z})$-invariance of $f$ and the intertwining relation \eqref{Eq: intertwining relation},
    \begin{align*}
        f(u_n \cdot x)
        &= f\bigl(\gamma_n \cdot (u_n \cdot x)\bigr) && \text{(\(\SL_d\)-invariance)} \\
        &= f\bigl(\alpha(\gamma_n, \omega)(u_n) \cdot (\gamma_n \cdot x)\bigr) && \text{(by \eqref{Eq: intertwining relation})}.
    \end{align*}
    Therefore
    $$ \|u_n \cdot f - f\|_{L^2(X, \nu_\omega)} = \|\alpha(\gamma_n, \omega)(u_n) \cdot f - f\|_{L^2(X, \nu_{\gamma_n \cdot \omega})}.$$
    Since $\gamma_n \cdot \omega \in B$ and $\alpha(\gamma_n, \omega)(u_n)$ is within distance $\delta$ of $e$, Lemma \ref{Lemma: Egorov} gives
    $$ \|u_n \cdot f - f\|_{L^2(X, \nu_\omega)} \leq \epsilon.$$
    The same bound holds for every $u_n^t := \exp(t X_n)$, $t \in [0, 1]$, by the same argument applied to $t X_n$.

    \emph{Diagonal argument.} The unit sphere in $\mathrm{Lie}(N_0)$ is compact; passing to a subsequence, $X_n \to X_\infty$ for some unit vector $X_\infty \in \mathrm{Lie}(N_0)$. Letting $\epsilon \to 0$ and combining with continuity of the $N_0$-action on $L^2(X, \nu_\omega)$, the limiting one-parameter subgroup $u_\infty^t := \exp(t X_\infty)$ satisfies $u_\infty^t \cdot f(\omega, \cdot) = f(\omega, \cdot)$ in $L^2$ for every $t \in [0, 1]$. As $X_\infty \neq 0$, the stabiliser of $f(\omega, \cdot)$ contains a non-trivial one-parameter subgroup, so its dimension is at least $1$ (which is $> \dim M = 0$ in our reduction).
\end{proof}

We close with the rigidity statement on stabilisers of functions on nilmanifolds.

\begin{lem}\label{Lemma: stabilisers of functions on nilmanifolds}
    Let $N/\Gamma$ be a nilmanifold, $f \in L^\infty(N/\Gamma)$, and let $L$ be the connected component of the identity in the stabiliser of $f$ under the $N$-action. Then $L$ is closed in $N$, $L \cap \Gamma$ is a lattice in $L$, and $L$ is normalised by $N_0$ (the identity component of $N$).
\end{lem}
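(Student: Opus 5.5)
The plan is to work entirely inside the unitary representation of $N$ on $L^2(N/\Gamma)$ by left translation. The stabiliser $S := \{u \in N : u \cdot f = f \text{ in } L^2\}$ is the stabiliser of a vector under a strongly continuous unitary representation. Hence $S$ is a closed subgroup of $N$, and so is a Lie subgroup; its identity component $L$ is then closed and connected. This settles the first assertion.

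For normalisation by $N_0$, I would show that for $n \in N_0$ the conjugate $nLn^{-1}$ also stabilises a function attached to $f$. Since $nLn^{-1}$ stabilises $n \cdot f$, this is not immediate, so the plan is to go through Lie algebras. I would set $\mathfrak{l} = \mathrm{Lie}(L)$, the space of $X$ with $\exp(tX) \cdot f = f$ for all $t$. The key step is to find a larger closed subgroup of the form $L' \supseteq L$ with $L'$ normal and still fixing $f$. To get it I would decompose $L^2(N/\Gamma)$ along the maximal torus factor $N/[N_0,N_0]\Gamma$ and use induction on the nilpotency class. Concretely, I would first apply the abelian case, where any closed connected subgroup of a torus group is normal. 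I would then pass to the central extension by $C_{s}(N)\cap$ the centre. Vertical characters (Fourier modes along the central torus $Z(N)_0/(Z(N)_0 \cap \Gamma)$) are preserved by $N$. Within each mode, $[N_0, L] \subseteq L$ modulo the centre by induction. The central correction is handled by noticing that if $X \in \mathfrak{l}$ and $Y \in \mathrm{Lie}(N_0)$, then $[Y,X]$ lies in a deeper term of the lower central series. One can then show that $\exp(t[Y,X])$ fixes $f$ by the commutator identity $\exp(sY)\exp(tX)\exp(-sY) = \exp(tX + st[Y,X] + \cdots)$, together with descending induction on the degree of the central series. This gives $[\mathrm{Lie}(N_0), \mathfrak{l}] \subseteq \mathfrak{l}$, i.e.\ $L$ is normalised by $N_0$.

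For rationality ($L \cap \Gamma$ a lattice in $L$), I would use the standard criterion that a connected closed subgroup $L$ normalised by $N_0$ has closed orbits $L\Gamma/\Gamma$ iff $\mathfrak{l}$ is rational with respect to the $\mathbb{Q}$-structure from $\Gamma$. I would then argue via Ratner/Leibman for nilmanifolds: the closure $\overline{L\Gamma}$ is $L^*\Gamma$ for the smallest rational subgroup $L^* \supseteq L$, and $f$, being $L$-invariant, is invariant under the closure of the $L$-action on a.e.\ fibre. Because $L$ is normal in $N_0$, $L$-orbits on $N/\Gamma$ equidistribute in $L^*$-orbits, so $f$ is $L^*$-invariant. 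Maximality of $L$ as the identity component of the stabiliser then forces $L^* = L$.

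The main obstacle is the normalisation step, specifically controlling the commutator terms. If that inductive argument fails, I would try instead proving rationality first via the equidistribution route for the group generated by $L$ and its $N_0$-conjugates, and then deduce normality from rationality plus maximality.
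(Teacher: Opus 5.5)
Your first step (the stabiliser is closed by strong continuity of the $N$-representation on $L^2(N/\Gamma)$) is fine. Your last step is also sound in outline: once normality is known, Leibman's orbit-closure theorem, unique ergodicity on orbit closures and maximality of $L$ force $L$ to be rational. Some care is needed with the several components of $N/\Gamma$ when $N\neq N_0$, where $g^{-1}Lg$ need not equal $L$. This is essentially the mechanism the paper uses.

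The genuine gap is the normalisation step, which carries all the weight. The identity $\exp(sY)\exp(tX)\exp(-sY)=\exp(tX+st[Y,X]+\cdots)$ only says that the conjugated one-parameter group fixes $\exp(sY)\cdot f$, not $f$. To build elements fixing $f$ out of commutators you need both entries in the stabiliser, which returns only $[\mathfrak{l},\mathfrak{l}]\subseteq\mathfrak{l}$. No manipulation of the group law can give $\exp(t[Y,X])\cdot f=f$ for $Y\notin\mathfrak{l}$; this is a spectral/dynamical fact. For example, on the Heisenberg nilmanifold, $\exp(\mathbb{R}e_1)$-invariance forces invariance under the centre because generic $\exp(\mathbb{R}e_1)$-orbits are dense in $2$-tori containing the central direction.

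The induction is also not well-founded. For a nontrivial central character $\chi$, the mode $f_\chi$ is not a function on the quotient nilmanifold but a section of a line bundle over it, so the inductive hypothesis does not apply to it. Even for the trivial mode, the identity component of $\mathrm{Stab}(f)=\bigcap_\chi\mathrm{Stab}(f_\chi)$ is not determined by the identity components of the individual stabilisers. Your fallback is circular: the closed subgroup generated by the $N_0$-conjugates of $L$ is connected and normal, so showing that it fixes $f$ \emph{is} the normality claim.

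What is missing is a Mautner-type input. Here is the central case. Suppose $X\in\mathfrak{l}$, $W=[Y,X]$ lies in the Lie algebra of the central torus, and $f_\chi$ is a mode on which $\exp(uW)$ acts by $e^{2\pi i\lambda u}$.
\begin{itemize}
\item The relation $\exp(sY)\exp(tX)\exp(-sY)=\exp(tX)\exp(stW)$ shows that $g_s:=\exp(-sY)\cdot f_\chi$ satisfies $\exp(tX)\cdot g_s=e^{2\pi i\lambda st}g_s$.
\item If $\lambda\neq 0$, the $g_s$ are pairwise orthogonal eigenvectors of the unitary flow $\exp(tX)$.
\item Yet $s\mapsto g_s$ is norm-continuous with $\|g_s\|=\|f_\chi\|$, which forces $f_\chi=0$.
\end{itemize}
Extending this beyond the central case needs a genuine argument. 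One option is Kirillov theory: in an irreducible unitary representation of a connected simply connected nilpotent Lie group, a one-parameter subgroup has a nonzero fixed vector only if it acts trivially, so $L$-fixed vectors are fixed by the normal closure of $L$.

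The paper avoids Lie-algebraic manipulation altogether and gets normality from the dynamics, together with rationality:
\begin{itemize}
\item Leibman gives $\overline{L\cdot x\Gamma}=L_x\cdot x\Gamma$ with $L_x\supseteq L$ rational.
\item Unique ergodicity makes $f$ invariant along these orbit closures.
\item Countability of rational subgroups makes $x^{-1}L_xx$ constant on a positive-measure set $B$.
\item A Steinhaus-type argument on $B\cdot B^{-1}$, together with maximality of $L$, gives $nLn^{-1}\subseteq L$ for $n\in N_0$.
\end{itemize}
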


\begin{proof}
    The stabiliser of $f$ in $N$ is closed; its connected component $L$ is then closed and connected. For each $x \in N$, by Leibman's theorem on orbit closures of unipotent flows on nilmanifolds \cite{Leibman}, the closure of $L \cdot (x \Gamma)$ in $N/\Gamma$ has the form $L_x \cdot (x \Gamma)$, where $L_x$ is a closed connected subgroup of $N$ containing $L$ with $L_x \cap x \Gamma x^{-1}$ a lattice in $L_x$. The function $f$ is $L$-invariant, hence (by unique ergodicity of $L$ on its orbit closure $L_x \cdot (x \Gamma)$ \cite{Leibman}) constant $\mu_{L_x \cdot (x\Gamma)}$-almost everywhere on $L_x$-orbits. In particular $f$ is $L_x$-invariant for $\mu_{N/\Gamma}$-a.e.\ $x$.

    There are only countably many closed connected subgroups $H \subseteq N$ such that $H \cap \Gamma$ is a lattice in $H$ (Lemma \ref{Lemma: Measurable selection subnilmanifolds}). The map $x \mapsto x^{-1} L_x x$ takes values in this countable set; on a positive-measure set $B \subset N$, $x^{-1} L_x x$ is constant, say equal to $L^*$. Then for $x, y \in B$, $L_x = x L^* x^{-1}$ and $L_y = y L^* y^{-1}$. In particular $L_x = L_y$ when $x y^{-1} \in N_{L^*} := \{n \in N : n L^* n^{-1} = L^*\}$, the normaliser of $L^*$ in $N$.

    The set $B \cdot B^{-1}$ has non-empty interior (since $B$ has positive measure in a locally compact group); the smallest closed subgroup containing $B \cdot B^{-1}$ contains $N_0$. The relation $L_x \supseteq L$ together with the equivariance $L_{nx} = n L_x n^{-1}$ for $n$ in the normaliser of $L^*$ extends to $N_0$ by continuity, giving $n L n^{-1} \subseteq L_{nx}$ for all $n \in N_0$. Since $L$ is the connected component of the stabiliser of $f$ and $f$ is $L_{nx}$-invariant, $n L n^{-1} \subseteq L$, i.e., $L$ is normalised by $N_0$.
\end{proof}

\begin{lem}\label{Lemma: Dimension reduction}
    With $f$ as fixed at the start of this section, there exist (up to an isomorphism of bundles of nilsystems) a subgroup $M \subseteq N_0$ such that:
    \begin{enumerate}
        \item $M$ stabilises $f$;
        \item $M \cap \Gamma$ is a lattice in $M$;
        \item $M$ is normal in $N_0$ and invariant under elements in the image of $\alpha$;
        \item the cocycle $\alpha_{N_0/M}$ is cohomologous to a cocycle taking values in a compact subgroup of $\Aut(N_0/M)$.
    \end{enumerate}
\end{lem}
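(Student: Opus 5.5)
The plan is to build $M$ by an increasing induction on dimension. Proposition \ref{Prop: using squeezed vectors} supplies the engine, Lemma \ref{Lemma: stabilisers of functions on nilmanifolds} supplies the regularity, and Lemma \ref{Lemma: Compactness criterion} gives the stopping criterion.

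Start with $M_0 := \{e\}$. This subgroup trivially satisfies (1)--(3). Suppose we have $M_i \trianglelefteq N_0$ satisfying (1)--(3). If $\alpha_{N_0/M_i}$ is cohomologous to a cocycle with values in a compact subgroup, we stop with $M := M_i$. Otherwise, Lemma \ref{Lemma: Compactness criterion} applies to $\alpha_{N_0/M_i}$ (valued in the lcsc group $\Aut(N_0/M_i)$, over the ergodic $\SL_d(\mathbb Z)$-action on $Z^0(X)$). Its contrapositive says that for every $B$ of positive measure, $E_{N_0/M_i}(x,B)$ is unbounded for a.e.\ $x$. Proposition \ref{Prop: using squeezed vectors} then shows that, for a.e.\ $x$, the identity component $L_x$ of the stabiliser in $N_0$ of $f(x,\cdot)$ has dimension $>\dim M_i$.

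We next turn $L_x$ into a constant subgroup. By Lemma \ref{Lemma: stabilisers of functions on nilmanifolds}, each $L_x$ is closed, normalised by $N_0$, and meets $\Gamma$ in a lattice. There are only countably many such rational subgroups, so $x\mapsto L_x$ takes countably many values. The $\SL_d(\mathbb Z)$-invariance of $f$ and the intertwining relation \eqref{Eq: intertwining relation} give the equivariance
\[
L_{\gamma\cdot x} = \alpha(\gamma,x)(L_x).
\]
Thus $x\mapsto L_x$ is an $\alpha$-equivariant map into a countable $\Aut(N)$-set. The cocycle reduction lemma \cite[Lem. 4.2.8]{ErgodicTheoryZimmer} then lets us, after cohomologising $\alpha$, assume $L_x = M_{i+1}$ is constant a.e. We absorb this cohomology into the bundle by applying the transfer function fibrewise; this is the ``up to an isomorphism of bundles of nilsystems'' clause.

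The new subgroup $M_{i+1}$ satisfies (1)--(3). It is contained in $N_0$ since $L_x$ is connected, it is normal in $N_0$, it is rational, and it is $\alpha$-invariant because $L_{\gamma x}=\alpha(\gamma,x)(L_x)$ with both sides equal to $M_{i+1}$. Since $M_i\subseteq L_x$ (by (1) and connectedness), we get $M_i\subsetneq M_{i+1}$ with strictly larger dimension. As $\dim N_0<\infty$, the process terminates. At worst it reaches $M=N_0$, in which case $N_0/M$ is trivial and (4) holds vacuously.

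I expect the main obstacle to be the bookkeeping in the reduction step. First, the cohomology realising the constancy of $L_x$ must take values in automorphisms compatible with the lattice $\Gamma$, so that the bundle can be re-identified with constant fibre $N/\Gamma$. This should follow from the countable repartitioning used in Step 2 of Proposition \ref{Prop: invariant nilfactor}, at the cost of passing to commensurable lattices. Second, Proposition \ref{Prop: using squeezed vectors} requires unboundedness for \emph{every} $B$. Lemma \ref{Lemma: Compactness criterion} only yields failure of relative compactness on a conull set for each fixed $B$, and I would handle this by using ergodicity to upgrade ``positive measure'' to ``a.e.''.
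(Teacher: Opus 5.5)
Your proposal is correct and is essentially the paper's proof. It uses the same subgroup $L_x$ (the identity component of the fibrewise stabiliser), makes it constant via the equivariance $L_{\gamma\cdot x}=\alpha(\gamma,x)(L_x)$, countability and a transfer function, and obtains (4) from Lemma \ref{Lemma: Compactness criterion} and Proposition \ref{Prop: using squeezed vectors}. The paper takes the transfer function valued in the group generated by the image of $\alpha$, which removes your lattice-compatibility worry.

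Since $L_x$ is already the full identity component, your induction performs at most one nontrivial step; the paper simply takes this maximal $M$ at once and gets (4) by contradiction. Your second worry is also moot: the contrapositive of Lemma \ref{Lemma: Compactness criterion} already yields unboundedness for every positive-measure $B$ and a.e.\ $x$.
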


\begin{proof}
    For $x \in Z^0(X)$, let $M_x \subseteq N_0$ denote the connected component of the identity in $\{u \in N_0 : u \cdot f(x, \cdot) = f(x, \cdot)\}$. By Lemma \ref{Lemma: stabilisers of functions on nilmanifolds}, $M_x$ is closed, $M_x \cap \Gamma$ is a lattice in $M_x$, and $M_x$ is normal in $N_0$. The $\SL_d(\mathbb{Z})$-invariance of $f$ together with the intertwining relation \eqref{Eq: intertwining relation} gives
    \begin{equation}\label{Eq: Invariance stabiliser}
        M_{\gamma \cdot x} = \alpha(\gamma, x)(M_x).
    \end{equation}

    Connected closed subgroups $H \subseteq N_0$ with $H \cap \Gamma$ a lattice form a countable set (Lemma \ref{Lemma: Measurable selection subnilmanifolds}); $x \mapsto M_x$ takes values in this set. By a measurable-selection argument, there exists a positive-measure $A \subseteq Z^0(X)$ on which $M_x = M$ is constant. Ergodicity of $\SL_d(\mathbb{Z})$ on $Z^0(X)$ together with \eqref{Eq: Invariance stabiliser} implies that $M_x$ takes values in the $\alpha$-orbit of $M$ for $\nu$-a.e.\ $x$; by countability of this orbit and measurable selection \cite[Thm. A.5]{ErgodicTheoryZimmer}, there exists a Borel map $\phi: Z^0(X) \to \langle \alpha \rangle \subset \Aut(N)$ (with $\langle \alpha \rangle$ the subgroup of $\Aut(N)$ generated by the image of $\alpha$) such that $\phi(x)(M) = M_x$ for $\nu$-a.e.\ $x$.

    The map $\Phi: X \to X$ defined by $\Phi(x, \bar n) := (x, \phi(x)(\bar n))$ is an isomorphism of bundles of nilsystems (since $\phi(x)$ commensurates $\Gamma$, having values in $\langle \alpha \rangle \subseteq \mathrm{Comm}_{\Aut(N)}(\Gamma)$). After replacing $f$ by $f \circ \Phi$ and $\alpha$ by its conjugate by $\phi$, the new fibrewise stabiliser is the constant group $M$, and items (1)--(3) hold.

    For (4): suppose for contradiction that $\alpha_{N_0/M}$ were not cohomologous to a cocycle taking values in a compact subgroup. By the compactness criterion (Lemma \ref{Lemma: Compactness criterion}), there exists $B \subseteq Z^0(X)$ of positive measure such that $E_{N_0/M}(x, B)$ is unbounded for $\nu$-a.e.\ $x$. Proposition \ref{Prop: using squeezed vectors} then gives, for $\nu$-a.e.\ $x$,
    $$ \dim \{u \in N_0 : u \cdot f(x, \cdot) = f(x, \cdot)\} > \dim M.$$
    But the connected component of this stabiliser is $M_x = M$, of dimension $\dim M$ — contradiction. Therefore $\alpha_{N_0/M}$ is cohomologous to a cocycle taking values in a compact subgroup of $\Aut(N_0/M)$.
\end{proof}

\subsection{Space of bounded directions}\label{Subsec: Space of bounded directions}

Under the standing assumption $d \geq 3$, we find a measurably-varying complement to $M$ in $N_0$. The key step is the following.

\begin{lem}\label{Lem: L is abelian}
    With the notation and hypotheses of Lemma \ref{Lemma: Dimension reduction}, $N_0/M$ is abelian.
\end{lem}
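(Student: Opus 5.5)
The plan is to play the compactness in Lemma \ref{Lemma: Dimension reduction}(4) against the unipotent nature of the translation action. The starting point is the relation $\gamma t \gamma^{-1} = \gamma(t)$ in $\ASL_d(\mathbb{Z})$. Inserted into the intertwining relation \eqref{Eq: intertwining relation}, it gives, for $\gamma \in \SL_d(\mathbb{Z})$, $t \in \mathbb{Z}^d$ and a.e.\ $x$,
$$\mathrm{Ad}\bigl(\rho(\gamma(t), \gamma\cdot x)\bigr) = \alpha(\gamma,x)\circ \mathrm{Ad}\bigl(\rho(t,x)\bigr)\circ \alpha(\gamma,x)^{-1}$$
as automorphisms of $N_0$. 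After passing to the bundle, this holds modulo inner automorphisms coming from the base-change cochain $b$. Since $M$ is normal in $N_0$ and $\alpha$-invariant, the relation descends to $\Aut(N_0/M)$. By Lemma \ref{Lemma: Dimension reduction}(4), we may also replace $\alpha_{N_0/M}$ by a cohomologous cocycle with values in a compact subgroup $K \subset \Aut(N_0/M)$; the transfer function is bounded on a set of positive measure, which is all that is needed.

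\emph{Step 1: the translations act by bounded automorphisms of $N_0/M$.} Fix $s \in \mathbb{Z}^d \setminus\{0\}$, and choose $t$ and a unipotent $\psi \in \SL_d(\mathbb{Z})$ as in the proof of Theorem \ref{Theorem: Reduction to Host--Kra factors}, so that $\psi^n(t) = t + n s$. Since $\rho$ is a homomorphism on $\mathbb{Z}^d$, we have $\mathrm{Ad}\rho(t+ns) = \mathrm{Ad}\rho(t)\,\mathrm{Ad}\rho(s)^n$. On the nilpotent Lie algebra $\mathrm{Lie}(N_0/M)$, $\mathrm{Ad}\rho(s)$ is a unipotent linear map, so $\|\mathrm{Ad}\rho(s)^n\|$ grows polynomially in $n$ unless $\mathrm{Ad}\rho(s)$ acts trivially. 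On the other side, the right-hand side of the displayed relation is a $K$-conjugate of the fixed element $\mathrm{Ad}\rho(t,x)$, up to bounded transfer factors. By Poincaré recurrence for $\psi$ acting on $Z^0(X)$, there are infinitely many $n$ with $\psi^n \cdot x$ in a fixed set of positive measure where the transfer function and $\rho(t,\cdot)$ are bounded. Along those $n$ the right-hand side stays bounded, while the left-hand side grows. Hence $\mathrm{Ad}\rho(s,x)$ induces the identity on $N_0/M$. Here I would use that $\rho(\cdot,x)$ is a genuine homomorphism into $N$, together with the fact that the $b$-inner parts only change the picture by elements of $\Gamma$ that are bounded on good sets. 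Doing this for a basis of $s$ gives $[\rho(\mathbb{Z}^d,x), N_0] \subseteq M$ for a.e.\ $x$.

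\emph{Step 2: $[N_0,N_0]\subseteq M$.} Now pass to $\bar N := N/M$, which makes sense since $M$ is normal in $N$ by Step 1 and Lemma \ref{Lemma: stabilisers of functions on nilmanifolds}. By Lemma \ref{Lemma: Dimension reduction}(2), $M\Gamma$ is closed, so $\bar N/\bar\Gamma$ is a nilsystem, and it is ergodic as a factor of $X_x$. In $\bar N$, the image of $\rho(\mathbb{Z}^d)$ centralises $\bar N_0$, and $\bar N$ is generated by $\bar N_0$ and $\rho(\mathbb{Z}^d)$. Consequently $[\bar N,\bar N] = [\bar N_0,\bar N_0]$, and more generally $C_i(\bar N) = C_i(\bar N_0)$ for $i \geq 1$. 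I would then use the standard fact for ergodic nilsystems (Leibman, see \cite[Ch.~11]{HostKra}) that ergodicity is detected on the maximal abelian factor $\bar N/[\bar N,\bar N]\bar\Gamma$, and that the connected part of $[\bar N,\bar N]$ is generated by commutators with the acting elements, i.e.\ $C_1(\bar N)^\circ$ is the closure of the group generated by $C_2(\bar N)$ and the commutators $[\rho(t),\bar N_0]$. By Step 1 those commutators are trivial, so $C_1 = C_2$, and nilpotency forces $C_1(\bar N_0) = [\bar N_0, \bar N_0]$ to be trivial. Thus $N_0/M$ is abelian.

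The main obstacle is Step 2. The precise form of the Leibman-type statement needed, namely that in an ergodic nilsystem whose acting elements centralise $N_0$ modulo $M$ the connected commutator subgroup must vanish, has to be checked carefully, and the reducedness of $(N,\Gamma)$ should enter there. If it fails as stated, my fallback is a second squeezing argument: apply Proposition \ref{Prop: using squeezed vectors} to the cocycle describing the $\SL_d(\mathbb{Z})$-action on $C_1(N_0)M/M$ via the higher intertwining cocycles $\alpha_k$. Property (T)/superrigidity would then make that action compact, and I would combine this with the type-$(k+1)$ property of $q_k$ from \cite[Ch.~18, Prop.~5]{HostKra} to kill the commutator part.
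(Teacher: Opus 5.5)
Your Step 1 claims $[\rho(\mathbb{Z}^d,x),N_0]\subseteq M$. This is false already for the degree-two processes $S_2$ of the introduction, so the argument does not go through. Over the quadratic part of $P$, the fibre there is $N/\Gamma$ with $N=\mathbb{Z}^d\ltimes\mathbb{T}^{d+1}$, $t\cdot(a,c)=(a,\,c+a\cdot t)$ for $(a,c)\in\mathbb{T}^d\times\mathbb{T}$, and $\Gamma=\mathbb{Z}^d$, which is a reduced pair. Translation by $s$ acts with linear part $(a,c)\mapsto(a,c+a\cdot s)$, the $\SL_d(\mathbb{Z})$-action is $\alpha(\gamma)(a,c)=(\gamma^{-T}a,c)$, and $f$ depends only on $c=P(0)$. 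Hence $M=\mathbb{T}^d\times\{0\}$ and $N_0/M\cong\mathbb{T}$ with $\alpha_{N_0/M}$ trivial, so every conclusion of Lemma \ref{Lemma: Dimension reduction} holds. Yet $[\rho(s),(a,0)]=(0,a\cdot s)\notin M$.

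The step that breaks is ``the relation descends to $\Aut(N_0/M)$''. $M$ is normal in $N_0$ and $\alpha$-invariant, but nothing forces $\rho(t,x)$ to normalise it. In this example it does not, so $\mathrm{Ad}\rho(t,x)$ does not even act on $N_0/M$. The unipotent growth of $\mathrm{Ad}\rho(t+ns)$ that you detect is real, but it lives on $N_0$. It is balanced by the unboundedness of $\alpha$ in the $M$-directions (here $\gamma^{-T}$ on $\mathbb{T}^d$), so no ``translations act boundedly'' statement can be extracted from Lemma \ref{Lemma: Dimension reduction}(4). Step 2 then has no input, since forming $N/M$ relies on normality of $M$ in $N$ coming from Step 1. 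What does hold, and is what the paper uses later in Proposition \ref{Prop: abelian connected component}, is weaker: translations commute modulo $M$ with the $\sigma$-fixed directions $V$, not with all of $N_0$.

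Your fallback points toward the paper's route, but the steps that make it work are missing. The paper never bounds $\mathrm{Ad}\rho$. Its argument runs as follows:
\begin{enumerate}
\item By superrigidity it writes $\alpha|_{N_0}\sim\sigma\cdot z$ and takes $V$ to be the $\sigma$-fixed subgroup; compactness of $\alpha_{N_0/M}$ forces $VM=N_0$.
\item Assuming $N_0/M$ is non-abelian, it extracts $L\subseteq V$ with $Z(L)\subseteq[L,L]$, and a level $k$ with $L\subseteq C_k(N)$ and $L\not\subseteq C_{k+1}(N)$.
\item Modulo $Q=(M\cap C_k(N))K_kC_{k+1}(N)$, the image of $L$ is $\widetilde L\cong\mathbb{R}^n$ with $n>0$. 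The full $\ASL_d(\mathbb{Z})$-cocycle $\pi\circ\rho_k$ then takes values in the amenable group $(\text{compact})\ltimes\widetilde L$.
\item Property (T) of $\ASL_d(\mathbb{Z})$ makes this cocycle compact-valued up to cohomology. So its restriction to $\mathbb{Z}^d$ is a coboundary into $\mathbb{R}^n$, contradicting the Host--Kra non-triviality of the type-$(k+1)$ structural cocycle $q_k$.
\end{enumerate}
The contradiction thus comes from the translation values of $\rho_k$ on a quotient where the $\SL_d(\mathbb{Z})$-part is bounded. It does not come from the conjugation action of translations on $N_0$, which is where your argument looks.
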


\begin{proof}
    Let $\alpha$ denote the intertwining cocycle (\S\ref{Subsection: Intertwining and action cocycles}), and let $\beta := \alpha|_{N_0}: \SL_d(\mathbb{Z}) \times Z^0(X) \to \Aut(N_0)$ be its restriction to the connected component $N_0$ (well-defined since $\alpha$ stabilises $N_0$ by continuity). By Zimmer's cocycle superrigidity theorem (e.g. \cite[Thm. 1.5]{FisherMargulis}), $\beta$ is cohomologous, in the Zariski-closed subgroup of $\Aut(N_0)$ stabilising $M$, to a cocycle of the form
    $$ (\gamma, x) \mapsto \sigma(\gamma) \, z(\gamma, x),$$
    where $\sigma: \SL_d(\mathbb{Z}) \to \Aut(N_0)$ is a group homomorphism with semi-simple Zariski closure, and $z$ takes values in a compact subgroup $K \subset \Aut(N_0)$ centralised by $\sigma(\SL_d(\mathbb{Z}))$.

    \emph{The subgroup $V$ of bounded directions.} Define
    $$ V := \{u \in N_0 : \sigma(\gamma)(u) = u \text{ for all } \gamma \in \SL_d(\mathbb{Z})\},$$
    the subgroup of $\sigma(\SL_d(\mathbb{Z}))$-fixed points in $N_0$. Since $\sigma$ has semi-simple Zariski closure, $V$ is a closed connected subgroup of $N_0$. Moreover $V$ is invariant under $K$, since $K$ centralises $\sigma(\SL_d(\mathbb{Z}))$.

    \emph{Claim: $V \cdot M = N_0$.} Since $\sigma$ has semi-simple Zariski closure, $\mathrm{Lie}(N_0)$ decomposes as a $\sigma$-module as $\mathrm{Lie}(V) \oplus W$, where $W$ has no non-zero $\sigma$-invariant vectors. By Lemma \ref{Lemma: Dimension reduction}(4), the projection $\beta_{N_0/M}$ is cohomologous to a compact-valued cocycle. As constant cocycles cohomologous to compact-valued cocycles must themselves have compact image, the projection of $\sigma$ to $\Aut(N_0/M)$ has compact image. Since $\sigma|_W$ has no fixed vectors, the projection of $W$ to $\mathrm{Lie}(N_0)/\mathrm{Lie}(M)$ must vanish — otherwise the image of $\sigma$ on this quotient would be non-compact. Hence $W \subseteq \mathrm{Lie}(M)$, so $\mathrm{Lie}(V) + \mathrm{Lie}(M) = \mathrm{Lie}(N_0)$, i.e.\ $V \cdot M = N_0$.

    \emph{Reduction to a non-abelian subgroup of $V$.} Suppose for contradiction that $N_0/M$ is non-abelian. Then $V$ is non-abelian (since $V$ surjects onto $N_0/M$). Decompose $V$ as follows. Let $Z := Z(V)$ be the centre of $V$, $p: V \twoheadrightarrow V^{\mathrm{ab}} := V/[V, V]$ the abelianisation, and choose a closed subgroup $L_0 \subseteq V^{\mathrm{ab}}$ complementing $p(Z)$, so that $V^{\mathrm{ab}} = p(Z) \oplus L_0$. Set $L := p^{-1}(L_0)$. Then $V = LZ$, $[L, L] = [V, V]$, and the centre of $L$ satisfies $Z(L) \subseteq [L, L]$ (a non-central element of $L$ projects non-trivially to $L_0$, while a central element of $L$ commutes with $LZ = V$, hence lies in $L \cap Z \subseteq L \cap p^{-1}(0) = [L, L]$).

    \emph{Choosing the level $k$.} Let $k \geq 0$ be such that $L \subseteq C_k(N)$ but $L \not\subseteq C_{k+1}(N)$, where $C_\bullet(N)$ denotes the lower central series of $N$. Since $V$ is non-abelian and connected, such $k$ exists.

    Recall $\alpha_k$ and $\rho_k$ from \S\ref{Subsection: Intertwining and action cocycles}. Let $K_k$ denote the maximal compact subgroup of $C_k(N)$. The subgroups $K_k$, $C_{k+1}(N)$ and $M \cap C_k(N)$ are normal in $C_k(N)$ and invariant under the range of $\alpha_k$, hence so is the product
    $$ Q := (M \cap C_k(N)) \cdot K_k \cdot C_{k+1}(N) \trianglelefteq C_k(N).$$
    Let $\pi: C_k(N) \twoheadrightarrow C_k(N)/Q$ be the natural projection.

    The image $\pi(L)$ is non-trivial: $L \subseteq C_k(N)$ but $L \not\subseteq C_{k+1}(N) \subseteq Q$. Moreover, $\pi(L)$ has trivial centre (centres of $L$ are killed by $\pi$ since $Z(L) \subseteq [L, L] \subseteq C_{k+1}(N)$ when $L \subseteq C_k(N)$) and contains no non-trivial compact subgroups (since $K_k \subseteq Q$). As a connected nilpotent Lie group with these properties, $\pi(L) \cong \mathbb{R}^n$ for some $n > 0$.

    \emph{Reaching a contradiction.} Set $\widetilde L := \pi(L)$. The cocycle $\pi \circ \rho_k$ takes values in $\Aut(\widetilde L) \ltimes \widetilde L$ (since the kernel $Q$ is $\rho_k$-invariant). The restriction $\alpha_k|_L$, which by construction of $L \subseteq V$ is cohomologous to a cocycle taking values in a compact subgroup of $\Aut(\widetilde L)$ (descending from the compactness on $V$ via the superrigidity decomposition above). Hence $\pi \circ \rho_k$ takes values, up to cohomology, in (compact) $\ltimes \widetilde L$, an amenable group.

    Since $\ASL_d(\mathbb{Z})$ has property (T) (using $d \geq 3$), Zimmer's theorem on  cocycles with amenable target \cite[Thm. 9.1.1]{ErgodicTheoryZimmer} reduces $\pi \circ \rho_k$ to a cocycle taking values in a compact subgroup. Restricting to $\mathbb{Z}^d \times Z_k(X)$, the restriction lands in $\widetilde L \cong \mathbb{R}^n$, which contains no non-trivial compact subgroups; by Corollary \ref{Cor: coho to compact indep of ambient}, the restriction is cohomologous to the trivial cocycle in $\widetilde L$.

    But the projection of $\rho_k|_{\mathbb{Z}^d \times Z_k(X)}$ to $C_k(N)/C_{k+1}(N)$ is precisely the Host--Kra structural cocycle $q_k$ of type $k+1$ (see \S\ref{Subsection: Intertwining and action cocycles}), and \cite[Ch. 13, Prop. 8]{HostKra} shows $q_k$ is non-trivial as a cocycle modulo coboundaries: more precisely, its further projection along $\pi$ to $\widetilde L$ is non-trivial whenever $\widetilde L \neq 0$ and $k \geq 1$. This contradicts triviality of $\pi \circ \rho_k|_{\mathbb{Z}^d \times Z_k(X)}$. Therefore $N_0/M$ is abelian.
\end{proof}

\subsection{Abelian connected component}

\begin{prop}\label{Prop: abelian connected component}
    Up to passing to a nilfactor of $X$ and a finite extension, we may assume that $N \cong \mathbb{Z}^d \ltimes A$ for a connected abelian Lie group $A$, that the subgroup of bounded directions $V$ (Lemma \ref{Lem: L is abelian}) coincides with the centre $Z(N)$, and that $\alpha$ acts trivially on $Z(N)$.
\end{prop}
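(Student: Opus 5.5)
The plan is to quotient out the stabiliser $M$ from Lemma \ref{Lemma: Dimension reduction} and then normalise the resulting abelian group and the cocycle.

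\textbf{Step 1: pass to the quotient by $M$.} By Lemma \ref{Lemma: Dimension reduction}, after an isomorphism of bundles there is a closed subgroup $M \trianglelefteq N_0$ which stabilises $f$, meets $\Gamma$ in a lattice, and is $\alpha$-invariant. We first check that $M$ is in fact normal in all of $N$. Conjugation by $\rho(t,x)$, for $t \in \mathbb{Z}^d$, is an element of the range of $\alpha$ on $\ASL_d(\mathbb{Z})$, and this range preserves $M$. So $M$ is normalised by $N_0$ and by $\rho(\mathbb{Z}^d)$, hence by $N$.

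Since $M$ is normal and $M \cap \Gamma$ is a lattice in $M$, the quotient $X \to Z^0(X) \rtimes_\rho (N/M)/(\Gamma M/M)$ is a nilfactor. It is $\ASL_d(\mathbb{Z})$-equivariant because $M$ is $\alpha$-invariant, and $f$ factors through it because $M$ stabilises $f$. We replace $N$ by $N/M$. By Lemma \ref{Lem: L is abelian}, the new identity component $A := N_0/M$ is abelian.

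\textbf{Step 2: arrange the product structure $\mathbb{Z}^d \ltimes A$.} The group $N$ is generated by $A$ and $\rho(\mathbb{Z}^d)$, so $N/A$ is a quotient of $\mathbb{Z}^d$. Ergodicity of the fibres and the standing form of $N$ make this quotient equal to $\mathbb{Z}^d$. Choosing lifts of a basis gives a splitting $N \cong \mathbb{Z}^d \ltimes A$. We then pass to the reduced pair (quotienting the largest subgroup of $\Gamma$ normal in $N$), which still has this form.

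\textbf{Step 3: make $\alpha$ trivial on the centre.} By Lemma \ref{Lemma: Dimension reduction}(4), $\alpha_{A}$ is cohomologous to a cocycle valued in a compact subgroup $K \subset \Aut(A)$. We now restrict to $\alpha|_{Z(N)}$; note $Z(N) \cap A = A^{\rho(\mathbb{Z}^d)}$ is $\alpha$-invariant. By Lemma \ref{Lemma: From compact to trivial}(2), after passing to the compact extension by $K$ the lifted cocycle becomes a coboundary. Absorbing the transfer function fibrewise, as in Step 2 of Proposition \ref{Prop: invariant nilfactor}, makes $\alpha$ trivial on $Z(N)$.

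To keep the extension finite, we instead use Proposition \ref{Prop: cocycles into Aut(N) reduce to lattice stabiliser, baby case}. It reduces $\alpha$ to the stabiliser of a lattice, so its image in $\Aut(Z(N))$ lies in a discrete group. A cocycle into a discrete group that is cohomologous to a compact-valued one is cohomologous to one with finite image. Lemma \ref{Lemma: From compact to trivial} then trivialises it on a finite extension.

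\textbf{Step 4: identify the bounded directions $V$ with $Z(N)$.} With $\alpha$ compact-valued on $A$, the superrigidity decomposition from the proof of Lemma \ref{Lem: L is abelian} has $\sigma$ acting on $\mathrm{Lie}(A)$ with compact image. Since $\sigma$ has semisimple Zariski closure and $\SL_d(\mathbb{Z})$ has no infinite compact linear image for $d\geq3$ (Margulis), $\sigma$ has finite image. After a finite extension killing that finite quotient, $\sigma$ is trivial and $V = A$.

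What remains is the equality $V = Z(N)$, which amounts to showing that $\rho(\mathbb{Z}^d)$ acts trivially on $A$. The plan here is to use that $\alpha(\gamma,x)$ intertwines the conjugation action of $\rho(t)$ with that of $\rho(\gamma t)$. Because $\alpha$ is compact-valued, the unipotent automorphisms $\mathrm{Ad}\,\rho(t)|_A$ form a $\SL_d(\mathbb{Z})$-equivariant family up to bounded conjugacy. Taking $\gamma_n$ with $\gamma_n t \to \infty$, boundedness of $\mathrm{Ad}\,\rho(\gamma_n t)$ forces $\mathrm{Ad}\,\rho(t)-\id$ to vanish, which would give $V = Z(N)$.

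\textbf{Main obstacle.} I expect this last step to be the hardest: showing that $\rho(\mathbb{Z}^d)$ acts trivially on $A$, or otherwise cutting down to the nilfactor on which it does. Unipotent actions of $\mathbb{Z}^d$ on $A$ have genuinely polynomial growth, so the boundedness argument has to be done carefully. If it fails in general, the fallback is to define $V$ as $Z(N)$ after passing to the nilfactor where $\rho(\mathbb{Z}^d)$ acts trivially on $A$. The remaining unipotent part would then be deferred to the polynomial description in the next section.
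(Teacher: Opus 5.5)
There is a genuine gap, and it is in Step 1. Lemma~\ref{Lemma: Dimension reduction} only gives that $M$ is normal in $N_0$ and invariant under the intertwining cocycle $\alpha$ of $\SL_d(\mathbb{Z})$. It says nothing about $\rho(\mathbb{Z}^d)$. Your justification fails because $f$ is not $\mathbb{Z}^d$-invariant: the fibrewise stabiliser of the translate $t\cdot f$ is $\rho(t,x)^{-1}M\rho(t,x)$, which in general differs from $M$.

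The degree-two polynomial processes already show this. Take $N=\mathbb{Z}^d\ltimes(\mathbb{R}^d\times\mathbb{T})$ with $s\cdot(L,c)=(L,c+L(s))$, and $f(x,(L,c))=g(c)$. Then $M=\mathbb{R}^d\times\{0\}$, but $s\cdot(L,0)=(L,L(s))\notin M$. So $M\backslash N/\Gamma$ carries no $\mathbb{Z}^d$-action and is not a nilfactor, while the normal closure of $M$ in $N$ is all of $N_0$ and kills $f$. This is why the paper only quotients by $[N_0,N_0]\subseteq M$, which is characteristic in $N_0$ and hence normal in $N$. After that, $A=N_0/[N_0,N_0]$ still contains $M$, and Lemma~\ref{Lemma: Dimension reduction}(4) gives compactness of the cocycle only on $A/M$, not on $A$. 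In the example, $\alpha(\gamma)$ acts on the $\mathbb{R}^d$-factor by the inverse transpose of $\gamma$, which is unbounded.

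Consequently Step 4 rests on a false premise: $\sigma$ need not be finite, $V$ is in general a proper subgroup of $A$, and $\rho(\mathbb{Z}^d)$ does not act trivially on $A$. If it did, every such system would be a Kronecker system and processes like $S_2$ could not occur. Your fallback of passing to the nilfactor on which $\rho(\mathbb{Z}^d)$ acts trivially does not preserve $f$.

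The missing idea is to run your boundedness heuristic only where compactness is available: bounded directions in the source, and $M$ quotiented out in the target. For $v\in V$ and the superrigidity cochain $\phi$, consider $t\mapsto b(t,\phi_x(v)):=\pi([t,\phi_x(v)])\in A/M$. This map is polynomial in $t$ and satisfies $\widetilde\alpha(\gamma,x)\bigl(b(t,\phi_x(v))\bigr)=b(\gamma(t),\phi_{\gamma x}(v))$. The induced cocycle $\widetilde\alpha$ on $A/M$ is compact-valued up to cohomology, so this polynomial has bounded orbit. By Lemma~\ref{Lemma: Invariant polynomials} it is constant, hence zero since $b(0,\cdot)=0$.

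This yields only $[t,\phi_x(v)]\in M$, not triviality of the action. One then quotients by the closed subgroup generated by these commutators, which lies in $M$ (so $f$ survives) and is normal since $A$ is abelian. Quotienting further by $M\cap Z(N)$ gives $V=Z(N)$.

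Your Step 3 is essentially the paper's last step, but it must come after these quotients. The compactness input should be Lemma~\ref{Lemma: Dimension reduction}(4) on $A/M$, into which $Z(N)$ now injects. Discreteness of $\Aut(Z(N))$ comes from $Z(N)$ being a torus by reducedness, not from compactness of $\alpha$ on all of $A$.
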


\begin{proof}
    \emph{Reduction to abelian $N_0$.} By Lemma \ref{Lem: L is abelian}, $N_0/M$ is abelian, hence $[N_0, N_0] \subseteq M$. The subgroup $[N_0, N_0]$ is characteristic in $N_0$, hence normal in $N$. Since $f$ is $M$-invariant and $[N_0, N_0] \subseteq M$, $f$ factors through the quotient
    $$ X = Z^0(X) \rtimes_\rho N/\Gamma \;\twoheadrightarrow\; Z^0(X) \rtimes_{\rho'} N'/\Gamma',$$
    where $N' := N/[N_0, N_0]$, $\Gamma' := [N_0, N_0]\Gamma / [N_0, N_0]$, and $\rho'$ is the projected cocycle. Renaming, we work with the new bundle and write $N = \mathbb{Z}^d \ltimes A$ with $A$ a connected abelian Lie group.

    \emph{$V$ becomes central modulo $M$.} Let $V$ be the subgroup of bounded directions from Lemma \ref{Lem: L is abelian}. The Fisher--Margulis decomposition $\beta \sim \sigma \cdot z$ from the proof of Lemma \ref{Lem: L is abelian} is a cohomology between $\beta$ and $\sigma \cdot z$, realised by some Borel cochain $\phi: Z^0(X) \to \Aut(N_0) = \Aut(A)$. The twisted subgroup $\phi_x(V)$ is then $\beta$-equivariant: $\alpha(\gamma, x)(\phi_x(V)) = \phi_{\gamma x}(V)$ for $\nu$-a.e.\ $x$ and all $\gamma \in \SL_d(\mathbb{Z})$.

    Let $\pi: A \twoheadrightarrow A/M$ and let $\widetilde\alpha: \SL_d(\mathbb{Z}) \times Z^0(X) \to \Aut(A/M)$ be the unique cocycle with $\widetilde\alpha \circ \pi = \pi \circ \alpha|_A$. Define
    $$ b: \mathbb{Z}^d \times A \to A/M, \qquad b(t, a) := \pi([t, a]),$$
    where $[t, a] = (t \cdot a) \cdot a^{-1} \in A$ uses the $\mathbb{Z}^d$-action on $A$ in the semidirect product. Since $A$ is abelian and $\mathbb{Z}^d$ acts unipotently, $b(t, a)$ is polynomial in $t$ (of degree at most the nilpotency class minus one) and linear in $a$.

    For $v \in V$, applying $\widetilde\alpha(\gamma, x)$ to $b(t, \phi_x(v))$:
    \begin{align*}
        \widetilde\alpha(\gamma, x)\bigl(b(t, \phi_x(v))\bigr)
        &= \pi\bigl(\alpha(\gamma, x)([t, \phi_x(v)])\bigr) \\
        &= \pi\bigl([\gamma(t),\, \alpha(\gamma, x)(\phi_x(v))]\bigr) \\
        &= \pi\bigl([\gamma(t),\, \phi_{\gamma \cdot x}(v)]\bigr) \\
        &= b(\gamma(t), \phi_{\gamma \cdot x}(v)),
    \end{align*}
    using respectively the definition of $\widetilde\alpha$, that $\alpha$ is an automorphism (with $\alpha$ acting on $\mathbb{Z}^d \subset N$ via $\gamma$'s linear part), and the $\beta$-equivariance of $\phi_x(V)$.

    By Lemma \ref{Lemma: Dimension reduction}(4), $\widetilde\alpha$ is cohomologous to a cocycle taking values in a compact subgroup $K \subset \Aut(A/M)$, via some Borel cochain $\psi$. Hence $\widetilde\alpha(\gamma, x) \in \psi_{\gamma x}^{-1} K \psi_x$, and the orbit of $b(\cdot, \phi_x(v))$ under the $\SL_d(\mathbb{Z})$-action through $\widetilde\alpha$ is bounded in $\mathbb{R}_{\leq n}[X_1, \ldots, X_d]$. By Lemma \ref{Lemma: Invariant polynomials} below, the polynomial $t \mapsto b(t, \phi_x(v))$ is constant in $t$ for $\nu$-a.e.\ $x$. Since $b(0, a) = 0$, this gives $b(t, \phi_x(v)) = 0$ for all $t$, i.e.\ $\pi([t, \phi_x(v)]) = 0$ in $A/M$.

    \emph{Reducing to $V \subseteq Z(N)$.} The closed subgroup
    $$ H := \overline{\langle [t, \phi_x(v)] - [t', \phi_x(v)] : t, t' \in \mathbb{Z}^d,\, v \in V,\, x \in Z^0(X) \rangle} \subseteq A$$
    lies in $M$ (by the previous paragraph) and is normalised by $\mathbb{Z}^d$, hence normal in $N$ (since $A$ is abelian). Quotienting $X$ by $H$ preserves $f$-invariance and produces a new bundle in which $[t, \phi_x(v)] = 0$ for all $t \in \mathbb{Z}^d$ and $\phi_x(v) \in V$, i.e.\ $\phi_x(v) \in Z(N)$. Quotienting further by $M \cap Z(N)$, which is normal in $N$, we obtain $V = Z(N)$.

    \emph{$Z(N)$ is compact.} Under the standing reducedness assumption on $(N, \Gamma)$ (see the first paragraph of \S\ref{Section: Structure of invariant functions}), $Z(N) \cap \Gamma$ is trivial, so $Z(N)$ embeds into $N/\Gamma$ as a compact subgroup; hence $Z(N)$ is compact.

    \emph{Triviality of $\alpha|_{Z(N)}$.} The cocycle $\alpha' := \alpha(\gamma, x)|_{Z(N)}: \SL_d(\mathbb{Z}) \times Z^0(X) \to \Aut(Z(N))$ is cohomologous to a compact-valued cocycle by Lemma \ref{Lemma: Dimension reduction}(4) restricted to $Z(N) \subseteq N_0$. Since $Z(N)$ is a compact abelian Lie group, $\Aut(Z(N))$ is discrete; the closure of the image of $\alpha'$ equals the image. By Corollary \ref{Cor: coho to compact indep of ambient}, $\alpha'$ is cohomologous in its image to a cocycle taking values in a compact (hence finite) subgroup. Lemma \ref{Lemma: From compact to trivial} then provides a finite extension $\widetilde X \to X$ on which the lifted $\alpha'$ becomes a coboundary. Replacing $X$ by $\widetilde X$, we may assume $\alpha$ acts trivially on $Z(N)$.
\end{proof}

\begin{lem}\label{Lemma: Invariant polynomials}
    Let $n \geq 0$ and let $P: Z^0(X) \to \mathbb{R}_{\leq n}[X_1, \ldots, X_d]$ be a measurable map such that, for every $\gamma \in \SL_d(\mathbb{Z})$, every $t \in \mathbb{Z}^d$, and $\nu$-a.e.\ $x \in Z^0(X)$,
    $$ P(\gamma \cdot x)(t) = P(x)(\gamma(t)) \quad \text{(up to a uniformly bounded error)}.$$
    Then $P(x)$ is the constant polynomial for $\nu$-a.e.\ $x$.
\end{lem}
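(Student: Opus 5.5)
The plan is to exploit the fact that $\SL_d(\mathbb{Z})$ acts on $\mathbb{R}_{\leq n}[X_1, \ldots, X_d]$ by precomposition, $(\gamma \cdot P)(t) := P(\gamma(t))$. This is a finite-dimensional linear representation, and its only invariant vectors, indeed its only vectors with bounded orbit, are the constants. The hypothesis says that $x \mapsto P(x)$ is equivariant up to a uniformly bounded error.

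First I pass to the homogeneous components. Write $P(x) = \sum_{j \leq n} P_j(x)$ with $P_j(x)$ homogeneous of degree $j$. A uniform bound $C$ on $|P(\gamma x)(t) - P(x)(\gamma t)|$ for all $t \in \mathbb{Z}^d$ is only meaningful if it controls the coefficients. So I first note that a polynomial bounded by $C$ on all of $\mathbb{Z}^d$ must be constant. Applying this to $t \mapsto P(\gamma x)(t) - P(x)(\gamma t)$ gives exact equivariance $P_j(\gamma x) = P_j(x) \circ \gamma$ for every $j \geq 1$. If the bounded error is meant only on a bounded set of $t$, I instead interpret it as a bound in the norm on coefficients. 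Either way, I reduce to the following claim: for each $j \geq 1$, the measurable map $Q := P_j : Z^0(X) \to V_j := \mathrm{Sym}^j$ satisfies $Q(\gamma x) = \gamma \cdot Q(x)$ up to a bounded error in norm.

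Next comes the key step, showing $Q = 0$ almost everywhere. Choose $R$ so that $B := \{x : \|Q(x)\| \leq R\}$ has positive measure. By Poincaré recurrence along a unipotent element $u \in \SL_d(\mathbb{Z})$, for a.e. $x \in B$ there are infinitely many $m$ with $u^m x \in B$. Then $\|u^m \cdot Q(x)\| \leq R + C'$ along this infinite subsequence, where the bounded error accumulates only once because we use $u^m$ directly as a single element.

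The orbit $m \mapsto u^m \cdot q$ of a vector $q$ under a unipotent element is polynomial in $m$. If it is bounded along infinitely many $m$, it is constant, so $q$ is fixed by $u$. Running this over all elementary unipotents $u = I + E_{ab}$, which generate $\SL_d(\mathbb{Z})$, I get that $Q(x)$ is fixed by all of $\SL_d(\mathbb{Z})$ for a.e. $x \in B$.

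A homogeneous polynomial of degree $j \geq 1$ invariant under $t \mapsto t + s\,t_b e_a$ for all $a \neq b$ and all integers $s$ must vanish. To see this, look at its dependence on $t_a$ after specializing $t_b \neq 0$: invariance under $t_a \mapsto t_a + s t_b$ for all integers $s$ forces independence of $t_a$ for all $a$. So $Q(x) = 0$ on $B$. The set $\{Q = 0\}$ is then $\SL_d(\mathbb{Z})$-invariant up to bounded error, and more precisely the same argument applies to every bounded sublevel set. Letting $R \to \infty$ covers $Z^0(X)$ up to a null set, so $Q = 0$ almost everywhere, and therefore $P(x)$ is constant.

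The main obstacle is making the "bounded error" hypothesis precise enough that the recurrence argument closes. I need the error to be bounded in coefficient norm uniformly over single group elements $\gamma$, not merely for generators, since the error would otherwise compound along $u^m$. In the application in Proposition \ref{Prop: abelian connected component}, the error comes from $\psi_{\gamma x}^{-1} K \psi_x$. To handle this cleanly, I would apply the recurrence argument to the transformed map $x \mapsto \psi_x \cdot Q(x)$ on sets where $\psi$ is bounded, which gives the needed uniformity for all $\gamma$ at once.
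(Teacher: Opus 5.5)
Your proof is correct and follows essentially the same route as the paper: Poincar\'e recurrence along elementary unipotents $u$, the fact that $m\mapsto u^m\cdot q$ (equivalently $s\mapsto P(x)(s,t_2,\ldots,t_d)$ along $s=t_1+mt_2$) is polynomial and hence constant if bounded on infinitely many $m$, and then elimination of one variable at a time; your preliminary upgrade to exact equivariance of the non-constant homogeneous parts is a tidy extra step that the paper bypasses by working at a fixed $t$. Your closing remark is also well taken: in Proposition~\ref{Prop: abelian connected component} the relation is twisted by $\widetilde\alpha(\gamma,x)\in\psi_{\gamma x}^{-1}K\psi_x$ rather than perturbed by a uniformly bounded additive error, and running the same recurrence on $x\mapsto\psi_x\cdot Q(x)$ with a $K$-invariant norm is the natural way to make the lemma cover that application.
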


\begin{proof}
    Let $u \in \SL_d(\mathbb{Z})$ be the unipotent with $u(e_1) = e_1 + e_2$ and $u(e_i) = e_i$ for $i \geq 2$. Then for every $t = (t_1, \ldots, t_d)$ and every $l \in \mathbb{Z}$,
    $$ P(u^l \cdot x)(t_1, t_2, \ldots, t_d) = P(x)(t_1 + l t_2, t_2, \ldots, t_d) + O(1).$$
    Let $B \subseteq Z^0(X)$ be a measurable subset on which $P(x)$ is uniformly bounded. By Poincaré recurrence, for $\nu$-a.e.\ $x \in B$ there is an increasing sequence $(l_n)_{n \geq 0}$ with $u^{l_n} \cdot x \in B$. Hence $P(u^{l_n} \cdot x)(t)$ is bounded in $n$, and equals $P(x)(t_1 + l_n t_2, t_2, \ldots, t_d) + O(1)$. The polynomial $s \mapsto P(x)(s, t_2, \ldots, t_d)$ is bounded along an infinite arithmetic progression $\{t_1 + l_n t_2\}$ whenever $t_2 \neq 0$, hence constant in $s$.

    We deduce that $P(x)$ does not depend on $X_1$, for $\nu$-a.e.\ $x \in B$. Letting $B$ exhaust $Z^0(X)$, $P(x)$ is independent of $X_1$ for $\nu$-a.e.\ $x$. The same argument with the unipotent permuting other pairs of basis vectors shows $P(x)$ is independent of $X_k$ for each $k = 1, \ldots, d$. Hence $P(x)$ is the constant polynomial.
\end{proof}

\section{From nilsystems to polynomials}\label{Section: From nilsystems to polynomials}

We can now piece together the elements of the proof. The first subsection relates relative independence of marginals to a Bernoulli-thinning description of point processes, and connects inverse limits of factors to successive approximations of these processes. The second subsection then translates the structural results of the previous sections into the language of polynomials and exhibits the families of invariant point processes they produce.

\subsection{Approximation by return times}\label{Section: Approximation by return times}

Our first lemma turns an equality of marginals into a generative description of the point process.

\begin{lem}\label{Lemma: Relative mixing implies Bernoulli thinning}
    Let $\underline{X}$ be a $\mathbb{Z}^d$-invariant point process with associated dynamical system $X$, and let $\pi: X \to Y$ be a $\mathbb{Z}^d$-factor. Suppose that for every $r \geq 1$ and every distinct $t_1, \ldots, t_r \in \mathbb{Z}^d$,
    \begin{equation}\label{Eq: Equality marginals}
        \mathbb{E}[\mathbf{1}_{t_1 \in \underline{X}} \cdots \mathbf{1}_{t_r \in \underline{X}}] = \mathbb{E}\bigl[\mathbb{E}[\mathbf{1}_{t_1 \in \underline{X}} \mid Y] \cdots \mathbb{E}[\mathbf{1}_{t_r \in \underline{X}} \mid Y]\bigr].
    \end{equation}
    Then the law of $\underline{X}$ coincides with that of the point process obtained by drawing $y \in Y$ at random and keeping each $t \in \mathbb{Z}^d$ independently with probability $f((-t) \cdot y)$, where $f := \mathbb{E}[\mathbf{1}_{0 \in \underline{X}} \mid Y]$.
\end{lem}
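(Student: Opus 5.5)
The plan is to compare finite-dimensional distributions. The law of a random subset of $\mathbb{Z}^d$ is a Borel probability measure on $\{0,1\}^{\mathbb{Z}^d}$. It is determined by its values on cylinder sets, and by inclusion--exclusion it is already determined by the \emph{inclusion probabilities} $\mathbb{P}(t_1,\ldots,t_r \in \underline{X})$ for finite families of distinct points. This holds because the sets $\{t_1,\ldots,t_r\in \underline X\}$ form a $\pi$-system generating the product $\sigma$-algebra. So it suffices to show that the thinned process $\underline{X}'$ built from $Y$ has the same inclusion probabilities as $\underline{X}$.

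First compute the inclusion probabilities of $\underline{X}'$. Conditionally on $y$, the events $\{t_i \in \underline{X}'\}$ are independent with probabilities $f((-t_i)\cdot y)$. Hence
$$\mathbb{P}(t_1,\ldots,t_r\in \underline{X}') = \int_Y \prod_{i=1}^r f((-t_i)\cdot y)\, d\pi_*\mu(y).$$
Next identify $\mathbb{E}[\mathbf{1}_{t\in\underline{X}} \mid Y]$ with the translate of $f$. We have $\mathbf{1}_{t\in\underline X}(x) = \mathbf{1}_{0\in\underline X}((-t)\cdot x)$, with the sign matching the convention in the statement. Conditional expectation onto a $\mathbb{Z}^d$-factor commutes with the $\mathbb{Z}^d$-action, because the action preserves both $\mu$ and the sub-$\sigma$-algebra $\pi^{-1}(\mathcal{B}_Y)$. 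It follows that
$$\mathbb{E}[\mathbf{1}_{t\in\underline X}\mid Y](x) = f((-t)\cdot \pi(x))$$
almost everywhere.

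Substituting this into the hypothesis \eqref{Eq: Equality marginals} gives
$$\mathbb{P}(t_1,\ldots,t_r\in\underline X)=\int_X\prod_i f((-t_i)\cdot\pi(x))\,d\mu(x),$$
and pushing forward by $\pi$ shows this equals the formula obtained for $\underline X'$. The case $r=0$ is trivial. Uniqueness of a measure agreeing on a generating $\pi$-system (Dynkin's theorem) then identifies the two laws.

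The only point requiring care is bookkeeping. One must match the sign convention of the translation action, namely whether $t\cdot \mathbf{1}_{0\in \underline X}$ equals $\mathbf{1}_{t\in\underline X}$ or $\mathbf{1}_{-t\in\underline X}$, with the $(-t)\cdot y$ in the statement. One must also check that $\underline X'$ is a well-defined point process: $f$ takes values in $[0,1]$ almost surely, and the sampling is measurable in $y$, which follows by building it as a product with independent uniform variables $U_t$ and keeping $t$ iff $U_t\le f((-t)\cdot y)$. None of this is a serious obstacle.
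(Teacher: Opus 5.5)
Your proposal is correct and follows the paper's proof. Both compute the inclusion probabilities of the thinned process, identify $\mathbb{E}[\mathbf{1}_{t\in\underline X}\mid Y]$ with the corresponding translate of $f$ using $\mathbb{Z}^d$-equivariance of conditional expectation, and conclude via the hypothesis that all finite-dimensional inclusion probabilities agree. You additionally spell out the steps the paper leaves implicit: the $\pi$-system/Dynkin uniqueness argument, the sign convention, and the measurability of the sampling.
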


\begin{proof}
    The marginal of $\underline{X}$ at distinct points $t_1, \ldots, t_r$ is the left-hand side of \eqref{Eq: Equality marginals}. Writing $\underline{Y}$ for the candidate point process described in the conclusion, its marginal at the same points is
    $$ \mathbb{P}(t_1, \ldots, t_r \in \underline{Y}) = \mathbb{E}\bigl[(t_1 \cdot f) \cdots (t_r \cdot f)\bigr] = \mathbb{E}\bigl[\mathbb{E}[\mathbf{1}_{t_1 \in \underline{X}} \mid Y] \cdots \mathbb{E}[\mathbf{1}_{t_r \in \underline{X}} \mid Y]\bigr],$$
    using the $\mathbb{Z}^d$-equivariance of conditional expectation. Hypothesis \eqref{Eq: Equality marginals} gives equality of marginals at every $r$-tuple, hence equality of laws.
\end{proof}

The second lemma describes how an inverse limit of factor systems is approximated, at the level of point processes, by the corresponding chain of approximations.

\begin{lem}\label{Lemma: Inverse limit approximation}
    Let $Y$ be an ergodic $\mathbb{Z}^d$-system with $f_1,f_2 \in L^\infty(Y)$ taking values in $[0, 1]$, and let $\underline{Y_1}, \underline{Y_2}$ be the point processes obtained by drawing $y \in Y$ at random and keeping each $t \in \mathbb{Z}^d$ independently with probability $f_1((-t) \cdot y)$ and $f_2((-t) \cdot y)$ respectively. 
    Then there is a coupling of $\underline{Y_1}$ and $\underline{Y_2}$ such that
    $$d(\underline{Y_1} \Delta \underline{Y_2}) \leq \|f_1-f_2 \|_2$$
    almost surely - where for a subset $A \subset \mathbb{Z}^d$, $d(A)$ denotes the \emph{density}
    $$ d(A) = \limsup_{N \rightarrow \infty} \frac{\vert A \cap [-N,N]^d\vert }{(2N+1)^d}.$$
\end{lem}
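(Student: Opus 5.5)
We couple the two processes with a common random point $y \in Y$ and common uniform random variables. Draw $y \in Y$ according to the invariant measure, and independently draw i.i.d.\ uniform random variables $(U_t)_{t \in \mathbb{Z}^d}$ on $[0,1]$. Set
$$\underline{Y_i} := \{t \in \mathbb{Z}^d : U_t \leq f_i((-t)\cdot y)\}, \qquad i = 1,2.$$
Conditionally on $y$, each $t$ is kept in $\underline{Y_i}$ independently with probability $f_i((-t)\cdot y)$, so the marginals have the prescribed laws. Moreover, $t \in \underline{Y_1}\Delta\underline{Y_2}$ exactly when $U_t$ lies between the two values $f_1((-t)\cdot y)$ and $f_2((-t)\cdot y)$. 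Conditionally on $y$, this event has probability $g((-t)\cdot y)$, where $g := |f_1 - f_2|$, and these events are independent across $t$.

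The key step is a law of large numbers for the density of the symmetric difference. Write $\mathbf{1}_{t \in \underline{Y_1}\Delta\underline{Y_2}} = g((-t)\cdot y) + \xi_t$. Conditionally on $y$, the $\xi_t$ are independent, centred and bounded by $1$. A standard strong law for independent bounded variables, obtained for instance via fourth moments and Borel--Cantelli along the boxes $[-N,N]^d$, gives almost surely
$$\frac{1}{(2N+1)^d}\sum_{t \in [-N,N]^d}\xi_t \to 0.$$
Since $[-N,N]^d$ is symmetric, the box averages of $t \mapsto g((-t)\cdot y)$ are ergodic averages of $g$ along a tempered F{\o}lner sequence. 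By the pointwise ergodic theorem for $\mathbb{Z}^d$ and the ergodicity of $Y$, they converge almost surely to $\int_Y g$. Hence the density $d(\underline{Y_1}\Delta\underline{Y_2})$ exists as a limit and equals $\int_Y |f_1 - f_2|$ almost surely.

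Finally, the Cauchy--Schwarz inequality on a probability space gives $\|f_1-f_2\|_1 \leq \|f_1-f_2\|_2$, which yields the claimed bound.

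The only point requiring some care is the almost sure strong law for the conditionally independent fluctuations $\xi_t$, uniformly along boxes. The fourth moment of the box average is $O(N^{-2d})$, which is summable in $N$ for every $d \geq 1$, so Borel--Cantelli applies directly. Everything else is routine.
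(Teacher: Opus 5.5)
Your proof is correct and follows essentially the same route as the paper. Your common-uniform coupling $U_t \le f_i((-t)\cdot y)$ gives exactly the same joint law at each site as the paper's add/remove construction, namely the maximal coupling of the two Bernoulli variables. The remaining steps (the conditional strong law, the pointwise ergodic theorem along the boxes $[-N,N]^d$, and $\|f_1-f_2\|_1 \le \|f_1-f_2\|_2$) coincide with the paper's argument, and your fourth-moment Borel--Cantelli bound just makes the conditional strong law explicit.
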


\begin{proof}
    Write $\delta = f_2 - f_1$, and for brevity write $f_i(t,y) := f_i((-t)\cdot y)$ and $\delta(t,y) := \delta((-t)\cdot y)$.
    To construct the coupling, draw $y \in Y$ at random. For each $t \in \mathbb{Z}^d$, independently keep $t$ with probability $f_1(t,y)$; this provides our realisation of $\underline{Y_1}$.
    
    Now, using the same $y \in Y$, define $\underline{Y_2}$ by independently redrawing each $t \in \mathbb{Z}^d$ as follows:
    \begin{itemize}
        \item conditional on $t \in \underline{Y_1}$, remove $t$ with probability 
        $\dfrac{-\min(\delta(t,y), 0)}{f_1(t,y)}$ (and $0$ if $f_1(t,y) = 0$);
        \item conditional on $t \notin \underline{Y_1}$, add $t$ with probability 
        $\dfrac{\max(\delta(t,y), 0)}{1-f_1(t,y)}$ (and $0$ if $f_1(t,y) = 1$).
    \end{itemize}
    Since $0 \leq f_1+\delta = f_2 \leq 1$, both fractions lie in $[0,1]$, so this is well-defined. A direct computation shows that, conditional on $y$,
    $$ \mathbb{P}(t \in \underline{Y_2} \mid y) = f_1(t,y) + \delta(t,y) = f_2(t,y), $$
    and that the events $\{t \in \underline{Y_2}\}_{t \in \mathbb{Z}^d}$ are conditionally independent given $y$. Hence $\underline{Y_2}$ has the desired law.
    
    We now compute the conditional probability of the symmetric difference. Conditional on $y$, for every $t \in \mathbb{Z}^d$,
    \begin{align*}
        \mathbb{P}(t \in \underline{Y_1}, t \notin \underline{Y_2} \mid y) 
        &= f_1(t,y) \cdot \frac{-\min(\delta(t,y), 0)}{f_1(t,y)} = -\min(\delta(t,y), 0), \\
        \mathbb{P}(t \notin \underline{Y_1}, t \in \underline{Y_2} \mid y) 
        &= (1-f_1(t,y)) \cdot \frac{\max(\delta(t,y), 0)}{1-f_1(t,y)} = \max(\delta(t,y), 0),
    \end{align*}
    (with the obvious interpretation when $f_1(t,y) \in \{0,1\}$). Summing,
    $$ \mathbb{P}(t \in \underline{Y_1} \Delta \underline{Y_2} \mid y) = |\delta(t,y)|. $$
    
    Conditional on $y$, the indicators $\big(\mathbf{1}_{t \in \underline{Y_1} \Delta \underline{Y_2}}\big)_{t \in \mathbb{Z}^d}$ are independent and bounded by $1$. By the strong law of large numbers (applied conditionally on $y$), almost surely,
    $$ \frac{1}{(2N+1)^d} \left| (\underline{Y_1} \Delta \underline{Y_2}) \cap [-N,N]^d \right| 
    - \frac{1}{(2N+1)^d} \sum_{t \in [-N,N]^d} |\delta(t,y)| \xrightarrow[N\to\infty]{} 0. $$
    By the pointwise ergodic theorem for $\mathbb{Z}^d$-actions along the Følner sequence $([-N,N]^d)_N$, and since $Y$ is ergodic,
    $$ \frac{1}{(2N+1)^d} \sum_{t \in [-N,N]^d} |\delta(t,y)| \xrightarrow[N\to\infty]{} \|\delta\|_1 \quad \text{for a.e. } y \in Y. $$
    Combining the two  yields $d(\underline{Y_1} \Delta \underline{Y_2}) = \|\delta\|_1$ almost surely, and in particular
    $$ d(\underline{Y_1} \Delta \underline{Y_2}) = \|\delta\|_1 \leq \|\delta\|_2 = \|f_1 - f_2\|_2, $$
    by Cauchy--Schwarz (since $Y$ is a probability space).
\end{proof}
\subsection{Invariant polynomial point processes}

We now describe a family of invariant point processes built from the algebraic data we have isolated in the previous sections. Throughout, we work in $\mathbb{T}^m$, the $m$-dimensional torus. For $k \geq 0$, write
$$ \mathbb{T}^m_{\leq k}[X_1, \ldots, X_d] = \bigl\{(t_1, \ldots, t_d) \mapsto \!\!\sum_{i_1 + \cdots + i_d \leq k} t_1^{i_1} \cdots t_d^{i_d} \, \tau_{(i_1, \ldots, i_d)} : \tau_{(i_1, \ldots, i_d)} \in \mathbb{T}^m \bigr\}$$
for the space of polynomial maps $\mathbb{Z}^d \to \mathbb{T}^m$ of total degree at most $k$. This space is itself a finite-dimensional torus, and admits two natural actions of $\ASL_d(\mathbb{Z})$: by precomposition $P \mapsto P \circ \gamma$, and by linear action on the coefficient torus.

One checks from the $1$-cocycle equation that the space of cocycles for a unipotent $\mathbb{Z}^d$-action on $\mathbb{T}^m$ embeds in $\mathbb{T}^m_{\leq k}[X_1, \ldots, X_d]$ for $k$ the nilpotency class of $\mathbb{Z}^d \ltimes \mathbb{T}^m$. 

The simplest invariant point processes arising from this picture are the following.

\begin{defn}\label{Def: simple polynomial process}
    Fix integers $k, m \geq 0$, let $\mu$ denote the Haar probability measure on the subspace of $\mathbb{T}^m_{\leq k+1}[X_1, \ldots, X_d]$ consisting of polynomials of degree exactly $k+1$, and let $f: \mathbb{T}^m \to [0, 1]$ be measurable. The associated \emph{simple polynomial point process} on $\mathbb{Z}^d$ is obtained as follows:
    \begin{itemize}
        \item draw a random polynomial $P$ according to $\mu$;
        \item independently across $t \in \mathbb{Z}^d$, keep $t$ with probability $f(P(t))$.
    \end{itemize}
\end{defn}

The measure $\mu$ is invariant under the precomposition action of $\ASL_d(\mathbb{Z})$, since this action preserves total degree, and the resulting point process is therefore $\ASL_d(\mathbb{Z})$-invariant.

The structural theorem we are aiming at says, conversely, that every $\ASL_d(\mathbb{Z})$-invariant point process arises from such a polynomial construction, after suitable enlargement. For degrees $k = 1$ and $k = 2$ the simple polynomial point processes (with $\mathbb{T}$ replaced by general compact abelian groups) already exhaust all examples. In higher degree, an additional intertwining action enters.

To describe the general construction, fix $k, m \geq 0$ and a closed subgroup $T \subseteq \mathbb{T}^m$, and consider the space $T_{\leq k}[X_1, \ldots, X_d]$ of polynomial maps of degree at most $k$ taking values in $T$. Choose a representation $\pi: \SL_d(\mathbb{Z}) \to \SL_m(\mathbb{R})$ together with a Borel section $s: \SL_m(\mathbb{R})/\SL_m(\mathbb{Z}) \to \SL_m(\mathbb{R})$. Define the Mackey cocycle
\begin{align*}
    \alpha: \SL_d(\mathbb{Z}) \times \SL_m(\mathbb{R})/\SL_m(\mathbb{Z}) &\longrightarrow \SL_m(\mathbb{Z}) \\
    (\gamma, \bar g) &\longmapsto s(\pi(\gamma) \bar g)^{-1} \, \pi(\gamma) \, s(\bar g),
\end{align*}
which lands in $\SL_m(\mathbb{Z})$ since the projection of the right-hand side to $\SL_m(\mathbb{R})/\SL_m(\mathbb{Z})$ is the identity. We assume that the range of $\alpha$ stabilises $T$, so that $\alpha$ acts on $T_{\leq k}[X_1, \ldots, X_d]$.

\begin{defn}\label{Def: general polynomial process}
    With the data $(k, m, T, \pi, s)$ as above, let $\nu$ be a the Haar probability measure on a closed subgroup of  $T_{\leq k}[X_1, \ldots, X_d]$ stable under  $P \mapsto \alpha(\gamma, x)(P) \circ \gamma^{-1}$  for every $\gamma \in \SL_d(\mathbb{Z})$, and let $f: T \to [0, 1]$ be measurable and invariant under the range of $\alpha$. The associated \emph{polynomial point process} on $\mathbb{Z}^d$ is obtained as follows:
    \begin{itemize}
        \item draw a random polynomial $\underline{P}$ according to $\nu$;
        \item independently across $t \in \mathbb{Z}^d$, keep $t$ with probability $f(\underline{P}(t))$.
    \end{itemize}
\end{defn}

The equivariance of $\underline{P}$ and the $\alpha$-invariance of $f$ together ensure that the process is $\ASL_d(\mathbb{Z})$-invariant.

\begin{thm}\label{Thm: All processes are polynomial}
    Every $\ASL_d(\mathbb{Z})$-invariant point process on $\mathbb{Z}^d$ is approximated by point processes arising in this way, for some choice of $k, m, T, \pi, \underline{P}$, and $f$ as above.
\end{thm}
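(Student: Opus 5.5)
We would prove Theorem \ref{Thm: All processes are polynomial} by assembling the results of the previous sections. Let $\underline{X}$ be an $\ASL_d(\mathbb{Z})$-invariant point process with associated system $X$ and $f := \mathbf{1}_{0 \in \underline{X}}$, which is $\SL_d(\mathbb{Z})$-invariant. By ergodic decomposition (and the fact that convex combinations and mixtures of polynomial processes are again approximable), we may assume $X$ is $\ASL_d(\mathbb{Z})$-ergodic. Fix $\varepsilon > 0$.

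\emph{Step 1 (reduction to a Host--Kra factor).} Choose $r$ large enough that the approximation error is controlled uniformly in the number of points. By Theorem \ref{Theorem: Reduction to Host--Kra factors}, for every $r$ the $r$-point marginals of $\underline{X}$ coincide with those computed through $\mathbb{E}[f \mid Z^{K}(X)]$ with $K = 2^{d-1}r$. Letting $r$ vary, we get for every $r$ an equality of the form \eqref{Eq: Equality marginals} with $Y = Z^{K(r)}(X)$. Passing to $Y_\infty := Z^\infty(X) = \bigvee_K Z^K(X)$ gives equality for all $r$ simultaneously. Lemma \ref{Lemma: Relative mixing implies Bernoulli thinning} then shows $\underline{X}$ is the Bernoulli thinning of $Y_\infty$ by $\mathbb{E}[f\mid Y_\infty]$. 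Since $\mathbb{E}[f \mid Z^K(X)] \to \mathbb{E}[f\mid Y_\infty]$ in $L^2$, Lemma \ref{Lemma: Inverse limit approximation} reduces us to approximating a thinning by $f_K := \mathbb{E}[f\mid Z^K(X)]$ for a fixed $K$, up to density error $\varepsilon/3$. The coupling there is $\ASL_d(\mathbb{Z})$-invariant because the construction is canonical in $y$.

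\emph{Step 2 (equivariant nilfactors).} Working in $Z^K(X)$, apply Proposition \ref{Prop: reduction to nilbundles} to obtain a commensurated nilfactor $Y$ with $\|f_K - \mathbb{E}[f_K\mid Y]\|_2 \le \varepsilon/3$. Then Proposition \ref{Prop: invariant nilfactor} realizes the inverse limit of commensurated nilfactors through $Y$, after a finite extension $\widetilde X$ (which does not change the law of the thinning process), as an inverse limit of $\ASL_d(\mathbb{Z})$-equivariant nilfactors. One of these, $Y_m$, gives a further $\varepsilon/3$ approximation. On $Y_m$, Lemma \ref{Lemma: Dimension reduction} produces $M$ and Lemma \ref{Lem: L is abelian} shows $N_0/M$ is abelian. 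Since $\mathbb{E}[f\mid Y_m]$ is $M$-invariant, Proposition \ref{Prop: abelian connected component} lets us pass to a factor with $N = \mathbb{Z}^d \ltimes A$, $A$ a compact abelian Lie group (a subgroup $T$ of a torus $\mathbb{T}^m$ after reducedness), with $\alpha$ trivial on the centre.

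\emph{Step 3 (translation to polynomials).} In this setting a point of the fibre over $\omega$ is a base point $a \in A$, and the $\mathbb{Z}^d$-action is $t\cdot a = \rho(\omega)(t)\, a$, where $\rho(\omega)$ is a homomorphism into $\mathbb{Z}^d \ltimes A$ with unipotent linear part. Setting $\underline{P}(t) := $ the $A$-coordinate of $(-t)\cdot(\omega,a)$, the $1$-cocycle equation for a unipotent action forces $\underline{P}$ to be a polynomial map $\mathbb{Z}^d \to T$ of degree at most the nilpotency class, as remarked before Definition \ref{Def: simple polynomial process}. The thinning function $\mathbb{E}[f\mid Y_m]((-t)\cdot y)$ becomes $F(\underline{P}(t))$ for a measurable $F$. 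The law of $\underline{P}$ is the pushforward of the Haar measure on fibres, integrated over $Z^0(X)$. The $\SL_d(\mathbb{Z})$-equivariance, through the intertwining cocycle $\alpha$ (reduced by Propositions \ref{Prop: SLd cocycles stabilize a lattice} and \ref{Prop: cocycles into Aut(N) reduce to lattice stabiliser} to values in $\SL_m(\mathbb{Z})$ stabilising $T$), yields invariance under $P \mapsto \alpha(\gamma,x)(P)\circ\gamma^{-1}$, and $\SL_d(\mathbb{Z})$-invariance of $f$ gives $\alpha$-invariance of $F$.

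\emph{Main obstacle.} The hard step is identifying the law of $\underline{P}$ with a Haar measure on a closed invariant subgroup and $\alpha$ with a Mackey cocycle of the form in Definition \ref{Def: general polynomial process}. For this I would apply Zimmer superrigidity to $\alpha$ on $Z^0(X)$: it is cohomologous to $\pi(\gamma)$ times a compact-valued cocycle, and the compact part can be killed by a finite extension via Lemma \ref{Lemma: From compact to trivial}. This presents the ergodic $\SL_d(\mathbb{Z})$-space $Z^0(X)$ through its factor onto an orbit in $\SL_m(\mathbb{R})/\SL_m(\mathbb{Z})$. The Haar property of the fibre law would then come from ergodicity and the fibrewise Haar measures on the nilmanifolds.
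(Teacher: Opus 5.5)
Up to the polynomial step your outline follows the paper: Host--Kra reduction, then commensurated and equivariant nilfactors, then abelianisation of $N_0/M$, then polynomial cocycles. Your treatment of Step 1 via the join $Z^\infty(X)$ is fine. The gap is at the step you flag as the main obstacle, and the mechanism you propose there does not work.

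In the fibre coordinates, the Haar-random fibre point only contributes the term $t\mapsto t\cdot\tau$, plus finite data. The remaining coefficients $b(\cdot,\omega)$ are functions of $\omega\in Z^0(X)$. So the law of $\underline P$ is a mixture over $\omega$ of Haar measures on cosets, jointly distributed with the $\alpha$-data $\Phi(\omega)$. Ergodicity of $Z^0(X)$ does not make this joint law Haar on a closed subgroup, nor does it make $\underline P$ independent of $\Phi(\omega)$. Independence is essential, because Definition~\ref{Def: general polynomial process} draws $\underline P$ from a single Haar measure $\nu$ with no reference to a base point.

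Likewise, superrigidity only produces an equivariant map $Z^0(X)\to\Aut(N)/\Aut(N,\Gamma)$. Its pushforward could a priori be any $\pi(\SL_d(\mathbb Z))$-invariant measure, not Haar on an orbit. The paper supplies the missing input by measure classification, used twice:
\begin{itemize}
\item In Lemma~\ref{Lemma: Measurability over homogeneous factor}, Benoist--Quint identifies the pushforward as Haar on a closed orbit $H/\Gamma_H$.
\item In Lemma~\ref{Lemma: Independence}, Benoist--Quint is applied again to the joint law of $(\Phi,b)$ on $H/\Gamma_H\times(\text{torus of polynomials})$, under the action through $\pi\times\id$, giving a homogeneous measure. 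Because its $H/\Gamma_H$-marginal is Haar, the acting subgroup surjects onto $H$, and this forces the product $\mu_{H/\Gamma_H}\otimes\mu_T$. That is exactly the Haar law of $b$ on a closed subgroup $T$, independent of $\alpha$.
\end{itemize}

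Your Step 3 also skips two reductions that are needed before the twisted invariance can be written down.
\begin{itemize}
\item ``The $A$-coordinate'' of a fibre point only makes sense after choosing a section of $\Gamma_0\to\mathbb Z^d$. In general $\Gamma$ need not project onto $\mathbb Z^d$; this is Lemma~\ref{Lemma: convenient lattice}. The paper then needs Lemma~\ref{Lemma: intertwining sections}, another finite extension, to make this section $\alpha$-equivariant, at the cost of the finite $2$-cocycle $c$ in \eqref{Eq: cocycle eq polynomial}.
\item The $\SL_d(\mathbb Z)$-action on fibres is affine, $\bar x\mapsto b(\gamma,\omega)\,\alpha(\gamma,\omega)(x)$. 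Unless you first cohomologise its cocycle from $\Aut(N)\ltimes N$ into $\Aut(N)$ (the paper uses reductivity of the algebraic hull), you only get $P_{\gamma\omega}\circ\gamma=b(\gamma,\omega)+\alpha(\gamma,\omega)P_\omega$. The definition needs the linear equivariance $\alpha(\gamma,x)(P(t,x))=P(\gamma(t),\gamma\cdot x)$.
\end{itemize}
A minor point: Lemma~\ref{Lemma: From compact to trivial} yields a compact, not necessarily finite, extension, which is harmless for the law of the process.
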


The remainder of the section is devoted to the proof of Theorem \ref{Thm: All processes are polynomial}.

\subsection{Reducing the base space and an independence property}

The polynomial description of the previous subsection is completed by two further structural results: a measurability statement that locates the cocycle $\alpha$ over a concrete homogeneous space, and an independence statement between $\alpha$ and $b$. Together with the previous reductions, these produce the description of point processes from Definition \ref{Def: general polynomial process} in full generality.

\begin{lem}\label{Lemma: Measurability over homogeneous factor}
    Let $(X, \nu)$ be an $\SL_d(\mathbb{Z})$-ergodic system, $N$ a connected simply connected nilpotent Lie group, $\Gamma \subset N$ a lattice, and $\alpha: \SL_d(\mathbb{Z}) \times X \to \Aut(N, \Gamma)$ a cocycle valued in the stabiliser of $\Gamma$ in $\Aut(N)$. Up to a compact extension of $X$, there exist a homomorphism $\pi: \SL_d(\mathbb{Z}) \to \Aut(N)$ with image contained in a closed subgroup $H \subseteq \Aut(N)$, and an $\SL_d(\mathbb{Z})$-equivariant measurable map
    $$ \Phi: X \longrightarrow H/\Gamma_H, \qquad \Gamma_H := H \cap \Aut(N, \Gamma),$$
    such that $\alpha$ factors through $\Phi$ and is given by the constant cocycle $\alpha(\gamma, x) = \pi(\gamma)$.
\end{lem}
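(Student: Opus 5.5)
The plan is to combine Zimmer's cocycle superrigidity with the standard Mackey-range construction for cocycles into a discrete subgroup of an algebraic group. Set $\Lambda := \Aut(N,\Gamma)$. This is a discrete subgroup of $\Aut(N)$, and after fixing a $\mathbb{Q}$-basis of $\mathrm{Lie}(N)$ adapted to $\Gamma$ it is an arithmetic subgroup of the $\mathbb{Q}$-algebraic group $\Aut(N) \subset \mathrm{GL}(\mathrm{Lie}(N))$ (Malcev, \cite[Thm. 2.12]{RaghunathanDiscrete}).

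\emph{Step 1: superrigidity.} View $\alpha$ as a cocycle into the real algebraic group $G := \Aut(N)$. By Zimmer's cocycle superrigidity \cite[Thm. 1.5]{FisherMargulis}, after replacing $\alpha$ by a cohomologous cocycle in $G$, we may write
\begin{equation*}
\alpha(\gamma,x) = \phi(\gamma x)\,\pi(\gamma)\,z(\gamma,x)\,\phi(x)^{-1},
\end{equation*}
where $\pi:\SL_d(\mathbb{Z})\to G$ is a homomorphism, $\phi: X\to G$ is Borel, and $z$ takes values in a compact subgroup $K$ centralising $\pi(\SL_d(\mathbb{Z}))$. This decomposition is applied to the algebraic hull $L$ of $\alpha$, so that $\pi$ and $K$ lie in $L$. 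Passing to the compact extension $X\rtimes_z K$ kills $z$, by Lemma \ref{Lemma: From compact to trivial}(2): on the extension the lifted $z$ is a coboundary. Absorbing that coboundary into $\phi$ leaves
\begin{equation*}
\alpha(\gamma,x)=\phi(\gamma x)\,\pi(\gamma)\,\phi(x)^{-1}.
\end{equation*}
Superrigidity needs $\alpha$ to be Zariski dense in a semisimple group without compact factors. The amenable radical of the hull is handled by property (T) together with \cite[Thm. 9.1.1]{ErgodicTheoryZimmer}, exactly as in the proof of Lemma \ref{Lem: L is abelian}. I expect this to be the main technical obstacle, since the unipotent radical of $\Aut(N)$ must be shown to contribute only something cohomologically trivial. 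My first attempt would reduce to a Levi factor and use vanishing of $H^1$ for $\SL_d(\mathbb{Z})$, $d\geq 3$, with unipotent coefficients.

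\emph{Step 2: the map $\Phi$.} Define $\Phi(x):=\phi(x)^{-1}\Lambda \in G/\Lambda$. Because $\alpha$ takes values in $\Lambda$, the relation
\begin{equation*}
\pi(\gamma)\,\phi(x)^{-1} = \phi(\gamma x)^{-1}\,\alpha(\gamma,x)
\end{equation*}
shows that $\Phi(\gamma x)=\pi(\gamma)\Phi(x)$. Thus $\Phi$ is equivariant for the left action of $\SL_d(\mathbb{Z})$ on $G/\Lambda$ through $\pi$. To land in a homogeneous space of the right size, push $\Phi_*\nu$ forward. This is an ergodic $\pi(\SL_d(\mathbb{Z}))$-invariant probability measure on $G/\Lambda$, not necessarily Haar on the whole space. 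Apply Ratner's theorem, or rather its discrete-group version for higher-rank lattices acting via $\pi$: if needed, first pass to the closure of $\pi(\SL_d(\mathbb{Z}))$ together with the unipotent elements it generates, and use Ratner's classification for those subgroups. This should show that $\Phi_*\nu$ is homogeneous, namely the Haar measure on a closed orbit $Hg\Lambda$ with $H\cap g\Lambda g^{-1}$ a lattice in $H$. The group $H$ contains $\pi(\SL_d(\mathbb{Z}))$ up to finite index.

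Conjugating by $g$, that is replacing $\phi$ by $\phi g$ and $\pi$ by $g^{-1}\pi g$, identifies the support of $\Phi_*\nu$ with $H/\Gamma_H$, where $\Gamma_H=H\cap\Lambda$. The finite-index subtlety would be absorbed by a further finite extension.

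\emph{Step 3: factoring $\alpha$ through $\Phi$.} Choose a Borel section $s:H/\Gamma_H\to H$ and replace $\phi(x)^{-1}$ by $s(\Phi(x))$. The two differ by a map $X\to\Gamma_H$, so the new cocycle is cohomologous to $\alpha$ within $\Lambda$. The new cocycle is the Mackey cocycle
\begin{equation*}
s(\pi(\gamma)\Phi(x))^{-1}\,\pi(\gamma)\,s(\Phi(x)).
\end{equation*}
It visibly factors through $\Phi$, and it is identified with the constant cocycle $\pi(\gamma)$ via the transfer map $s\circ\Phi$. This is the sense in which the statement should be read.
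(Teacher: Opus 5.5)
Your proposal follows the paper's proof essentially step for step: superrigidity, a compact extension to trivialise $z$, the equivariant map $x \mapsto \phi(x)^{-1}\Aut(N,\Gamma)$, and a measure-classification theorem to identify the support with $H/\Gamma_H$. The paper invokes Benoist--Quint directly where you go through Ratner, it treats the general-target superrigidity of \cite{FisherMargulis} as a black box (so it does not address your worry about the unipotent radical either), and in your Step 3 note that $\phi(x)^{-1}$ and $s(\Phi(x))$ differ by a $\Lambda$-valued rather than $\Gamma_H$-valued map, which still gives the claimed cohomology inside $\Lambda$.
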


\begin{proof}
    By Zimmer's superrigidity theorem \cite[Thm. 1.5]{FisherMargulis}, $\alpha$ is cohomologous to a cocycle of the form $(\gamma, x) \mapsto \pi(\gamma) z(\gamma, x)$, where $\pi: \SL_d(\mathbb{Z}) \to \Aut(N)$ is a group homomorphism and $z$ takes values in a compact subgroup centralising $\pi(\SL_d(\mathbb{Z}))$. Lemma \ref{Lemma: From compact to trivial} provides a compact extension on which $z$ becomes trivial, so we may assume $\alpha(\gamma, x) = \phi(\gamma \cdot x)^{-1} \pi(\gamma) \phi(x)$ for some measurable $\phi: X \to \Aut(N)$.

    Since $\alpha(\gamma, x) \in \Aut(N, \Gamma)$, we have
    $$ \phi(\gamma \cdot x) \, \Aut(N, \Gamma) = \pi(\gamma) \, \phi(x) \, \Aut(N, \Gamma)$$
    in $\Aut(N)/\Aut(N, \Gamma)$. Hence the map $x \mapsto \phi(x) \, \Aut(N, \Gamma)$ is $\SL_d(\mathbb{Z})$-equivariant from $X$ to $\Aut(N)/\Aut(N, \Gamma)$, where $\SL_d(\mathbb{Z})$ acts on the target through $\pi$.

    By Benoist--Quint's theorem \cite{BenoistQuint}, the pushforward measure is supported on a closed orbit $H \cdot \Aut(N, \Gamma)$ and is the Haar probability measure on this orbit, for some closed subgroup $H \subseteq \Aut(N)$ containing $\pi(\SL_d(\mathbb{Z}))$. Identifying this orbit with $H / \Gamma_H$ for $\Gamma_H := H \cap \Aut(N, \Gamma)$ gives the desired map $\Phi$. The cohomologised cocycle $\alpha(\gamma, x) = \pi(\gamma)$ no longer depends on $x$ and is the announced constant cocycle.
\end{proof}

We now turn to the independence statement.

\begin{lem}\label{Lemma: Independence}
    With $\alpha$ and $b$ as in \S\ref{Subsection: From bundle of nilsystems to bundle of polynomials}, the random variables $\gamma \mapsto \alpha(\gamma, \cdot)$ and $t \mapsto b(t, \cdot)$ on $(X, \nu)$ are independent.
\end{lem}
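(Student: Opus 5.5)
We have to guess the setting: $X = Z^0(X)\rtimes_\rho N/\Gamma$ with $N=\mathbb{Z}^d\ltimes A$ abelian connected component (Proposition \ref{Prop: abelian connected component}). The action cocycle splits as $\rho(\gamma,x) = b(\gamma,x)\cdot\alpha(\gamma,x)$. Here $\alpha$ is the intertwining (linear) part. By Lemma \ref{Lemma: Measurability over homogeneous factor}, after a compact extension $\alpha$ is measurable with respect to an equivariant factor $\Phi: Z^0(X)\to H/\Gamma_H$. The translation part $b(t,x)$, for $t\in\mathbb{Z}^d$, is a polynomial cocycle in $t$ with values in $A$ modulo $\Gamma$. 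The claim is that the $\sigma$-algebra $\mathcal{A}$ generated by $x\mapsto \alpha(\cdot,x)$ (equivalently by $\Phi$) and the $\sigma$-algebra $\mathcal{B}$ generated by $x\mapsto b(\cdot,x)$ are independent under $\nu$.

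I would prove this through a relative ergodicity argument. The first step is to reduce the claim: it suffices to show that for every bounded $\mathcal{B}$-measurable function $F$, the conditional expectation $\mathbb{E}[F\mid\mathcal{A}]$ is constant. Since $\mathcal{B}$ is generated by the coefficients of the random polynomial $b(\cdot,x)\in T_{\le k}[X_1,\dots,X_d]$, we may take $F = \chi(b(\cdot,x))$ with $\chi$ a character of this compact abelian group, by Stone--Weierstrass. Now fix a fibre $\Phi(x)=y$, i.e.\ fix the lattice data / intertwining $\alpha$. Then $\mathbb{Z}^d$ acts on the fibre of $\mathcal{A}$ by translations $b(t,\cdot)$, which do not change $y$ because $\mathbb{Z}^d$ acts trivially on $Z^0(X)$ and hence on $\Phi$. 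The key observation is that base change by $m\in A$ (an affine automorphism commuting with the bundle structure) modifies $b(t,x)$ by the coboundary $[t,m]$ while leaving $\alpha$ untouched. The conditional law of $b$ given $\mathcal{A}$ is therefore a measure on polynomials invariant under this coboundary subgroup together with the $\mathbb{Z}^d$ translations. Ergodicity of the $\mathbb{Z}^d$-action on almost every fibre $N/\Gamma$ (the bundle hypothesis) then forces this conditional law to be Haar on a closed subgroup $P_y$ of $T_{\le k}[X_1,\dots,X_d]$.

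The second step is to show that $P_y$ and the Haar measure do not depend on $y$. The equivariance $b(\gamma t,\gamma x)=\alpha(\gamma,x)(b(t,x))$, which follows from the cocycle identity for $\rho$ on $\ASL_d(\mathbb{Z})$, gives $P_{\gamma y}=\alpha(\gamma,x)(P_y)\circ\gamma^{-1}$. Next, $y\mapsto P_y$ takes values in the countable set of closed subgroups of the torus $T_{\le k}[X_1,\dots,X_d]$ (after the lattice stabilisation of Proposition \ref{Prop: cocycles into Aut(N) reduce to lattice stabiliser}). By the cocycle reduction lemma and ergodicity of $\SL_d(\mathbb{Z})$ on $H/\Gamma_H$ we can therefore cohomologise so that $P_y\equiv P$ is constant. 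Once it is constant, $\mathbb{E}[\chi(b)\mid\mathcal{A}] = \int_P\chi$ is constant, and independence follows.

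The main obstacle is the first step: justifying that the conditional law of $b$ given $\mathcal{A}$ is a Haar measure. This requires that averaging over the fibrewise nilsystem, rather than over the base, saturates the polynomial. I would first try to use Lemma \ref{Lemma: Factors nilbundles} to identify fibrewise automorphisms with affine maps, and unique ergodicity of the nilrotation on each fibre (Leibman). If that fails, I would fall back on a Weyl-type equidistribution argument for polynomial sequences on the torus, applied along a unipotent element of $\SL_d(\mathbb{Z})$ as in Lemma \ref{Lemma: Invariant polynomials}.
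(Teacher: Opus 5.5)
Your Step 1 has a genuine gap, and it is where the whole content of the lemma sits. Both $\Phi$ and $x\mapsto b(\cdot,x)$ are functions on the base $X$, on which $\mathbb{Z}^d$ acts trivially. So $\mathbb{Z}^d$ does not act on the fibres $\Phi^{-1}(y)$ at all. Fibrewise ergodicity, or Leibman's unique ergodicity, of the rotation by $\rho(\cdot,x)$ on $N/\Gamma$ is a statement about each individual $x$. It says nothing about how $b(\cdot,x)$ is distributed as $x$ ranges over $\Phi^{-1}(y)$.

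Likewise, base change by $m\in A$ is a change of coordinates of the bundle, not a $\nu$-preserving transformation of $X$ fixing $\mathcal{A}$. It replaces $b$ by a cohomologous cocycle with a different law, so nothing makes the law of $b$ invariant under $t\mapsto [t,m]$. Even on the total space, where Haar measure on the fibre is translation-invariant, you only gain invariance under the coboundary subgroup. The distribution of the cohomology class of $b(\cdot,x)$ (for instance its top-degree coefficients) stays unconstrained.

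The degree-one case is a good test. For $N=\mathbb{Z}^d\times\mathbb{T}$ with trivial action there are no coboundaries, and $b(t,x)=\sum_i t_i\xi_i(x)$. The fact that $\xi$ is Haar on $\mathbb{T}^d$ comes only from the classification of ergodic $\SL_d(\mathbb{Z})$-invariant measures on $\mathbb{T}^d$. Fibrewise ergodicity merely excludes the rational atoms.

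The missing ingredient is therefore measure rigidity for the $\SL_d(\mathbb{Z})$-action on the base, which is exactly how the paper argues:
\begin{enumerate}
\item Once $\alpha=\pi$ is constant and $\Phi$ pushes $\nu$ to Haar measure on $H/\Gamma_H$, the joint law of $(\Phi(x),b(\cdot,x))$ is an ergodic $\SL_d(\mathbb{Z})$-invariant measure on $H/\Gamma_H\times K$.
\item This space is realised as a homogeneous space $G/\Lambda$ on which $\SL_d(\mathbb{Z})$ acts through $\pi\times\id$.
\item Benoist--Quint makes the joint law the Haar measure on a closed orbit $L\cdot y$.
\item The Haar marginal on $H/\Gamma_H$ forces $L$ to surject onto $H\times \SL_d(\mathbb{Z})$.
\item The fibre $V=L\cap\mathbb{R}^m_{\le l}[X_1,\dots,X_d]$ then produces the product $\mu_{H/\Gamma_H}\otimes\mu_T$.
\end{enumerate}
Your fallback, equidistribution along a single unipotent element of $\SL_d(\mathbb{Z})$, does not replace this joining classification.

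Your Step 2 also has a secondary problem. With $\alpha$ already equal to the constant cocycle $\pi$, equivariance plus ergodicity only place $y\mapsto P_y$ in a single finite $\SL_d(\mathbb{Z})$-orbit of closed subgroups. Cohomologising to make $P_y$ constant would alter $\alpha$ and $b$, which are the very random variables whose independence is claimed. You would instead have to rule out non-trivial finite orbits directly.
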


\begin{proof}
    By Lemma \ref{Lemma: Measurability over homogeneous factor}, $\alpha$ factors through the $\SL_d(\mathbb{Z})$-equivariant map $\Phi: X \to H/\Gamma_H$ as a constant cocycle. The map $x \mapsto b(\cdot, x)$ takes values in the space of polynomial maps $\mathbb{Z}^d \to T$ satisfying \eqref{Eq: cocycle eq polynomial} from the previous subsection, which is a compact subgroup $K \subseteq \mathbb{T}^m_{\leq l}[X_1, \ldots, X_d]$ for some $l$ depending only on the nilpotency class of $N$.

    The joint law of $\alpha$ and $b$ is therefore supported on the homogeneous space $H/\Gamma_H \times K$, which we realise as $G/\Lambda$ with
    $$ G := H \times \bigl(\SL_d(\mathbb{Z}) \ltimes \mathbb{R}^m_{\leq l}[X_1, \ldots, X_d]\bigr), \qquad \Lambda := \Gamma_H \times \bigl(\SL_d(\mathbb{Z}) \ltimes \mathbb{Z}^m_{\leq l}[X_1, \ldots, X_d]\bigr).$$
    Here $\SL_d(\mathbb{Z})$ acts on $\mathbb{R}^m_{\leq l}[X_1, \ldots, X_d]$ by precomposition, and $H$ acts (commutingly) by its natural action on the coefficient torus $\mathbb{T}^m$. The action of $\SL_d(\mathbb{Z})$ on $G/\Lambda$ in question is by translation through the embedding
    $$ \pi \times \id: \SL_d(\mathbb{Z}) \hookrightarrow H \times \SL_d(\mathbb{Z}) \subset G,$$
    realising both the cocycle action on $\alpha$ (through $\pi$) and the precomposition action on $b$ (through $\id$).

    Since $d \geq 3$, the Zariski closure of the image of $\pi \times \id$ in $G$ is semisimple without compact factors. By Benoist--Quint \cite{BenoistQuint}, every ergodic $\SL_d(\mathbb{Z})$-invariant measure on $G/\Lambda$ is the Haar probability measure on a closed orbit $L \cdot y$ for some closed subgroup $L \subseteq G$ and some $y \in G/\Lambda$. Let $\mu$ be the pushforward to $G/\Lambda$ of $\nu$; then $\mu$ has this affine form for some $L$.

    The projection of $\mu$ to $H/\Gamma_H$ is the Haar measure (this is the marginal law of $\Phi_* \nu$), which forces the projection of $L$ to $H$ to be surjective. Likewise, the marginal in the $\SL_d(\mathbb{Z}) \ltimes \mathbb{R}^m_{\leq l}[X_1, \ldots, X_d]$ factor projects surjectively onto the discrete $\SL_d(\mathbb{Z})$-quotient. Combined, the projection of $L$ to $H \times \SL_d(\mathbb{Z})$ is surjective.

    Set $V := L \cap \mathbb{R}^m_{\leq l}[X_1, \ldots, X_d]$. The closed-orbit hypothesis forces $V$ to intersect $\mathbb{Z}^m_{\leq l}[X_1, \ldots, X_d]$ as a lattice, hence the image $T$ of $V$ in $\mathbb{T}^m_{\leq l}[X_1, \ldots, X_d]$ is a closed subgroup. The structure of $L$ as an extension of $H \times \SL_d(\mathbb{Z})$ by $V$ identifies $\mu$ as the product
    $$ \mu = \mu_{H/\Gamma_H} \otimes \mu_T,$$
    where $\mu_T$ is Haar on $T$. The product structure means precisely that $\alpha$ (whose law is $\mu_{H/\Gamma_H}$) and $b$ (whose law is $\mu_T$) are independent.
\end{proof}
\subsection{From bundle of nilsystems to bundle of polynomials}\label{Subsection: From bundle of nilsystems to bundle of polynomials}

Combining the results of \S\ref{Section: Approximation by return times} with the structural theorems of the previous sections, the classification of $\ASL_d(\mathbb{Z})$-invariant point processes reduces to understanding $\ASL_d(\mathbb{Z})$-ergodic systems of a specific shape. Let us make this reduction explicit.

Starting from an arbitrary $\ASL_d(\mathbb{Z})$-invariant point process $\underline X$ with associated dynamical system $X$, Theorem \ref{Theorem: Reduction to Host--Kra factors} approximates the relevant marginals by their projections onto the characteristic factors $Z^k(X)$, which are inverse limits of bundles of nilsystems over $Z^0(X)$. Proposition \ref{Prop: reduction to nilbundles} then upgrades the approximation to a commensurated nilfactor, and the results of \S\ref{Subsection:From commensurated to invariant nilfactors} extract from this an inverse limit of $\ASL_d(\mathbb{Z})$-equivariant nilfactors after passing to a finite extension. Finally, the structural results of \S\ref{Subsec: Space of bounded directions} reduce the connected component of the underlying nilpotent group to an abelian connected Lie group $A$, with a decomposition of the bounded directions $V = Z_N(A)$ as a direct factor; the reducedness assumption then forces $A$ to be a torus.

We may therefore assume that the system at play has the following form. Fix an ergodic base $X$, a torus $T = \mathbb{T}^m$, a nilpotent group $N = \mathbb{Z}^d \ltimes T$ in which $\mathbb{Z}^d$ acts on $T$ by automorphisms, a lattice $\Gamma \subset N$ projecting to a finite-index subgroup of $\mathbb{Z}^d$, and a $\mathbb{Z}^d$-cocycle $\rho: \mathbb{Z}^d \times X \to N$ giving rise to the bundle $X \rtimes_\rho N/\Gamma$. The structural results give in addition a connected $\SL_d(\mathbb{Z})$-invariant subgroup $M \subseteq T$ together with a decomposition $T = M \oplus Z_N(T)$, where $Z_N(T)$ denotes the subgroup of elements of $T$ central in $N$. The function describing the point process is a measurable $f: M \backslash N/\Gamma \to [0, 1]$.

Standard results from homogeneous dynamics let us upgrade the regularity of this picture step by step. We begin by replacing $\Gamma$ with a more convenient lattice.

\begin{lem}\label{Lemma: convenient lattice}
    With notation as above, we may assume $\Gamma \cap T = \{0\}$, and there exists a lattice $\Gamma_0 \subset N$ containing $\Gamma$ whose projection to $\mathbb{Z}^d = N/T$ is surjective. Moreover, $\Gamma_0$ is invariant under all elements of $\Aut(N)$ stabilising $\Gamma$.
\end{lem}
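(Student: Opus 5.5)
The plan has two independent parts. First we shrink $\Gamma$ so that it meets $T$ trivially. Then we build $\Gamma_0$ canonically out of $\Gamma$, so that invariance under $\Aut(N)$-elements stabilising $\Gamma$ comes for free.

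\emph{Step 1: making $\Gamma \cap T$ trivial.} Since $T$ is compact and $\Gamma$ is discrete, $F := \Gamma \cap T$ is a finite normal subgroup of $\Gamma$. I would not quotient by $F$: the factor $N/\Gamma \to N/\Gamma F'$ is only a finite-to-one factor, and $f$ need not descend to it. Instead I pass to a finite cover.
\begin{itemize}
\item $\Gamma$ is a finitely generated nilpotent group, hence residually finite. So there is an index $e$ such that every subgroup of index $e$ avoids the finite set $F \setminus \{0\}$.
\item Let $\Gamma'$ be the intersection of all subgroups of $\Gamma$ of index $e$. There are finitely many, so $\Gamma'$ is a characteristic finite-index subgroup of $\Gamma$ with $\Gamma' \cap T = \{0\}$, and it is again a lattice in $N$.
\item Because $\Gamma'$ is characteristic, every automorphism of $N$ stabilising $\Gamma$ also stabilises $\Gamma'$. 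In particular the action and intertwining cocycles of \S\ref{Subsection: Intertwining and action cocycles} preserve $\Gamma'$, so the $\ASL_d(\mathbb{Z})$-action lifts to $X \rtimes_\rho N/\Gamma'$.
\item This bundle is a finite extension of the original one, and $f$ pulls back to it. This is the same kind of finite extension already permitted in Propositions \ref{Prop: invariant nilfactor} and \ref{Prop: abelian connected component}.
\end{itemize}
Replacing $\Gamma$ by $\Gamma'$ gives $\Gamma \cap T = \{0\}$.

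\emph{Step 2: constructing $\Gamma_0$.} Let $p \colon N \to \mathbb{Z}^d$ be the projection and $\Lambda := p(\Gamma)$, a subgroup of finite index $e$. Define
$$\Gamma_0 := \bigl\langle g \in N : g^{e} \in \Gamma \bigr\rangle.$$
This set of generators is defined intrinsically from $\Gamma$, so $\Gamma_0$ is stable under every automorphism of $N$ stabilising $\Gamma$. It contains $\Gamma$, since $\gamma^e \in \Gamma$ for all $\gamma \in \Gamma$.

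\emph{Surjectivity of $p|_{\Gamma_0}$.} Fix $v \in \mathbb{Z}^d$, so $ev \in \Lambda$, and pick $\gamma \in \Gamma$ with $p(\gamma) = ev$. For $g = (v, \tau)$ we have $g^e = (ev,\, S_v\tau + c_v)$, where $S_v = 1 + v + \cdots + v^{e-1}$ is the operator induced by the $\mathbb{Z}^d$-action on $T$ and $c_v$ is a constant.
\begin{itemize}
\item The action is unipotent, so on $\mathrm{Lie}(T)$ we have $S_v = e\cdot \id + (\text{nilpotent})$, which is invertible.
\item Hence $S_v$ is a surjective endomorphism of the torus $T$ with finite kernel.
\item Solving $S_v\tau + c_v = $ the $T$-coordinate of $\gamma$ then yields $g$ with $g^e = \gamma \in \Gamma$.
\end{itemize}

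\emph{Discreteness of $\Gamma_0$ (main obstacle).} Each fibre $p^{-1}(v)$ contains only finitely many generators, because $\ker S_v$ is finite. The difficulty is that the group they generate could still accumulate in $T$. I would first try to show that $\Gamma_0 \cap T$ is finite by exhibiting an explicit discrete subgroup containing all generators. The candidate is $p^{-1}(\mathbb{Z}^d) \cap \pi^{-1}(\text{torsion of order dividing some } e')$, where $\pi$ is a reference splitting. Since the unipotent action has rational (integer) matrices, it preserves each finite torsion subgroup $T[e']$, and the cocycle constants $c_v$ are controlled by $\Gamma$.

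A fallback, if this fails, is to define $\Gamma_0$ instead as the normaliser in $p^{-1}(\mathbb{Z}^d)$ of the characteristic subgroup $\Gamma^{(e)}$, intersected with the preimage of a finite subgroup of $T$. This choice is still canonical, hence still $\Aut$-invariant.

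Once $\Gamma_0$ is discrete, it is a lattice: $T$ is compact, and $p(\Gamma_0) = \mathbb{Z}^d$ gives cocompactness.
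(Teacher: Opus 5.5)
Your construction is the paper's own: replace $\Gamma$ by a characteristic finite-index subgroup meeting $T$ trivially, then let $\Gamma_0$ be generated by the $e$-th roots of elements of $\Gamma$ for a fixed exponent $e$. Step~1, the $\Aut$-invariance, and the surjectivity computation via $S_v=e\cdot\id+(\text{nilpotent})$ are fine. There is one slip: residual finiteness gives \emph{some} normal subgroup of index $e$ avoiding $F\setminus\{0\}$, not every one. Since that subgroup is among those you intersect, $\Gamma'\cap T=\{0\}$ still follows.

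The gap is the one non-formal claim, that $\Gamma_0$ is discrete, i.e.\ that $\Gamma_0\cap T$ is finite. You leave it open, and neither suggested route works.
\begin{itemize}
\item Finiteness of the set of generators in each fibre of $p$ says nothing about the group they generate.
\item In a fixed splitting $N=\mathbb{Z}^d\ltimes T$, the $T$-coordinates of elements of $\Gamma$ are in general not torsion. For example, take the trivial action on $T=\mathbb{T}$ and $\Gamma=\{(v,\theta\cdot v): v\in 2\mathbb{Z}^d\}$ with $\theta\in\mathbb{R}^d$ having an irrational coordinate. Then $\mathbb{Z}^d\ltimes T[e']$ does not even contain $\Gamma$.
\item A homomorphic splitting of $p$ passing through $\Gamma$ need not exist once the action is non-trivial, and in any case it would not be canonical.
\item For the fallback: since $\Gamma\cap T=\{0\}$, an element $\tau\in T$ normalises $\Gamma^{(e)}$ if and only if it commutes with it. So the normaliser contains the positive-dimensional subtorus of common fixed points of the unipotent action (all of $T$ when the action is trivial), and is not discrete. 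Cutting it down by a finite subgroup of $T$ in chosen coordinates runs into the problems just described.
\end{itemize}

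What is missing is Malcev's rational structure.
\begin{itemize}
\item Extend the commuting unipotent matrices $u_i\in\mathrm{GL}_m(\mathbb{Z})$ defining the action to $t\mapsto\exp(\sum_i t_i\log u_i)$. This realises $\tilde N=\mathbb{Z}^d\ltimes\mathbb{R}^m$, a cover of $N$ with kernel $\mathbb{Z}^m$, as a closed subgroup of the simply connected nilpotent group $G=\mathbb{R}^d\ltimes\mathbb{R}^m$.
\item The full preimage $\tilde\Gamma\supseteq\mathbb{Z}^m$ of $\Gamma$ is a lattice in $G$. Hence there is a Malcev basis $X_1,\dots,X_n$ of $\mathrm{Lie}(G)$ through a central series (so $X_n$ is central), with rational structure constants, and an integer $K$ with $\log\tilde\Gamma\subseteq\frac{1}{K}\bigoplus_i\mathbb{Z}X_i$.
\item Any lift $h\in\tilde N$ of one of your generators has $h^e\in\tilde\Gamma$, hence $\log h=\frac{1}{e}\log(h^e)\in\frac{1}{eK}\bigoplus_i\mathbb{Z}X_i$.
\item Coordinates of the second kind are polynomials with rational coefficients in those of the first kind. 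So all such $h$ lie in the single finitely generated subgroup $\langle\exp(X_1/K'),\dots,\exp(X_n/K')\rangle$ of $G_{\mathbb{Q}}=\exp(\bigoplus_i\mathbb{Q}X_i)$, for a suitable $K'$.
\item A finitely generated subgroup of $G_{\mathbb{Q}}$ is discrete. Induct along the central series: its intersection with $\exp(\mathbb{R}X_n)$ is a finitely generated subgroup of $\mathbb{Q}$, hence cyclic.
\item Therefore the lifts generate a discrete subgroup $D\supseteq\mathbb{Z}^m$ mapping onto $\Gamma_0$, and $\Gamma_0\cap T=(D\cap\mathbb{R}^m)/\mathbb{Z}^m$ is finite.
\end{itemize}
The paper's sketch only asserts this discreteness, so this is precisely the step your argument has to supply.
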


\begin{proof}[Proof sketch]
    The derived subgroup of $\Gamma$ lies in $T$ and is discrete, hence finite. A standard argument shows $\Gamma$ contains a finite-index abelian subgroup isomorphic to $\mathbb{Z}^d$ that intersects $T$ trivially and is characteristic in $\Gamma$; replacing $\Gamma$ by this subgroup yields the first claim. For the second, take $\Gamma_0$ to be generated by all $u \in N$ with $u^m \in \Gamma$ for some sufficiently large $m$; this group is discrete, projects surjectively onto $\mathbb{Z}^d$, and is invariant under any automorphism of $N$ stabilising $\Gamma$.
\end{proof}

Fix $\Gamma$ and $\Gamma_0$ as in Lemma \ref{Lemma: convenient lattice}. Although $\Gamma$ itself need not project surjectively onto $\mathbb{Z}^d$, we can construct measurable sections of the projection $\Gamma_0 \to \mathbb{Z}^d$ that intertwine well with the $\ASL_d(\mathbb{Z})$-action.

\begin{lem}\label{Lemma: intertwining sections}
    Up to passing to a finite extension of $X$, there is a measurable map $\sigma: \mathbb{Z}^d \times X \to \Gamma_0$ satisfying:
    \begin{enumerate}
        \item for every $x \in X$, the map $t \mapsto \sigma(t, x)$ is a section of the projection $p: \Gamma_0 \to \mathbb{Z}^d$;
        \item for every $x \in X$, $t \in \mathbb{Z}^d$, and $t' \in \Gamma$, $\sigma(t + p(t'), x) = \sigma(t, x) \cdot t'$;
        \item for every $\gamma \in \SL_d(\mathbb{Z})$, $t \in \mathbb{Z}^d$, and $\nu$-a.e.\ $x \in X$, $\alpha(\gamma, x)(\sigma(t, x)) = \sigma(\gamma(t), \gamma \cdot x)$.
    \end{enumerate}
\end{lem}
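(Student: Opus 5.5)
The plan is to build $\sigma$ from a fixed reference section and then correct it by a cocycle into a finite group, removed by a finite extension.

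\textbf{Step 1: a reference section.} Since $\Gamma \cap T = \{0\}$, the image $p(\Gamma)$ has finite index in $\mathbb{Z}^d$, and $\Gamma \cong p(\Gamma)$. Choose coset representatives $r_1, \ldots, r_q$ of $\mathbb{Z}^d/p(\Gamma)$ and lifts $\tilde r_j \in \Gamma_0$, which exist because $p(\Gamma_0) = \mathbb{Z}^d$. For $t = r_j + p(t')$ with $t' \in \Gamma$, set $\sigma_0(t) := \tilde r_j \cdot t'$. This is well defined, since $p|_\Gamma$ is injective. It is a section of $p$ and satisfies (2) by construction.

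\textbf{Step 2: the set of admissible sections.} Let $\mathcal{S}$ be the set of all sections of $p$ satisfying (2). Any $\sigma \in \mathcal{S}$ is determined by its values on $r_1, \ldots, r_q$, and these are lifts of $r_j$ in $\Gamma_0$, hence lie in $\tilde r_j \cdot (\Gamma_0 \cap T)$. Since $\Gamma_0$ is discrete and $T$ is compact, $\Gamma_0 \cap T$ is finite, so $\mathcal{S}$ is finite.

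The group $\SL_d(\mathbb{Z})$, twisted by $\alpha$, acts on sections by
$$(\gamma \ast_x \sigma)(t) := \alpha(\gamma, x)\bigl(\sigma(\gamma^{-1}(t))\bigr).$$
We check that this preserves $\mathcal{S}$:
\begin{enumerate}
\item $\alpha(\gamma, x)$ stabilises $\Gamma_0$ by Lemma \ref{Lemma: convenient lattice}, since it stabilises $\Gamma$.
\item $\alpha(\gamma, x)$ acts on $N/T = \mathbb{Z}^d$ through $\gamma$, so the twisted map is again a section of $p$.
\item Relation (2) is transported because $\alpha(\gamma, x)(\Gamma) = \Gamma$ and $p \circ \alpha(\gamma, x) = \gamma \circ p$.
\end{enumerate}
Hence $\gamma \mapsto \bigl(\sigma \mapsto \gamma \ast_x \sigma\bigr)$ defines a cocycle $c: \SL_d(\mathbb{Z}) \times X \to \mathrm{Sym}(\mathcal{S})$ into a finite group. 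The cocycle identity for $c$ follows from that of $\alpha$; if $\alpha$ only satisfies it modulo inner automorphisms of a subgroup, this has to be checked against the normalisations already made.

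\textbf{Step 3: trivialising the finite cocycle.} Form the finite extension $\widetilde X := X \rtimes_c \mathrm{Sym}(\mathcal{S})$. By Lemma \ref{Lemma: From compact to trivial}(2), the lift of $c$ is a coboundary $c(\gamma, x) = \phi(\gamma \cdot \tilde x)\,\phi(\tilde x)^{-1}$. Define $\sigma(\cdot, \tilde x) := \phi(\tilde x)(\sigma_0)$. Then
$$\sigma(\cdot, \gamma \cdot \tilde x) = \phi(\gamma \tilde x)\sigma_0 = c(\gamma, x)\,\phi(\tilde x)\sigma_0 = \gamma \ast_x \sigma(\cdot, \tilde x).$$
Evaluating at $\gamma(t)$ gives exactly (3). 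Properties (1) and (2) hold pointwise because $\phi(\tilde x)\sigma_0 \in \mathcal{S}$.

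The main obstacle is Step 2: checking that $\ast_x$ genuinely preserves $\mathcal{S}$, which requires the invariance of $\Gamma_0$ and the compatibility $p \circ \alpha(\gamma, x) = \gamma \circ p$. A secondary difficulty is handling any residual failure of the exact cocycle identity, which would be done by passing to the quotient of $\mathrm{Sym}(\mathcal{S})$ on which the inner ambiguity acts trivially.
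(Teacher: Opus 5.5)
Your proof is correct and is the paper's argument. The paper also takes the finite set of sections of $p$ satisfying (2), lets $\SL_d(\mathbb{Z})$ permute it via $\sigma_1 \mapsto \alpha(\gamma,x)\circ\sigma_1\circ\gamma^{-1}$, and trivialises the resulting cocycle into the finite symmetric group on a finite extension using Lemma \ref{Lemma: From compact to trivial}; you simply spell out what the paper leaves implicit (non-emptiness via $\sigma_0$, the compatibility $p\circ\alpha(\gamma,x)=\gamma\circ p$, and the $\alpha$-invariance of $\Gamma_0$ from Lemma \ref{Lemma: convenient lattice}).
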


\begin{proof}
    Let $\Sigma$ denote the set of sections of $p: \Gamma_0 \to \mathbb{Z}^d$ satisfying (2). Then $\Sigma$ is finite (it is a torsor over the finite group $\Hom(\mathbb{Z}^d/p(\Gamma), \Gamma_0 \cap T)$). For every $x \in X$, $\gamma \in \SL_d(\mathbb{Z})$, and $\sigma_1 \in \Sigma$, the composition $\alpha(\gamma, x) \circ \sigma_1 \circ \gamma^{-1}$ is again in $\Sigma$, defining an action of $\SL_d(\mathbb{Z})$ on $X \times \Sigma$ by $\gamma \cdot (x, \sigma) := (\gamma \cdot x, \alpha(\gamma, x) \circ \sigma \circ \gamma^{-1})$. The corresponding cocycle takes values in the finite symmetric group of $\Sigma$, and Lemma \ref{Lemma: From compact to trivial} produces a finite extension on which it becomes a coboundary. The trivialised section is the desired $\sigma$.
\end{proof}

Fix such a measurably-varying section $\sigma$. We can now write the cocycle $\rho$ explicitly. For $\nu$-a.e.\ $x \in X$ and every $t \in \mathbb{Z}^d$, write
$$ \rho(t, x) = \sigma(t, x) \cdot b(t, x)$$
for the unique $b(t, x) \in T$ that makes this identity hold. The cocycle equation $\rho(t_1 + t_2, x) = \rho(t_1, x) \rho(t_2, x)$ together with the definition of $\sigma$ then yields, after a short computation,
\begin{equation}\label{Eq: cocycle eq polynomial}
    c(t_1, t_2, x) = b(t_1 + t_2, x) - b(t_1, x) - t_1 \cdot b(t_2, x),
\end{equation}
where $c(t_1, t_2, x) := \sigma(t_1 + t_2, x)^{-1} \sigma(t_1, x) \sigma(t_2, x) \in \Gamma_0 \cap T$. The map $c(\cdot, \cdot, x)$ is a $2$-cocycle on $\mathbb{Z}^d$ valued in the finite group $\Gamma_0 \cap T$, and depends on $t_1, t_2$ only through their classes modulo $p(\Gamma)$. Hence $c$ factors through $(\mathbb{Z}/m\mathbb{Z})^d \times (\mathbb{Z}/m\mathbb{Z})^d$ for some $m$ depending only on $\Gamma_0 \cap T$ and the $\mathbb{Z}^d$-action, and is polynomial in its arguments of degree bounded in terms of the nilpotency class of $N$ alone.

This polynomial structure of $c$ propagates to $b$.

\begin{lem}\label{Lemma: b is polynomial}
    For $\nu$-a.e.\ $x \in X$, the map $t \mapsto b(t, x)$ is a polynomial map $\mathbb{Z}^d \to T$ of degree bounded in terms of the nilpotency class of $N$ alone.
\end{lem}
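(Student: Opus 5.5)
The plan is to read \eqref{Eq: cocycle eq polynomial} as a twisted cocycle identity for the unipotent $\mathbb{Z}^d$-action on $T$, and to show that iterated finite differences of $b(\cdot,x)$ eventually vanish. Write $\Delta_s F(t) := F(t+s) - F(t)$ for $F: \mathbb{Z}^d \to T$, and $U_s$ for the automorphism $a \mapsto s\cdot a$ of $T$ coming from the semidirect product. Since $N = \mathbb{Z}^d \ltimes T$ is nilpotent of class $k$, each $U_s - \id$ is nilpotent on $T$. More precisely, any product of $k$ operators $(U_{s_i} - \id)$ vanishes, because the filtration $T \supseteq [N,T] \supseteq [N,[N,T]] \supseteq \cdots$ is preserved by the $U_s$, and each $U_s - \id$ lowers it by one step.

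First, rewrite \eqref{Eq: cocycle eq polynomial} with $t_1 = t$, $t_2 = s$, and swap roles using commutativity of $\mathbb{Z}^d$:
\[
\Delta_s b(t,x) = b(s,x) + (U_s - \id)\, b(t,x) + c(s,t,x),
\]
up to the symmetric correction coming from $c$. Apply a second difference operator $\Delta_{s'}$. The constant term $b(s,x)$ disappears, and since $U_s - \id$ commutes with $\Delta_{s'}$ we get
\[
\Delta_{s'}\Delta_s b = (U_s - \id)\Delta_{s'} b + \Delta_{s'} c(s,\cdot,x).
\]
Iterating and substituting the formula for $\Delta b$ back in, any $r$-fold difference $\Delta_{s_1}\cdots\Delta_{s_r} b$ is a sum of three kinds of terms:
\begin{enumerate}
\item terms $(U_{s_{i_1}}-\id)\cdots(U_{s_{i_j}}-\id)\, b(\cdot)$ with $j$ growing;
\item terms of the form $(\text{product of } U - \id)\, b(s_i,x)$, which are constants and are killed by one further difference;
\item terms involving differences of $c$.
\end{enumerate}
Once $j \geq k$ the products of type (1) vanish by nilpotency, so about $2k$ differences remove all the $b$-dependence.

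The remaining obstacle is $c$. It takes values in the finite group $\Gamma_0 \cap T$ and depends only on classes modulo $m$, so it is periodic rather than polynomial. To handle this, I will restrict to the sublattice $m\mathbb{Z}^d$ of differences. For $s_i \in m\mathbb{Z}^d$, each $\Delta_{s_i}$ kills any function that is periodic modulo $m$, and the $c$-terms disappear. So $\Delta_{s_1}\cdots\Delta_{s_R} b(\cdot,x) = 0$ for all $s_i \in m\mathbb{Z}^d$, with $R$ depending only on $k$. By the standard characterisation of polynomials via vanishing differences, $b(\cdot,x)$ restricted to each coset $t_0 + m\mathbb{Z}^d$ is then a polynomial of degree $< R$ in the coset variable, with values in the torus $T$.

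Finally, I have to pass from coset-wise polynomiality to a single polynomial on $\mathbb{Z}^d$. Here I will use that $\mathbb{T}^m$-valued polynomials in $t/m$ are again polynomial maps on $\mathbb{Z}^d$ in the torus sense. I would also absorb the residual finite-valued, periodic discrepancy by modifying $\sigma$, which is already only determined up to the finite set $\Sigma$ of Lemma~\ref{Lemma: intertwining sections}, possibly after a further finite extension of $X$. I expect this last gluing step, rather than the difference computation, to be the delicate point. If it fails as stated, the fallback is to conclude that $b$ is polynomial on $m\mathbb{Z}^d$-cosets, which still suffices for Definition~\ref{Def: general polynomial process} after enlarging $T$.
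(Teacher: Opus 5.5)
You use the same engine as the paper's sketch, namely iterated finite differences plus unipotence, and your difference computation is sound. Taking $t_1=s$, $t_2=t$ in \eqref{Eq: cocycle eq polynomial} gives exactly $\Delta_s b=b(s)+(U_s-\id)b+c(s,\cdot)$, with no symmetrisation needed. For $s_1,\dots,s_{k+1}\in m\mathbb{Z}^d$, every $c$-term in the expansion of $\Delta_{s_{k+1}}\cdots\Delta_{s_1}b$ is either differenced at least once along a period, or carries the prefactor $\prod_{i\le k}(U_{s_i}-\id)=0$. So $b(\cdot,x)$ is a polynomial of degree $\le k$ on each coset of $m\mathbb{Z}^d$.

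\textbf{The genuine gap is the gluing step.} Rewriting each coset polynomial in the variable $t$ only shows that each restriction $b|_{t_0+m\mathbb{Z}^d}$ \emph{separately} extends to a polynomial on $\mathbb{Z}^d$. It does not show that the $m^d$ pieces come from a single polynomial, and in general they do not, because periodic torus-valued functions need not be polynomial. For instance, $f=\tfrac13\mathbf{1}_{2\mathbb{Z}}$ has $\Delta_1^j f=\pm 2^{j-1}/3\neq 0$ for every $j$.

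This failure actually occurs for sections satisfying property (2) of Lemma~\ref{Lemma: intertwining sections}. Take $N=\mathbb{Z}\times\mathbb{T}$, $\Gamma=\langle(2,\tfrac13)\rangle$, $\Gamma_0=\mathbb{Z}\times(\tfrac13\mathbb{Z}/\mathbb{Z})$, $\rho(t)=(t,\theta t)$ and $\sigma(t)=(t,\tfrac13\lfloor t/2\rfloor)$. Then $b(t)=\theta t-\tfrac13\lfloor t/2\rfloor$, which is linear on each coset of $2\mathbb{Z}$ but is not a polynomial map.

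Even when gluing is possible, the coset-wise degree does not control the global degree. For example, $\tfrac12\binom{t}{4}$ is constant on cosets of $8\mathbb{Z}$ yet has degree $4$. So a degree bound in terms of the class alone would not follow either.

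Re-choosing $\sigma\in\Sigma$ is the right instinct: in the example, $\sigma(t)=(t,\tfrac23 t)$ also satisfies (2) and gives $c=0$. But that choice is exactly the missing argument. You would have to produce, measurably in $x$, a section for which $c(\cdot,\cdot,x)$ is polynomial, while keeping the equivariance (3). Your fallback only proves a weaker statement than the lemma.

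The paper's sketch avoids gluing because it uses, from the paragraph preceding the lemma, that $c(\cdot,\cdot,x)$ is itself polynomial of some bounded degree $D$. Then your expansion can be run with arbitrary $s_i\in\mathbb{Z}^d$. The $j$-th $c$-term is hit by $r-j$ differences, and the terms with $j>k$ already vanish because of their prefactor. Hence all $c$-terms vanish once $r\ge k+D+1$, so $\Delta_{s_1}\cdots\Delta_{s_{k+D+1}}b(\cdot,x)=0$ for all $s_i\in\mathbb{Z}^d$. This gives one global polynomial of degree $\le k+D$ directly.

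You used only periodicity of $c$, which is strictly weaker. In the example above, $c(s,t)=-\tfrac13$ when $s,t$ are both odd and $0$ otherwise: it is periodic but not polynomial. That is exactly why you are left with a gluing problem. The ingredient you are missing is polynomiality of $c$. Be aware that the paper asserts it immediately after noting periodicity, and the example shows it genuinely depends on how $\sigma$ is chosen. A complete proof along either route therefore has to secure a good section.
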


The proof is a direct manipulation of \eqref{Eq: cocycle eq polynomial}, exploiting that $c(\cdot, \cdot, x)$ is polynomial and that the $\mathbb{Z}^d$-action on $T$ is unipotent, so finite differences of $b$ reduce its degree.

We now turn to the $\SL_d(\mathbb{Z})$-equivariance. Since $\rho$ takes values in the affine group $\Aut(N) \ltimes N$ and $d \geq 3$, the restriction $\rho|_{\SL_d(\mathbb{Z}) \times X}$ has reductive algebraic hull \cite[Thm. 1.5]{FisherMargulis}. Standard cocycle reduction provides a measurable $\phi: X \to N$ such that
$$ \phi(\gamma \cdot x)^{-1} \, \rho(\gamma, x) \, \phi(x) \in \Aut(N)$$
for every $\gamma \in \SL_d(\mathbb{Z})$ and $\nu$-a.e.\ $x \in X$. Replacing $\rho$ by this conjugate, we may assume $\rho|_{\SL_d(\mathbb{Z}) \times X}$ takes values in $\Aut(N)$ from the outset.

For $\gamma \in \SL_d(\mathbb{Z})$, $t \in \mathbb{Z}^d$, and $\nu$-a.e.\ $x \in X$, applying the cocycle equation in two ways and using that $\mathbb{Z}^d$ acts trivially on $X$,
$$ \rho(\gamma, x) \, \rho(t, x) = \rho(\gamma t, x) = \rho(\gamma(t) \gamma, x) = \rho(\gamma(t), \gamma \cdot x) \, \rho(\gamma, x).$$
Substituting $\rho(t, x) = \sigma(t, x) b(t, x)$ and using the equivariance of $\sigma$ (Lemma \ref{Lemma: intertwining sections}(3)) gives
\begin{equation}\label{Eq: Invariance eq for cocycle end}
    \alpha(\gamma, x)(b(t, x)) = b(\gamma(t), \gamma \cdot x).
\end{equation}

We can now describe the action of $\rho(t, x)$ on a Haar-random point of $N/\Gamma_0$. The choice of section $\sigma(\cdot, x)$ provides an identification $T \times \mathbb{Z}^d/p(\Gamma_0) \simeq N/\Gamma_0$ via $(\tau, \bar t) \mapsto \tau \, \sigma(t, x) \, \Gamma_0$ for any representative $t$ of $\bar t$. Drawing $\bar n \in N/\Gamma_0$ at Haar random is then equivalent to drawing $t_{\mathrm{rand}} \in \mathbb{Z}^d/p(\Gamma_0)$ uniformly and $\tau \in T$ at Haar random independently. For such $\bar n$,
$$ \rho(t, x) \cdot \bigl(\tau \, \sigma(t_{\mathrm{rand}}, x) \, \Gamma_0\bigr) = \bigl(b(t, x) + t \cdot \tau + c(t_{\mathrm{rand}}, t, x)\bigr) \, \sigma(t + t_{\mathrm{rand}}, x) \, \Gamma_0,$$
using the polynomial cocycle equation \eqref{Eq: cocycle eq polynomial} for the rearrangement.

The right-hand side displays $\rho(t, x) \cdot \bar n$ as a polynomial expression in $t$ with coefficients in $T \times \mathbb{Z}^d/p(\Gamma_0) \simeq N/\Gamma_0$, of degree bounded in terms of the nilpotency class of $N$ alone. The $\SL_d(\mathbb{Z})$-equivariance properties \eqref{Eq: Invariance eq for cocycle end} of $b$ and the analogous equivariance of $c$, together with the identity $\rho(\gamma, x)(t \cdot \tau) = \gamma(t) \cdot \alpha(\gamma, x)(\tau)$, give that this random polynomial $P(t, x)$ satisfies
$$ \alpha(\gamma, x)(P(t, x)) = P(\gamma(t), \gamma \cdot x).$$
Furthermore, the random variables $\alpha(\gamma, \cdot)$ and $P(t, \cdot)$ are independent by the independence property established in the next subsection (Lemma \ref{Lemma: Independence}).

This is precisely the description of $\rho(t, x) \cdot \bar n$ as the value at $t$ of an $\alpha$-equivariant random polynomial, matching the framework of Definition \ref{Def: general polynomial process}. We can now assemble the proof of the main theorem.

\begin{proof}[Proof of Theorem \ref{Thm: All processes are polynomial}]
    Let $\underline X$ be an $\ASL_d(\mathbb{Z})$-invariant point process on $\mathbb{Z}^d$, with associated dynamical system $X$ and indicator function $\mathbf{1}_{0 \in \underline X} \in L^\infty(X)$. The reduction described at the beginning of this subsection yields, after passing to a finite extension and to an $\ASL_d(\mathbb{Z})$-equivariant nilfactor $X \to Y$, a system of the standing form $Y = X' \rtimes_\rho N/\Gamma$ with $N = \mathbb{Z}^d \ltimes T$, $T = \mathbb{T}^m$, and a measurable function $f: M \backslash N/\Gamma \to [0, 1]$ such that $\mathbb{E}[\mathbf{1}_{0 \in \underline X} \mid Y] = f$.

    By Theorem \ref{Theorem: Reduction to Host--Kra factors}, the marginals of $\underline X$ at any finite collection of distinct points satisfy
    $$ \mathbb{E}[\mathbf{1}_{t_1 \in \underline X} \cdots \mathbf{1}_{t_r \in \underline X}] = \mathbb{E}\bigl[\mathbb{E}[\mathbf{1}_{t_1 \in \underline X} \mid Y] \cdots \mathbb{E}[\mathbf{1}_{t_r \in \underline X} \mid Y]\bigr]$$
    in the limit along the inverse system. Lemma \ref{Lemma: Relative mixing implies Bernoulli thinning} applied at each level of the inverse limit, combined with Lemma \ref{Lemma: Inverse limit approximation}, identifies the law of $\underline X$ with the limit of point processes obtained by drawing $y \in Y$ at random and keeping each $t \in \mathbb{Z}^d$ independently with probability $f((-t) \cdot y)$.

    Lemma \ref{Lemma: b is polynomial} and the surrounding discussion express the action of $\rho(t, x)$ on a Haar-random $\bar n \in N/\Gamma_0$ as the value at $t$ of a random polynomial $P(t, x)$ taking values in $T_{\leq l}[X_1, \ldots, X_d]$, where $l$ depends only on the nilpotency class of $N$. The polynomial satisfies the equivariance property $\alpha(\gamma, x)(P(t, x)) = P(\gamma(t), \gamma \cdot x)$ from \eqref{Eq: Invariance eq for cocycle end}.

    By Lemma \ref{Lemma: Measurability over homogeneous factor}, the cocycle $\alpha$ factors through an $\SL_d(\mathbb{Z})$-equivariant map to a homogeneous space $H/\Gamma_H$, with $\alpha$ becoming the constant cocycle $\pi$. By Lemma \ref{Lemma: Independence}, the random variables $x \mapsto \alpha(\gamma, x)$ and $x \mapsto P(t, x)$ are independent. The data $(l, m, T, \pi, P, f)$ now matches the requirements of Definition \ref{Def: general polynomial process}: $P$ is the  random polynomial $\underline P$ whose law is a Haar measure and $\alpha$-invariant, and $f$ is the measurable function on $T$ (after identifying $M \backslash T$ with the relevant quotient and triviality of the action of $\alpha$ on $M \backslash T$).

    The point process associated with this data, by Definition \ref{Def: general polynomial process}, has the same marginals as $\underline X$ at every finite collection of points, hence the same law. This concludes the proof.
\end{proof}

\bibliographystyle{alpha}
\bibliography{ref}

@inproceedings{de1937prevision,
  title={La pr{\'e}vision: ses lois logiques, ses sources subjectives},
  author={De Finetti, Bruno},
  booktitle={Annales de l'institut Henri Poincar{\'e}},
  volume={7},
  pages={1--68},
  year={1937}
}

@article{diaconis1977finite,
  title={Finite forms of de Finetti's theorem on exchangeability},
  author={Diaconis, Persi},
  journal={Synthese},
  volume={36},
  number={2},
  pages={271--281},
  year={1977},
  publisher={Springer}
}

@article{diaconis1980finite,
  title={Finite exchangeable sequences},
  author={Diaconis, Persi and Freedman, David},
  journal={The Annals of Probability},
  pages={745--764},
  year={1980},
  publisher={JSTOR}
}

@article{gavalakis2021information,
  title={An information-theoretic proof of a finite de Finetti theorem},
  author={Gavalakis, Lampros and Kontoyiannis, Ioannis},
  journal={Electronic Communications in Probability},
  volume={26},
  pages={1--5},
  year={2021},
  publisher={The Institute of Mathematical Statistics and the Bernoulli Society}
}

@article{kirsch2019elementary,
  title={An elementary proof of de Finetti’s theorem},
  author={Kirsch, Werner},
  journal={Statistics \& Probability Letters},
  volume={151},
  pages={84--88},
  year={2019},
  publisher={Elsevier}
}

@book{HostKra,
 author = {Host, Bernard and Kra, Bryna},
 title = {Nilpotent structures in ergodic theory},
 fseries = {Mathematical Surveys and Monographs},
 series = {Math. Surv. Monogr.},
 issn = {0076-5376},
 volume = {236},
 isbn = {978-1-4704-4780-9; 978-1-4704-5061-8},
 year = {2018},
 publisher = {Providence, RI: American Mathematical Society (AMS)},
 language = {English},
 doi = {10.1090/surv/236},
 zbMATH = {7007704},
 Zbl = {1433.37001}
}

@book{ErgodicTheoryZimmer,
 author = {Zimmer, Robert J.},
 title = {Ergodic theory and semisimple groups},
 fseries = {Monographs in Mathematics},
 series = {Monogr. Math., Basel},
 issn = {1017-0480},
 volume = {81},
 year = {1984},
 publisher = {Birkh{\"a}user, Cham},
 language = {English},
 zbMATH = {3911340},
 Zbl = {0571.58015}
}

@article{ZieglerNonConventional,
 author = {Ziegler, Tamar},
 title = {Universal characteristic factors and {Furstenberg} averages},
 fjournal = {Journal of the American Mathematical Society},
 journal = {J. Am. Math. Soc.},
 issn = {0894-0347},
 volume = {20},
 number = {1},
 pages = {53--97},
 year = {2007},
 language = {English},
 doi = {10.1090/S0894-0347-06-00532-7},
 zbMATH = {5120455},
 Zbl = {1198.37014}
}

@article{Austin,
 author = {Austin, Tim},
 title = {Extensions of probability-preserving systems by measurably-varying homogeneous spaces and applications},
 fjournal = {Fundamenta Mathematicae},
 journal = {Fundam. Math.},
 issn = {0016-2736},
 volume = {210},
 number = {2},
 pages = {133--206},
 year = {2010},
 language = {English},
 doi = {10.4064/fm210-2-3},
 zbMATH = {5834943},
 Zbl = {1206.28023}
}

@incollection{FisherMargulis,
 author = {Fisher, David and Margulis, G. A.},
 title = {Local rigidity for cocycles},
 booktitle = {Surveys in differential geometry. Vol. VIII: Lectures on geometry and topology held in honor of Calabi, Lawson, Siu, and Uhlenbeck at Harvard University, Cambridge, MA, USA, May 3--5, 2002},
 isbn = {1-57146-114-0},
 pages = {191--234},
 year = {2003},
 publisher = {Somerville, MA: International Press},
 language = {English},
 zbMATH = {2070183},
 Zbl = {1062.22044}
}

@article{RaghunathanDiscrete,
 author = {Raghunathan, M. S.},
 title = {Discrete subgroups of {Lie} groups},
 fjournal = {The Mathematics Student},
 journal = {Math. Stud.},
 issn = {0025-5742},
 volume = {2007},
 pages = {59--70},
 year = {2007},
 language = {English},
 zbMATH = {5575404},
 Zbl = {1193.22011}
}

@misc{JamneshanMachado,
 author = {Asgar Jamneshan and Simon Machado},
 title = {The non-ergodic {Host}-{Kra}-{Ziegler} structure theorem for {$\mathbb{Z}^d$}-actions via measurable selections},
 year = {2026},
 howpublished = {Preprint, {arXiv}:2601.09553 [math.{DS}] (2026)},
 url = {https://arxiv.org/abs/2601.09553},
 arXiv = {arXiv:2601.09553}
}

@article{Leibman,
 author = {Leibman, A.},
 title = {Pointwise convergence of ergodic averages for polynomial actions of {{\(\mathbb{Z}^d\)}} by translations on a nilmanifold},
 fjournal = {Ergodic Theory and Dynamical Systems},
 journal = {Ergodic Theory Dyn. Syst.},
 issn = {0143-3857},
 volume = {25},
 number = {1},
 pages = {215--225},
 year = {2005},
 language = {English},
 doi = {10.1017/S0143385704000227},
 zbMATH = {2174669},
 Zbl = {1080.37004}
}

@article{TaoZieglerConcat,
 author = {Tao, Terence and Ziegler, Tamar},
 title = {Concatenation theorems for anti-{Gowers}-uniform functions and {Host}-{Kra} characteristic factors},
 fjournal = {Discrete Analysis},
 journal = {Discrete Anal.},
 issn = {2397-3129},
 volume = {2016},
 pages = {60},
 note = {Id/No 13},
 year = {2016},
 language = {English},
 doi = {10.19086/da.850},
 zbMATH = {6637026},
 Zbl = {1400.11028}
}

@article{FrantzikinakisKucaJoint,
 author = {Frantzikinakis, Nikos and Kuca, Borys},
 title = {Joint ergodicity for commuting transformations and applications to polynomial sequences},
 fjournal = {Inventiones Mathematicae},
 journal = {Invent. Math.},
 issn = {0020-9910},
 volume = {239},
 number = {2},
 pages = {621--706},
 year = {2025},
 language = {English},
 doi = {10.1007/s00222-024-01313-w},
 zbMATH = {7968253},
 Zbl = {1566.37007}
}

@article{MackeyPointRealization,
 author = {Mackey, G. W.},
 title = {Point realizations of transformation groups},
 fjournal = {Illinois Journal of Mathematics},
 journal = {Ill. J. Math.},
 issn = {0019-2082},
 volume = {6},
 pages = {327--335},
 year = {1962},
 language = {English},
 zbMATH = {3284736},
 Zbl = {0178.17203}
}

@book{PropertyT,
 author = {Bekka, Bachir and de la Harpe, Pierre and Valette, Alain},
 title = {Kazhdan's property},
 fseries = {New Mathematical Monographs},
 series = {New Math. Monogr.},
 volume = {11},
 isbn = {978-0-521-88720-5},
 year = {2008},
 publisher = {Cambridge: Cambridge University Press},
 language = {English},
 zbMATH = {5284649},
 Zbl = {1146.22009}
}

@book{KechrisDescriptive,
 author = {Kechris, Alexander S.},
 title = {Classical descriptive set theory},
 fseries = {Graduate Texts in Mathematics},
 series = {Grad. Texts Math.},
 issn = {0072-5285},
 volume = {156},
 isbn = {3-540-94374-9},
 year = {1995},
 publisher = {Berlin: Springer-Verlag},
 language = {English},
 zbMATH = {722611},
 Zbl = {0819.04002}
}

@article{BenoistQuint,
 author = {Benoist, Yves and Quint, Jean-Fran{\c{c}}ois},
 title = {Stationary measures and invariant subsets of homogeneous spaces. {II}},
 fjournal = {Journal of the American Mathematical Society},
 journal = {J. Am. Math. Soc.},
 issn = {0894-0347},
 volume = {26},
 number = {3},
 pages = {659--734},
 year = {2013},
 language = {English},
 doi = {10.1090/S0894-0347-2013-00760-2},
 zbMATH = {6168604},
 Zbl = {1268.22011}
}

@article{guiheneuf2017minkowski,
  title={A Minkowski theorem for quasicrystals},
  author={Guih{\'e}neuf, Pierre-Antoine and Joly, {\'E}milien},
  journal={Discrete \& Computational Geometry},
  volume={58},
  number={3},
  pages={596--613},
  year={2017},
  publisher={Springer}
}

@article{ruhr2023classification,
  title={Classification and statistics of cut-and-project sets},
  author={R{\"u}hr, Ren{\'e} and Smilansky, Yotam and Weiss, Barak},
  journal={Journal of the European Mathematical Society},
  volume={26},
  number={9},
  pages={3575--3638},
  year={2023}
}

@article{howe1979asymptotic,
  title={Asymptotic properties of unitary representations},
  author={Howe, Roger E and Moore, Calvin C},
  journal={Journal of Functional Analysis},
  volume={32},
  number={1},
  pages={72--96},
  year={1979},
  publisher={Elsevier}
}

@book{daley2003introduction,
  title={An introduction to the theory of point processes: volume I: elementary theory and methods},
  author={Daley, Daryl J and Vere-Jones, David},
  year={2003},
  publisher={Springer}
}

@book{baake2013aperiodic,
  title={Aperiodic Order. Volume 1: A Mathematical Invitation},
  author={Baake, Michael and Grimm, Uwe},
  year={2013},
  publisher={Cambridge University Press}
}

@book{senechal1995quasicrystals,
  title={Quasicrystals and Geometry},
  author={Senechal, Marjorie},
  year={1995},
  publisher={Cambridge University Press}
}

@article{foreman2011conjugacy,
  title={The conjugacy equivalence relation of ergodic measure-preserving transformations is not Borel},
  author={Foreman, Matthew and Rudolph, Daniel J and Weiss, Benjamin},
  journal={Annals of Mathematics},
  pages={1529--1586},
  year={2011},
  publisher={JSTOR}
}

@article{hjorth2001invariants,
  title={On invariants for measure preserving transformations},
  author={Hjorth, Greg},
  journal={Fundamenta Mathematicae},
  volume={169},
  number={1},
  pages={51--84},
  year={2001}
}
\end{document}